\documentclass[oneside, a4paper]{amsart}
\usepackage[utf8]{inputenc}
\usepackage[english]{babel}
\usepackage{amssymb,amsmath,amsthm}
\usepackage{graphicx}

\usepackage[backref=page]{hyperref}

\usepackage{tikz-cd}

\usepackage{xspace}
\usepackage{enumerate}
\usepackage{verbatim}

\usepackage[margin=1in]{geometry}

\usepackage{mathtools}
\pretolerance=10000

\newcommand{\F}[1]{\ensuremath{\mathbb{F}_{#1}}}
\newcommand{\Fp}{\mathbb{F}_p}

\newcommand{\QQ}{\ensuremath{\mathbb{Q}}\xspace}
\newcommand{\CC}{\ensuremath{\mathbb{C}}\xspace}

\newcommand{\ZZ}{\ensuremath{\mathbb{Z}}\xspace}
\newcommand{\Zp}{\ensuremath{\mathbb{Z}_{(p)}}\xspace}
\newcommand{\Z}[1]{\ensuremath{\mathbb{Z}_{(#1)}}\xspace}
\newcommand{\NN}{\ensuremath{\mathbb{N}}\xspace}
\newcommand{\LL}{\ensuremath{\mathbb{L}}\xspace}
\newcommand{\inN}{\ensuremath{\in\mathbb{N}}\xspace}

\newcommand{\OO}{\ensuremath{\mathcal{O}}\xspace}
\newcommand{\rarr}{\rightarrow}

\newcommand{\xrarr}[1]{\xrightarrow{#1}}

\newcommand{\pd}{\partial}

\newcommand{\mc}[1]{\ensuremath{\mathcal{#1}}}
\newcommand{\id}{\ensuremath{\mbox{id}}}

\newcommand{\bu}{\bullet}
\newcommand{\ot}{\otimes}

\newcommand{\M}{\mc{M}}
\newcommand{\Res}{\underset{t=0}{\mathrm{Res\ }}}
\newcommand{\pphi}{\varphi}

\newcommand{\Kntop}{\mathrm{K}(n)^{top}}
\newcommand{\Kn}{\mathrm{K}(n)_{int}}
\newcommand{\Kntilde}{\widetilde{\mathrm{K}}(n)_{int}}
\newcommand{\Knspec}{\mathcal{K}(n)_{int}}

\newcommand{\oKn}{\overline{\mathrm{K}(n)}_{int}}
\newcommand{\Km}[1][n]{\mathrm{K}(#1)_{int}}
\newcommand{\CKK}{\mathrm{CK}_0}
\newcommand{\KK}{\mathrm{K}_0}

\renewcommand{\H}{\mathrm{H}}

\newcommand{\MU}{MU}

\DeclareMathOperator{\Spec}{\mathrm{Spec}}

\DeclareMathOperator{\CH}{\mathrm{CH}}
\DeclareMathOperator{\Hom}{\mathrm{Hom}}
\DeclareMathOperator{\Tor}{\mathrm{Tor}}
\DeclareMathOperator{\colim}{\mathrm{colim}}
\DeclareMathOperator{\SH}{\mathrm{SH}}

\DeclareMathOperator{\BPspec}{\mathcal{BP}}
\DeclareMathOperator{\MGL}{\mathrm{MGL}}
\DeclareMathOperator{\KGL}{\mathrm{KGL}}
\DeclareMathOperator{\BPtop}{BP^{top}}
\DeclareMathOperator{\Entop}{\mathrm{E}(n)^{top}}

\theoremstyle{definition}
\newtheorem{Def}{Definition}[section]
\newtheorem*{Def-intro}{Definition}
\newtheorem{Rk}[Def]{Remark}
\newtheorem{Exl}[Def]{Example}
\theoremstyle{plain}
\newtheorem{Th}[Def]{Theorem}
\newtheorem*{Th-intro}{Theorem}
\newtheorem{Prop}[Def]{Proposition}
\newtheorem{Cr}[Def]{Corollary}
\newtheorem{Lm}[Def]{Lemma}
\newtheorem{Not}[Def]{Notation}

\newtheorem*{BigTh-add}{Algebraic Classification of Additive Operations Theorem  (CAOT)}

\makeatletter
\newcommand\footnoteref[1]{\protected@xdef\@thefnmark{\ref{#1}}\@footnotemark}
\makeatother

\begin{document}

\title[Chern classes from Morava K-theories to $p^n$-typical oriented theories]%
{Chern classes from Morava K-theories\\
to \mbox{$p^n$-typical} oriented theories}
\author{Pavel Sechin}

\begin{abstract}
We study non-additive operations from algebraic Morava K-theories to
oriented cohomology theories in algebraic geometry. 
For oriented cohomology theory $A$ that has a {$p^n$}-typical formal group law
over a $\Zp$-algebra we construct `Chern classes' from 
the algebraic $n$-th Morava K-theory with $p$-local coefficients to $A$.
If the coefficient ring of $A$
is a free $\Zp$-module we also prove that these Chern classes 
freely generate all operations from $\Kn^*$ to $A$.

Examples of such theories are algebraic Morava K-theories $\Km[nm]^*$ for all $m\in\NN$
and Chow groups with p-local coefficients. 
The universal $p^n$-typical oriented theory is $BP\{n\}^*$
whose coefficient ring is also a free $\Zp$-module.

Chern classes from the $n$-th algebraic Morava K-theory $\Kn^*$ to itself
allow us to introduce the gamma filtration on $\Kn^*$.
This is the best approximation to the topological filtration obtained by values of operations
and it satisfies properties similar to that of the classical gamma filtration on $\KK$.
The major difference from the classical case
 is that Chern classes from the graded factors $gr^i_\gamma \Kn^*$
to Chow groups with p-local coefficients are surjective for $i\le p^n$,
which allows to estimate $p$-torsion 
in Chow groups of codimension up to $p^n$ of some varieties.
\end{abstract}

\keywords{algebraic Morava K-theory,  operations between oriented cohomology theories,
gamma filtration, Chow groups.}

\subjclass{14C15, 14C35 (primary); 14C40 (secondary)}

\maketitle

\section*{Introduction}

\subsection*{Orientable cohomology theories in algebraic geometry}

In algebraic geometry a cohomology theory can be said to be orientable
if it can be endowed with a suitable notion of push-forward maps for projective morphisms.
For example, push-forward maps in Chow groups can be defined using direct image of cycles,
in K-theory of smooth varieties one can define push-forward maps using the right derived functor
of the push-forward of coherent sheaves, and in Weil-type cohomology theories, e.g.\ \'etale cohomology,
push-forward maps can be described using the Poincare duality. 

It was observed by Panin and Smirnov \cite{PanSmi}
that the orientation
can also be given by the structure of Chern classes of line bundles satisfying certain axioms.
As the projective bundle theorem is usually contained in the axioms of these cohomology theories,
one can define higher Chern classes due to the classical method of Grothendieck.
Moreover, one could say that an orientable theory is a cohomology theory
which can be endowed with a suitable notion of Chern classes.
A possibility to be oriented then leads to the computation of the ring of operations from $\KK$ 
to many cohomology theories (e.g.\ for free theories, see below),
which is freely generated by Chern classes.

Altogether these observations confirm a distinguished role of the K-theory with respect 
to orientation. The scope of this paper is a search for a different notion of orientability
where the distinguished role is played by Morava K-theories $\Kn^*$ instead of $\KK$.
To introduce $\Kn^*$ let us recall how one can construct oriented cohomology theories in algebraic geometry.

There exists the universal oriented\footnote{Note that the 
 notions of orientation due to Levine-Morel and Panin-Smirnov differ in some technical details.} cohomology theory $\Omega^*$ which is called
algebraic cobordism of Levine-Morel \cite{LevMor}. 
The universality has the following meaning here: for every oriented theory $A^*$
there exist a unique morphism
of presheaves of rings from $\Omega^*$ to $A^*$ which respects both push-forward and pullback maps.
 
Another universal property of algebraic cobordism permits to introduce many 
new cohomology theories. The ring of coefficients of $\Omega^*$, at least over fields 
of zero characteristic,
is isomorphic to the Lazard ring classifying formal group laws. 
For any formal group law $F_R$ over a commutative ring $R$
the presheaf of rings $\Omega^* \ot_\LL R$ is an oriented cohomology theory, called free theory,
where the map $\LL\rarr R$ corresponds to $F_R$.
Well-known examples of free theories are Chow groups and K-theory $\KK$
obtained via the additive and the multiplicative formal group laws over $\ZZ$, respectively.

\subsection*{Morava K-theories}

If one takes $R$ to be $\Zp$ where $p$
is a prime number, and chooses a formal group law over $\Zp$
with the logarithm of the form $\log_{\Kn}(x) = x+\frac{a_1}{p}x^{p^n}+\frac{a_2}{p^2}x^{p^{2n}}+\ldots$,
where $a_1\in\Zp^\times$, then the corresponding free theory $\Kn^*$ 
is called an $n$-th algebraic Morava K-theory.
In a special case $n=1$ and $\log_{\mathrm{K}(1)_{int}}(x)=\sum_{i\ge 0} \frac{1}{p^i}x^{p^i}$
free theory $\mathrm{K}(1)^*_{int}$ is isomorphic to $\KK\ot\Zp$ as presheaf of rings.
It seems that this isomorphism was the main motivation for topologists to call
such cohomology theories similarly: Morava K-theories.

The definition of Morava K-theories given above is quite ad hoc and  demands some explanations,
which we adopt from topology. %where Morava K-theories appeared first.
Algebraic cobordism $\Omega^*$ of a smooth variety together with Landweber-Novikov operations
on it defines a quasi-coherent sheaf on the moduli stack of formal groups (see e.g.\ \cite[Prop.~2.10]{Sec-Cob}).
%(see e.g.\ \cite[Section 1.4]{Sec-Cob}).
This stack has a particularly nice description
 after base change to $\F{p}$: 
there exist a decreasing filtration
by closed substacks $\mc{M}_{fg}^{\ge n}$ which
classify formal group laws of height not less than $n$, and
$\mc{M}_{fg}^{\ge n+1}$ is a Cartier divisor in $\mc{M}_{fg}^{\ge n}$.
Moreover, $\mc{M}_{fg}^{\ge n} \setminus \mc{M}_{fg}^{\ge n+1}$
has an essentially unique geometric point.

Formal group laws yield free theories as explained above,
and isomorphic formal group laws yield multiplicatively isomorphic free theories.
Thus, a field-valued point 
of the stack of formal groups defines a free theory as a presheaf of rings
(but the orientation on it can be chosen differently).
Morava K-theories $\Kn^*/p$, as defined above, are those free theories that correspond to 
some of the $\F{p}$-points of $\mc{M}_{fg}^{\ge n} \setminus \mc{M}_{fg}^{\ge n+1}$,
but these theories become multiplicatively isomorphic with $\overline{\F{p}}$-coefficients.
Thus Morava K-theories exhaust all free theories with $\F{p}$-coefficients
except for the geometric point `at infinity', i.e. in the intersection of all substacks $\mc{M}_{fg}^{\ge n}$, $n\ge 1$.
This point corresponds to Chow groups modulo $p$, sometimes called the infinite Morava K-theory.

As the first Morava K-theory is nothing else than K-theory modulo $p$,
 the `chromatic structure' of the stack $\mc{M}_{fg}$ 
places $\Kn^*$ into an intermediate position between $\KK/p$ and $\CH^*/p$.
This is the underlying reason why Morava K-theories 
behave simpler than Chow groups, and yet more complicated than $\KK$.
Known instances of this phenomenon are the computation of operations (see theorems below) and some results about motives.
For example,
if $X$ is a projective homogeneous variety such that $M_{\Kn^*}(X)$ is a Tate motive
(here $M_{\Kn^*}$ is a functor to the category 
of pure motives corresponding to an oriented theory $\Kn^*$),
then $M_{\mathrm{K}(m)^*_{int}}(X)$ is also a Tate motive for $m<n$ (\cite[Cor. 7.11]{SechSem}).
This result gives rise to an effective `linear' strategy for the study of motives of projective homogeneous varieties
 that goes from $\mathrm{K}(1)^*_{int}*$ (aka $\KK$) to $\Kn^*$, $n>1$,
and finishes with $\CH^*\ot\Zp$ for every prime $p$.

Even though the results about Morava K-theories $\Kn^*$ might be interesting by themselves,
one also hopes to obtain some information about the more classical theories such as Chow groups
with their help.
The first step in this direction was made in \cite{Sech}
where so-called Chern classes, operations from $\Kn^*$ to $\CH^*\ot\Zp$ were constructed.
These Chern classes are similar to the classical Chern classes from $\KK$ to $\CH^*$:
they are free generators of all operations and satisfy a Cartan-type formula.
This result led us to ask the question
about the existence of the notion of Morava-orientable theories ([{\sl op.cit.}, Introduction]).
The goal of this paper is to construct new Chern classes from $\Kn^*$ to many other oriented theories
and to provide at least partial answer to the question of orientability. 
Among these operations, perhaps, the most important are the endo-operations of $\Kn^*$
which give rise to the gamma filtration on $\Kn^*$ 
and provide a tool to calculate Chern classes from $\Kn^*$ to $\CH^*\ot\Zp$ for certain varieties.

\subsection*{Results of the paper}

\subsubsection*{Morava-orientable theories and Chern classes}

We start with the description of the results of the paper
with the following definition generalizing Cartier's $p$-typical formal group laws.

\begin{Def-intro}[{for the full definition encompassing torsion-case see Section \ref{sec:pn-typ},
 cf. the notion of formal $A$-module \cite[Ch.~21]{Haz}}]
Let $A$ be a torsion-free $\Zp$-algebra.
A formal group law $F$ over $A$ is called $p^n$-typical 
if its logarithm is of the form $\log_F(x) = \sum_{i\ge 0} l_i x^{p^{ni}}$
where $l_i \in A\otimes_\ZZ \QQ$.

A free theory is called $p^n$-typical if the corresponding 
formal group law is $p^n$-typical.
\end{Def-intro}

The main result of this paper is the following.

\begin{Th-intro}[Theorem \ref{th:main}; if $A^*=\CH^*\ot\Zp$ this is {\cite[Th. 4.2.1]{Sech}}]
For every Morava K-theory $\Kn^*$ 
and every $p^n$-typical theory $A^*$
there exist a series of operations $c_j\colon\Kn^*\rarr A^*$ for $j\ge 1$
satisfying the following conditions. 
\begin{enumerate}[i)]
\item Operation $c_j$ takes values in $\tau^jA^*$,
where $\tau^\bullet$ is the topological filtration on $A^*$.
\item Denote by $c_{tot} = \sum_{i\ge 1} c_it^i$ the total Chern class in a formal variable $t$. 
Then the Cartan formula holds universally: 
$$c_{tot}(x+y)=F_{\Kn}(c_{tot}(x),c_{tot}(y)),$$
where $x,y\in \Kn^*(X)$ for a smooth variety $X$ and the identity takes place in $A^*(X)[[t]]$;
\item If $A=A^*(\Spec k)$ is a free $\Zp$-module, then 
all operations from $\Kn^*$ to $A^*$ are uniquely expressible as series in Chern classes:
$$ [\tau^1 \Kn^*, A^*] = A[[c_1,\ldots, c_i,\ldots ]],$$
where $\Kn^*=\Zp\oplus \tau^1 \Kn^*$ as presheaves of abelian groups,
and $[\mc{F},\mc{G}]$ denotes the set of natural transformations of presheaves of sets $\mc{F}, \mc{G}$
on the category of smooth varieties.
\end{enumerate}
\end{Th-intro}

Examples of $p^n$-typical theories include Morava K-theories $\mathrm{K}(mn)^*_{int}$ for $m\ge 1$,
Chow groups $\CH^*\ot\Zp$ and the universal $p^n$-typical theory $BP\{n\}^*$
whose ring of coefficients is a polynomial algebra $\Zp[v_n, v_{2n}, \ldots ]$.
Following an analogy with K-theory $p^n$-typical theories might be called Morava-orientable
even though we do not give any geometric definition of Morava orientation.

We also provide some circumstantial evidence that $p^n$-typical theories
are precisely the Morava-oriented theories: if the right notion of Morava-orientability exists,
then the intersection of Morava-orientable theories with oriented theories 
is exactly the set of $p^n$-typical theories (see Appendix \ref{app:non-exist-op}).
Nevertheless, questions of the existence of Morava-orientable but non-orientable theories
and of finding a correct and geometric definition of Morava-oriented theories remain open.

\subsubsection*{Uniqueness of Morava K-theories}

We should also mention that for each prime number $p$ 
and each number $n\in \NN$ the definition of Morava K-theory above yields infinitely many
oriented theories $\Kn^*$. It is not true that all of them are multiplicatively isomorphic 
(Appendix \ref{app:mor_not_mult}),
however we can prove the following.

\begin{Th-intro}[Theorem \ref{th:morava_unique}]
Let $\Kn^*$, $\oKn^*$ be two $n$-th Morava K-theories over $\Zp$.

Then there exist an isomorphism of presheaves of abelian groups 
$\Kn^*\xrarr{\sim} \overline{K}(n)^*$.
\end{Th-intro}

The uniqueness of Morava K-theories is not an issue in topology,
where there exist a unique spectrum with $\F{p}$-coefficients
representing the $n$-th topological Morava K-theory. The arguments used there, however, can not be applied
in the algebraic situation, and, moreover, we do not know whether
any kind of uniqueness statement for spectra representing Morava K-theories with $\Zp$-coefficients is known 
even in topology.
It may be possible, however, to prove a uniqueness result of this kind in the topological setting 
using the results or the methods of the preprint \cite{Luecke-Peterson}.

\subsubsection*{Gamma filtration on Morava K-theories}

The main application of the existence of Chern classes from $\Kn^*$ in this paper
is the construction of the gamma filtration on $\Kn^*$.

\begin{Def-intro}
Let $c_i\colon\Kn^*\rarr \Kn^*$ be Chern classes constructed in the theorem above.
Define the gamma filtration on $\Kn^*(X)$ for a smooth variety $X$ by the following formula
$$\gamma^m \Kn^*(X):= < c_{i_1}(\alpha_1)\cdots c_{i_k}(\alpha_k)| \sum_j i_j\ge m,\alpha_j \in \Kn^*(X)>, \quad m\ge 1,$$
where the brackets denote the generation as $\Zp$-module.
\end{Def-intro}

The gamma filtration satisfies
properties similar to those of the classical gamma filtration on $\KK$
which we summarize below. 
The main difference is that Chern classes from corresponding graded factors on $\Kn^*$
map surjectively to Chow groups $\CH^*\ot\Zp$
of codimension up to $p^n$ (in contrast to up to $p$ in the case of $p$-local K-theory).

\begin{Th-intro}[Prop. \ref{prop:morava_gamma_properties}]
Denote by $c_i^{\CH}$ the Chern classes from $\Kn^*$ to $\CH^*\ot\Zp$.

The gamma filtration $\gamma$ and the topological filtration $\tau$ on $\Kn^*$ satisfy the following properties:
\begin{enumerate}[i)]
\item $\gamma^i\Kn^*\subset \tau^i\Kn^*$;
\item $\gamma^\bullet \ot \QQ = \tau^\bullet \ot\QQ$;
\item $c_i^{\CH}|_{\tau^{i+1}\Kn^*}=0$;
\item the operation $c^{\CH}_i$ is additive when restricted to $\tau^i \Kn^*$,
and the following maps are isomorphisms of abelian groups:
$$ gr^i_\gamma \Kn^*\ot\QQ \rarr gr^i_\tau \Kn^*\ot\QQ \xrarr{c_i^{\CH}} \CH^i\ot\QQ;$$
\item $c_i^{\CH}\colon gr^i_\tau \Kn^*\rarr \CH^i\ot\Zp$ is an isomorphism for $i\colon 1\le i \le p^n$;
\item\label{item:intro-chern-surj} $c_i^{CH}\colon gr^i_\gamma \Kn^*\rarr \CH^i\ot\Zp$ is surjective for $i\colon 1\le i \le p^n$.
\end{enumerate}
\end{Th-intro}

The property \ref{item:intro-chern-surj}) is a new tool for obtaining estimates 
on the $p$-torsion in Chow groups of codimension up to $p^n$. Even though the problem of calculating
$gr_\gamma^\bullet \Kn^*(X)$ is not an easy one, we describe now a situation when it can be solved. 
If $X$ is a geometrically cellular variety,
such that the map $\Kn^*(X)\rarr \Kn^*(\overline{X})$ is an isomorphism (where $\overline{X}$ is 
the base change of $X$ to a base field where it becomes cellular, e.g.\ to the separable closure
of the base field),
then the graded pieces of the gamma filtration can be calculated over 
the algebraic closure and thus depend only on the `combinatorial' cellular
 structure of the variety $\bar{X}$.
First example of such variety was found by Voevodsky and it is a Pfister quadric
of dimension $2^{n+2}-2$ \cite{Voe}.
In a joint paper with Semenov \cite{SechSem} we
show that many other quadrics satisfy this assumption 
and the results of this paper are then crucial for obtaining
 new bounds on torsion in the groups $\CH^{\le 2^n}$ of them [Th.~8.14, loc.cit.], see Section~\ref{section:application_quadrics}.

\subsubsection*{Methods and ideas of the proof}

Having summarized the results, let us briefly describe the methods and the tools of the paper.
Note that algebraic cobordism admits a geometric description \cite{LevPand}:
the generators of the abelian group $\Omega^k(X)$, $k\in \ZZ$, for a smooth variety $X$
are classes of isomorphisms of projective morphisms $f\colon Y\rarr X$
of codimension $k$ with $Y$ being a smooth variety, and the relations are generated
by an analogue of the classical cobordance relation and, also, by so-called double-point relations.
Therefore the same classes generate all free theories (as a module over the ring of coefficients),
however, it is unclear whether one could write explicit geometric relations between these generators $f\colon Y\rarr X$ for any free theory apart from $\CH^*$ and $\KK$.
 Despite this lack of geometric description there exist tools to calculate operations between free 
theories in an efficient way.
Namely, the problem of constructing operations between free theories was reduced to a purely algebraic
question by Vishik in \cite{Vish1, Vish2}. More precisely, Vishik's result says that
an operation $\phi$ from $A^*$ to $B^*$ can be uniquely reconstructed
by the data of action of $\phi$ on products of projective spaces
which commutes with pullbacks along several types of morphisms between them.
For additive operations this produces a linear system of equations
which depend only on formal group laws of theories involved. For general operations
the system of equations can be considered as non-linear but still depends only on the formal group laws over $A$ and $B$.

The case of classification of operations from $\Kn^*$ to $\CH^*\ot\Zp$ which we treated in \cite{Sech}
turns out to be the simplest case, mainly because the topological filtration is split
on Chow groups by the graded components. The general case can be reduced to it by the following 
construction.

First, we prove that an operation $\phi\colon A^*\rarr B^*$ takes values in the $i$-th part of the topological
 filtration $\tau^i B^*$
if and only if it does so on products of projective spaces (Prop.~\ref{prop:operations_top_COT}).
Second, we construct an injective map of $B$-modules (Section~\ref{sec:tr_constr}):

$$ tr_i\colon [A^*,\tau^i B^*]/[A^*, \tau^{i+1}B^*]
 \hookrightarrow [A^*,\CH^i\ot B], \quad i\ge 0, $$

where %$[C^*,D^*]$ denotes the set of all natural transformation from $C^*$ to $D^*$ as presheaves of sets,
$[A^*,\tau^j B^*]$ has a natural structure of an abelian group since $\tau^j B^*$ 
is a presheaf of abelian groups,
and we call $tr_i$ the truncation map.
For an operation $\phi\colon A^*\rarr \tau^i B^*$ the operation $tr_i \phi\colon A^*\rarr \CH^i\ot B$
can be characterized by the equation:  
$$\rho_B\circ tr_i\phi \equiv \phi \mod \tau^{i+1} B^*$$
where $\rho_B\colon\CH^i\ot B\rarr gr^i_\tau B^*$ is the canonical morphism of theories.
In other words, $tr_i\phi$ is the lift of the operation $\phi \mod \tau^{i+1}B^*$ to Chow groups
along the map $\rho_B$.

Finally, we show that when $A^*=\Kn^*$ and $B^*$ is a $p^n$-typical theory,
 the truncation map is an isomorphism,
 and we reduce the classification of operations 
to the already known case where $B^*$ is the $p$-local theory of Chow groups.
 We should emphasize that
these results are also based on Vishik's theorem
which, thus, constitutes the main tool of the paper.

\subsubsection*{Outline of the paper}

Section \ref{prelim} is a recap of oriented cohomology theories, Vishik's results on operations between them
and the properties of the topological filtration on free theories.

Section \ref{sec:trunc} contains the construction of the truncation map.
We classify operations between free theories which target a part
of the topological filtration (\ref{sec:op_topfilt}),
provide an algebraic description of the truncation map and 
discuss its properties (\ref{sec:tr_constr}).
We also consider a special refinement of the truncation map which we will need later (\ref{sec:trunc_mod}).
Finally, we investigate the general properties of modules of operations
when all the truncation maps are isomorphisms (\ref{sec:all_op_from_trunc}).

Section \ref{sec_op_mor} contains definitions of $p^n$-typical formal group laws (\ref{sec:pn-typ}),
Morava K-theories (\ref{sec:def_morava}) as well as their properties and the statement
of the main theorem on the classification of operations from $\Kn^*$ to $p^n$-typical theories (\ref{subsec_mainth}).

Section \ref{sec:proof_main} contains the proof of the main theorem
for which we need also to classify additive operations from Morava K-theories
to $p^n$-typical theories (\ref{sec:add_morava_BPn}).

In Section \ref{sec:morava_unique} we prove the uniqueness of $n$-th Morava K-theory as a presheaf of abelian groups.

Section \ref{sec:morava_gamma_filtration} contains the definition and properties 
of the gamma filtration on Morava K-theories (\ref{sec:morava_gamma_def_properties}),
the proof of its uniqueness (\ref{sec:gamma_unique})
and calculation of some computational constants related to it (\ref{sec:constant_chern}).

Appendix \ref{app} contains results which are complementary
to the rest of the paper. In \ref{app:non-exist-op} we show that there could not exist Chern classes
from the $n$-th Morava K-theory to the $m$-th Morava K-theory except for the cases treated in the main theorem.
Section \ref{app:mor_not_mult} contains examples of $n$-th Morava K-theories which are not multiplicatively isomorphic.
Finally, Section \ref{app:image_of_Chern_Chow} gives an inductive formula for the image of Chern classes
from $gr^i_\tau \Kn^*$ in $\CH^i\ot\Zp$.

\section*{Acknowledgements}

The author is extremely grateful to Alexander Vishik 
for the attentive guidance 
in the earlier stages of this work.

We thank the anonymous referee for a careful reading of the paper and for many helpful comments and questions, 
in particular those that led to the inclusion of Appendix~\ref{app:disclaimer}.

\section{Preliminaries}\label{prelim}

Fix a field $k$, throughout the paper it is assumed to have characteristic $0$.
All varieties over $k$ are assumed to be
quasi-projective.

In this section we recall basic definitions of oriented theories 
and describe the key input results of Vishik 
on the classification of operations.

\subsection{Oriented theories and formal group laws}\label{sec:def}

The following definition is a version of {\cite[Def. 1.1.2]{LevMor}} 
(cf. also \cite[2.0.1]{PanSmi})
with an additional axiom (LOC) and no grading on rings.

\begin{Def}[{\cite[2.1]{Vish1}}]\label{goct}
 
An {\it oriented cohomology theory} (or just an oriented theory) is
a presheaf $A^*$ of commutative rings on the category of smooth quasi-projective 
varieties over $k$ supplied with the data of push-forward maps for projective morphisms.
Namely, for each projective morphism of smooth varieties $f\colon X\rarr Y$,
morphisms of abelian groups $f_*\colon A^*(X)\rarr A^*(Y)$ are defined.

The structure of push-forwards has to satisfy the following axioms 
(for precise statements see {\sl ibid}):
 functoriality for compositions (A1), base change for transversal morphisms (A2),
the projection formula, the projective bundle theorem (PB),
 $\mathbb{A}^1$-homotopy invariance (EH)
and the localization axiom (LOC).\end{Def}

\begin{Def}\label{def:mult_iso}
Two oriented theories $A^*$, $B^*$ are called {\it multiplicatively isomorphic},
if there is an isomorphism between underlying presheaves of commutative rings,
i.e.\ $A^*(X)\cong B^*(X)$ as rings, functorially in $X$ with respect to pullback maps
(these isomorphism do not have to preserve push-forward maps).
\end{Def}

Recall that $A^*$ satisfies (LOC) (see \cite[2.1]{Vish1}, cf.\ \cite[3.2]{LevMor}),
if for any smooth $X$, a closed subset $Z$ in $X$ and its open complement $U$
the following sequence is right exact:

\begin{equation}\label{eq:loc}
\tag{LOC}
A^*(Z)\rarr A^*(X) \rarr A^*(U) \rarr 0,
\end{equation}

where $A^*(Z)$ is defined as a colimit of the groups $A^*(\widetilde{Z})$ 
over projective morphisms $\tilde{Z}\rarr Z$ with smooth $\tilde{Z}$.

\begin{Not}\label{not:A}
If $A^*$ is an oriented theory, we denote by $A$ its ring of coefficients,
i.e.\ $A=A^*(\Spec k)$.
\end{Not}

Recall that a pair of morphisms between smooth varieties $f\colon X\rarr Z$, $g\colon Y\rarr Z$
is called {\it transversal} if $\Tor^{\OO_Z}_q(f_*\OO_X,g_*\OO_Y)=0$ for all $q>0$,
and $X\times_Z Y$ is a smooth variety (\cite[Def. 1.1.1]{LevMor}).
The axiom (A2) says that if $f$ is a projective morphism,
then $g^*f_* = f'^*g'_*$ where base change morphisms are denoted according to the following diagram:

\begin{center}
\begin{tikzcd}
X \times_Z Y \arrow[d, "g'"] \arrow[r, "f'"] & Y \arrow[d, "g"] \\
X \arrow[r,"f"] & Z
\end{tikzcd}
\end{center}
Note that in the definition above we do not assume that $A^*$ is a presheaf of graded rings,
and it will often be the case that $A^*$ has either no grading (as in the case of $\KK$)
or has $\ZZ/m$-grading for some $m\in \NN$ (as in the case of Morava K-theories $\Kn^*$, $n\in \NN$).
If $A^*$ is a graded theory, then the push-forward maps shift the grading by codimension,
but even if $A^*$ is non-graded we still keep the ${}^*$-sign to distinguish the presheaf
from its ring of coefficients $A:=A^*(\Spec k)$ (see Notation~\ref{not:A}).

Each oriented theory $A^*$ can be endowed with Chern classes of vector bundles $c_i^A$
using the classical method due to Grothendieck.
The first Chern class allows to associate the formal group law 
$F_A\in A[[x,y]]$ to the theory $A^*$ (\cite[Lemma 1.1.3]{LevMor})
so that it satisfies the following equation: $c_1^A (L_1\ot L_2)=F_A(c_1^A(L_1),c_1^A(L_2))$
for every pair of line bundles $L_1, L_2$
over a smooth variety $X$.
Recall that the formal group laws (FGLs) over a ring $A$ are in 1-to-1 correspondence
with the ring morphisms from the Lazard ring $\LL$ to $A$. In particular,
the construction above yields a morphism
 of rings $\LL\rarr A^*(\Spec k)$ for all oriented theories $A^*$.

\begin{Th}[Levine-Morel, {\cite[1.2.6]{LevMor}}]\label{th:alg_cob_universal}
There exists the universal oriented theory $\Omega^*$ called algebraic cobordism,
i.e. for any oriented theory $A^*$ 
there exists a unique morphism of presheaves of rings
$p_A\colon\Omega^*\rarr A^*$ which respects the structure of push-forwards.

The associated formal group law of algebraic cobordism
 is the universal formal group law,
and $\Omega^*(\mathrm{Spec\ }k)$ 
is canonically isomorphic to the Lazard ring $\mathbb{L}$.
\end{Th}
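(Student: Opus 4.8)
The plan is to follow the construction of Levine--Morel, which I would organize around the double-point degeneration presentation of $\Omega^*$. First I would recall the definition: for a smooth quasi-projective $X$, $\Omega^*(X)$ is generated as an abelian group by cobordism cycles $[f\colon Y\to X]$ with $Y$ smooth and $f$ projective, graded by codimension, modulo the double-point relations; the pull-back along a morphism $g\colon X'\to X$ is defined by transversal base change (after a moving lemma that allows one to represent every class by a cycle transversal to $g$), and the push-forward along a projective $h\colon X\to Z$ is defined on cycles by composition $[f]\mapsto[h\circ f]$. One then checks that these operations are well defined on relations and satisfy the axioms (A1), (A2), the projection formula, (PB), (EH) and (LOC); the verification of (LOC) and of the projective bundle theorem are the technically substantial inputs, and I would quote them from \cite{LevMor} rather than reprove them.

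Given that $\Omega^*$ is an oriented theory, universality is the heart of the statement. For an arbitrary oriented theory $A^*$, I would define $p_A\colon\Omega^*\to A^*$ on cycles by
$$p_A\bigl([f\colon Y\to X]\bigr):=f_*(1_{A^*(Y)})\in A^*(X),$$
using the push-forward structure with which $A^*$ is equipped. The key steps are then: (1) this is well defined, i.e.\ it kills the double-point relations — this follows from the double-point formula, which in turn is a consequence of the projection formula together with the deformation-to-the-normal-cone computation carried out inside $A^*$; (2) $p_A$ commutes with pull-backs — for $g$ transversal to the representing cycle this is axiom (A2), and for general $g$ one uses the moving lemma in $\Omega^*$ to reduce to the transversal case; (3) $p_A$ is a ring homomorphism — one reduces the product of two cycles to a transversal intersection and applies the projection formula; (4) $p_A$ commutes with push-forwards — immediate from functoriality (A1) on the level of cycles; (5) uniqueness — any morphism of theories $\Omega^*\to A^*$ respecting push-forwards must send $[f\colon Y\to X]=f_*(1)$ to $f_*(1)$ in $A^*$, since it sends $1\in\Omega^*(Y)$ to $1\in A^*(Y)$, so it coincides with $p_A$ on generators, hence everywhere.

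Finally, for the coefficient ring I would compute $\Omega^*(\Spec k)$: the construction gives a ring map $\LL\to\Omega^*(\Spec k)$ classifying $F_\Omega$, and conversely universality applied to the oriented theory built from the universal FGL over $\LL$ (a free theory in the sense of the introduction) produces a map $\Omega^*(\Spec k)\to\LL$; one checks these are mutually inverse, which is exactly the content of Levine--Morel's theorem identifying $\Omega^*(\Spec k)\cong\LL$. I expect the main obstacle to be step (1), well-definedness of $p_A$ on the double-point relations: this requires the deformation-to-the-normal-cone argument and the double-point formula to be available inside an abstract oriented theory satisfying only the listed axioms, and it is here that the axioms (PB), (EH) and the projection formula are genuinely used; everything else is formal manipulation with the push-forward/pull-back calculus.
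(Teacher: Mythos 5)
This theorem is not proved in the paper at all: it is background quoted verbatim from Levine--Morel (\cite[Th.~1.2.6, 1.2.7]{LevMor}), so there is no internal argument to compare yours against; the only meaningful comparison is with Levine--Morel's own proof, and against that your plan has a genuine gap in its final step. Your deduction of $\Omega^*(\Spec k)\cong\LL$ is circular: you propose to apply universality to ``the oriented theory built from the universal FGL over $\LL$ (a free theory in the sense of the introduction)'', but free theories are by definition $\Omega^*\otimes_\LL R$, i.e.\ they presuppose the identification $\Omega^*(\Spec k)\cong\LL$ that you are trying to prove. To produce a map $\Omega^*(\Spec k)\to\LL$ by universality you would need an oriented theory with coefficient ring $\LL$ and the universal formal group law constructed \emph{independently} of $\Omega^*$, and no such algebraic construction is available at that stage. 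In Levine--Morel the statement $\LL\cong\Omega^*(\Spec k)$ (their algebraic analogue of Quillen's theorem) is a separate, hard theorem: surjectivity of $\LL\to\Omega^*(\Spec k)$ uses resolution of singularities and the generalized degree formula, and injectivity uses, in characteristic zero, comparison with complex cobordism via topological realization. It is not a formal consequence of universality.

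Two further points on the construction itself. First, the double-point presentation you start from is Levine--Pandharipande's, not Levine--Morel's; with that presentation the verification that one gets an oriented theory (in particular (PB) and (LOC)) and the well-definedness of $p_A$ on the double-point relations -- i.e.\ the validity of the double-point formula in an arbitrary oriented theory, which needs the formal group law and the blow-up/deformation calculus -- are precisely the substance of the theorem, and your plan defers all of this to citation; in the literature the double-point model acquires its pullbacks and its universality only through the comparison isomorphism with Levine--Morel's $\Omega^*$. Second, defining pullbacks ``by transversal base change after a moving lemma'' is not something that exists in this naive form: there is no moving lemma letting one represent an arbitrary cobordism class by a cycle transversal to a given morphism, and Levine--Morel construct l.c.i.\ pullbacks by an entirely different and lengthy route (Borel--Moore functors, deformation to the normal cone). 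Your uniqueness argument (any pushforward-respecting morphism must send $[f\colon Y\to X]=f_*(1)$ to $f_*(1_{A^*(Y)})$) is fine and is indeed how uniqueness is proved, but as it stands the proposal establishes neither the existence half in full nor the coefficient-ring computation.
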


Once $\Omega^*$ is known to exist, 
one can construct an oriented theory with a prescribed formal group law in the following way.

\begin{Def}[Levine-Morel,{ \cite[Rem. 2.4.14]{LevMor}}]\label{def:free_theory}
Let $R$ be a ring, let $\mathbb{L}\rightarrow R$ be a ring morphism
corresponding to a formal group law $F_R$ over $R$.

Then $\Omega^*\otimes_\mathbb{L} R$ is an oriented theory which is called a {\bf free theory}
(the morphism $\mathbb{L}\rightarrow R$ and the identification $\Omega^*(\mathrm{Spec\ }k)\cong \LL$
are hidden in the notation).
The ring of coefficients of $\Omega^*\otimes_\mathbb{L} R$ is $R$, and its associated FGL is $F_R$.
\end{Def}

Chow groups $\CH^*$ and K-theory $\KK$ are the most well-known examples of free theories (see {\sl op.cit.})
which also have alternative definitions via generators and relations.

Note that there is no condition imposed on the formal group law in this definition,
in particular, no Landweber-exactness. Although it looks wrong from the perspective of topology,
this definition makes sense in algebraic geometry and can be compared to the topological situations in many instances.
In particular, for the topological Morava K-theory $\Kntop$
there exists a motivic spectrum $\mathcal{K}(n)$ in the stable motivic category $\SH(k)$,
which is constructed from the algebraic cobordism specrum $\MGL$ by a similar procedure as $\Kntop$ is constructed from $\MU$.
Moreover, for a smooth variety $X$ over $k$ there is an isomorphism
$$ \Knspec^{2*,*}(X) \cong \Omega^*(X) \otimes_{\LL} \Knspec^{2*',*'}(k).$$
We refer the reader to Appendix~\ref{app:disclaimer} for the explanation of this phenomenon in algebraic geometry,
where we give an overview of the relevant references without claiming any originality.

\subsection{Operations and poly-operations}\label{sec:operations}

\begin{Def}\label{operation}
Let $A^*, B^*$ be presheaves of sets (resp.\ of abelian groups)
 on the category of smooth varieties over a field.

An (additive) {\it operation} $\phi\colon A^*\rarr B^*$ is a morphism of presheaves of sets (resp.\ of abelian groups). 
The set of all (additive) operations from $A^*$ to $B^*$ is denoted by $[A^*, B^*]$ (resp.\  $[A^*,B^*]^{add}$). 

If $B^*$ is a presheaf of commutative rings, the set $[A^*, B^*]$ 
 has the natural commutative ring structure
given by the multiplication and addition on the target theory:
for $\phi_1,\phi_2\colon A^*\rarr B^*$ we have 
$$ (\phi_1\cdot \phi_2)(x) = \phi_1(x)\cdot \phi_2(x), \quad (\phi_1+\phi_2)(x)=\phi_1(x)+\phi_2(x),$$
where $x\in A^*(X)$ for some smooth quasi-projective $X$ over $k$.
\end{Def}

As we will see (e.g.\ in Section~\ref{subsec_poly}) non-additive operations 
naturally give rise to poly-operations. 

\begin{Def}[{\cite[Def. 4.2]{Vish2}}]
Let $A^*, B^*$ be presheaves of sets (or abelian groups, or rings)
 on the category of smooth varieties over a field $k$.
 
An {\it external $r$-ary poly-operation} from $A^*$ to $B^*$ 
is a morphism of presheaves of sets on the $r$-product category
of smooth varieties over a field $k$
from $(A^*)^{\times r}$ to $B^*\circ \prod^r$.

Explicitly, for smooth varieties $X_1, \ldots, X_r$
an external poly-operation yields a map of sets 
$$ A^*(X_1)\times A^*(X_2) \times \ldots \times A^*(X_r) 
\rightarrow B^*(X_1 \times X_2 \times \ldots \times X_r)$$
in a functorial way.

An {\it internal $r$-ary poly-operation} from $A^*$ to $B^*$ 
is a morphism of presheaves of sets on the category
of smooth varieties over a field $k$
from $(A^*)^{\times r}$ to $B^*$.

Explicitly, for a smooth variety $X$
an internal poly-operation yields a map of sets 
$$ A^*(X)\times A^*(X) \times \ldots \times A^*(X) 
\rightarrow B^*(X)$$
in a functorial way.
\end{Def}

It is not hard to see that there is a 1-to-1 correspondence
 between these two notions (\cite[Section~4]{Vish2}).
The set of all (internal or external) $r$-ary poly-operations 
is denoted by $[(A^*)^{\times r}, B^*\circ \prod^r]$.

\subsection{Derivatives and products of poly-operations}\label{subsec_poly}

There are two straight-forward ways to produce some poly-operations from operations,
or in other words to increase the {\sl arity} of operations.

First, if $\phi_1, \phi_2$ are external $r_1$-ary and $r_2$-ary poly-operations, respectively,
from a presheaf of sets to a presheaf of rings,
then we can define an $(r_1+r_2)$-ary poly-operation $\phi_1\odot \phi_2$ 
as their external product: 
$$(\phi_1\odot \phi_2) (x_1,x_2,\ldots,x_{r_1},y_1,y_2,\ldots,y_{r_2}) = \pi_1^*\phi_1(x_1,x_2,\ldots,x_{r_1})
\cdot\pi_2^*\phi_2(y_1,y_2,\ldots,y_{r_2}),$$

where $x_i\in A^*(X_i)$ for $i\colon 1\le i\le r_1$, $y_i\in A^*(Y_i)$ for $j\colon 1\le j\le r_2$,
and \mbox{$\pi_1\colon \prod_i X_i \times \prod_j Y_j \rarr \prod_i X_i$},
\mbox{$\pi_2\colon \prod_i X_i \times \prod_j Y_j \rarr \prod_j Y_j$}
are projections.
Similarly, one can define products of internal poly-operations
which we also denote as $\phi_1 \odot \phi_2$.

If $B^*$ is a presheaf of (commutative) rings, this construction defines a morphism of (commutative) $B$-algebras
$$ [(A^*)^{\times r_1},B^*\circ \prod^{r_1}]\ot_B [(A^*)^{\times r_2},B^*\circ\prod^{r_2}] \xrarr{\odot} 
[(A^*)^{\times (r_1+r_2)}, B^*\circ \prod^{r_1+r_2}],$$
where $B=B^*(\Spec k)$ (Notation~\ref{not:A}).

In some cases this morphism of algebras turns out to be an isomorphism,
which may be seen, perhaps, as some kind of Künneth formula.
When this property is satisfied for all $r_1, r_2$ for $A^*$ and $B^*$
(see e.g.\ Th. \ref{basis}),
we will write $[(A^*)^{\times r}, B^*\circ \prod^r] = [A^*,B^*]^{\odot r}$.

Second, if $\phi$ is an external $r$-ary poly-operation,
then one can define an $(r+1)$-ary external poly-operation $\eth^1_i \phi$ 
as its derivative with respect to the $i$-th component (\cite[Def. 3.1, end of Section~4]{Vish2}).
Denote by $Z_{<i}=(z_1,\ldots,z_{i-1})$, $Z_{>i}=(z_{i+1}, \ldots, z_r)$, then 
$$ \eth^1_i \phi(Z_{<i},x,y,Z_{>i}) := 
\phi(Z_{<i},\pi_1^*(x)+\pi_2^*(y),Z_{>i})
-\phi(Z_{<i},\pi_1^*(x),Z_{>i})-\phi(Z_{<i},\pi_2^*(y),Z_{>i}),$$
where $z_j \in A^*(X_j)$ for  $j\colon 1\le j\le r$, $j\neq i$,
and $x\in A^*(X)$, $y\in A^*(Y)$, $\pi_{1}$, $\pi_{2}$ are projections from $X\times Y$ to $X$ and $Y$ respectively.
Similarly, one can define internal derivatives $\pd_i \phi$ of internal poly-operations.

If $r=1$, i.e. $\phi$ is an operation, we will omit the subscript
and write $\pd \phi$ and $\eth \phi$ to mean its derivatives.
Iterating the procedure one can easily define
$\eth^s_{(r_1,\ldots,r_s)}=\eth^1_{r_s}\circ\eth^1_{r_{s-1}}\circ \cdots \circ \eth^1_{r_1}$.
However, it is easy to see that all $s$-derivatives of an operation are symmetric
and thus derivatives do not depend on the order of derivation.
We will write $\eth^s \phi$ to denote any of them. 
Similarly, we define $\pd^s \phi$ to be the $s$-th internal derivative of $\phi$.

By definition of the derivative of $\phi$ one can express
values of $\phi$ on the sum of {\it two} elements
as the sum of values of $\phi$ and $\partial^1\phi$.
It is useful for computations to have analogous formulas
for the values on the sum of any number of elements.
We state this result for derivatives of maps between
abelian groups (\cite[Def. 3.1]{Vish2}), and it is clear 
that it can  be applied for both internal and external derivatives.

\begin{Prop}[Discrete Taylor Expansion, {\cite[Prop. 3.2]{Vish2}}]\label{prop:taylor}
Let $f\colon A\rarr B$ be a map between abelian groups.
Denote by $\partial^if\colon A^{\times i}\rarr B$ its derivatives.

For any set $\{a_i\}_{i\in I}$ of elements in $A$ the following equality holds:

$$ f(\sum_{i\in I} a_i) = \sum_{\emptyset\neq J\subset I} \partial^{|J|-1}f(a_j|j\in J). $$
\end{Prop}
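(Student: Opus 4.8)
The plan is to prove the formula by induction on the cardinality of the index set $I$, the case of infinite $I$ being immediate once one observes that both sides only involve the finitely many indices with $a_i\neq 0$; throughout I read the right-hand side as a sum over \emph{nonempty} finite subsets $J\subset I$, with the convention $\partial^0 f = f$ (for $J=\emptyset$ the exponent $|J|-1$ is not defined, so that term is simply absent). The base case $|I|=1$ is a tautology: $f(a_{i_0})=\partial^0 f(a_{i_0})$.

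For the inductive step I would fix $i_0\in I$, set $I'=I\setminus\{i_0\}$ and $b=\sum_{i\in I'}a_i$. By the defining relation $\partial^1 f(x,y)=f(x+y)-f(x)-f(y)$ one has
$$ f\Big(\sum_{i\in I}a_i\Big)=f(a_{i_0}+b)=f(a_{i_0})+f(b)+\partial^1 f(a_{i_0},b), $$
and I would then account for the three summands separately. The term $f(a_{i_0})$ is the contribution of $J=\{i_0\}$. Applying the inductive hypothesis to $f$ and the index set $I'$ rewrites $f(b)$ as $\sum_{\emptyset\neq J\subset I'}\partial^{|J|-1}f(a_j\,|\,j\in J)$, i.e. the total contribution of all nonempty $J\subset I$ not containing $i_0$.

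It remains to match $\partial^1 f(a_{i_0},b)$ with the sum over $J\ni i_0$ with $|J|\geq 2$. For this I introduce the auxiliary map $g\colon A\rarr B$, $g(x):=\partial^1 f(a_{i_0},x)$, and record the key "slot–shift" identity $\partial^s g(x_1,\ldots,x_s)=\partial^{s+1}f(a_{i_0},x_1,\ldots,x_s)$ for all $s\geq 0$, which follows from the definition of $\partial^1$ by a one–line induction on $s$. Applying the inductive hypothesis to $g$ and $I'$ (legitimate since $|I'|<|I|$) gives
$$ \partial^1 f(a_{i_0},b)=g\Big(\sum_{i\in I'}a_i\Big)=\sum_{\emptyset\neq J\subset I'}\partial^{|J|-1}g(a_j\,|\,j\in J)=\sum_{\emptyset\neq J\subset I'}\partial^{|J|}f\big(a_{i_0},a_j\,|\,j\in J\big), $$
and by the symmetry of derivatives (noted just before the statement) each summand equals $\partial^{|J\cup\{i_0\}|-1}f(a_j\,|\,j\in J\cup\{i_0\})$. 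As $J$ runs over the nonempty subsets of $I'$, the sets $J\cup\{i_0\}$ run precisely over the subsets of $I$ containing $i_0$ of size at least $2$. Collecting the three pieces produces exactly $\sum_{\emptyset\neq J\subset I}\partial^{|J|-1}f(a_j\,|\,j\in J)$, closing the induction.

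Since this is a direct induction, I do not expect a genuine obstacle: the only points requiring care are the combinatorial bookkeeping of which subsets $J$ arise from each of the three summands, and the verification of the slot–shift identity $\partial^s g=\partial^{s+1}f(a_{i_0},\,\cdot\,)$, which itself uses nothing beyond the definition of the derivative together with its symmetry.
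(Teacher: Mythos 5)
Your argument is correct, and there is in fact nothing in the paper to compare it against: Proposition \ref{prop:taylor} is quoted from Vishik (\cite[Prop. 3.2]{Vish2}) without proof, so your induction serves as a complete, self-contained verification of the statement as it is used here. The three ingredients you isolate are exactly what is needed: the defining relation $f(x+y)=f(x)+f(y)+\partial^1 f(x,y)$, the symmetry of higher derivatives, and the slot-shift identity $\partial^s g(x_1,\ldots,x_{s+1})=\partial^{s+1}f(a_{i_0},x_1,\ldots,x_{s+1})$ for $g(x)=\partial^1 f(a_{i_0},x)$, which indeed follows from the definitions because freezing one argument commutes with differentiating in the remaining slots; and your induction is correctly quantified over all maps $f$, which is what legitimizes applying the hypothesis to the auxiliary map $g$. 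Your reading of the right-hand side (nonempty $J$, with $\partial^0 f=f$) is the only one under which the identity holds for arbitrary $f$, and it agrees with how the formula is applied in the paper (proofs of Prop.~\ref{prop:operations_top_COT} and Lemma~\ref{lm:riemann_roch_snc}, where the index set is finite). An equivalent, non-inductive route would be to first establish the closed formula $\partial^{s}f(x_1,\ldots,x_{s+1})=\sum_{\emptyset\neq T\subseteq\{1,\ldots,s+1\}}(-1)^{s+1-|T|}f\bigl(\sum_{t\in T}x_t\bigr)$ and then deduce the expansion by interchanging the two summations, using that $\sum_{T\subseteq J\subseteq I}(-1)^{|J|-|T|}$ vanishes unless $T=I$; your induction avoids this bookkeeping. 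One small caveat: your dismissal of infinite $I$ is slightly too quick, since a term $\partial^{|J|-1}f(a_j\,|\,j\in J)$ with some $a_j=0$ need not vanish (for instance $\partial^1 f(x,0)=-f(0)$), so the right-hand side reduces to the indices with $a_i\neq 0$ only when $f(0)=0$; this is harmless, because the left-hand side is only defined when almost all $a_i$ vanish, and in the paper the formula is only ever invoked for finite index sets, where your proof applies verbatim and needs no hypothesis on $f(0)$.
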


\subsection{Vishik's classification of operations from theories of rational type}\label{sec:vishik}

Theories of rational type were introduced by Vishik in \cite{Vish1}
and are those oriented theories which satisfy an additional axiom (CONST)
and whose values on varieties can be reconstructed by induction on the dimension.

\begin{Def}[{\cite[Def 4.4.1]{LevMor}}]\label{def:const_axiom}
An oriented theory $A^*$ satisfies the {\it axiom (CONST)} 
if for every smooth irreducible variety $X$
the value of the theory in its generic point 
$$A^*(k(X)):=\colim_{U\subset X} A^*(U),$$
is canonically isomorphic to the ring of coefficients of the theory $A:=A^*(Spec (k))$ (Notation~\ref{not:A}).
The colimit in the above formula is taken over all non-empty open subsets of $X$.

In particular, this allows to split $A^*$ as presheaf of abelian groups into two summands:
$A^*=\tilde{A}^*\oplus \underline{A}$, 
where $\underline{A}$ is a constant presheaf\footnote{The value of this presheaf on a variety $X$ is $A^{\pi_0 X}$ 
where $\pi_0 X$ is the set of generic points of $X$.}
  and $\tilde{A}^*$
 is an ideal subpresheaf of elements which are trivial in generic points.
\end{Def}

Algebraic cobordism satisfies this axiom (\cite[Cor. 4.4.3]{LevMor}),
and therefore every free theory does as well.
As we have already mentioned theories of rational type
satisfy other properties except for (CONST),
 which are, however, very technical.
Fortunately, the following theorem allows us to skip the definition of theories of rational type.

\begin{Th}[Vishik, {\cite[Prop. 4.9]{Vish1}}]
Theories of rational type are precisely free theories.
\end{Th}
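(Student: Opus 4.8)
The statement to prove is: Theories of rational type (those oriented theories satisfying (CONST) plus the technical axioms in Vishik's definition) are precisely free theories. This is a two-way inclusion. One direction — every free theory is of rational type — should follow from verifying the axioms for algebraic cobordism $\Omega^*$ and noting they are preserved under base change $\Omega^* \otimes_\LL R$. The more substantial direction is that every theory of rational type arises as $\Omega^* \otimes_\LL R$ for $R$ its coefficient ring, i.e. the canonical map $\Omega^*(X) \otimes_\LL A \to A^*(X)$ induced by the Levine–Morel universality morphism $p_A$ (Theorem~\ref{th:alg_cob_universal}) is an isomorphism of presheaves of rings.

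**Plan for the easy direction.** First I would recall that $\Omega^*$ satisfies (CONST) by \cite[Cor. 4.4.3]{LevMor}, and that the remaining defining axioms of a theory of rational type — roughly, that the theory is generated by classes of projective morphisms and that its values are computed by a specified inductive procedure over the dimension of the variety using the localization sequence and the projective bundle / homotopy axioms — hold for $\Omega^*$ essentially by its construction in \cite{LevMor} (the constructor $\Omega_*$ is built precisely by such an induction, and Vishik's axioms are an axiomatization of that). Then I would observe that each of these properties is stable under the base change $- \otimes_\LL R$: tensoring is right exact so it preserves the localization-type surjectivity statements, (CONST) is preserved because $\tilde\Omega^*$ splits off and tensoring respects the splitting, and (EH), (PB) for $\Omega^* \otimes_\LL R$ follow from those for $\Omega^*$ since the relevant modules are free. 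Hence every free theory is of rational type.

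**Plan for the hard direction.** Given $A^*$ of rational type with coefficient ring $A = A^*(\Spec k)$, the formal group law $F_A$ yields $\LL \to A$ and hence a morphism of free theories, and by universality (Theorem~\ref{th:alg_cob_universal}) the morphism $p_A \colon \Omega^* \to A^*$ factors through a natural transformation $\theta_X \colon (\Omega^* \otimes_\LL A)(X) \to A^*(X)$ respecting pushforwards, pullbacks and the ring structure. The goal is to show $\theta_X$ is an isomorphism for all smooth $X$, which I would do by induction on $\dim X$. The inductive step is where the definition of "rational type" does the work: both $\Omega^* \otimes_\LL A$ and $A^*$ satisfy the same inductive recipe for reconstructing $A^*(X)$ from the values on lower-dimensional (or simpler, e.g. cellular / projective-bundle) pieces via the localization axiom (LOC), $\mathbb{A}^1$-invariance (EH), and the projective bundle theorem (PB). One shows that $\theta$ is compatible with every step of this recipe — it commutes with the boundary maps in the localization sequences, with the projective bundle isomorphisms, and with the generators coming from projective morphisms $f\colon Y \to X$ (both sides send the cobordism class $[f]$ to $f_*(1)$, because $\theta$ respects pushforwards and $\theta_{\Spec k} = \mathrm{id}_A$) — and hence by the five lemma / induction $\theta_X$ is bijective. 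The surjectivity onto generators is immediate from $\theta$ respecting pushforwards; injectivity is the content that forces $A^*$ to satisfy *all* the relations holding in $\Omega^* \otimes_\LL A$, which is exactly guaranteed by the technical axioms packaged into "rational type".

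**Main obstacle.** The genuine difficulty — and the reason this is cited as \cite[Prop. 4.9]{Vish1} rather than proved here — is that unwinding the definition of "theory of rational type" is unavoidable for the hard direction: one must actually use the precise form of Vishik's inductive reconstruction axioms (the (CONST) splitting, the behaviour along smooth divisors and their complements, and the identification of the theory on a variety with an explicit colimit over its presentations) to run the induction and verify $\theta$ respects each step. All the "moving parts" of the argument live inside that definition, which the excerpt (and this proof proposal) deliberately suppresses; the cobordism-theory input (universality, base change, $\Omega^*$ of rational type) is comparatively formal. I would therefore present the cobordism-side verifications in detail and, for the reconstruction step, refer to \cite[Sect. 4]{Vish1} for the axioms and carry out the compatibility check of $\theta$ with those axioms, which is the real heart of the proof.
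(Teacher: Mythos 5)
The paper does not prove this statement at all: it is imported verbatim from Vishik (\cite[Prop. 4.9]{Vish1}), and the surrounding text says explicitly that the theorem is invoked precisely so that the definition of a theory of rational type can be skipped. So there is no internal proof to compare your attempt against; the only honest benchmark is Vishik's own argument, which is exactly what your proposal ends up deferring to.

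Judged on its own terms, your text is a road map rather than a proof, and the gap is concrete: every step that carries weight is outsourced to the suppressed definition. For the direction ``free theories are of rational type'' you assert that Vishik's axioms hold for $\Omega^*$ ``essentially by its construction''; this is not accurate, since Levine--Morel build $\Omega^*$ by global generators and relations, not by a dimension-inductive recipe, and verifying the rational-type presentation for $\Omega^*\otimes_\LL R$ genuinely uses the localization sequence, the generalized degree formula and further structural results. For the converse, the induction you sketch cannot be closed by a five-lemma argument as stated: the localization sequences available here are only right exact, so surjectivity of $\theta_X\colon(\Omega^*\otimes_\LL A)(X)\to A^*(X)$ is indeed cheap (pushforward generators), but injectivity --- which you correctly identify as the whole content --- is exactly what the unstated axioms must deliver, and no mechanism for it is given. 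In short, the sentence ``carry out the compatibility check of $\theta$ with those axioms'' is the entire theorem; since it is not carried out, the proposal is not a proof but a citation with commentary, which is what the paper already does.
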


Throughout the paper we will call theories of rational type as `free theories'.

The main tool that we use in this paper
is Vishik's theorem classifying operations
from free theories to oriented theories.

\begin{Th}[{Vishik, \cite[Th. 5.1]{Vish1}, \cite[Th. 5.1]{Vish2}}]\label{th:Vish_op}

Let $A^*$ be a free theory and let $B^*$ be an oriented theory.

Then the set of operations from $A^*$ to $B^*$ preserving zero
is in 1-to-1 correspondence with the data of
pointed maps of sets (where the values of theories are pointed by zero) 
\mbox{$A^*((\mathbb{P}^\infty)^{\times l})\rightarrow B^*((\mathbb{P}^\infty)^{\times l})$}
 for $l\ge 0$
which commute with the pull-backs for:
\begin{enumerate}
\item the permutation action of symmetric groups $\Sigma_{l}$;
\item the partial diagonals;
\item the partial Segre embeddings;
\item the partial point embeddings;
\item the partial projections.
\end{enumerate}
\end{Th}

See also \cite[Th. 5.2]{Vish2} for the classification of external poly-operations.

\begin{Rk}\label{rem:grad}
If the target theory is graded, 
then the theorem allows one to compute poly-operations to each of the components of the target.

To see this note that grading on $B^*$ yields (additive) projectors $p_n\colon B^*\rarr B^n$
and an operation to a component $B^n$ is just an operation
which is zero when composed with $p_m$, $m\neq n$.
 As follows from the theorem, this property may be checked on products of projective spaces.
\end{Rk}

\begin{Rk}
A similar result was known in topology due to Kashiwabara (see \cite[Th. 4.2]{Kash}
where it is formulated for spectra representing cohomology theories).
Note that an oriented cohomology theory in the sense of Definition~\ref{goct}
is not representable by a motivic spectrum -- in the best case it is a part of a representable theory,
see Appendix~\ref{app:disclaimer}. This might explain 
the difference between Vishik's and Kashiwabara's theorems, since the latter demands 
certain additional conditions for spectra to be satisfied.
We do not know whether these conditions are fulfilled in our main cases of interest.
\end{Rk}

\subsection{Chern classes as free generators of operations from $\KK$}\label{sec:chern_op_K0}

The following is an application of Vishik's classification of operations 
that provides motivation for our results.
It shows that (classical) Chern classes freely generate all operations from $\tilde{K}_0$ 
to any oriented theory $A^*$.
The main result of this paper will be concerned with replacing $\KK$
in this statement by the $n$-th Morava K-theory $\Kn^*$
by defining new `Chern classes',
albeit restricting the class of oriented theories $A^*$ to a class of so-called $p^n$-typical oriented theories.

\begin{Th}[Vishik, for the proof see {\cite[Th. 2.1]{Sech}}]\label{basis}
Let $A^*$ be an oriented theory.

Then the ring of $r$-ary poly-operations from a presheaf $\tilde{\KK}$ to $A^*$ 
is freely generated over $A$ by products of Chern classes.
\end{Th}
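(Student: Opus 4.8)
The plan is to use Vishik's classification (Theorem \ref{th:Vish_op}) to reduce everything to the computation of poly-operations on products of projective spaces, and there to use the splitting principle together with the structure of $K_0$ of $\mathbb{P}^\infty$. First I would recall that $\tilde{K}_0((\mathbb{P}^\infty)^{\times l})$ can be described explicitly: writing $h_s = 1 - [\mathcal{O}(-1)]$ on the $s$-th factor for $1 \le s \le l$, one has that $K_0((\mathbb{P}^\infty)^{\times l})$ is the completion of the polynomial ring over $K_0(\mathrm{pt})$ in the classes $h_1, \dots, h_l$, and a similar (completed, multi-indexed) description for products of these. The Chern classes $c_i$ of $K_0$ are the operations whose restrictions to these products send a formal sum $\sum_s h_s$ to the corresponding elementary-symmetric-type expressions; concretely, $c_i([\mathcal{O}(-1)] - 1) = (-1)^i$ on $\mathbb{P}^\infty$ (up to choice of normalization), and the total Chern class is multiplicative with respect to the additive formal group law of $K_0$ — i.e. $c_{tot}(x + y) = c_{tot}(x)\, c_{tot}(y)$, which follows from the Whitney formula.

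Next I would set up the main bijection. By Theorem \ref{th:Vish_op}, an $r$-ary external poly-operation $\tilde{K}_0 \rarr A^*$ is the same datum as a compatible family of maps of sets $\times_{i=1}^r \tilde{K}_0((\mathbb{P}^\infty)^{\times l_i}) \rarr A^*(\times_i (\mathbb{P}^\infty)^{\times l_i})$ commuting with pullbacks along the five listed classes of morphisms (symmetric group actions, partial diagonals, Segre embeddings, point embeddings, projections). The key observation is that the ideal $\tilde{K}_0((\mathbb{P}^\infty)^{\times l})$ is topologically generated, as a set closed under addition and under the relevant structure maps, by the single class $h = [\mathcal{O}(-1)] - 1$ on $\mathbb{P}^\infty$ pulled back and added across factors: every element of $\tilde{K}_0$ of any product of projective spaces is obtained from $h$ by sums, diagonals, Segre maps, projections and permutations (this is essentially the statement that $\mathbb{P}^\infty$ is the classifying space, and the Segre embedding realizes the product on $K_0$). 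Consequently a poly-operation is determined by its value on tuples of sums of copies of $h$, and by the Discrete Taylor Expansion (Prop. \ref{prop:taylor}) this value is determined by the values of all the derivatives $\eth^s \phi$ on tuples of single $h$'s. Those values are simply arbitrary elements of $A^*((\mathbb{P}^\infty)^{\times(\text{something})})$, which by the projective bundle theorem for $A^*$ are power series over $A$ in the corresponding first Chern classes $c_1^A$.

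Then I would organize the bookkeeping: a poly-operation is thus encoded by an unrestricted choice, for each multi-index recording how many $h$-variables sit in each of the $r$ slots, of a power series in the $A$-theoretic $c_1^A$ of the ambient $\mathbb{P}^\infty$'s, subject only to the symmetry built into derivatives and to compatibility under the point-embedding (which controls the constant terms / normalization). Unwinding this, the choice of such a family is equivalent to freely specifying the images $\phi(h) , \eth\phi(h,h), \eth^2\phi(h,h,h), \dots$, and by multiplicativity of $c_{tot}$ and the Segre/Whitney formula these are exactly the data of a free power series in the $A$-valued Chern class operations $c_1, c_2, \dots$ applied in each slot, multiplied externally via $\odot$. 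I would conclude that the natural map $A[[c_1, c_2, \dots]]^{\odot r} \rarr [(\tilde{K}_0)^{\times r}, A^* \circ \prod^r]$ is a bijection of rings; injectivity comes from evaluating on $h, h+h, \dots$ on the universal products of $\mathbb{P}^\infty$'s (the Chern classes take distinguishable values there), and surjectivity is the reconstruction just described.

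The main obstacle I expect is the careful verification that the five compatibility conditions in Vishik's theorem translate exactly into ``the operation is a free power series in Chern classes'' with no extra constraints and no missing ones — in particular checking that the partial-projection and point-embedding conditions pin down precisely the behavior at the unit $1 \in K_0$ (so that one really gets $\tilde{K}_0 = \ker(K_0 \to \mathbb{Z})$ and not something bigger), and that the Segre-embedding compatibility is equivalent to $c_{tot}$ being multiplicative for the additive FGL rather than merely additive in top degree. This is exactly the content carried out in \cite[Th. 2.1]{Sech}, so I would cite that proof for the detailed combinatorics while presenting the conceptual skeleton above; the novelty in the present paper is not this theorem but its Morava analogue, where the additive FGL of $K_0$ is replaced by $F_{K(n)}$ and the class of targets is cut down to $p^n$-typical theories.
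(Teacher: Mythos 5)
Your overall strategy is the same one the cited proof follows; the paper itself gives no argument for Theorem \ref{basis} beyond the reference to \cite[Th. 2.1]{Sech}, and that proof does proceed exactly by Vishik's Theorem \ref{th:Vish_op}, the identification $K_0((\mathbb{P}^\infty)^{\times l})=\ZZ[[h_1,\dots,h_l]]$, reduction of an arbitrary element to (limits of integral combinations of) diagonal pullbacks of the square-free elements $h_1\cdots h_N$, the Discrete Taylor Expansion (Prop.~\ref{prop:taylor}) together with continuity (Section~\ref{sec:cont}), and the resulting bookkeeping in terms of symmetric series, which assembles into series in Chern classes.

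However, two concrete assertions in your sketch are wrong, and they sit precisely at the points you yourself flag as the delicate ones. First, the formal group law of $K_0$ is multiplicative, not additive: with your coordinate $h=1-[\mathcal{O}(-1)]$ the Segre pullback sends $h\mapsto h_1+h_2-h_1h_2$, and the Cartan/Whitney identity the Segre compatibility encodes is $c_{tot}(x+y)=c_{tot}(x)\,c_{tot}(y)$, i.e. the multiplicative-FGL analogue of the paper's formula $c_{tot}(x+y)=F_{K(n)}(c_{tot}(x),c_{tot}(y))$ (cf.\ Remark~\ref{rem_k0-mor}); if you carried out your bookkeeping pretending the Segre map acts on the $h$-variables by $h\mapsto h_1+h_2$, the compatibility check would fail. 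Second, the restriction values you quote for the Chern classes are incorrect: for the class $[\mathcal{O}(-1)]-1$ one has $c_{tot}=1+c_1^A(\mathcal{O}(-1))t$, so $c_1$ restricts to $c_1^A(\mathcal{O}(-1))$ and all $c_i$ with $i\ge 2$ vanish (equivalently $c_{tot}(h)=\bigl(1+c_1^A(\mathcal{O}(-1))t\bigr)^{-1}$); no normalization gives $(-1)^i$. Consequently the injectivity argument cannot be run on multiples of $h$ on a single $\mathbb{P}^\infty$: one must evaluate on $h_1+\dots+h_N\in\tilde{K}_0((\mathbb{P}^\infty)^{\times N})$ for all $N$, where monomials in Chern classes restrict to linearly independent symmetric series, and the reduction of a general power series in the $h_i$ (with arbitrary, in particular negative, integer coefficients) to such sums uses Vishik's continuity statement and not only Prop.~\ref{prop:taylor}. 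None of this affects the truth of the theorem, and the cited proof in \cite[Th. 2.1]{Sech} carries these steps out correctly, but as written your skeleton would not verify without these corrections.
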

\begin{Rk}
Note that there is
 no issue of convergence of a series of Chern classes for any particular element of $\KK$,
 since Chern classes $c_i^A$ are nilpotent on each variety.
 (Indeed, it is enough to show this for the universal oriented theory $\Omega^*$, 
  and in this case the claim follows because $\Omega^i(X)=0$ for $i>\dim X$.) 
\end{Rk}

Using notations from section \ref{subsec_poly}, 
we may write  $[(\tilde{K}_0)^{\times r}, A^*\circ \prod^r] = A[[c_1^A,\ldots,c_i^A,\ldots]]^{\odot r}$.

\subsection{Notation and continuity of operations}\label{sec:cont} 

Recall that by the projective bundle theorem for every oriented theory $A^*$
its value on the product of projective spaces is a quotient of a polynomial ring
generated by the first Chern class of the anti-canonical line bundle.
For the product of projective spaces $z_i^A$ denotes 
$c_1^A(\mathcal{O}(1)_i)$ where $\mathcal{O}(1)_i$ is the pull-back of $\mathcal{O}(1)$ from the $i$-th factor. 
Then we have
$$A^*(\mathbb{P}^{n_1}\times \cdots \times \mathbb{P}^{n_l})= A [z_1^A, \ldots, z_l^A]/((z_1^A)^{n_1+1}, \ldots, (z_l^A)^{n_l+1}).$$

One can form an ind-variety $\mathbb{P}^\infty$ as a formal colimit of linear inclusions of projective spaces.
The value of cohomology theories on it can be formally defined as a limit of the values on projective spaces,
thus, for an oriented theory $A^*$ we obtain
$A^*((\mathbb{P}^\infty)^{\times l}) = A [[z_1^A, \ldots, z_l^A]]$.
If the theory $A$ is clear from the context we shall drop $A$ from the notation of $z_i^A$ for brevity.

We should point out that working with infinite dimensional projective space is
a handy but totally formal convention. However, one needs to be careful with it
as the restriction of every operation to products of (infinite) projective spaces
satisfies the property of continuity which we now explain (following \cite{Vish2}).

Let $G\colon A^*\rarr B^*$ be an operation from a free theory  $A^*$ to an oriented theory $B^*$
which preserves 0.
By Theorem \ref{th:Vish_op} the operation $G$ is determined by maps of pointed sets  
\mbox{$G_{\{l\}}\colon A[[z^A_1,\ldots, z^A_l]]\rarr B[[z^B_1,\ldots, z^B_l]]$}
for all $l\ge 0$.
As $G_{\{l\}}$'s have to commute with pull-backs along partial projections  
the following diagram is commutative for any $l\ge 0$:
\begin{center}
\begin{tikzcd}
A[[z^A_1,\ldots, z^A_l]] \arrow[r, "G_{\{l\}}"] \arrow{d} & B[[z^B_1,\ldots, z^B_l]] \arrow{d} \\
A[[z^A_1,\ldots, z^A_{l+1}]] \arrow[r, "G_{\{l+1\}}"] & B[[z^B_1,\ldots, z^B_{l+1}]]\\
\end{tikzcd}
\end{center}
This allows to use only one transform, the inductive limit of maps $G_{\{l\}}$
$$ G\colon A[[\bar{z}^A]]\rarr B[[\bar{z}^B]],$$
where $A[[\bar{z}^A]]:= \cup_{l\ge 0} A[[z^A_1,\ldots, z^A_l]]$,
and similarly, for $B[[\bar{z}^B]]$.
The map $G$ uniquely determines $G_{\{l\}}$ for any $l$.
Since $G$ preserves 0 we have $G(0)=0$.

Denote by $F^k_A$ an ideal in $A[[\bar{z}^A]]$ of series of degree $\ge k$
($F^k_B$ is defined analogously).

\begin{Prop}[Vishik, {\cite[Prop. 5.3]{Vish2}}]\ \\

Let $P, P'\in A[[\bar{z}^A]]$ be s.t. $P \equiv P' \mod F^k_A$.
Then $$ G(P) \equiv G(P') \mod F^k_B. $$
\end{Prop}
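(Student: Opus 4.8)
The statement to prove is the continuity of $G$: if $P \equiv P' \bmod F^k_A$, then $G(P) \equiv G(P') \bmod F^k_B$. The plan is to reduce to the case where $G$ is supported on finitely many variables, and then to exploit the pull-back compatibilities in Vishik's Theorem~\ref{th:Vish_op} --- specifically compatibility with partial point embeddings and partial projections --- to localize the difference between $P$ and $P'$ into a single extra variable that can be ``sent to zero''. First I would write $P' = P + R$ with $R \in F^k_A$, and observe that since $G$ is determined by its restrictions $G_{\{l\}}$ to finitely many variables and these are compatible under inclusion, it suffices to work in $A[[z^A_1,\dots,z^A_l]]$ for a fixed $l$ large enough that both $P, P'$ lie there. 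Then I would use the discrete Taylor expansion (Prop.~\ref{prop:taylor}) to write
$$G(P + R) = \sum_{J} \partial^{|J|-1} G(\text{arguments among } P, R),$$
so that it is enough to control the terms involving at least one copy of $R$, i.e. to show $\partial^1 G(P, R) \in F^k_B$ and more generally that the derivatives applied with an argument in $F^k_A$ land in $F^k_B$.

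The key geometric step is the following trick. Given $R \in F^k_A \subset A[[z^A_1,\dots,z^A_l]]$, introduce one extra variable $z^A_{l+1}$ and consider, on $A[[z^A_1,\dots,z^A_{l+1}]]$, the pull-back squares expressing compatibility of $G_{\{l+1\}}$ with (a) the partial projection forgetting the last factor and (b) the partial point embedding into the last factor (i.e. setting $z^A_{l+1} = 0$, which corresponds to the zero-section $\mathbb{P}^{n_1}\times\cdots\times\mathbb{P}^{n_l} \hookrightarrow \mathbb{P}^{n_1}\times\cdots\times\mathbb{P}^{n_l}\times\mathbb{P}^{m}$). The idea is to realize $R$ as $z^A_{l+1}^{?}\cdot(\text{something})$ after a suitable substitution, or rather: since any element of $F^k_A$ is an $A$-linear combination of monomials of degree $\ge k$, and monomials of degree $\ge k$ are products of the $z^A_i$, one reduces by multilinearity of the derivatives and the product structure to the case $R$ a single monomial $z^A_{i_1}\cdots z^A_{i_k}\cdot(\dots)$; then using the Segre embedding / diagonal compatibilities one can absorb such a monomial into a single variable raised to the $k$-th power. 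Concretely, I expect the cleanest route is: restrict attention to the variable $z := z^A_{l+1}$, use that $G_{\{l+1\}}$ commutes with the self-map of $\mathbb{P}^\infty$-factors induced by $\mathcal{O}(1) \mapsto \mathcal{O}(1)^{\otimes ?}$ and with the diagonal to reduce a degree-$\ge k$ perturbation to the form $z^k\cdot(\text{power series})$, and then use the point-embedding pull-back $z \mapsto 0$ together with the fact that $G$ of something in the ideal $(z)$, after setting $z=0$, is $G$ of the corresponding thing without $z$ --- making the difference divisible by $z$, hence (iterating $k$ times, or doing it in one stroke with $z^k$) divisible by $z^k$ in the relevant sense. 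Pushing this back along the projection that collapses the auxiliary variable then yields the bound $G(P) \equiv G(P') \bmod F^k_B$.

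The main obstacle I anticipate is the bookkeeping needed to go from ``divisible by a single variable to the $k$-th power'' to ``lies in $F^k_B$'', i.e. to correctly handle arbitrary degree-$\ge k$ perturbations rather than just pure powers of one variable: this is where one genuinely uses that the derivatives $\partial^s G$ are multi-additive (so the perturbation can be decomposed monomial-by-monomial) together with the Segre/diagonal pull-back compatibilities (to recombine a product of distinct variables $z^A_{i_1}\cdots z^A_{i_k}$ into the image of a single $z^k$ under an allowed morphism). A secondary subtlety is ensuring all the reductions are compatible with the inductive-limit description, so that the conclusion, proved on each $A[[z^A_1,\dots,z^A_l]]$, is uniform in $l$; this is automatic from the commuting squares with the partial projections but should be stated. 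Everything else --- the Taylor expansion, the reduction to finitely many variables, and the passage $z=0$ --- is formal once the right morphisms between products of projective spaces have been chosen.
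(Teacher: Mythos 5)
First, a caveat: the paper does not prove this proposition at all — it is quoted verbatim from \cite{Vish2}, so your argument can only be judged on its own terms; measured that way, two of its steps are genuinely broken. The first is the decomposition step. You reduce to finitely many variables and then want to treat the perturbation $R=P'-P\in F^k_A$ monomial by monomial, saying this ``genuinely uses that the derivatives $\partial^s G$ are multi-additive''. They are not: for a non-additive $G$ the failure of additivity of $\partial^1 G(x,-)$ is exactly $\partial^2 G$, so no multilinearity is available. The only decomposition tool is the discrete Taylor expansion (Prop.~\ref{prop:taylor}), which is a statement about \emph{finite} sums, whereas after fixing $l$ the series $R$ still has infinitely many monomials; splitting it term by term is precisely the kind of limit that continuity is supposed to justify. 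To make a monomial-by-monomial (or telescoping) reduction legitimate you must first restrict to finite-dimensional products $(\mathbb{P}^N)^{\times l}$, where $R$ becomes a finite sum and where all coefficients of monomials of degree $<k$ can still be read off; restricting to finitely many variables alone does not do this.

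The second, more serious, problem is that your key geometric reduction runs in the wrong direction. You propose to merge a degree-$\ge k$ monomial into a single variable, reducing to a perturbation of the form $z^k\cdot(\dots)$, and then to extract divisibility of $G(P+R)-G(P)$ by $z^k$ from the point embedding $z\mapsto 0$, ``iterating $k$ times''. The point embedding only ever yields divisibility by the \emph{first} power of $z$: after dividing by $z$ the quotient is no longer of the form $G(\text{something})$, so there is nothing to iterate, and no pullback compatibility detects $z^k$-divisibility in a single variable (note also that $z_1\cdots z_k$ is not the pullback of $z^k$ along any projection or Segre map; rather $z^k=\delta^*(z_1\cdots z_k)$ for the diagonal $\delta$). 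The workable argument is the opposite reduction: since the pullback along a composite of partial diagonals is the \emph{surjective} substitution identifying groups of variables, choose preimages $\tilde P,\tilde R$ of $P,R$ with $\tilde R$ divisible by a product of $k$ pairwise \emph{distinct} variables; setting each of these to zero shows that $G(\tilde P+\tilde R)-G(\tilde P)$ is divisible by each, hence by their product, hence lies in $F^k_B$; then apply the diagonal substitution, which sends variables to variables and so preserves $F^k_B$, to get $G(P+R)\equiv G(P)\pmod{F^k_B}$. (Veronese and Segre maps play no role, and since $G$ is an actual operation it is natural for all morphisms anyway.) Alternatively, one can bypass monomial bookkeeping entirely by a support argument, which is presumably what the cited proof does: on $(\mathbb{P}^N)^{\times l}$ one has $P'-P\in\tau^k A^*$ (each $z_i\in\tau^1$ and $\tau^\bullet$ is multiplicative), so $P'$ and $P$ agree on an open $U$ with complement of codimension $\ge k$; naturality of $G$ for $U\hookrightarrow(\mathbb{P}^N)^{\times l}$ gives $G(P')-G(P)\in\tau^k B^*$, and on products of projective spaces the $k$-th piece of the topological filtration is the span of monomials of degree $\ge k$ (the analogue for $B^*$ of Prop.~\ref{prop:top_filt_properties}).
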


This allows to calculate approximation of $G(P)$ approximating $P$.
In particular, the operation $G$ is determined by its restriction 
to the products of finite-dimensional projective spaces, or equivalently
by the maps
$$G_{r,n}\colon A^*((\mathbb{P}^n)^{\times l})=A[[z^A_1,\ldots, z^A_l]]/F_A^{n+1} 
\rarr B^*((\mathbb{P}^n)^{\times l})=B[[z_1,\ldots, z_l]]/F_B^{n+1}.$$
In other words, maps $G_{\{l\}}$ or map $G$ are determined by their restriction 
to the polynomial rings $A[z^A_1,\ldots, z^A_l] \subset A[[z^A_1,\ldots, z^A_l]]$,
and analogous statements are true for poly-operations as well.

\subsection{General Riemann-Roch theorem}

Let $A^*$ be a free theory, let $B^*$ be an oriented theory,
and let $\phi\colon A^*\rarr B^*$ be an operation.

Let $X$ be a smooth variety and let $i\colon Z\rightarrow X$ be its closed smooth subvariety.
Denote by $G^c_Z$ the composition 
$$A^*(Z)\xrarr{\cdot z_1^A\cdots z_c^A} A^*(Z\times (\mathbb{P}^\infty)^{\times c})\xrarr{\phi} 
B^*(Z\times \mathbb{P}^\infty)^{\times c})\cong B^*(Z)[[z_1^B,\ldots, z_c^B]].$$

If  $\mu_i \in B^*(Z)$ for $i\colon 1\le i \le c$ are nilpotent elements,
we will denote by $G^c_Z|_{z_i^B=\mu_i}$ the composition of $G^c_Z$
with the $B^*(Z)$-linear map $B^*(Z)[[z_1^B,\ldots, z_c^B]]\rarr B^*(Z)$ which sends $z_i^B$ to $\mu_i$.
For a formal group law $F_B$ denote by $\omega_B \in B[[t]]dt$
the unique invariant differential s.t. $\omega_B(0)=dt$.

The following result is a general form of Riemann-Roch-type theorems
for non-additive operations. 

\begin{Th}[Vishik, {\cite[Th. 5.19]{Vish2}}]\label{th:riemann_roch}
Let $\alpha\in A^*(Z)$, 
denote by $\mu_1,\ldots, \mu_c$ the $B$-roots of the normal bundle $N_{Z/X}$.

Let $L_i$ be line bundles over $Z$ for $1\le i\le k$,
 and denote by $x_i=c_1^A(L_i)$, $y_i=c_1^B(L_i)$ their first Chern classes.

Then 
$$ \phi\left(i_* (\alpha \prod_{i=1}^k x_i)\right)
= i_* \Res \frac{G^{c+k}_Z(\alpha)|_{z_i^B=t+_B\mu_i, 1\le i \le c;
z^B_{c+j} = y_j, 1\le j \le k}}
{t\cdot \prod_{i=1}^c (t+\mu_i)} \omega_B. $$
\end{Th}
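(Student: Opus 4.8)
The plan is to reduce everything to the continuity/reconstruction statement (Proposition~\cite[Prop.~5.3]{Vish2} as recalled above) and to the case of a line bundle, where the formula can be checked directly on products of projective spaces. First I would set up the geometry: given the closed embedding $i:Z\hookrightarrow X$ with normal bundle $N_{Z/X}$ of rank $c$, I deform to the normal cone, i.e. replace $X$ by the total space of $N_{Z/X}$ (or rather its projective completion $\mathbb{P}(N\oplus \mathcal{O})$) so that $i$ becomes the zero-section embedding. Since an operation $\phi$ commutes with pullbacks and both sides of the asserted identity are compatible with the specialization to the normal cone (pushforward along a closed embedding and the operation $\phi$ are preserved under the relevant transversal base changes, by axiom (A2) and functoriality of $\phi$), it suffices to prove the formula when $i$ is the zero section of a vector bundle. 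Then I would further split $N_{Z/X}$ by the splitting principle: pull back along the flag bundle $\mathrm{Fl}(N)\to Z$, which is a projective bundle tower, so that $N$ acquires a filtration with line bundle quotients $L'_1,\dots,L'_c$ whose first $A$-Chern classes are the $A$-roots and whose first $B$-Chern classes are exactly $\mu_1,\dots,\mu_c$; the pullback maps on the flag bundle are injective on the relevant $B^*$-modules, so it is enough to verify the identity there.

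Having reduced to $i$ being the zero section of a sum of line bundles $L'_1\oplus\cdots\oplus L'_c$, I would express $i_*(1)$ as a product of first Chern classes of line bundles via the self-intersection formula: $i_*i^*(\beta) = \beta\cdot \prod_{j=1}^c c_1^A(L'_j)$ in $A^*$, and similarly in $B^*$ with $\mu_j=c_1^B(L'_j)$. Combined with the projection formula this rewrites $\phi(i_*(\alpha\prod_i x_i))$ purely in terms of $\phi$ applied to a class supported on $Z$ but expressed as $i_*$ of a product of first Chern classes of line bundles over $Z$. So the crux becomes: prove the formula in the universal case where $Z$ is a product of $\mathbb{P}^\infty$'s, $\alpha$ is pulled back from the first few factors, and all the line bundles $L'_j$ and $L_i$ are the canonical bundles on the remaining factors. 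This is exactly the situation governed by Vishik's reconstruction theorem \ref{th:Vish_op}: both sides are operations on products of projective spaces, so it is enough to match them there.

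On products of projective spaces the computation is the familiar residue manipulation. Writing $z_i^B$ for the first $B$-Chern classes of the canonical bundles, the key identity is that pushforward along $\mathbb{P}^\infty\times W\to W$ — or more precisely the Gysin map for the zero locus of a section of a line bundle with Chern class $w$ — is given by the residue $\Res_{t=0}\,(\,\cdot\,)\,\omega_B/(\text{stuff})$ evaluated after the substitution $z^B\mapsto t+_B w$; this is the standard ``Quillen formula'' for pushforward in an oriented theory, valid because $\omega_B$ is the invariant differential normalizing the formal group law $F_B$. Iterating this over the $c$ line bundles $L'_j$ with Chern classes $\mu_j$ produces the denominator $t\prod_{j=1}^c(t+_B\mu_j)$ — here one uses that $\prod_j(t+_B\mu_j)$ and $\prod_j(t+\mu_j)$ differ by a unit power series in $t$ absorbed into $\omega_B$, modulo the standard care with the formal group versus ordinary sum — and the remaining substitutions $z^B_{c+j}=y_j$ just record that the line bundles $L_i$ are left untouched. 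The function $G^{c+k}_Z$ is by definition precisely $\phi$ applied after multiplying by the extra $\mathbb{P}^\infty$-variables, so tracing through the definitions gives the stated formula. The main obstacle, I expect, is bookkeeping: making the deformation-to-the-normal-cone and splitting-principle reductions fully compatible with $\phi$ and with $i_*$ simultaneously (one must know that all the base-change squares involved are transversal so that axiom (A2) applies, and that the relevant pullback maps are injective), and then being careful about the difference between $+_B$ and $+$ inside the residue so that the normalizing differential $\omega_B$ is exactly the invariant one and no spurious units survive.
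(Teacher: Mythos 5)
First, note that the paper does not prove Theorem~\ref{th:riemann_roch} at all: it is imported from Vishik \cite[Prop.~5.19]{Vish1}, whose proof runs through the machinery of theories of rational type (induction on dimension, resolution of singularities, transversal base change), so your plan is necessarily a different route; the question is whether it closes. The crucial unproved step is your very first reduction, deformation to the normal cone. For a bare operation $\phi$ (a natural transformation of presheaves of sets) the only tool available is compatibility with pullbacks. On the deformation space $M$ (the blow-up of $X\times\mathbb{P}^1$ along $Z\times\{0\}$) this lets you say that $\phi(\tilde{\imath}_*\mathrm{pr}^*\beta)\in B^*(M)$ restricts, over $1$, to $\phi(i_*\beta)$ on $X$ and, over $0$, to $\phi(s_*\beta)$ on the component $\mathbb{P}(N_{Z/X}\oplus\mathcal{O})$ of the special fibre. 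But the implication you need goes the other way: from the (checkable) identity on the special fibre back to the identity on $X$. There is no injectivity of $B^*(M)\to B^*(\mathbb{P}(N_{Z/X}\oplus\mathcal{O}))$, nor a specialization map in the relevant direction, and the classical deformation arguments (for $i^*i_*=e(N)\cdot(-)$, or Riemann--Roch for morphisms of theories) lean on additivity or multiplicativity plus localization, none of which you may use for a general non-additive $\phi$. So ``both sides are compatible with the specialization to the normal cone'' is exactly the assertion that needs proof, and I do not see how to supply it within your framework; this is a genuine gap, not bookkeeping.

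Second, your appeal to Theorem~\ref{th:Vish_op} at the crux is off target: that theorem classifies operations, i.e.\ transformations defined naturally on all smooth varieties, whereas the two sides of the Riemann--Roch identity are particular elements of $B^*(X)$ depending on the auxiliary data $(Z\subset X,\alpha,L_i)$; there is no sense in which ``both sides are operations on products of projective spaces,'' and an arbitrary pair $(Z,\alpha)$ cannot be pushed into a universal situation over $(\mathbb{P}^\infty)^{\times l}$, since $A^*$ is representable by products of projective spaces only in the line-bundle direction. What naturality does give (and what your plan implicitly needs) is the evaluation $\phi\bigl(\alpha\prod_j c_1^A(L'_j)\bigr)=G^c_Z(\alpha)|_{z_j^B=c_1^B(L'_j)}$ via classifying maps for globally generated bundles; combined with homotopy invariance and $s^*s_*=e(N)\cdot(-)$ this does settle the zero-section-of-a-split-bundle case by a formal residue identity, and the splitting-principle step is fine. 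So the back end of your plan is sound, but without a valid reduction from a general closed embedding to the zero-section case the proof does not go through; Vishik's actual argument avoids this by establishing the formula inside the dimension-induction that reconstructs operations on theories of rational type.
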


\subsection{Topological filtration on oriented theories}\label{sec:top_filt}

\begin{Def}\label{def:top_filt}
Let $A^*$ be a presheaf of abelian groups on the category of smooth varieties over
a field $k$. 
Define the {\sl topological} (or sometimes called {\sl support codimensional}) filtration $\tau^\bullet$ 
on the values of $A^*$ on a variety $X$ by the formula:
$$ \tau^i A^*(X) := \cup_{codim_X X\setminus U \ge i} \mbox{Ker\ }(A^*(X)\rarr A^*(U)),$$
where the union goes over all open subvarieties $U$ in $X$ 
with the complement of codimension at least $i$.
\end{Def}

Note that, if $A^*$ is an oriented theory satisfying the (CONST) axiom (e.g.\ a free theory),
then the zero-th graded quotient of the topological filtration 
is split: $\tau^1 A^*=\tilde{A}^*$, $\tau^0A^*/\tau^1A^*=A$, $A^*=\tilde{A}^*\oplus A$ 
(see Section \ref{sec:vishik}).

We summarize well-known properties of the topological filtration in the following proposition.

\begin{Prop}\label{prop:top_filt_properties}
Let $A^*$ be a free theory.
\begin{enumerate}
\item\label{item:topfilt_free_surj}
The canonical map $(\tau^i \Omega^*)\ot_\LL A \rarr \tau^i A^*$ is surjective;
\item\label{item:topfilt_omega} $\tau^i \Omega^n=\sum_{m\le n-i} \LL^{m} \Omega^{n-m}$;
\item\label{item:push_top_filt} if $f\colon X\rarr Y$ is a projective morphism of codimension $c$,
then $f_*\tau^i A^*(X) \subset \tau^{i+c}A^*(Y)$;
\item\label{item:chern_top_filt} Chern class $c_i^A$ takes values in $\tau^iA^*$;
\item\label{item:topfilt_mult} the topological filtration on $A^*$ is multiplicative,
i.e. $(\tau^i A^*)(\tau^j A^*)\subset \tau^{i+j}A^*$;
\item\label{item:CH_surj_grtop} there exist a canonical surjective map of $A$-modules
$\rho_A\colon\CH^\bullet\ot_\ZZ A \rarr gr_\tau^\bullet A^*$,
where $A=A(\Spec k)$ (Notation~\ref{not:A});
\item\label{item:topfilt_projspace} if $X$ is a smooth variety, $E$ is a vector bundle on $X$ 
of rank $r$, $n\ge 0$, $i\ge 0$,
then 
$$\tau^i A^*(\mathbb{P}(E)) = 
\sum_{m=0}^{r-1} \tau^{i-m}A^*(X)\cdot \xi^m, $$

where $\xi = c_1^A (\mathcal{O}(1))$
on $\mathbb{P}(E)$.

In particular, we have
$$\tau^i A^*(X\times (\mathbb{P}^\infty)^{\times n}) = 
\sum_{m\ge 0} \tau^{i-m}A^*(X)\ot_A A[[z_1^A, \ldots, z_n^A]]^{\deg \ge m},$$ 
where $z^A_i$ is the first Chern class of the line bundle $\mathcal{O}(1)_i$ (see Section \ref{sec:cont}).
\end{enumerate}
\end{Prop}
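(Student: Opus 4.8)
The plan is to prove the seven statements in a logical order, starting from the structural input for algebraic cobordism and transporting everything to a general free theory $A^*$ by base change. First I would recall from Levine--Morel that $\Omega^*$ has a description via the $\LL$-module generated by classes of projective morphisms, together with the fact (\cite[Thm. 4.5.7]{LevMor}) that $\tau^i \Omega^*(X)$ is generated by classes $f_*(1_Y)$ of projective morphisms $f\colon Y\rarr X$ with $\dim X-\dim f(Y)\ge i$; equivalently, classes $f_*(\alpha)$ where $f$ factors through a closed subvariety of codimension $\ge i$. With this, statement (\ref{item:push_top_filt}) for $\Omega^*$ is immediate from functoriality of pushforward and the behaviour of codimension of supports under a closed embedding; then since $f_*$ on $A^*$ is defined through $p_A\colon\Omega^*\rarr A^*$ which commutes with pushforwards, and since every class in $\tau^i A^*(X)$ lifts to $\tau^i\Omega^*(X)\ot_\LL A$ once we have (\ref{item:topfilt_free_surj}), statement (\ref{item:push_top_filt}) for general $A^*$ follows. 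For (\ref{item:topfilt_free_surj}) I would argue directly: a generator of $\tau^i A^*(X)$ is by definition killed on some open $U$ with $\mathrm{codim}(X\setminus U)\ge i$; by the localization axiom (LOC) it comes from $A^*_{Z}(X)$ where $Z=X\setminus U$, and using the description of cobordism with supports (or, more elementarily, resolving $Z$ and using that $\tau^i\Omega^*$ surjects onto classes supported in codimension $\ge i$ together with right-exactness of $-\ot_\LL A$), it lifts to $\tau^i\Omega^*(X)\ot_\LL A$.

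Next I would handle (\ref{item:topfilt_omega}): the formula $\tau^i\Omega^n=\sum_{m\le n-i}\LL^m\Omega^{n-m}$ is essentially the statement that the topological filtration on $\Omega^*$ agrees with the filtration by "classes of positive-dimensional fibres", which is Levine--Morel's computation that $\tau^i\Omega^*$ is the ideal generated by classes $f_*(1_Y)$ with $\dim Y\le\dim X-i$, combined with the graded structure $\Omega^n(X)=\bigoplus_m \LL^m\cdot\Omega^{n-m}(X)_{\mathrm{eff}}$. I would cite \cite[Thm. 4.5.7, Lemma 4.5.10]{LevMor} here rather than reprove it. Statement (\ref{item:chern_top_filt}) then follows by reduction to $\Omega^*$ (Chern classes are compatible with $p_A$), where $c_i^A$ of a bundle $E$ on $X$ is, up to the projective bundle formalism, a pushforward from a codimension-$i$ degeneracy locus — or, more cleanly, one checks it on the universal example: $c_i$ of the tautological bundle on a Grassmannian is represented by a Schubert variety of codimension $i$, hence lies in $\tau^i$ by (\ref{item:push_top_filt}). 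Multiplicativity (\ref{item:topfilt_mult}) I would deduce from the projection formula: if $\alpha=f_*(\alpha')$ with $f$ of codimension $\ge i$ on its support and $\beta\in\tau^j A^*(X)$, then $\alpha\cdot\beta=f_*(\alpha'\cdot f^*\beta)$, and $f^*\beta\in\tau^j A^*(Y)$ since $f^*$ preserves the topological filtration (pullback of something vanishing on $U$ vanishes on $f^{-1}U$, whose complement still has codimension $\ge j$ when... ) — here one must be slightly careful and instead restrict to $f$ a closed embedding, using that a general projective $f$ factors as a closed embedding followed by a projection, and projections (smooth) preserve codimension of supports; combined with (\ref{item:push_top_filt}) this gives $\alpha\cdot\beta\in\tau^{i+j}A^*(X)$.

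For (\ref{item:CH_surj_grtop}), the canonical map $\rho_A$ is built from the classical fact that $\CH^i=\Omega^i/\tau^{i+1}\Omega^i$ compatibly (Levine--Morel identify $\CH^*$ with $\Omega^*\ot_\LL\ZZ$, and the topological filtration on $\CH^*$ is split by the grading), so one gets $\CH^i\ot A=(\Omega^i/\tau^{i+1}\Omega^i)\ot A\twoheadrightarrow\tau^i A^*/\tau^{i+1}A^*=gr^i_\tau A^*$, surjectivity being exactly (\ref{item:topfilt_free_surj}) modulo $\tau^{i+1}$. Finally (\ref{item:topfilt_projspace}) is the main computational point, and I expect \textbf{this to be the main obstacle}. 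One inclusion, $\sum_m\tau^{i-m}A^*(X)\cdot\xi^m\subset\tau^i A^*(\mathbb{P}(E))$, is immediate from (\ref{item:topfilt_mult}) and (\ref{item:chern_top_filt}) plus the fact that the projection $\pi\colon\mathbb{P}(E)\rarr X$ has $\pi^*$ preserving the topological filtration (here actually $\pi^*\tau^{i-m}A^*(X)\subset\tau^{i-m}A^*(\mathbb{P}(E))$ needs that codimension of supports is preserved under the smooth pullback $\pi$, which holds). The reverse inclusion is the delicate one: given $\beta\in\tau^i A^*(\mathbb{P}(E))$, write it via the projective bundle theorem uniquely as $\sum_{m=0}^{r-1}\pi^*(a_m)\xi^m$ with $a_m\in A^*(X)$; I must show $a_m\in\tau^{i-m}A^*(X)$. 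The strategy is to extract $a_m$ by the usual formula $a_m=\pi_*(\beta\cdot\eta_m)$ for suitable polynomial classes $\eta_m\in A^*(\mathbb{P}(E))$ in $\xi$ (a "dual basis" coming from the relation $\sum_j c_j^A(E)\xi^{r-j}$... or rather from the Vishik/Quillen residue formula), using that $\pi_*(\xi^m)$ for the bundle is known; then $\pi_*(\tau^i)\subset\tau^{i-(r-1)}$ is too weak, so instead I would localize: it suffices to check $a_m\in\tau^{i-m}A^*(X)$ after restricting to an open $U\subset X$ away from any given codimension-$(i-m)$ closed subset, and over such $U$ one uses that $\beta|_{\mathbb{P}(E_U)}$ still lies in $\tau^i$, reducing to an induction on $\dim X$ via a stratification — or, cleaner, one invokes the known statement for $\CH^*$ and $\Omega^*$ (where it is in Levine--Morel, as $\tau^\bullet$ is a "graded" filtration there) and then base-changes along $\Omega^*\ot_\LL A$ using (\ref{item:topfilt_free_surj}) together with the flatness/right-exactness needed to transport the direct-sum decomposition. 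The "in particular" consequence for $X\times(\mathbb{P}^\infty)^{\times n}$ is then obtained by taking $E$ trivial, iterating in $n$, and passing to the limit over finite-dimensional projective spaces, which is harmless since the topological filtration is compatible with the defining colimit.
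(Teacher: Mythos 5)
Your outline for parts (\ref{item:topfilt_free_surj}), (\ref{item:topfilt_omega}), (\ref{item:push_top_filt}) and (\ref{item:CH_surj_grtop}) matches the paper's proof, but your argument for multiplicativity (\ref{item:topfilt_mult}) has a genuine gap. It rests on two claims that fail as stated: first, that an element of $\tau^i A^*(X)$ is $f_*(\alpha')$ for a closed embedding $f$ of a \emph{smooth} subvariety of codimension $\ge i$ (supports need not be smooth; the generalized degree formula only produces projective morphisms with image of codimension $\ge i$, to which (\ref{item:push_top_filt}) does not apply directly); second, and more seriously, that $f^*$ preserves the support-codimension filtration because ``the complement still has codimension $\ge j$''. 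For a closed embedding $f:Y\hookrightarrow X$ the codimension of $f^{-1}(X\setminus U)$ in $Y$ can drop arbitrarily (e.g.\ $Y$ may be contained in $X\setminus U$), and factoring a projective morphism as a closed embedding followed by a smooth projection does not repair this, since it is exactly the closed-embedding step that fails. Pullbacks \emph{do} preserve the topological filtration for free theories, but the only argument available at this stage is via (\ref{item:topfilt_free_surj}) and the $\LL$-module description (\ref{item:topfilt_omega}) --- which is precisely the paper's route for (\ref{item:topfilt_mult}): by (\ref{item:topfilt_free_surj}) it suffices to treat $\Omega^*$, and there $(\tau^i\Omega^*)(\tau^j\Omega^*)\subset\tau^{i+j}\Omega^*$ is immediate from the formula in (\ref{item:topfilt_omega}). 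Your proof of (\ref{item:topfilt_mult}) should be replaced by this reduction.

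For (\ref{item:chern_top_filt}) your Grassmannian detour is more roundabout than necessary (it again presupposes that pullbacks preserve the filtration); note that $c_i^A(E)$ is the image of $c_i^\Omega(E)\in\Omega^i=\tau^i\Omega^i$, so the claim follows from (\ref{item:topfilt_free_surj}) and (\ref{item:topfilt_omega}), or, as in the paper, from the construction of Chern classes together with $c_1^A(L)\in\tau^1 A^*$. For (\ref{item:topfilt_projspace}) your first suggested route (extracting $a_m=\pi_*(\beta\cdot\eta_m)$ and then localizing/stratifying) is left incomplete and does not obviously close, but your second route is sound and genuinely different from the paper's: the formula for $\Omega^*$ follows from (\ref{item:topfilt_omega}) together with the projective bundle theorem (it is not literally stated in Levine--Morel, so it must be spelled out), and then only the surjectivity of (\ref{item:topfilt_free_surj}), compatibly with $\xi$ and $\pi^*$, is needed to transport it to $A^*$ --- no flatness or direct-sum considerations enter. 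The paper argues differently: it lifts an element of $\tau^iA^*(\mathbb{P}(E))$ modulo $\tau^{i+1}$ to $\CH^i(\mathbb{P}(E))\ot A$ via (\ref{item:CH_surj_grtop}), applies the projective bundle theorem for Chow groups, and inducts on $i$; either argument is acceptable, but yours must be carried out rather than hedged.
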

\begin{proof}
(\ref{item:topfilt_free_surj}) Let $\alpha \in A^*(X)$ be supported on 
a closed subvariety $Z$, $j\colon Z \hookrightarrow X$,  of codimension $i$ (i.e. $\alpha \in \tau^i A^*(X)$).
If $Z$ is smooth, then $A^*(Z)=\Omega^*(Z)\ot_\LL A$, 
and there exist $\beta_k \in \Omega^*(Z)$, $a_k\in A$, $k\in J$,
s.t. $\sum_{k\in J} (j_*\beta_k) \ot a_k$ is a lift of $\alpha$ to $(\tau^i \Omega^*(X))\ot_\LL A$.
However, if $Z$ is not smooth, we need to consider Borel-Moore oriented cohomology theories 
to make the argument work, see \cite[Section~2.1]{LevMor}.
More precisely, one extends free theories to presheaves of abelian groups 
on all quasi-projective varieties \cite[Remark~2.1.4]{LevMor}
and proves the (LOC) axiom for them. We leave the details to the reader.

(\ref{item:topfilt_omega}) is \cite[Th. 4.5.7]{LevMor}. Note, however,
that they use homological grading for algebraic cobordism.

(\ref{item:push_top_filt}) follows for algebraic cobordism by (\ref{item:topfilt_omega}),
since pushforwards are $\mathbb{L}$-linear; and for arbitrary free theory by (\ref{item:topfilt_free_surj}).  

(\ref{item:chern_top_filt}) follows from the construction of Chern classes
and from the fact that $c^A_1(L) \in \tau^1 A^*$ for a line bundle $L$.
 The latter property
can be checked using the (CONST) axiom and the fact that every line bundle is trivial in the generic point.

(\ref{item:topfilt_mult}): It follows from (\ref{item:topfilt_free_surj})
that it is enough to consider the case $A^*=\Omega^*$, and the 
statement then follows from the explicit description of the
topological filtration in (\ref{item:topfilt_omega}).

(\ref{item:CH_surj_grtop}) In the case $A^*=\Omega^*$ the map is constructed in \cite[Cor. 4.5.8]{LevMor}.
From (\ref{item:topfilt_free_surj}) it follows, 
that there is a canonical surjective map $(gr^\bullet_\tau \Omega^*)\ot_{\LL} A\rarr gr^\bullet_\tau A^*$.
We define $\rho_A$ as the composition of this map 
with $\rho_\Omega\ot_{\LL} A\colon \CH^*\ot A \rarr (gr^\bullet_\tau \Omega^*)\ot_{\LL} A$.

(\ref{item:topfilt_projspace}): It follows from 
 (\ref{item:chern_top_filt}) and (\ref{item:topfilt_mult}) that the RHS is contained in the LHS.
Let $\alpha \cdot \xi^k \in \tau^i A^*(\mathbb{P}(E))$.
Then by (\ref{item:CH_surj_grtop}) there exist an element $z\in \CH^i(\mathbb{P}(E))\ot A$
which maps to $\alpha \cdot \xi^k$. By the projective bundle theorem for Chow groups
we see that $z=x\cdot \xi^k$ for some $x\in \CH^{i-k}(X)$, and its image in $\tau^i A^*(\mathbb{P}(E))/\tau^{i+1}$
is of the form $\beta \cdot \xi^k$ where $\beta \in \tau^{i-k} A^*(\mathbb{P}(E))$.
Lifting this element to $\tau^i A^*(\mathbb{P}(E))$ we can argue by induction on $i$
that $\alpha \in \tau^{i-k} A^*(\mathbb{P}(E))$. 
\end{proof}

For convenience of readers, we include the proof of the following
well-known statement for which we were not able to find a reference.

\begin{Lm}\label{lm:rho_morphism_of_theories}
Let $B^*$ be a free theory.
Then $gr_\tau^\bullet B^*:=\oplus_{c\ge 0} gr_\tau^c B^*:=\oplus_{c\ge 0} \tau^c B^*/\tau^{c+1} B^*$ 
is an oriented theory 
in the sense of Levine-Morel (\cite[Def. 1.1.2]{LevMor}), i.e.\ it satisfies Definition \ref{goct} 
except for the (LOC) axiom.

Moreover,  the map $\rho_B\colon \CH^\bullet\ot_\ZZ B\rarr gr_\tau^\bullet B^*$ is a morphism of oriented theories,
where $B$ is the coefficient ring of $B^*$ (Notation~\ref{not:A}).
\end{Lm}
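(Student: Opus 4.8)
The plan is to build the oriented-theory structure on $gr_\tau^\bullet B$ by transporting pushforwards from $B^*$ along the graded-quotient projection, and then to check the axioms one at a time, the serious point being that pushforwards are well-defined on the associated graded (i.e.\ respect the filtration) — which is exactly Proposition~\ref{prop:top_filt_properties}(\ref{item:push_top_filt}). First I would set up the data: $gr_\tau^\bullet B$ is already a presheaf of graded rings by multiplicativity of $\tau^\bullet$ (Prop.~\ref{prop:top_filt_properties}(\ref{item:topfilt_mult})); pullbacks are induced since $\tau^\bullet$ is functorial (clear from Definition~\ref{def:top_filt}, as $f^*$ sends $\mathrm{Ker}(B^*(X)\to B^*(U))$ into the analogous kernel for any smooth $U'\subset X'$ mapping into $U$). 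For a projective morphism $f:X\to Y$, factor it as a closed embedding $i:X\hookrightarrow \mathbb{P}^N_Y$ of some codimension $c$ followed by the projection $\pi:\mathbb{P}^N_Y\to Y$; by Prop.~\ref{prop:top_filt_properties}(\ref{item:push_top_filt}) the closed-embedding pushforward shifts $\tau^j$ into $\tau^{j+c}$, and by Prop.~\ref{prop:top_filt_properties}(\ref{item:topfilt_projspace}) the projective-bundle pushforward $\pi_*$ respects the filtration as well (it lowers degree by $N$, consistent with the codimension bookkeeping). Hence each piece of $f_*$ descends to a map $gr_\tau^j B^*(X)\to gr_\tau^{j+c}B^*(Y)$, and assembling over $j$ and over a factorization-independent reindexing by the codimension of $f$ gives the pushforward $f_*^{gr}$ on $gr_\tau^\bullet B$.

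Next I would verify the axioms of Definition~\ref{goct} for $gr_\tau^\bullet B$. Functoriality (A1), the base-change/transversality axiom (A2), the projection formula, $\mathbb{A}^1$-invariance (EH) and the projective bundle theorem (PB) all follow formally: each is an identity of maps that holds in $B^*$, and since every map in sight preserves $\tau^\bullet$, passing to the quotient preserves the identity. For (PB) one uses that $gr_\tau^\bullet B$ of $\mathbb{P}(E)$ is the expected free module over $gr_\tau^\bullet B(X)$ by Prop.~\ref{prop:top_filt_properties}(\ref{item:topfilt_projspace}). One does \emph{not} attempt (LOC) — the statement explicitly excludes it, and indeed the localization sequence is destroyed by taking associated graded. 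A remark on well-definedness of the grading convention: I would note $f_*^{gr}$ is defined on the homogeneous component $gr_\tau^j$ and lands in $gr_\tau^{j+c}$ where $c=\mathrm{codim}(f)$; independence of the chosen factorization of $f$ into $i$ and $\pi$ is inherited from the corresponding independence for $f_*$ in $B^*$ together with the fact that both factorizations produce filtration-compatible maps, so the induced maps on $gr$ agree.

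For the second assertion, that $\rho_B:\CH^*\ot B\to gr_\tau^\bullet B$ is a morphism of oriented theories, I would argue as follows. Surjectivity and the $B$-module structure are already given by Prop.~\ref{prop:top_filt_properties}(\ref{item:CH_surj_grtop}); that $\rho_B$ is a ring map and commutes with pullbacks is the content of its construction via $\Omega^*\to \CH^*\ot\LL$ and $\Omega^*\to gr_\tau^\bullet\Omega^*$ in \cite[Cor.~4.5.8]{LevMor}, base-changed along $\LL\to B$. It remains to check compatibility with pushforwards. By Prop.~\ref{prop:top_filt_properties}(\ref{item:topfilt_free_surj}) and the factorization above, it suffices to treat closed embeddings and projective-bundle projections, and to treat the universal case $B^*=\Omega^*$ (the general case follows by tensoring with $B$ over $\LL$). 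For a closed embedding $i:Z\hookrightarrow X$ of codimension $c$ the deformation-to-the-normal-cone computation of $i_*$ in $\Omega^*$ (Levine--Morel) expresses $i_*\alpha$ modulo $\tau^{c+1}$ purely in terms of the Chow-theoretic pushforward and Chern classes of $N_{Z/X}$, which is precisely $\rho_\Omega$ applied to the Chow pushforward; for $\pi:\mathbb{P}(E)\to X$ the pushforward is computed by the projective bundle formula in both theories and matched via Prop.~\ref{prop:top_filt_properties}(\ref{item:topfilt_projspace}).

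The main obstacle is the middle paragraph's (A2): transversal base change involves the $\Tor$-vanishing hypothesis, and one must be careful that the filtration-compatibility of $f_*$ is used coherently with the filtration-compatibility of the base-changed $f'_*$; but since the underlying identity $g^*f_* = f'^*_*g'_*$ holds in $B^*$ and all four maps preserve $\tau^\bullet$ (pullbacks trivially, the two pushforwards by Prop.~\ref{prop:top_filt_properties}(\ref{item:push_top_filt}) and (\ref{item:topfilt_projspace}) applied to the respective codimensions, which agree since transversal base change preserves codimension), the induced square on $gr_\tau^\bullet$ commutes. The other genuinely non-formal input is the normal-cone computation needed for $\rho_B$ on pushforwards, but this is exactly the content of the Levine--Morel construction of $\rho_\Omega$ and requires no new argument here.
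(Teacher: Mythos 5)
Your overall architecture (transport pushforwards to the associated graded, check the axioms, reduce to the universal case) is viable, but it diverges from the paper's proof exactly where real content is needed. The paper never factors projective morphisms: it reduces all four required properties to $B^*=\Omega^*$ via Prop.~\ref{prop:top_filt_properties}(\ref{item:topfilt_free_surj}) and then uses the $\LL$-module description $\tau^i\Omega^n=\sum_{m\le n-i}\LL^m\Omega^{n-m}$ of Prop.~\ref{prop:top_filt_properties}(\ref{item:topfilt_omega}); since pullbacks and pushforwards are $\LL$-linear (projection formula), an \emph{arbitrary} projective pushforward shifts the filtration by its codimension, pullbacks preserve it, and $p^*$ for a vector bundle \emph{strictly} preserves it, all in one stroke. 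Your route (closed embedding via item (\ref{item:push_top_filt}), projection via item (\ref{item:topfilt_projspace})) can be completed, but two steps are asserted rather than argued: for the projection you still need the projection formula together with the fact that $\pi_*\xi^m$ is a constant (pulled back from the point), and for (EH) filtration-preservation of $p^*$ alone does not give an isomorphism on graded pieces --- you need strictness, e.g.\ because the inverse of $p^*$ is again a pullback (along the zero section), or via the $\LL$-module description as in the paper.

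The genuine gap is the compatibility of $\rho_B$ with pushforwards. The sentence ``the deformation-to-the-normal-cone computation of $i_*$ expresses $i_*\alpha$ modulo $\tau^{c+1}$ purely in terms of the Chow-theoretic pushforward and Chern classes of $N_{Z/X}$'' is not a quotable statement, and its indexing is wrong: if $\alpha\in\tau^j$ with $j\ge 1$ then $i_*\alpha\in\tau^{j+c}\subset\tau^{c+1}$, so working ``modulo $\tau^{c+1}$'' sees nothing beyond the degree-zero part; what has to be proved is that $\rho_X(i_*^{\CH}z)=i_*\rho_Z(z)$ in $gr^{j+c}_\tau\Omega^*(X)$, i.e.\ modulo $\tau^{j+c+1}$, for every $j$, and Chern-class correction terms of the normal bundle play no role there. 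The paper's argument requires no geometry at all: in degree $i$ the map $\rho_\Omega$ is by definition the inverse of the isomorphism $\CH^i\cong\Omega^i/\tau^{i+1}$ induced by the morphism of oriented theories $\Omega^*\ot_\LL\ZZ\cong\CH^*$, hence commutes with pushforwards, pullbacks and products; the full $\rho_\Omega$ is the $\LL$-linearization of this map, and $\LL$-linearity of pushforwards and pullbacks finishes the argument, the case of general free $B^*$ again following from Prop.~\ref{prop:top_filt_properties}(\ref{item:topfilt_free_surj}). (Alternatively, a cycle-level check works: for $z=[W]$ both $\rho_X(i_*^{\CH}z)$ and $i_*\rho_Z(z)$ are represented by one and the same resolution $[\tilde W\rarr X]$ modulo the next filtration step.) You should replace the normal-cone appeal by one of these arguments.
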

\begin{proof}
To prove that $gr_\tau^\bullet B^*$ is an oriented theory
we need to show the following: 
\begin{enumerate}
\item the topological filtration on $B^*$ is respected by pullbacks;
\item if $f\colon X\rarr Y$ is a projective morphism of codimension $c$, then $f_* \tau^i B^*(X) \subset \tau^{c+i} B^*(Y)$ 
for all $i\ge 0$;
\item $gr_\tau^\bullet B^*$ satisfies the projective bundle theorem (PB);
\item If $p\colon E\rarr X$ is a vector bundle on $X$, then $p^*$ strictly preserves the topological filtration.
\end{enumerate}

(1) and (2) allow to define the structure of push-forward and pullbacks on $gr_\tau^* B^\bullet$.
Properties (A1), (A2) and the projection formula for $gr_\tau^\bullet B^*$ 
 follow directly from the corresponding properties of $B^*$.
(PB) and (EH) follow from (3) and (4) above, respectively.

Properties (1) and (2) follow from Prop.~\ref{prop:top_filt_properties}, (\ref{item:topfilt_omega}), (\ref{item:push_top_filt}).
 To show property (4) it suffices to treat the case of $B^*=\Omega^*$ 
due to Prop. \ref{prop:top_filt_properties}, (\ref{item:topfilt_free_surj}),
and it follows from Prop.~\ref{prop:top_filt_properties} (\ref{item:topfilt_omega})
since $p^*\colon\Omega^*(X)\rarr \Omega^*(E)$ is an isomorphism of $\LL$-modules.
Also, (3) follows from Prop. \ref{prop:top_filt_properties} (\ref{item:topfilt_projspace}).

It is left to show that the map $\rho_B$ commutes with pullbacks and push-forwards.
Again it is enough to treat the case $B^*=\Omega^*$.
However, the map $(\rho_\Omega)^{\deg i}\colon\CH^i \rarr \tau^i\Omega^i/\tau^{i+1}\Omega^i=\Omega^i/\tau^{i+1}\Omega^{i+1}$
 is the inverse of the isomorphism of theories $\Omega^* \ot_\LL \ZZ \cong \CH^*$, 
and commutes with push-forwards and pullbacks. The whole map $\rho_\Omega$
is just the $\LL$-linearisation of this map, and since pullbacks and push-forwards
are morphisms of $\LL$-modules we obtain the claim.
Similarly, it follows that $\rho$ is multiplicative.
\end{proof}
\begin{Rk}\label{rem:gr_oriented}
Even though $gr^\bullet_\tau \Omega^*$ is an oriented theory in the sense of Levine-Morel,
it does not satisfy the localization axiom\footnote{Vishik kindly provided a counter-example
to this statement via personal communication.} and therefore is not an oriented theory (Def. \ref{goct}).
In particular, Vishik's results on classification of operations (Th. \ref{th:Vish_op})
 can not be applied to operations with the target theory $gr^\bullet_\tau B^*$ where $B^*$ is a free theory.
\end{Rk}

\subsection{The gamma-filtration on $\KK$}

Chern classes $c^{\KK}_i$ from $\KK$ to $\KK$ are closely related 
to $\lambda$-operations (\cite[Exp. 0]{Gro}), which are more well-studied in the literature.
In particular, Theorem \ref{basis} actually can be restated as saying that all endo-operations of $\KK$
are freely generated by $\lambda$-operations. Nevertheless we prefer to work with Chern classes.
The gamma filtration is defined by the following formula:

\begin{equation}
\gamma^i \KK(X) := < c^{\KK}_{i_1}(\alpha_1)\cdots c^{\KK}_{i_k}(\alpha_k)| \sum_j i_j\ge m, \alpha_j \in \KK(X)>.
\end{equation}

We summarize well-known properties of the gamma filtration below,
so that thereafter we could compare them with analogous statement about the gamma filtration
on Morava K-theories.  We do not provide proofs, as we will not use these results,
however, proofs can be obtained in the same manner
as it will be done for Chern classes from $\Kn^*$ in Section \ref{sec:morava_gamma_filtration}.

\begin{Prop}\label{prop:gamma_K0}
\phantom{a}
\begin{enumerate}
\item $\gamma^i\KK \subset \tau^i\KK$ for $i\ge 0$;
\item $\gamma^1=\tau^1$, $\gamma^2=\tau^2$;
\item $\gamma^i\ot\QQ = \tau^i\ot\QQ$ as filtrations on $\KK\ot\QQ$;
\item The $i$-th Chern class $c_i^{\CH}\colon\KK\rarr \CH^i$ induces additive morphisms
$$c^{\CH}_i\colon gr^i_\tau \KK \rarr \CH^i, \quad c^{\CH}_i\colon gr^i_\gamma \KK \rarr \CH^i;$$
\item $c^{\CH}_i\ot id_{\QQ}$ yields an isomorphism between $gr^i_\gamma \KK\ot \QQ$
(or $gr^i_\tau \KK\ot \QQ$) and $\CH^i\ot \QQ$;
\item $c^{\CH}_1\colon gr^1_\gamma \KK \rarr \CH^1$ is an isomorphism,
and $c^{\CH}_2\colon gr^2_\gamma \KK \rarr \CH^2$ is surjective.
\item $c^{\CH}_i\colon gr^i_\gamma \KK\ot\Zp \rarr \CH^i\ot\Zp$ is surjective for $i\le p$.
\end{enumerate}
\end{Prop}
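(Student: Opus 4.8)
The plan is to deduce (1)--(7) by specialising to $n=1$ the arguments developed for the gamma filtration on $K(n)^*$ in Section~\ref{sec:morava_gamma_filtration}; since $K(1)^*=K_0\ot\Zp$, Proposition~\ref{prop:morava_gamma_properties} already contains the $p$-local versions of almost all of these statements, and what is left is to pass between $K_0$ and its localisations and to handle the low-codimension integral refinements. Item (1) is immediate: by part~i) of Theorem~\ref{th:main} (equivalently, from the construction of Chern classes together with Proposition~\ref{prop:top_filt_properties}, (\ref{item:chern_top_filt})) each operation $c_i^{K_0}\colon K_0\rarr K_0$ takes values in $\tau^i K_0$, and $\tau^\bullet$ is multiplicative by Proposition~\ref{prop:top_filt_properties}, (\ref{item:topfilt_mult}); hence any product $c_{i_1}^{K_0}(\alpha_1)\cdots c_{i_k}^{K_0}(\alpha_k)$ with $\sum_j i_j\ge m$ lies in $\tau^m K_0$.

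For item (4) I would use the truncation map $tr_i$ of Section~\ref{sec:tr_constr}: exactly as for $K(n)^*$, on $\tau^i K_0$ the Chern class $c_i^{\CH}$ is computed from $tr_i(c_i^{K_0})$ by means of the general Riemann--Roch Theorem~\ref{th:riemann_roch}, and this presentation shows at once that $c_i^{\CH}$ vanishes on $\tau^{i+1}K_0$ and is additive on $\tau^i K_0$; combined with item (1) ($\gamma^iK_0\subset\tau^iK_0$, $\gamma^{i+1}K_0\subset\tau^{i+1}K_0$) this yields the induced additive maps on both $gr^i_\tau K_0$ and $gr^i_\gamma K_0$. Items (3) and (5) are the rational part: the Chern character identifies $K_0\ot\QQ$ with $\bigoplus_j\CH^j\ot\QQ$, carrying $\tau^{\ge\bullet}$ onto the naive grading filtration (equivalently, $\tau^\bullet\ot\QQ$ is the eigenspace filtration of the Adams operations, which is also how $\gamma^\bullet$ is defined rationally), so $\gamma^\bullet\ot\QQ=\tau^\bullet\ot\QQ$ and $gr^i_\gamma K_0\ot\QQ=gr^i_\tau K_0\ot\QQ\cong\CH^i\ot\QQ$; on such a pure-codimension-$i$ class Newton's identities force $c_i^{\CH}=(-1)^{i-1}(i-1)!\,\mathrm{ch}_i$, which is therefore an isomorphism after $\ot\QQ$.

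Items (2), (6) and (7) carry the real content. For (7) I would argue as for Proposition~\ref{prop:morava_gamma_properties}, (vi) with $n=1$: the surjection $\rho\colon\CH^i\ot\Zp\to gr^i_\tau K_0\ot\Zp$ of Proposition~\ref{prop:top_filt_properties}, (\ref{item:CH_surj_grtop}) has image generated by products of Chern classes of $K_0$-classes in the range $i\le p$, which rests on the fact that the logarithm of the multiplicative formal group law of $K_0$ is $p$-integral in degrees $\le p$ — this is the origin of the bound, made precise by the inductive formula of Appendix~\ref{app:image_of_Chern_Chow}. Since $1\le p$ and $2\le p$ for every prime $p$, running this at all primes and combining with item (3) forces $gr^i_\gamma K_0=gr^i_\tau K_0$ for $i=1,2$, i.e.\ $\gamma^1=\tau^1$ and $\gamma^2=\tau^2$ (alternatively, $\gamma^1 K_0=\tilde{K}_0$ follows directly since $\tilde{K}_0(X)$ is additively generated by classes $[E]-\mathrm{rk}(E)=\pm c_1^{K_0}([E^{\vee}])$ via the Whitney formula); combined with the classical identification $gr^1_\tau K_0\cong\CH^1$ this also shows $c_1^{\CH}\colon gr^1_\gamma K_0\to\CH^1$ is an isomorphism and $c_2^{\CH}\colon gr^2_\gamma K_0\to\CH^2$ is onto, giving (6).

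The main obstacle is precisely the surjectivity statement (7) (and hence (6)): unlike (1)--(5) it does not follow from the formal truncation and Riemann--Roch machinery alone, but requires the delicate bookkeeping of the $p$-integrality of the formal group law in low degrees — the same computation that, for $K(n)^*$, produces the sharper range $i\le p^n$.
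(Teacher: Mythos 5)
Your overall strategy is the one the paper itself points to: Proposition \ref{prop:gamma_K0} is stated without proof (the text only says the proofs ``can be obtained in the same manner'' as in Section \ref{sec:morava_gamma_filtration}), and your treatment of items (1), (3), (4), (5), (7) does specialize Proposition \ref{prop:morava_gamma_properties} to $n=1$ (implicitly using Prop.~\ref{prop:gamma_K(1)_K0} to identify the two gamma filtrations on $K_0\ot\Zp$), which is essentially the intended route; for the rational statements one can equally well run the paper's descending induction with the constants $a_i$ instead of quoting the Adams-eigenspace description, and your phrasing of (7) in terms of ``the image of $\rho$'' is garbled (the actual mechanism is that the constant $a_i$, which is $(i-1)!$ up to $p$-adic units, is invertible in $\Zp$ exactly for $i\le p$), but the intended argument is correct.

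Item (2), however --- and with it the isomorphism half of item (6) --- has a genuine gap. The $n=1$ specialization of Proposition \ref{prop:morava_gamma_properties} gives, at a prime $p$ and for $i\le p$, only the surjectivity of $gr^i_\gamma\ot\Zp\rarr gr^i_\tau\ot\Zp$, i.e. $\gamma^i\ot\Zp+\tau^{i+1}\ot\Zp=\tau^i\ot\Zp$; it does not give $\gamma^i\ot\Zp=\tau^i\ot\Zp$. Combining this with the rational equality (3) does not close the gap: to upgrade the ``mod $\tau^{i+1}$'' statement to an equality of filtration terms one would iterate $\tau^2=\gamma^2+\tau^3=\gamma^2+\gamma^3+\tau^4=\cdots$, and the surjectivity needed in codimension $j$ is only available for $j\le p$, so at $p=2$ or $3$ the iteration stops immediately and $p$-torsion in $\tau^2/\gamma^2$ supported in higher codimension is not excluded by your data. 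The missing ingredient is the classical determinant argument: by the Whitney formula every $x\in\tilde{K}_0(X)$ satisfies $x\equiv[\det x]-1\pmod{\gamma^2}$, so $L\mapsto[L]-1$ gives a surjection $\CH^1\rarr gr^1_\gamma K_0$ inverse to $c_1^{\CH}$; hence $c_1^{\CH}:gr^1_\gamma K_0\rarr\CH^1$ is an isomorphism, and for $x\in\tau^2$ (where $c_1^{\CH}(x)=0$, i.e. $\det x$ is trivial) one concludes $x\in\gamma^2$, which is exactly $\gamma^2=\tau^2$ --- the $K_0$-analogue of the ``isomorphism for $i=1$'' claim in Proposition \ref{prop:morava_gamma_properties}, vi). With that in place your deduction of (6) goes through; note also that the surjectivity parts of (6) and (7) do globalize from all primes without (2), since a homomorphism of abelian groups whose cokernel vanishes after $\ot\Zp$ for every prime $p$ is surjective.
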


The gamma filtration has a description that does not use the existence or particular properties of Chern classes.
Namely, the gamma filtration is the best `operational' approximation of the topological filtration.

\begin{Prop}
For any $m\ge 0$
$$\gamma^m \KK(X) := < \phi(\alpha_1,\ldots, \alpha_k) | \phi\in [(\KK)^{\times k},\tau^m \KK \circ \prod^k], \alpha_i \in \KK(X)>,$$

i.e.\ the $m$-th part of the gamma filtration is generated by the image of all internal poly-operations 
whose codomain is the $m$-th part of the topological filtration on $\KK$.
\end{Prop}

\section{Truncation of operations and the topological filtration}\label{sec:trunc}

The Chern class $c^B_i$ considered as an operation from $\KK$ 
to some oriented theory $B^*$ is an example of an operation
which takes its values in $\tau^iB^*$, the $i$-th part of the topological filtration (Prop. \ref{prop:top_filt_properties},
(\ref{item:chern_top_filt})).
There is also a surjective additive map $\rho_B\colon\CH^i\ot B\rarr \tau^iB^*/\tau^{i+1}B^*$ (Prop. \ref{prop:top_filt_properties},
(\ref{item:CH_surj_grtop})),
and one can easily check that $\rho_B\circ c_i^{\CH}= c_i^B$ as operations to $\tau^iB^*/\tau^{i+1}B^*$.
Thus, $c_i^{\CH}$ is a lift of $c_i^{B} \mod \tau^{i+1}$ along the map $\rho_B$.

The goal of this section is to provide a construction of such a lift for all operations.
Namely, for an operation $\phi\colon A^*\rarr \tau^i B^*$ from a free theory to an oriented theory
we construct its truncation $tr_i \phi\colon A^*\rarr \CH^i\ot B$
s.t. $\rho_B\circ (tr_i\phi)=\phi$ as operations to $\tau^i B^*/\tau^{i+1}B^*$.
Moreover, the truncation map gives an inclusion of $B$-modules
$[A^*, \tau^i B^*]/[A^*, \tau^{i+1}B^*]\subset [A^*, \CH^*\ot B]$.
The problem of calculating
all operations from $A^*$ to $B^*$ then may be solved in two steps: 1. calculating all operations
from $A^*$ to $\CH^*\ot B$, 2. calculating images of truncation maps.

In Section \ref{sec:op_topfilt} we characterize operations from $A^*$ to $B^*$
which take values in $\tau^i B^*$ in terms of their action on products of projective spaces.
In Section \ref{sec:tr_constr} we explain the construction of the truncation
map and the action of the truncation of an operation on products of projective spaces. 
In short, series $G_{\{l\}}\in B[[z_1,\ldots, z_l]]$ which determine the operation (see Section \ref{sec:cont})
are truncated to polynomials of degrees $i$ with respect to variables $z_j$ 
by forgetting the part of these series of higher degree.
Section \ref{sec:trunc_mod} contains a variation of the truncation construction for operations between $p$-local theories.
All this is based on Vishik's Theorem \ref{th:Vish_op}. 
In Section \ref{sec:all_op_from_trunc} we show that if truncation maps are isomorphisms,
and the $B$-module $[A^*, \CH^*\ot B]$ is free, then lifts of (either $B$-module or $B$-algebra) generators 
of the latter are generators of $[A^*, B^*]$.

The results about truncations are one of the main technical tools for the rest of the paper.
In Section~\ref{sec:proof_main} we will apply them to the case when $A^*$ is the $n$-th Morava K-theory $\Kn^*$,
and $B^*$ is a $p^n$-typical oriented theory. Note that the problem of classifying all operations
from $\Kn^*$ to $\CH^*\ot\Zp$ was already solved in \cite{Sech}.

\subsection{Operations which target the $j$-th part of the topological filtration}\label{sec:op_topfilt}

\begin{Prop}\label{prop:operations_from_tilde}
Let $A^*$, $B^*$ be oriented theories,
and assume that $A^*$ satisfies the (CONST) axiom (see Def. \ref{def:const_axiom}).
Let $A$ denote the coefficient ring of $A^*$ (Notation~\ref{not:A}).

Then there are natural isomorphisms of the following sets:
$$ [A^*, B^*] = \prod_A [\tilde{A}^*, B^*], \qquad [A^*, B^*]^{add} = \Hom(A,B)\oplus [\tilde{A}^*, B^*].$$
\end{Prop}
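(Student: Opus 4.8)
The plan is to exploit only the splitting furnished by the (CONST) axiom; Vishik's classification is not needed here. Recall from Definition \ref{def:const_axiom} that (CONST) yields a decomposition $A^* = \tilde{A}^* \oplus \underline{A}$ as presheaves of abelian groups, where $\underline{A}$ is the constant subpresheaf spanned by pull-backs from $\Spec k$ and $\tilde{A}^*$ is the subpresheaf of elements vanishing at generic points. The first thing I would record is the behaviour of $\underline{A}$: for a connected smooth $X$ the section $A \to A^*(X)$ identifies $\underline{A}(X)$ with $A$ canonically (its composite with $A^*(X) \to A^*(k(X)) \cong A$ being the identity), and for a morphism $f \colon Y \to X$ of connected smooth varieties the map $f^* \colon \underline{A}(X) \to \underline{A}(Y)$ is $\mathrm{id}_A$, by functoriality of pull-back along $Y \to X \to \Spec k$. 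For a general $X$ one has $A^*(X) = \prod_i A^*(X_i)$ over the connected components $X_i$, and the decomposition is taken componentwise. Thus on connected $X$ we obtain a decomposition of \emph{sets} $A^*(X) = \coprod_{a \in A}\bigl(\tilde{A}^*(X) + a\bigr)$ which is respected by all pull-backs.

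Given this, for an operation $\phi \colon A^* \to B^*$ and $a \in A$ I would define $\phi_a \colon \tilde{A}^* \to B^*$ on a connected $X$ by $\phi_a(\tilde\alpha) := \phi(\tilde\alpha + a)$ (where $a$ also denotes its image under $A \to A^*(X)$), and extend componentwise; naturality of $\phi_a$ is immediate from naturality of $\phi$ together with the two facts above, namely that pull-back fixes the constant $a$ and preserves $\tilde{A}^*$. This defines a map $[A^*,B^*] \to \prod_{a \in A}[\tilde{A}^*,B^*]$. For the inverse, a family $(\psi_a)_{a \in A}$ is sent to the operation $\phi$ with $\phi(\alpha) := \psi_a(\tilde\alpha)$ on connected $X$, where $\alpha = \tilde\alpha + a$ is the unique decomposition with $\tilde\alpha \in \tilde{A}^*(X)$, $a \in A$, extended componentwise. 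Checking that $\phi$ is natural reduces, for $f \colon Y \to X$, to the case $Y$ connected (so that $f$ factors through a single component of $X$), and then amounts to $f^*(\tilde\alpha + a) = f^*\tilde\alpha + a$. The two assignments are visibly mutually inverse and natural in $A^*$ and $B^*$, giving the first isomorphism.

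For the additive statement one argues identically, using that an additive $\phi$ is determined by its restrictions to the two summands via $\phi(\tilde\alpha + a) = \phi|_{\tilde{A}^*}(\tilde\alpha) + \phi|_{\underline{A}}(a)$. The restriction $\phi|_{\tilde{A}^*}$ ranges over $[\tilde{A}^*, B^*]$ (read in the additive sense), while $\phi|_{\underline{A}}$ is a morphism of presheaves of abelian groups out of the constant presheaf $\underline{A}$, which by naturality along the structure morphisms $X \to \Spec k$ is determined by, and may be taken to be, an arbitrary group homomorphism $A \to B$. The resulting correspondence $\phi \leftrightarrow (\phi|_{\underline{A}}, \phi|_{\tilde{A}^*})$ gives $[A^*,B^*]^{add} = \Hom(A,B) \oplus [\tilde{A}^*,B^*]$.

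The only point that requires genuine care, rather than formal manipulation with the (CONST) splitting, is the bookkeeping over connected components: one must verify that the "constant part" of an element is locally constant on $X$, that it is preserved by every pull-back (using that a connected variety maps into a single component of the target), and hence that the componentwise constructions above patch to honest morphisms of presheaves. I expect this to be the only place where something must actually be checked rather than simply written down.
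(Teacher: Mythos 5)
Your argument is correct and is essentially the paper's proof: the (CONST) splitting $A^*=\tilde{A}^*\oplus A$ is used to identify an operation $\phi$ with the family of its restrictions $\phi_a=\phi(-+a)$ to the cosets $a\oplus\tilde{A}^*$, with the inverse given by reassembling a family $(\psi_a)_{a\in A}$, and the additive case handled by splitting off $\Hom(A,B)$. The paper records this in two sentences, taking the presheaf splitting from Definition \ref{def:const_axiom} as given; your extra bookkeeping over connected components and the naturality checks are just a more explicit rendering of the same construction.
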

\begin{proof}
As $A^*=A\oplus \tilde{A}^*$ by the (CONST) property,
there is a natural map $\prod_A [\tilde{A}^*, B^*] \rarr [A^*,B^*]$
which sends a tuple of operations $(\phi_a)_{a\in A}$
to an operation $(a,x)\rarr \phi_a(x)$. 
The inverse map is just the restriction of an operation
 to subspaces $a\oplus \tilde{A}^*$ for all $a\in A$.

The case of additive operations can be treated similarly.
\end{proof}

For an oriented theory $A^*$ satisfying (CONST) axiom 
the first piece of the topological filtration $\tau^1 A^*$ equals to $\tilde{A}^*$.
Thus, the proposition above simplifies a description of operations from $A^*$.
Note that since any operation which preserves zero
respects topological filtration, we see that classifying operations
from $A^*$ to $\tau^j B^*$, $j\ge 1$, is equivalent to classifying operations from
 $\tau^1 A^*=\tilde{A}^*$ to $\tau^j B^*$.
Thus, dealing with $\tilde{A}^*$ instead of $A^*$ in the following proposition
does not reduce the generality of the statement.

\begin{Prop}\label{prop:operations_top_COT}
Let $A^*$ be a free theory, 
 $B^*$ be an oriented theory  
 and let $\phi\colon\tilde{A}^*\rarr B^*$ be an operation.
Then $\phi$ takes values in $\tau^i B^*$ for some $i>0$
if and only 
if its restriction to products of projective spaces 
does. 
\end{Prop}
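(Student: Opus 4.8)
The plan is to use Vishik's classification (Theorem \ref{th:Vish_op}) to reduce the global statement about $\phi$ taking values in $\tau^i B^*$ to the purely combinatorial statement about its restrictions $\phi_{\{l\}}\colon A[[z_1^A,\ldots,z_l^A]]\to B[[z_1^B,\ldots,z_l^B]]$. One direction is trivial: if $\phi$ globally takes values in $\tau^i B^*$, then in particular its values on products of projective spaces lie in $\tau^i B^*((\mathbb{P}^\infty)^{\times l})$, which by Proposition \ref{prop:top_filt_properties}(\ref{item:topfilt_projspace}) is the submodule $F^i_B\subset B[[z_1^B,\ldots,z_l^B]]$ of series of degree $\ge i$. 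So the content is the converse.

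So assume the restriction of $\phi$ to all products of projective spaces takes values in the topological filtration $\tau^i$, i.e.\ each $\phi_{\{l\}}$ lands in $F^i_B$. I want to show that for every smooth variety $X$ and every $\alpha\in\tilde A^*(X)$ the class $\phi(\alpha)$ lies in $\tau^i B^*(X)$. Since $A^*$ is a free theory, $\tilde A^*(X)$ is generated as an abelian group by classes of the form $f_*(\beta)$ where $f\colon Z\to X$ is a projective morphism from a smooth variety and $\beta\in A^*(Z)$; and using resolution of singularities together with the projective bundle formula one can arrange $Z$ to be a product of projective bundles over components of $X$, and $\beta$ a monomial in first Chern classes of line bundles, times a pullback of a class from $X$. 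The key tool is then the general Riemann--Roch formula, Theorem \ref{th:riemann_roch}: it expresses $\phi(i_*(\alpha\prod x_i))$ as $i_*$ of a residue built entirely out of the transforms $G^{c+k}_Z$, which are themselves built from the $\phi_{\{l\}}$'s by multiplying by $z$-variables, applying $\phi$, and then specialising $z$-variables to $B$-roots of a normal bundle and to first Chern classes. The point is that this residue expression lies in $\tau^{?}B^*(Z)$ with the right index: multiplying by $z_1^A\cdots z_c^A$ raises topological degree by $c$, applying $\phi$ (which by assumption lands in $F^i_B$ on projective spaces) keeps us in degree $\ge i$, specialising the $z_j^B$ to Chern classes of line bundles keeps us in $\tau^{\ge i}$ by Proposition \ref{prop:top_filt_properties}(\ref{item:chern_top_filt}),(\ref{item:topfilt_mult}), and the residue-and-divide operation shifts the filtration index correctly since $\mu_i\in\tau^1 B^*(Z)$ are Chern roots of $N_{Z/X}$. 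Finally $i_*$ raises topological degree by $\mathrm{codim}$ (Proposition \ref{prop:top_filt_properties}(\ref{item:push_top_filt})), and the bookkeeping works out so that $\phi(i_*(\alpha\prod x_i))\in\tau^i B^*(X)$.

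To go from these generators to an arbitrary element of $\tilde A^*(X)$, which is a \emph{sum} of such push-forwards, I would use the Discrete Taylor Expansion (Proposition \ref{prop:taylor}): $\phi$ of a sum is expressed through $\phi$ and its internal derivatives $\partial^s\phi$ evaluated on the individual summands. Each derivative $\partial^s\phi$ is again an internal poly-operation whose restriction to products of projective spaces takes values in $\tau^i$ (since forming derivatives is done by the same pullback/difference operations that commute with the inclusion $F^i_B\hookrightarrow B[[z]]$, the hypothesis on $\phi$ passes to $\partial^s\phi$), so the Riemann--Roch argument of the previous paragraph applies verbatim to each term and each term lands in $\tau^i B^*(X)$; hence so does the sum.

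The main obstacle I expect is the bookkeeping in the Riemann--Roch step: one has to check carefully that all the operations involved — multiplying by $z$-variables, specialising to Chern roots $\mu_i$ of the normal bundle, taking the residue and dividing by $t\prod(t+\mu_i)$, and finally $i_*$ — shift the topological filtration index by exactly the amounts claimed, and that these shifts combine to give $\tau^i$ on the nose rather than $\tau^{i-1}$ or $\tau^{i+1}$. A secondary subtlety is making the reduction to ``$\alpha$ a monomial in Chern classes of line bundles over a product of projective bundles'' rigorous; here one invokes that for a free theory $A^*(Z)$ is spanned by such classes (a consequence of the construction of $\Omega^*$ by generators and relations, together with resolution of singularities in characteristic zero), so that it suffices to treat those, and the general case follows by $A$-linearity of $\phi$ after applying the Taylor expansion.
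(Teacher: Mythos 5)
The ``only if'' direction and the use of the Discrete Taylor Expansion to pass from generators to sums are fine, but the core of your converse argument has a genuine gap. You claim that the Riemann--Roch residue is ``built entirely out of the $\phi_{\{l\}}$'s''. It is not: the transform $G^{c+k}_Z$ appearing in Theorem \ref{th:riemann_roch} is $\phi$ evaluated on $A^*(Z\times(\mathbb{P}^\infty)^{\times(c+k)})$ for an \emph{arbitrary} smooth $Z$, and the hypothesis only controls the values of $\phi$ on bare products of projective spaces. Your attempted reduction --- that $\tilde{A}^*(X)$ is generated by $f_*(\beta)$ with $Z$ a product of projective bundles over $X$ and $\beta$ a monomial in first Chern classes --- is not available: by the generalized degree formula (or the construction of $\Omega^*$) the generators are pushforwards along arbitrary projective morphisms from smooth varieties (resolutions of closed subvarieties), and there is no reduction to projective bundles. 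So at the decisive moment (``applying $\phi$ keeps us in degree $\ge i$'') you are implicitly assuming exactly the statement to be proved, namely that $\phi$ lands in $\tau^i$ on $Z\times(\mathbb{P}^\infty)^{\times\bullet}$ for general smooth $Z$.

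The missing idea is an induction on the dimension of the variety, which is how the paper makes Riemann--Roch effective: the induction hypothesis is that $\phi$ lands in $\tau^i$ on $D\times(\mathbb{P}^\infty)^{\times n}$ for all smooth $D$ of dimension $\le d$, the base case being your hypothesis on products of projective spaces. For $X$ of dimension $d+1$ one writes an element as $a+\alpha$, resolves the (possibly singular) support of $\alpha$ by Hironaka so that it becomes a strict normal crossings divisor, applies the SNC Riemann--Roch formula (Lemma \ref{lm:riemann_roch_snc}) to express $\phi(p^*\alpha)$ through derivatives $\pd^{|S|-1}\phi$ evaluated on the strata $E_S$, which have dimension $\le d$ so the induction hypothesis applies; the filtration bookkeeping you defer is exactly Lemma \ref{lm:induction_pd_top} (divisibility of $\pd^{|S|-1}\phi$ by $z_1^B\cdots z_{|S|}^B$ plus Proposition \ref{prop:top_filt_properties}, (\ref{item:topfilt_projspace}), giving $\tau^{i-|S|}$ before the codimension-$|S|$ pushforward), and one descends from the blow-up via Lemma \ref{lm:blowup_topfilt}, which shows $p^*$ strictly preserves the topological filtration. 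Two further points you omit: the paper first reduces to $B^*$ free by lifting $\phi$ to $\Omega^*\ot_{\LL,F_B}B$ via Theorem \ref{th:Vish_op}, and Theorem \ref{th:riemann_roch} applies only to closed embeddings of smooth subvarieties, not to arbitrary projective morphisms, which is another reason the direct ``generators and push-forwards'' route does not run.
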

\begin{proof}
The only if part is straight-forward. For the converse let $i>0$, 
and assume that $\phi$ takes values in $\tau^i B^*$ on products of projective spaces.
Note that in particular this means that $\phi$ is zero over a point,
i.e.\ $\phi$ preserves zero. 

There exists a free theory $\Omega^*\ot_{\LL, F_B} B$
and the canonical morphism of theories $\Omega^*\ot_{\LL, F_B} B \rarr B^*$ 
which preserves the topological filtration.
Since an operation from $A^*$ to $B^*$ lifts to the free theory $\Omega^*\ot_{\LL, F_B} B$
by Vishik's Theorem \ref{th:Vish_op},
we may assume without loss of generality that $B^*$ is also a free theory.

We follow Vishik's proof of the Th. \ref{th:Vish_op} (\cite[5.4]{Vish2}) in which 
the operation is reconstructed on arbitraty smooth variety by induction
on its dimension. The key ingredient in this argument is the Riemann-Roch theorem (Th. \ref{th:riemann_roch})
for closed smooth subvarieties.

The idea of the construction of an operation by induction is the following:
if $X$ is a variety of dimension $d+1$, and $\alpha\in \tilde{A}^*(X)$
is supported on a smooth divisor $i\colon D\hookrightarrow X$,
then $\phi(i_*(\alpha) z_1\cdots z_n)\in B^*(X\times (\mathbb{P}^\infty)^{\times n})$ 
can be calculated by the Riemann-Roch formula
using the value of $\phi$ on a product of projective spaces with $D$, while the latter is a variety of dimension $d$.
There are, however, two technical issues which make the following proof a little bit cumbersome.
 First, the divisor $D$ does not have to be smooth,
and if it is not, the Riemann-Roch formula can not be applied.
Second, the fact that $\phi$ sends additive generators of $A^*(X\times (\mathbb{P}^\infty)^{\times n})$
to the $j$-th part of the topological filtration does not directly imply
that $\phi$ sends every element to this part of the filtration as the operation
does not have to be additive. To deal with the first problem one uses resolutions of singularities,
and the second one is dealt with by considering derivatives of the operations.

{\bf Induction assumption.} For an integer $d\ge 0$,
every smooth variety $X$ of dimension not greater than $d$ and every number $n\ge 0$ 
the map
\begin{center}
\begin{tikzcd}
\phi\colon A^*(X\times (\mathbb{P}^\infty)^{\times n}) \arrow{r} & B^*(X\times (\mathbb{P}^\infty)^{\times n})
\end{tikzcd}
\end{center}
takes values in $\tau^i B^*(X\times (\mathbb{P}^\infty)^{\times n})$.

{\bf Base of induction} ($d=0$) is the assertion of the Proposition for the base field,
and the case of finite extension of the base field 
clearly reduces to the assertion with the help of the (CONST) and the (PB) properties.

{\bf Induction step} ($d\rightarrow d+1$).
Let $X$ be a smooth variety of dimension $d+1$.

Every element of $\tilde{A}^*(X\times (\mathbb{P}^\infty)^{\times n})$
can be represented as a sum of two elements $\alpha$ and $a$
where $\alpha$ lies in the subgroup $\tilde{A}^*(X) [[z_1,\ldots, z_n]]$
and $a\in A[[z_1,\ldots, z_n]]^{\deg \ge 1}$. Here $A[[z_1,\ldots, z_n]]$
is considered as a subgroup of $A^*(X\times (\mathbb{P}^\infty)^{\times n})$ 
via the map $p_{X,n}^*$,
where $p_{X,n}\colon X\times (\mathbb{P}^\infty)^{\times n}\rarr (\mathbb{P}^\infty)^{\times n}$ is the projection morphism. 

By the definition of a derivative we have $\phi(a+\alpha)=\phi(a)+\phi(\alpha)+\pd^1\phi(a,\alpha)$,
and we are going to show that each of the latter three summands lies in $\tau^i B^*(X\times (\mathbb{P}^\infty)^{\times n})$.

1. The value of $\phi$ on  $a\in A[[z_1,\ldots, z_n]]$ 
is equal to $p_{X,n}^*\phi_{(\mathbb{P}^\infty)^{\times n}}(a)$.
By the base induction assumption $\phi(a)$ lies in $(z_1^B,\ldots, z_n^B)^i$
and therefore $p_{X,n}^* \phi(a) \in (z_1^B,\ldots, z_n^B)^i \subset \tau^i B^*(X\times (\mathbb{P}^\infty)^{\times n})$.

2.  For $\alpha \in \tilde{A}^*(X) [[z_1,\ldots, z_n]]$,
there exist a divisor $D$ in $X$, s.t. $\alpha$
restricts to $0$ over $X\setminus D \times (\mathbb{P}^\infty)^{\times n}$.
By the result of Hironaka \cite{Hiro} (recall that we always work in characteristic $0$ in this paper)
 there exist a resolution 
of singularities of $D$ inside $X$,
i.e.\ a birational morphism $p\colon\tilde{X}\rarr X$,
which is an isomorphism outside of $D$ 
and the preimage of $D$ is a divisor $E$ with strict normal crossings.

Let us show that $\phi(p^*(\alpha))$
lies in the $i$-th part of the topological filtration 
of $B^*(\tilde{X}\times (\mathbb{P}^\infty)^{\times n})$.
Indeed, $p^*\alpha$ restricts to zero in the complement to $p^{-1}(D)\times (\mathbb{P}^\infty)^{\times n}$,
and therefore 
there exists an element $\beta\in A^*(E\times (\mathbb{P}^\infty)^{\times n}))$ 
s.t. $p^*\alpha=f_* \beta$ where 
 $f\colon E\times (\mathbb{P}^\infty)^{\times j})\rarr \tilde{X}\times (\mathbb{P}^\infty)^{\times n})$ 
and $E\subset X$ is a divisor with 
strict normal crossings. 
Denote by $J$ the set of irreducible components of $E$, i.e.\ for $r\in J$ the divisor $E_r\subset E$
is smooth,
and for $S\subset J$ denote by $E_S$ the intersection of components $E_s$ where $s\in S$.
Note that by the definition of a divisor with simple normal crossing $d_S\colon E_S\hookrightarrow X$ 
is a closed smooth subvariety of codimension $|S|$. 

There is a well-defined push-forward map from the values on irreducible components of $E$ to the value on $E$:
$\oplus_r A^*(E_r\times (\mathbb{P}^\infty)^{\times n}))\rarr A^*(E\times (\mathbb{P}^\infty)^{\times n}))$ 
which is a surjection (see e.g.\ \cite[Section 2.2]{Vish1}),
and,
thus, we may assume that $p^*\alpha = \sum_{r\in J} (d_r)_* \beta_r$ where $\beta_r \in A^*(E_r\times (\mathbb{P}^\infty)^{\times n})$.

\begin{Lm}[{\cite[Formula (10)]{Vish2}}]\label{lm:riemann_roch_snc}
\begin{equation}\label{eq:riemann_roch_snc}
 \phi(p^*\alpha) = \sum_{S\subset J} (d_S)_* \Res
 \frac{\pd^{|S|-1} \phi(z_r^A j_r^*\beta_r, r\in S)|_{z_r^B=t+_B j_r^*\lambda_r}}{t\cdot \prod_{r\in S} (t+_B j_r^*\lambda_r)}
 \omega_t^B,
\end{equation}
where $\lambda_r=c_1^B(\OO(E_r))\in B^*(E_r)$ and $j_r\colon E_S\rarr E_r$ is a closed embedding.
\end{Lm}
We provide the proof of this Lemma for the sake of completeness. Note that the statement differs from \cite{Vish2} only in notation
and that there it is used to construct an operation, while we are dealing with an already existing operation $\phi$.
\begin{proof}[Proof of the Lemma.]

Using Discrete Taylor Expansion (Prop. \ref{prop:taylor}) 
we have 
$$\phi(p^*\alpha)=\sum_{S\subset J} \pd^{|J|-1}\phi((d_r)_* \beta_r, r\in S).$$
The internal derivative $\pd^{|J|-1}\phi$ on $X$ is equal
to $\Delta_X^* \eth^{|J|-1}\phi$ on $X^{\times |J|}$
where $\Delta_X\colon X\hookrightarrow X^{\times |J|}$ is the diagonal.
On the other hand by the Riemann-Roch formula for external poly-operations
applied to each of the `variables' separately
we obtain 
$$\eth^{|J|-1}\phi((d_r)_* \beta_r, r\in S)=
(\times d_r, r\in S)_* \Res \frac{\eth^{|J|-1}\phi(\beta_r z_r^A, r\in S)|_{z_r^A=t+_B \lambda_r}}{t\cdot \prod_{r\in S} (t+_B \lambda_r)}
\omega_t^B\in B^*(X^{\times |J|-1}).$$

In order to get rid of external derivatives note
 that the following square is a transversal cartesian square
(see Section \ref{sec:def} for the definition):
\begin{center}
\begin{tikzcd}
E_S \arrow[r, "d_S"] \arrow[d, "j_S"] & X \arrow[d, "\Delta_X"] \\
\prod_{r\in S} E_r \arrow[r,"{\times d_r, r\in S}"] & X^{\times |J|-1}\\
\end{tikzcd}
\end{center}
and, thus, by axiom (A2) for the theory $B^*$ 
we have $\Delta^*_X (\times d_r, r\in S)_*=(d_S)_*(j_S)^*$.
Applying this equality 
to the expression of $\phi(p^*\alpha)$ in terms of external derivatives on products of divisors
we obtain that 
$$\phi(p^*\alpha) = \sum_{S\subset J} (d_S)_*  j_S^* \Res \frac{\eth^{|J|-1}\phi(\beta_r z_r^A, r\in S)|_{z_r^A=t+_B \lambda_r}}{t\cdot \prod_{r\in S} (t+_B \lambda_r)}\omega_t^B.$$

Poly-operation commute with pullbacks, 
thus factoring $j_S^*=\Delta_{E_S}^*\circ (\times j_r)^*$
we see that the following holds: 
$j_S^* \eth^{|J|-1}\phi(\beta_r z_r^A, r\in S)= 
\Delta_{E_S}^* \eth^{|J|-1}\phi(z_r^A j_r^* \beta_r, r\in S)$,
and the latter is actually an internal derivative $\pd^{|J|-1}\phi(z_r^A j_r^* \beta_r, r\in S)$.
This finishes the proof of the Lemma.
\end{proof}

To conclude that $\phi(p^*\alpha)$ lies in the $i$-th part of the topological filtration
 we need to deal with fractions in the RHS of the equation (\ref{eq:riemann_roch_snc}). 
This is done in the following Lemma using the induction assumption.

\begin{Lm}\label{lm:induction_pd_top}
Let $\phi$ be an operation from $A^*$ to $B^*$, let $i\ge 1$, $d\ge 0$,
and assume that for each variety of dimension less than $d+1$ 
we have $\phi(A^*(X\times (\mathbb{P}^\infty)^{\times n})))\subset \tau^i B^*(X \times (\mathbb{P}^\infty)^{\times n}))$
for each $n\ge 0$.

Let $r\ge 1$ and let $D$ be a smooth variety of dimension less than $d+1$,
let $\beta_i\in A^*(D)$, $\lambda_i \in B^*(D)$ for $i\colon 1\le i \le r$.
Then  
$$\Res \frac{(\pd^{|S|-1}\phi)(z^A_r \beta_r, r\in S)|_{z_r^B=t+_B \lambda_r}}
{t\cdot \prod_{r\in S} (t+_B \lambda_r)}\omega^B_t \in \tau^{i-|S|} B^*(D).$$
\end{Lm}
\begin{proof}[Proof of the Lemma]
We need to show that $(\pd^{|S|-1}\phi)(z^A_r \beta_r, r\in S)$
equals to $F(\beta_r, r\in S) z_1^B\cdots z_{|S|}^B$
for some $F(\beta_r, r\in S) \in (\tau^{i-|S|}B^*(D))[[z_1^B,\ldots, z_{|S|}^B]]$.
Then the expression in question will be equal
to $\Res \frac{F(\beta_r, r\in S)|_{z_i^B=t+_B \lambda_i} \omega_t^B}{t}$,
i.e.\ to $F(\beta_r, r\in S)|_{z_i^B=\lambda_i}$.

We know that $(\pd^{|S|-1}\phi)(z^A_r \beta_r, r\in S)$
is divisible by $z_1^B\cdots z_{|S|}^B$ which is an instance of
continuity of operations (see Section \ref{sec:cont}).
On the other hand by definition the derivative 
\mbox{$(\pd^{|S|-1}\phi)(z^A_r \beta_r, r\in S)$} is equal 
to $\sum_{I\subset S} (-1)^{|I|} \phi(\sum_{i\in I} z_i^A \beta_i)$
and the only summand of it which can be divisible by a product of variables $z^B_r$
is $\phi(\sum_{r\in S} \beta_r z_r^A)$, because others do not contain all variables $z_i^B$ together
and therefore are cancelled.

By the inductive assumptions we have
$\phi(\sum_{r\in S} \beta_r z_r^A)\in \tau^i B^*(D\times (\mathbb{P}^\infty)^{\times |S|}))$
and from Prop.~\ref{prop:top_filt_properties},~(\ref{item:topfilt_projspace})
it follows that those summands of $\phi(\sum_{r\in S} \beta_r z_r^A)$
which are divisible by  $z_1^B\cdots z_{|S|}^B$ have coefficients
in $\tau^{i-|S|}B^*(D)$. 
\end{proof}

Thus, we have proved $p^*\phi(\alpha)\in \tau^i B^*(\tilde{X})$.
By projection formula we have $p_*p^* \phi(\alpha) = \phi(\alpha) p_* 1_{\tilde{X}}$.
However, by Prop.~\ref{prop:top_filt_properties}~(\ref{item:push_top_filt})
we have that $\phi_*$ preserves the topological filtration,
this filtration is multiplicative, and from the (LOC) it easily follows 
that $p_* 1_{\tilde{X}}$ is an invertible element in $B^*(X)$.
Therefore $\phi(\alpha)$ also lies in $\tau^i B^*(X)$.

3. For $a\in A[[z_1,\ldots, z_n]]$, $\alpha \in \tilde{A}^*(X) [[z_1,\ldots, z_n]]$
we need to show that $\pd \phi(a,\alpha)$ lies in 
$\tau^i B^*(X \times (\mathbb{P}^\infty)^{\times n}))$.
The proof is very similar to that of 2, so we only provide a sketch.

Denote by $\Delta\colon X\times (\mathbb{P}^\infty)^{\times n} \rarr X\times (\mathbb{P}^\infty)^{\times 2n}$
the morphism $\id_X\times \Delta_{(\mathbb{P}^\infty)^{\times n}}$
where $\Delta_{(\mathbb{P}^\infty)^{\times n}}$ is the diagonal morphism.
Then $\pd \phi(a,\alpha) = \Delta^* \eth \phi (a,\alpha)$,
and if $\eth \phi (a,\alpha)\in \tau^i B^*(X\times (\mathbb{P}^\infty)^{\times 2n}))$
the claim follows.

To study $\eth \phi (a,\alpha)$ we can apply the Riemann-Roch formula,
and acting as in 2 above we see that everything reduces
to the study of $\pd^{|S|} \phi(a, z_r^A \beta_r, r\in S)$
for some $\beta_r \in A^*(D)$, $r\in S$ and smooth variety $D$ of dimension less than $d+1$.
Similar to Lemma \ref{lm:induction_pd_top} one
then shows that \mbox{$\pd^{|S|} \phi(a, z_r^A \beta_r, r\in S)$}
equals to $F(a, \beta_r) z_1^B\cdots z_{|S|}^B$
where $F(a, \beta_r)$ lies in 
\mbox{$\tau^{i-|S|} B^*((\mathbb{P}^\infty)^{\times n})\times D\times (\mathbb{P}^\infty)^{\times |S|})$.}
(Indeed, one has to consider only $\phi(a+\sum \beta_r z_r^A)$
and the coefficient of $z_1^B\cdots z_{|S|}^B$ lies in $\tau^{i-|S|}$ by induction assumptions.)  

This finishes the inductive step and the proof of the Proposition.
\end{proof}

\begin{Rk}
A similar statement with a similar proof holds for poly-operations.
\end{Rk}

\subsection{Construction of the truncation of an operation}\label{sec:tr_constr}

Let $\mathbb{P}_n:= (\mathbb{P}^\infty)^{\times n}$
 be a product of $n$ copies of an infinite-dimensional projective space.
Let $B^*$ be an oriented theory.
Denote by $\pi_n$ the morphism of $B$-algebras 
$B^*(\mathbb{P}_n)\rarr (\CH^*\ot B)(\mathbb{P}_n)$
which sends $z_i^B$ to $z_i^{\CH}$ (see Section \ref{sec:cont} for the notation). 
Denote by $\pi_n^c$ the composition of $\pi_n$ 
with the projection to $(\CH^c\ot B)(\mathbb{P}_n)=B[z_1^{\CH},\ldots, z_n^{\CH}]^{\deg =c}$.

\begin{Prop}\label{prop:trunc_op}
Let $A^*$ be a free theory and let $B^*$ be an oriented theory.
Let $\phi\colon\tilde{A}^*\rarr \tau^c B^*$ be an operation for some $c\ge 1$.

Then there exist an operation $tr_c \phi\colon\tilde{A}^*\rarr \CH^c\ot B$,
s.t.\ for any $n\ge 0$ and any $\alpha \in \tilde{A}^*(\mathbb{P}_n)$
\begin{equation}\label{eq_trunc}
tr_c \phi (\alpha) = \pi_n^c (\phi (\alpha)).
\end{equation}

This defines a map between groups of operations called {\bf truncation}
$tr_c\colon[\tilde{A}^*,\tau^c B^*]\rarr [\tilde{A}^*, \CH^c\ot B]$
and induces an inclusion 
$$ gr^c_\tau[\tilde{A}^*, B^*]=[\tilde{A}^*,\tau^c B^*]/[\tilde{A}^*, \tau^{c+1}B^*]
 \xhookrightarrow{tr_c} [\tilde{A}^*,\CH^c\ot B]. $$
\end{Prop}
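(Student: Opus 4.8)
The plan is to construct $tr_c\phi$ via Vishik's classification (Theorem~\ref{th:Vish_op}), by prescribing its restrictions to products of projective spaces through formula \eqref{eq_trunc}, and then checking that these restrictions satisfy the five compatibility conditions of that theorem so that they glue to a genuine operation $\tilde{A}^*\rarr \CH^c\ot B$. First I would define, for each $n\ge 0$, the map of sets $(tr_c\phi)_{\{n\}}:\tilde{A}^*(\mathbb{P}_n)\rarr (\CH^c\ot B)(\mathbb{P}_n)$ by $\alpha\mapsto \pi_n^c(\phi(\alpha))$. Here one uses that $\phi$ lands in $\tau^c B^*$, hence on $\mathbb{P}_n$ (by Prop.~\ref{prop:top_filt_properties}, item~(\ref{item:topfilt_projspace}), applied with $X=\Spec k$) its value lies in $B[[z_1^B,\ldots,z_n^B]]^{\deg\ge c}$; applying $\pi_n$ and projecting onto degree $c$ is thus well-defined and picks out the ``leading term'' of the series. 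The key point is that $\pi_n$ is a ring homomorphism sending $z_i^B\mapsto z_i^{\CH}$ and that the analogous maps for $\CH^*\ot B$ on $\mathbb{P}_n$ are exactly the degree-$c$ truncations, so the whole construction is ``forget the higher-degree part of the series''.

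Next I would verify the five commutations. Each of the structural morphisms between products of projective spaces --- permutations of factors, partial diagonals, partial Segre embeddings, partial point embeddings, partial projections --- induces on $B^*$ (resp. $\CH^*\ot B$) a continuous $B$-algebra map determined by where it sends the generators $z_i^B$ (resp. $z_i^{\CH}$), and in each case the formula for $\CH^*\ot B$ is obtained from that for $B^*$ by replacing $F_B$ with the additive formal group law and discarding higher-order terms; equivalently, the pullback maps commute with the truncation homomorphisms $\pi_\bullet$ up to higher-degree terms, and become strictly compatible after projecting to the degree-$c$ component. Since $\phi$ itself commutes with all these pullbacks by hypothesis, $tr_c\phi$ does too. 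A small subtlety is the Segre embedding, where $z\mapsto z_1^B +_{F_B} z_2^B$ on $B^*$ but $z\mapsto z_1^{\CH}+z_2^{\CH}$ on $\CH^*\ot B$; one checks $\pi_n(x+_{F_B}y)\equiv \pi_n(x)+\pi_n(y)$ modulo terms of degree $>$ (degree of the relevant monomial), which suffices after taking the degree-$c$ part. By Theorem~\ref{th:Vish_op} the collection $\{(tr_c\phi)_{\{n\}}\}$ then assembles into an operation $tr_c\phi:\tilde A^*\rarr \CH^c\ot B$.

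To see that $tr_c$ is additive in $\phi$ (hence a homomorphism of groups, and in fact of $B$-modules) one notes that $\pi_n^c$ is additive and that both $(\phi_1+\phi_2)(\alpha)=\phi_1(\alpha)+\phi_2(\alpha)$ and sums of operations are computed componentwise on the $\mathbb{P}_n$; since an operation is determined by these restrictions, $tr_c(\phi_1+\phi_2)=tr_c\phi_1+tr_c\phi_2$, and similarly for scaling by $b\in B$. It remains to identify the kernel of $tr_c$ with $[\tilde A^*,\tau^{c+1}B^*]$. If $\phi$ takes values in $\tau^{c+1}B^*$, then on each $\mathbb{P}_n$ its value lies in degree $\ge c+1$, so $\pi_n^c(\phi(\alpha))=0$ and $tr_c\phi=0$. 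Conversely, if $tr_c\phi=0$, then on every $\mathbb{P}_n$ the degree-$c$ part of $\phi(\alpha)$ vanishes, i.e. $\phi$ restricted to products of projective spaces takes values in $\tau^{c+1}B^*$; by Prop.~\ref{prop:operations_top_COT} (applied to $\phi$, which preserves zero and already lands in $\tau^c\subset\tau^1$, so its target-theory arguments go through) this forces $\phi$ to take values in $\tau^{c+1}B^*$ on all smooth varieties. Hence $tr_c$ descends to an \emph{injection} $gr^c_\tau[\tilde A^*,B^*]=[\tilde A^*,\tau^cB^*]/[\tilde A^*,\tau^{c+1}B^*]\hookrightarrow[\tilde A^*,\CH^c\ot B]$.

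I expect the main obstacle to be the bookkeeping in the Segre-embedding (and diagonal) compatibility check: one must control precisely how much of the formal-group-law correction $x+_{F_B}y - (x+y)$ survives after the degree-$c$ projection, and confirm that the residual discrepancy is exactly killed, so that $tr_c\phi$ genuinely commutes with these pullbacks rather than only ``up to higher degree''. Everything else --- the definition, well-definedness, additivity, and the kernel computation via Prop.~\ref{prop:operations_top_COT} --- is comparatively routine once the Vishik machinery is invoked.
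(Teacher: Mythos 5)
Your proposal is correct and follows essentially the same route as the paper: define $tr_c\phi$ on products of projective spaces by $\pi_n^c\circ\phi$, invoke Theorem~\ref{th:Vish_op} after checking compatibility with the structural pullbacks (the Segre/diagonal case being handled exactly as in Lemma~\ref{lm:trunc_correct}, since $F_B(x,y)\equiv x+y \bmod (xy)$ and the correction terms die after the degree-$c$ projection), and identify the kernel with $[\tilde{A}^*,\tau^{c+1}B^*]$ via Prop.~\ref{prop:operations_top_COT}. The ``obstacle'' you flag at the end is indeed only routine bookkeeping, as the paper's proof confirms.
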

\begin{proof}
Due to Vishik's classification of operations (Th. \ref{th:Vish_op})
the formula (\ref{eq_trunc}) defines an operation from $\tilde{A}^*$ to $\CH^c\ot B$
if these maps respect pull-backs along several types of morphisms between $\mathbb{P}_n$'s.
This follows from the next Lemma.

\begin{Lm}\label{lm:trunc_correct}
Let $f\colon\mathbb{P}_n\rarr \mathbb{P}_{m}$ be one of the morphisms
between products of projective spaces appearing in the list of Th. \ref{th:Vish_op}.
Then for any $c\ge 1$ the following diagram is commutative:
\begin{center}
\begin{tikzcd}
\tau^cB^*(\mathbb{P}_m) \arrow[r, "\pi_n"] \arrow[d,"f_{B^*}^*"]  & (\CH^c\ot B)(\mathbb{P}_m) \arrow[d, "f_{\CH^*\ot B}^*"] \\
\tau^cB^*(\mathbb{P}_n) \arrow[r, "\pi_m"] & (\CH^c\ot B)(\mathbb{P}_n).
\end{tikzcd}
\end{center}
\end{Lm}
\begin{proof}
We consider only the case when $f\colon\mathbb{P}_n\rarr \mathbb{P}_{n+1}$ is a partial Segre embedding
acting on the last two components,
other cases can be treated similarly.

The pull-backs along Segre maps: $f^*_{B^*}$ sends $z^B_{n}$ to $F_B(z^B_n, z^B_{n+1})$,
$f^*_{CH^*\ot B}$ sends $z_n$ to $z_n+z_{n+1}$. We can check the commutativity of the diagram
on a monomial $b (z_1^B)^{r_1}\cdots (z_n^B)^{r_n}$ where $b\in B$, $r_j\ge 0$, $\sum r_j =c$:

\begin{center}
\begin{tikzcd}
   b (z_1^B)^{r_1}\cdots (z_n^B)^{r_n} \arrow{r} \arrow{d} &  b (z_1^{\CH})^{r_1}\cdots (z_n^{\CH})^{r_n} \arrow{d} \\
   b (z_1^B)^{r_1}\cdots (F_B(z_n^B, z_{n+1}^B))^{r_n} \arrow{r} & b (z_1^{\CH})^{r_1}\cdots (z_n^{\CH}+z_{n+1}^{\CH})^{r_n} \\
\end{tikzcd}
\end{center}

As $F_B(z^B_n, z^B_{n+1}) \equiv z^B_n + z^B_{n+1} \mod (z^B_nz^B_{n+1})$
the claim is checked by a straight-forward computation.
\end{proof}

The topological filtration on the oriented theory $B^*$
induces a decreasing filtration 
on the set of all operations from the theory $\tilde{A}^*$ to $B^*$.
We denote the graded factors of it by $gr^i_\tau [\tilde{A}^*,B^*]$.
The truncation map $tr_c$ sends an operation to zero
whenever the values of operation on the products of projective
spaces lie in $\tau^{c+1} B^*$, which is the same
as the operation taking values in $\tau^{c+1} B^*$ for all varieties by Prop. \ref{prop:operations_top_COT}.
It proves the last claim of the Proposition.
\end{proof}

\begin{Exl}
It is easy to see that $c_i^A$ takes values in $\tau^i A^*$,
and $tr_i c_i^A= c_i\ot\id\colon\KK\rarr \CH^i\ot A$.

One can check this using the construction of Chern classes in 
an arbitrary oriented theory,
or by calculating 
 the action of Chern classes on products of projective spaces.
Namely, it is enough to show that series $c_i^A(z_1\cdots z_j)$ 
have degree at least $i$ for any $j\le i$,
and that its $i$-th degree summand does not depend
on $A^*$ (i.e. on the formal group law of $A^*$).
We leave this to the reader.
\end{Exl}

\begin{Prop}\label{prop:op_mod_tau_vs_trunc}
Let $\phi\colon\tilde{A}^*\rarr \tau^c B^*$
be an operation between free theories.

Then the following diagram of presheaves is commutative:
\begin{equation}\label{diag:tr}
\begin{tikzcd}
\tilde{A}^* \arrow[r, "\phi"] \arrow[d, "tr_c \phi"] & \tau^c B^* \arrow{d} \\
\CH^c \ot B \arrow[r, "\rho_B"]  & \tau^c B^*/\tau^{c+1} B^*\\
\end{tikzcd} 
\end{equation}
where the map $\rho_B$ is defined in Prop.~\ref{prop:top_filt_properties},~(\ref{item:CH_surj_grtop}).
\end{Prop}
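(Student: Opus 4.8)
The plan is to verify commutativity of diagram \eqref{diag:tr} by reducing to the case of products of projective spaces, where everything is explicit, and then invoking the classification theorem (Th.~\ref{th:Vish_op}) together with the characterization of the truncation map on such products (formula \eqref{eq_trunc}). The statement to be proven is an equality of two operations $\tilde A^*\rarr \tau^cB^*/\tau^{c+1}B^*$, namely $\rho_B\circ tr_c\phi$ and $\phi \bmod \tau^{c+1}B^*$ (the right vertical arrow being the canonical projection). By Remark~\ref{rem:grad} and, more precisely, by the argument in Prop.~\ref{prop:operations_top_COT} which says an operation into $\tau^{c+1}B^*$ is detected on products of projective spaces, two operations into $B^*$ agree modulo $\tau^{c+1}B^*$ as soon as they agree modulo $\tau^{c+1}B^*$ on all $\mathbb{P}_n = (\mathbb{P}^\infty)^{\times n}$. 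So it suffices to check the identity after restricting to each $\mathbb{P}_n$.

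First I would record what each side does on $\mathbb{P}_n$. By \eqref{eq_trunc}, for $\alpha\in\tilde A^*(\mathbb{P}_n)$ we have $tr_c\phi(\alpha) = \pi_n^c(\phi(\alpha))\in (\CH^c\ot B)(\mathbb{P}_n) = B[z_1^{\CH},\ldots,z_n^{\CH}]^{\deg=c}$; this is literally the degree-$c$ homogeneous part of the image of $\phi(\alpha)$ under the substitution $z_i^B\mapsto z_i^{\CH}$. Since $\phi(\alpha)\in\tau^cB^*(\mathbb{P}_n)$, by Prop.~\ref{prop:top_filt_properties}\,(\ref{item:topfilt_projspace}) it is a sum of monomials $b\,(z_1^B)^{r_1}\cdots(z_n^B)^{r_n}$ with $b\in\tau^{c-\sum r_j}B$; modulo $\tau^{c+1}$ only the monomials with $\sum r_j = c$ and $b\in B = \tau^0B$ survive, i.e. $\phi(\alpha)\equiv (\text{degree-}c\text{ part of }\phi(\alpha)) \bmod \tau^{c+1}B^*(\mathbb{P}_n)$. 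On the other hand, the map $\rho_B$ in degree $c$ is, on $\mathbb{P}_n$, precisely the identification $B[z_1^{\CH},\ldots,z_n^{\CH}]^{\deg=c}\xrarr{\sim}\tau^cB^*(\mathbb{P}_n)/\tau^{c+1}$ sending $z_i^{\CH}\mapsto z_i^B$ — this follows from the construction of $\rho_B$ in Prop.~\ref{prop:top_filt_properties}\,(\ref{item:CH_surj_grtop}) (it is the $\LL$-linearization of the canonical iso $\Omega^*\ot_\LL\ZZ\cong\CH^*$) together with the fact that $z_i^\Omega$ maps to the generator $z_i^{\CH}$. Composing, $\rho_B(tr_c\phi(\alpha)) = \rho_B(\pi_n^c\phi(\alpha))$ is exactly the degree-$c$ part of $\phi(\alpha)$ read back with $z_i^{\CH}\mapsto z_i^B$, which is $\phi(\alpha)\bmod\tau^{c+1}B^*(\mathbb{P}_n)$. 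Hence the two composites agree on every $\mathbb{P}_n$.

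Finally I would assemble these: the operations $\rho_B\circ tr_c\phi$ and (projection)$\circ\,\phi$ from $\tilde A^*$ to $\tau^cB^*/\tau^{c+1}B^*$ agree on all products of projective spaces, hence by Prop.~\ref{prop:operations_top_COT} (applied to their difference, which is an operation into $B^*$ landing in $\tau^{c+1}B^*$ on products of projective spaces, hence everywhere — so the difference is zero in $\tau^cB^*/\tau^{c+1}B^*$) they agree as operations. This gives the commutativity of \eqref{diag:tr}. The one genuinely delicate point is the identification of $\rho_B$ restricted to $\mathbb{P}_n$ with the naive substitution $z_i^{\CH}\mapsto z_i^B$ on the graded pieces; this has to be traced through the definition of $\rho_B$ for $\Omega^*$ and then base-changed along $\LL\rarr B$, using that all the maps in sight are $\LL$-linear and that $\CH^*(\mathbb{P}_n)$, $\Omega^*(\mathbb{P}_n)$ are free on monomials in the first Chern classes of the tautological bundles. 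Everything else is bookkeeping with the topological filtration on products of projective spaces that was already established in Prop.~\ref{prop:top_filt_properties}.
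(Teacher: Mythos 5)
Your verification on products of projective spaces is fine (and it agrees with the base of induction in the paper's argument), but the reduction step at the end has a genuine gap, and it is exactly the pitfall the paper flags immediately after its own proof. The identity you want is an equality of two operations with values in the quotient presheaf $\tau^c B^*/\tau^{c+1}B^*$. This presheaf is not an oriented theory in the sense needed for Vishik's theorem --- $gr^\bullet_\tau B^*$ fails the localization axiom (Remark \ref{rem:gr_oriented}) --- so you cannot conclude that two operations into it agree just because they agree on all $\mathbb{P}_n$. Your attempt to route around this via Prop.~\ref{prop:operations_top_COT} does not work either: that proposition applies to an operation into the oriented theory $B^*$ itself, and the ``difference'' you invoke is not such an operation. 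The composite $\rho_B\circ tr_c\phi$ takes values in $\tau^cB^*/\tau^{c+1}B^*$ and admits no natural lift to $\tau^c B^*$ (a natural multiplicative/operational splitting of the topological filtration does not exist), so there is no operation $\tilde A^*\rarr B^*$ whose membership in $\tau^{c+1}B^*$ you could test on products of projective spaces. In short, the step ``agree on all $\mathbb{P}_n$ $\Rightarrow$ agree everywhere'' is precisely what needs proof, and it is not available by citation.

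The paper closes this gap by a genuinely different argument: an induction on the dimension of the variety, modelled on the proof of Prop.~\ref{prop:operations_top_COT} (and ultimately on Vishik's inductive construction of operations). One resolves the support of a class to a strict normal crossings divisor (Hironaka), writes $\phi(p^*\alpha)$ via the Riemann--Roch formula for poly-operations (Lemma \ref{lm:riemann_roch_snc}), and compares the resulting residue expressions for $\phi$ and $tr_c\phi$ term by term, using that $\rho_B$ is a morphism of oriented theories commuting with pushforwards (Lemma \ref{lm:rho_morphism_of_theories}), that pushforwards along $E_S\hookrightarrow X$ raise the filtration by $|S|$, and that $p^*$ for a birational morphism strictly preserves the filtration (Lemma \ref{lm:blowup_topfilt}). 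If you want to salvage your approach, you would have to supply an argument of this inductive type anyway; the explicit computation on $\mathbb{P}_n$ you gave then serves only as the induction base.
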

\begin{proof}
The proof goes by induction procedure 
which is analogous to the one in the proof of Prop.~\ref{prop:operations_top_COT}.

{\bf Induction assumption (for $d\ge 0$)}. 
The diagram (\ref{diag:tr}) commutes for values on $X\times (\mathbb{P}^\infty)^{\times n}$
for every smooth variety $X$ of dimension not greater than $d$, and for every $n\ge 0$.

Base of induction follows by the construction of the truncation of an operation
and the property $(LOC)$.

{\bf Induction step}. Let $a\in \tilde{A}^*(X)$.
Then using Hironaka's resolution of singularities there exist a smooth variety $\tilde{X}$
and a birational morphism $p\colon\tilde{X}\rarr X$
s.t. $p^*(a)$ is supported on a divisor $E$ with smooth normal crossings.
Note that $p^*$ is injective for every free theory. Since $p_*p^*(a)=p_*(1) a$ and $p_*(1)$ is an invertible element,
it follows from Prop.~\ref{prop:top_filt_properties} \ref{item:push_top_filt} that
$p^*$ strictly preserves the topological filtration. Thus, the corresponding map $p^*$ is also
injective for $\tau^c B^*/\tau^{c+1} B^*$, and to prove the commutativity
of (\ref{diag:tr}) suffices to look at the values on $\tilde{X}$.

Denote by $J$ the set of irreducible components of $E$, i.e.\ for $r\in J$ the divisor $E_r\subset E$
is smooth, and for $S\subset J$ denote by $E_S$ the intersection of components $E_s$ where $s\in S$.
Denote by $d_S\colon E_S\hookrightarrow X$ the inclusion of a closed smooth subvariety of codimension $|S|$. 
Then $p^*(a)= \sum_r (d_r)_*(\beta_r)$, and it is enough to check the commutativity
of (\ref{diag:tr}) for values on $p^*(a)$.
For a subset $S\subset J$ denote by $\Theta(\phi;p^*(a);S)$ the element $\Res
 \frac{\pd^{|S|-1} \phi(z_r^A j_r^*\beta_r, r\in S)|_{z_r^B=t+_B j_r^*\lambda_r}}{t\cdot \prod_{r\in S} (t+_B j_r^*\lambda_r)}
 \omega_t^B$, and similarly $\Theta(tr_c \phi;p^*(a);S)$.

Then by Lemma \ref{lm:riemann_roch_snc}
we have

\begin{equation}\label{eq:phi_p_alpha}
 \phi(p^*\alpha) = \sum_{S\subset J} (d_S)^B_* \Theta(\phi;p^*(a);S), 
\qquad (tr_c \phi)(p^*\alpha) = \sum_{S\subset J} (d_S)_*^{\CH} \Theta(tr_c \phi;p^*(a);S).
\end{equation}

Let us show that $\Theta(\phi;p^*(a);S)  \mod \tau^{c+1-|S|} \equiv \rho_B (\Theta(tr_c \phi;p^*(a);S))$.
Since $\rho_B$ commutes with push-forwards by Lemma \ref{lm:rho_morphism_of_theories} and $(d_S)^B_*$
increases the topological filtration by $|S|$,
it would follow that $(d_S)^B_* \Theta(\phi;p^*(a);S) \mod \tau^{c+1} \equiv 
\rho_B (d_S)^{\CH}_*(\Theta(tr_c \phi;p^*(a);S))$. Thus, dealing 
with the each summand of $\phi (p^* \alpha)$ in the formula (\ref{eq:phi_p_alpha})
we will prove the commutativity of (\ref{diag:tr}) on the element $p^*(\alpha)$.

Recall that internal derivatives are defined using values of the operation (Section \ref{subsec_poly}),
and therefore by induction assumption 
we have the following relation in $\tau^c B^*(E_S \times (\mathbb{P}^\infty)^{\times |S|})/\tau^{c+1}$:
\begin{equation}\label{eq:pd_rho}
\pd^{|S|-1} \phi(z_r^A j_r^*\beta_r, r\in S)\mod \tau^{c+1} =
\rho_B \left(\pd^{|S|-1} (tr_c\phi)(z_r^A j_r^*\beta_r, r\in S)\right).
\end{equation}

Note that by Prop. \ref{prop:top_filt_properties}, (\ref{item:topfilt_projspace})
we have 
$$\tau^c B^*(E_S \times (\mathbb{P}^\infty)^{\times |S|})/\tau^{c+1} 
= \sum_{i=0}^c \tau^i B^*(E_S)/\tau^{i+1} B^*(E_S) [[z_r^B, r\in S]]^{\deg = c-i}.$$
Moreover, by continuity of operations
$\pd^{|S|-1} \phi(z_r^A j_r^*\beta_r, r\in S)$ is divisible by $\prod_{r\in S} z_r^B$,
and similarly for $tr_c \phi$.
Thus, dividing both expressions in (\ref{eq:pd_rho})
by a product of $z_r$
we obtain an equality modulo $\tau^{c+1-|S|}$.

Note, however, that 

$$ \Theta(\phi;p^*(a);S) =  \left(\frac{\pd^{|S|-1} \phi(z_r^A j_r^*\beta_r, r\in S)}{\prod_{r\in S} z_r^B}\right)_{z_r=j_r^* \mu_r^B} $$

because $\omega_t^B(0)=dt$ and positive powers of $t$ do not intervene with the residue.
Since $\rho_B$ sends $j_r^*\mu_r^B$ to $\mu_r^{\CH}$
we obtain that

$$\Theta(\phi;p^*(a);S)  \mod \tau^{c+1-|S|} \equiv \rho_B (\Theta(tr_c \phi;p^*(a);S))$$

 which is enough for the claim as explained above.
 \end{proof}

If a presheaf $\bigoplus_c \tau^c B^*/\tau^{c+1} B^*$ were an oriented theory, then it would be enough
to check the claim of Prop. \ref{prop:op_mod_tau_vs_trunc}
 on products of projective spaces using Vishik's classification theorem. 
However, this is not true, see Remark \ref{rem:gr_oriented}.

The following proposition explains 
how to calculate any operation between free theories
on the graded pieces of the topological filtration
via classes of closed (non-neccesarily-smooth) subvarieties.

\begin{Prop}\label{prop:operations_chow_topfilt}
Let $\phi\colon\tilde{A}^*\rarr \tau^c B^*$ be an operation between free theories.
Then the composition of $\phi$  restricted to $\tau^c A^*$ 
with the projection $\tau^c B^*\rarr \tau^c B^*/\tau^{c+1} B^*$
factors through the quotient over $\tau^{c+1} A^*$
yielding an operation which fits in the following commutative diagram
\begin{center}
\begin{tikzcd}
	\CH^c\ot A \arrow{r} \arrow[d, "\phi_c^{\CH} " ]& \tau^c A^*/\tau^{c+1} A^* \arrow[d, "\phi"] \\
	\CH^c\ot B \arrow{r} & \tau^c B^*/\tau^{c+1}B^* \\
\end{tikzcd}
\end{center}
where  $\phi_c^{\CH} \colon \CH^c \ot A \rarr \CH^c \ot B$
is the composition of the map $tr_c \phi\colon\tau^c A^*/\tau^{c+1} A^*\rarr \CH^c\ot B$ with the map $\rho_A$.
\end{Prop}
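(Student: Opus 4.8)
\emph{Proof plan.} The plan is to deduce this from Proposition~\ref{prop:op_mod_tau_vs_trunc} together with the fact that the topological filtration on Chow groups is nothing but their grading. Throughout we may assume $c\ge 1$, which is the range in which $tr_c$ is defined. First I would note that $\phi$ and $tr_c\phi$ automatically preserve zero: writing $q_X\colon X\rarr\Spec k$ for the structure morphism, one has $0_X=q_X^*(0_{\Spec k})$, and since $\tau^cB^*(\Spec k)=0=(\CH^c\ot B)(\Spec k)$ for $c\ge 1$, functoriality of $\phi$ and of $tr_c\phi$ forces $\phi(0_X)=0$ and $(tr_c\phi)(0_X)=0$.

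The one elementary point I would record is that any operation $\psi\colon\tilde A^*\rarr C^*$ with $\psi(0)=0$ satisfies $\psi(\tau^jA^*)\subseteq\tau^jC^*$ for every $j$: if $\alpha\in\tau^jA^*(X)$ restricts to $0$ on the open $U=X\setminus Z$ with $\operatorname{codim}_X Z\ge j$, then $\psi(\alpha)|_U=\psi(\alpha|_U)=\psi(0)=0$, so $\psi(\alpha)$ is supported on $Z$. Applying this to $\psi=tr_c\phi$, to $C^*=\CH^c\ot B$ (viewed as a presheaf of abelian groups) and to $j=c+1$ gives $(tr_c\phi)(\tau^{c+1}A^*(X))\subseteq\tau^{c+1}(\CH^c\ot B)(X)$, and I would then observe that this last group vanishes: by the localization sequence for Chow groups, which remains exact after $\ot_\ZZ B$, an element of $\CH^c(X)\ot B$ supported on a closed subvariety $Z$ of codimension $\ge c+1$ lies in the image of $\CH_{\dim X-c}(Z)\ot B$, and $\CH_{\dim X-c}(Z)=0$ since $\dim Z<\dim X-c$. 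Hence $tr_c\phi$ kills $\tau^{c+1}A^*$, which is exactly what is needed for the map $tr_c\phi\colon\tau^cA^*/\tau^{c+1}A^*\rarr\CH^c\ot B$ in the statement — and therefore for $\phi_c^{\CH}:=(tr_c\phi)\circ\rho_A$ — to be well defined.

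For the left column of the diagram: by Proposition~\ref{prop:op_mod_tau_vs_trunc} we have $\phi\equiv\rho_B\circ(tr_c\phi)\pmod{\tau^{c+1}B^*}$ as operations $\tilde A^*\rarr\tau^cB^*/\tau^{c+1}B^*$; combined with $(tr_c\phi)(\tau^{c+1}A^*)=0$ this yields $\phi(\tau^{c+1}A^*)\subseteq\tau^{c+1}B^*$, so the composite of $\phi|_{\tau^cA^*}$ with the projection $\tau^cB^*\rarr\tau^cB^*/\tau^{c+1}B^*$ descends to an operation $\overline\phi\colon\tau^cA^*/\tau^{c+1}A^*\rarr\tau^cB^*/\tau^{c+1}B^*$, which is the right vertical arrow of the diagram. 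Commutativity of the diagram then says $\rho_B\circ\phi_c^{\CH}=\overline\phi\circ\rho_A$; since $\rho_A$ is surjective (Proposition~\ref{prop:top_filt_properties}, (\ref{item:CH_surj_grtop})) it suffices to verify $\rho_B\circ(tr_c\phi)=\overline\phi$ on $\tau^cA^*/\tau^{c+1}A^*$, and this is precisely the restriction of Proposition~\ref{prop:op_mod_tau_vs_trunc} to $\tau^cA^*$.

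I do not expect a genuine obstacle here: the substantive content — the compatibility of $\phi$ with $\rho_B$ and $tr_c$ modulo $\tau^{c+1}$ — is already Proposition~\ref{prop:op_mod_tau_vs_trunc}, and the rest is bookkeeping around well-definedness of the maps in the diagram. The only non-formal ingredient, and the closest thing to a snag, is the vanishing $\tau^{c+1}(\CH^c\ot B)=0$, i.e. that $\CH^c\ot B$ carries no support-codimensional filtration finer than its grading.
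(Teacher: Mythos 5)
Your overall route is the paper's route: a support/restriction argument to handle the factorization through $\tau^{c+1}A^*$, then Proposition~\ref{prop:op_mod_tau_vs_trunc} (plus the definition of $\phi_c^{\CH}$ via $\rho_A$) to assemble the commutative square; the observation $\tau^{c+1}(\CH^c\ot B)=0$ is also used in the paper. However, there is a genuine gap in the descent steps. The operations here are not additive, so the implication you use twice --- ``$tr_c\phi$ kills $\tau^{c+1}A^*$, hence $tr_c\phi$ descends to $\tau^cA^*/\tau^{c+1}A^*$'' and ``$\phi(\tau^{c+1}A^*)\subseteq\tau^{c+1}B^*$, hence $\phi$ modulo $\tau^{c+1}B^*$ descends to the quotient'' --- is false for a general non-additive map: vanishing on the subgroup $\tau^{c+1}A^*$ does not give invariance under translation by it (think of a non-additive $f:\ZZ\rarr\ZZ$ with $f(2\ZZ)=0$ but $f(1)\neq f(3)$). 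What the statement requires, and what the paper proves, is precisely that $\phi(x+y)\equiv\phi(x)$ modulo $\tau^{c+1}B^*$ for every $x$ and every $y\in\tau^{c+1}A^*(X)$.

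The fix is small and is exactly the paper's first paragraph: apply your restriction argument to the \emph{difference} rather than to $\phi(y)$. If $y\in\tau^{c+1}A^*(X)$ vanishes on $U=X\setminus Z$ with $\mathrm{codim}_X Z\ge c+1$, then $(\phi(x+y)-\phi(x))|_U=\phi(x|_U)-\phi(x|_U)=0$, so $\phi(x+y)-\phi(x)\in\tau^{c+1}B^*(X)$; this gives the right-hand vertical arrow directly, with no appeal to Proposition~\ref{prop:op_mod_tau_vs_trunc} at this stage. The same argument for $tr_c\phi$, combined with your (correct) vanishing $\tau^{c+1}(\CH^c\ot B)=0$, gives $tr_c\phi(x+y)=tr_c\phi(x)$, so the map $\tau^cA^*/\tau^{c+1}A^*\rarr\CH^c\ot B$ and hence $\phi_c^{\CH}=(tr_c\phi)\circ\rho_A$ are well defined. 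With these corrections your final step --- reducing the commutativity of the square to $\rho_B\circ tr_c\phi\equiv\phi \bmod \tau^{c+1}B^*$ on $\tau^cA^*$, which is Proposition~\ref{prop:op_mod_tau_vs_trunc} --- is exactly how the paper concludes.
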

\begin{proof}
Let $x\in A^*(X)$, $y\in \tau^{c+1} A^*(X)$ for some smooth variety $X$.
Then there exist an open subvariety $U\subset X$, s.t. $y|_U=0$,
and codimension of $X\setminus U$ in $X$ is greater or equal to $c+1$.
The element $\phi(x+y)-\phi(x)$ is zero when restricted to $U$
since $\phi$ commutes with pullbacks,
i.e.\ $\phi(x+y)\equiv \phi(x) \mod \tau^c B^*(X)$
and $\phi$ factors through $\tau^{c+1} B^*$.

The commutativity of each face of the following diagram follows either by definition or from Prop. \ref{prop:op_mod_tau_vs_trunc}.
\begin{center}
\begin{tikzcd}
 & \tau^c A^* \arrow[d, twoheadrightarrow]  \arrow[r, "\phi"] & \tau^c B^* \arrow{dd} \\
\CH^c\otimes A \arrow[r, twoheadrightarrow] \arrow[rd, "\phi_c^{\CH}"'] & \tau^c A^*/\tau^{c+1} A^* \arrow[d, "tr_c \phi"] \arrow [rd, "\phi"] & \\
& \CH^c\otimes B \arrow[r, twoheadrightarrow] & \tau^c B^*/\tau^{c+1} B^* 
\end{tikzcd}
\end{center}
The commutativity of the diagram in the statement of the proposition follows.
\end{proof}

\subsection{Truncation modulo ideal}\label{sec:trunc_mod}

In the construction of Chern classes from the $n$-th Morava K-theory
we will need a modification of the truncation process
which takes an operation from $A^*$ to $B^*\ot\QQ$ satisfying certain conditions
and produces an operation from $A^*$ to $\CH^*\ot B/p$.
This truncation is described here purely algebraically with the use of Vishik's
classification theorem (Th. \ref{th:Vish_op}) and is not provided with a geometric interpretation.

\begin{Prop}\label{prop:trunc_mod_ideal}
Let $p$ be a prime.
Let $A^*$ and  $B^*$ be free theories defined over torsion-free $\Zp$-algebras $A, B$, respectively.
Let $G\colon\tilde{A}^*\rarr B^*\ot\QQ$ be an operation, let $k\ge 1$.

Assume that on products of projective spaces $\phi$ acts integrally modulo the $k+1$-th 
part of the topological filtration
and the action is zero modulo the $k$-th part of the topological filtration and modulo $p$,
 i.e.\ $\forall n\ge 1$:
$$G_{\{n\}}\colon A^*[[z_1^A,\ldots, z_n^A]]\rarr 
\sum_{s=1}^{k-1} pB(z^B_1,\ldots, z^B_n)^s + B(z^B_1,\ldots, z^B_n)^{k}
 + B\ot\QQ (z^B_1,\ldots,z^B_n)^{k+1},$$
where the series $G_{\{n\}}$ is defined in Section~\ref{sec:cont}.

Then maps $(\pi_k\circ G_{\{n\}}) \mod p\colon A^*[[z_1^A,\ldots, z_n^A]] \rarr B/p[z_1^B,\ldots, z_n^B]^{deg=k}$,
 $n\ge 1$, define an operation $tr^{\mathrm{mod\ } p}_{k}G\colon\tilde{A}^* \rarr \CH^{k}\ot B/p$. 
If $G$ is additive, so is the operation $tr^{\mathrm{mod\ }p}_{k} G$.
\end{Prop}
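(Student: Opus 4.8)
The plan is to verify that the maps $\psi_n := (\pi_k \circ G_n) \bmod p$ satisfy the compatibility conditions of Vishik's Theorem~\ref{th:Vish_op}, so that they assemble into a genuine operation $\tilde A^* \rarr \CH^k \ot B/p$. Since $\phi$ itself is an operation, the original maps $G_n$ already commute with pull-backs along permutations, partial diagonals, partial Segre embeddings, partial point embeddings and partial projections; the content is that applying $\pi_k$ (truncation to degree exactly $k$ in the $z^B_j$-variables) and then reducing mod $p$ preserves each of these compatibilities. For permutations, partial diagonals, point embeddings and projections this is essentially formal, because the corresponding pull-back maps on $B^*(\mathbb P_n)$ are $B$-algebra homomorphisms sending each $z^B_i$ either to some $z^B_j$ or to $0$, hence they are homogeneous of degree preserving type and commute with $\pi_k$ on the nose; reduction mod $p$ is then harmless. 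The only case requiring an argument is the partial Segre embedding, exactly as in Lemma~\ref{lm:trunc_correct}.

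For the Segre case I would argue as in the proof of Lemma~\ref{lm:trunc_correct}: the pull-back $f^*_{B^*}$ replaces $z^B_n$ by $F_B(z^B_n, z^B_{n+1}) = z^B_n + z^B_{n+1} + (\text{higher order})$, where the higher-order correction is divisible by $z^B_n z^B_{n+1}$. The key new point is that the error term between "truncate at degree $k$ then substitute $F_B$" and "substitute $z^B_n + z^B_{n+1}$ then truncate at degree $k$" comes from two sources: the difference $F_B(u,v) - (u+v)$, which has coefficients in $B$ (not merely $B\ot\QQ$), and interactions with the degree~$(k+1)$-and-higher rational tail of $G_n$. By hypothesis that tail is $B\ot\QQ$-valued but lands in degrees $\ge k+1$; a monomial of degree $\ge k+1$ in the source can only contribute to degree exactly $k$ after a Segre substitution if the substitution \emph{lowers} degree, which it never does (Segre substitution is degree-non-decreasing on each monomial). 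Hence the rational tail does not contaminate the degree-$k$ part, and the degree-$k$ part of $G_n$ after Segre pull-back is computed entirely from the $\sum_{s=1}^{k-1} pB(\cdots)^s + B(\cdots)^k$ part, whose degree-$<k$ summands carry an explicit factor of $p$. Thus the discrepancy introduced by the Segre correction terms, when pushed up to degree $k$, is a sum of terms each divisible by $p$ (they originate from the degree-$\le k-1$ part, which is $p$-divisible), so the discrepancy vanishes after reduction mod $p$. This is where the hypothesis "zero modulo the $k$-th part of the topological filtration and modulo $p$" is used essentially.

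Once the five compatibilities are checked mod $p$, Vishik's theorem produces the operation $tr^{\mathrm{mod}\ p}_k \phi : \tilde A^* \rarr \CH^k \ot B/p$ whose restriction to $\mathbb P_n$ is $\psi_n$. Additivity of $tr^{\mathrm{mod}\ p}_k\phi$ when $\phi$ is additive is then immediate: additivity of an operation can be tested on products of projective spaces (again via Theorem~\ref{th:Vish_op}, since being additive means the difference $\phi(x+y)-\phi(x)-\phi(y)$ vanishes, and this difference is itself an operation, hence determined by its restrictions), and $\psi_n = (\pi_k \circ G_n)\bmod p$ is $\ZZ$-linear in the argument because $G_n$ is and because $\pi_k$ and reduction mod $p$ are additive. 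I expect the main obstacle to be the bookkeeping in the Segre case: one must track precisely that no rational coefficient in degree $\ge k+1$ can leak down into degree $k$ and that every correction term surviving into degree $k$ is $p$-divisible before reduction — essentially a careful homogeneity-plus-$p$-divisibility count, not a deep difficulty, but the place where all three hypotheses (integrality mod $\tau^{k+1}$, vanishing mod $\tau^k$, vanishing mod $p$) are simultaneously needed.
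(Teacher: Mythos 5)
Your proposal is correct and follows essentially the same route as the paper: reduce to checking, via Vishik's Theorem~\ref{th:Vish_op}, that $(\pi_k\circ G_n)\bmod p$ commutes with the five types of pull-backs, with the partial Segre embedding as the only nontrivial case, handled exactly as in Lemma~\ref{lm:trunc_correct}. Your degree/$p$-divisibility bookkeeping for the Segre substitution (the rational tail cannot descend to degree $k$, and the corrections reaching degree $k$ carry the factor $p$) is precisely the ``direct computation'' the paper leaves implicit, so there is nothing to add.
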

\begin{proof}
The proof follows the same strategy as Prop. \ref{prop:trunc_op} by defining 
operation $tr^{\mathrm{mod\ } p}_{k} G$ by its values on the products of projective space.
(due to Th. \ref{th:Vish_op}). We leave the details to the reader. 
\end{proof}

\subsection{Lifting operations via truncations}\label{sec:all_op_from_trunc}

The truncation map $tr_c$ (see Propositon~\ref{prop:trunc_op}) is not always an isomorphism,
and we will see that this is not the case
for example if $A^*=\Kn^*$ and $B^*=\mathrm{K}(m)_{int}^*$ where $m \nmid n$ (see Appendix \ref{app:non-exist-op}).
However, in the case of our interest when $A^*=\Kn^*$ 
and $B^*=BP\{n\}^*$ is the universal $p^n$-typical theory 
(see Sections \ref{sec:pn-typ}, \ref{sec:def_morava}) 
the truncation map will be shown to be an isomorphism.
In this situation the following results allow to reduce the problem of constructing operations
to the classification of operations to the Chow groups.

We should note the main feature of sets of operations 
which is used in the following. If $\phi_j\colon A^*\rarr B^*$, $j\in J$ is
a set of operations between two theories such that for every $i\ge 0$ all but a finite number of
operations in this set map to $\tau^i B^*$,
then the sum $\sum_j \phi_j$ is well-defined.
The reason for it is that the topological filtration
is finite on each variety, and therefore all except of finite summands in the sum $\sum_j \phi_j(a)$ 
are zero for $a\in A^*(X)$.

\begin{Prop}\label{prop:all_operations_from_truncations}
Suppose that $[A^*, \CH^i\ot B]$ is a free $B$-module
of finite rank for each $i\ge 0$, 
and assume that the map $tr_i\colon gr^i_\tau[A^*,B^*]\rarr [A^*, \CH^i\ot B]$ is an isomorphism 
 for all $i\ge 0$.
Denote by $\psi^{(i)}_j \in [A^*,\tau^iB^*]$, $j\in J_i$, a finite set of operations
for each $i\ge 0$, s.t. their $i$-th truncations are a basis 
of the corresponding free module.

Then operations $\psi^{(i)}_j$ 
freely\footnote{\label{footnote:free}
Here `free' actually means
'topologically free' with respect to the adic topology defined by the topological filtration on $B^*$, 
i.e.\ infinite sums of operations whose target space has growing index
of the topological filtrations are allowed.}
generate the $B$-module of all operations from $[A^*, B^*]$.

The same is true if one considers only additive operations instead of all operations.
\end{Prop}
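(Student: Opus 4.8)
The plan is to build the inverse to the map that sends a formal series in the $\psi^{(i)}_j$ to the operation it represents, by successive approximation along the topological filtration on $B^*$. I would first argue well-definedness of the map in one direction: given coefficients $b^{(i)}_j \in B$, the operation $\sum_{i\ge 0}\sum_{j\in J_i} b^{(i)}_j \psi^{(i)}_j$ makes sense because $\psi^{(i)}_j$ takes values in $\tau^i B^*$ and the topological filtration is finite on each variety (this is exactly the ``topologically free'' caveat of footnote \ref{footnote:free}); for fixed $\alpha \in A^*(X)$ only finitely many summands are nonzero. So there is a natural $B$-module map from $\prod_{i\ge 0} B^{J_i}$ (infinite sums allowed) to $[A^*, B^*]$, and the claim is that it is an isomorphism.

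For surjectivity I would proceed by induction on the index of the topological filtration. Let $\phi \in [A^*, B^*]$. Write $\phi = \phi(0) + \phi'$ where $\phi'$ preserves zero and hence respects the topological filtration, so $\phi' \in [\tilde A^*, \tau^0 B^*] = [\tilde A^*, B^*]$; the constant part $\phi(0) \in B$ is absorbed into the $i=0$ generators. Now suppose inductively that we have chosen coefficients so that $\phi - \sum_{i<c}\sum_j b^{(i)}_j\psi^{(i)}_j$ lies in $[A^*, \tau^c B^*]$. Its image under $tr_c : gr^c_\tau[A^*,B^*] \to [A^*,\CH^c\ot B]$ is, by hypothesis, a $B$-linear combination $\sum_{j\in J_c} b^{(c)}_j\, tr_c(\psi^{(c)}_j)$ of the chosen basis; since $tr_c$ is injective (Prop.~\ref{prop:trunc_op}, and an isomorphism here by assumption) and commutes with the $B$-module structure, subtracting $\sum_j b^{(c)}_j\psi^{(c)}_j$ produces an operation in $[A^*,\tau^{c+1}B^*]$. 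Iterating, the running remainder lands in $\bigcap_c [A^*,\tau^c B^*]$; because the topological filtration is separated on every variety (finite, in fact), such an operation is zero. Hence $\phi = \sum_{i,j} b^{(i)}_j\psi^{(i)}_j$.

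For injectivity (freeness), suppose $\sum_{i,j} b^{(i)}_j \psi^{(i)}_j = 0$ and not all $b^{(i)}_j$ vanish; let $c$ be minimal with some $b^{(c)}_j \neq 0$. Then $\sum_{i\ge c}\sum_j b^{(i)}_j\psi^{(i)}_j$ lies in $[A^*,\tau^c B^*]$ and its class in $gr^c_\tau[A^*,B^*]$ equals $\sum_{j\in J_c} b^{(c)}_j[\psi^{(c)}_j]$, because the $i>c$ terms already lie in $\tau^{c+1}B^*$. Applying $tr_c$ and using that $\{tr_c(\psi^{(c)}_j)\}_{j\in J_c}$ is a basis of the free $B$-module $[A^*,\CH^c\ot B]$ forces all $b^{(c)}_j = 0$, a contradiction. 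Therefore the representation is unique.

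Finally, the additive case is obtained verbatim: Prop.~\ref{prop:trunc_op}, Prop.~\ref{prop:operations_top_COT} and the truncation construction all preserve additivity, so if the $\psi^{(i)}_j$ are additive then each remainder in the induction is additive, and the subtraction of $B$-linear combinations of additive operations stays additive; one only needs that $[A^*,\CH^i\ot B]^{add}$ is the free module whose basis is being lifted, which is the hypothesis in this case. The main obstacle in carrying this out rigorously is bookkeeping at the level of the filtration on operations rather than on a fixed variety: one must check that the graded piece $gr^c_\tau[A^*,B^*]$ is computed ``pointwise'' correctly — i.e.\ that an operation lies in $[A^*,\tau^{c+1}B^*]$ iff its value does on every variety — which is exactly Prop.~\ref{prop:operations_top_COT}, and that the infinite sums assembled in the surjectivity step genuinely define an operation, which is the finiteness-of-filtration remark above; with those two inputs in hand the argument is the standard filtered-lifting bootstrap.
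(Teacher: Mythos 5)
Your proof is correct and follows essentially the same route as the paper: an inductive lifting along the topological filtration using the truncation isomorphisms $tr_c$ (with convergence of the infinite sum guaranteed by finiteness of the filtration on each variety), and freeness via applying $tr_c$ at the minimal degree of a putative relation. The only additions you make are explicit bookkeeping points (splitting off $\phi(0)$, citing Prop.~\ref{prop:operations_top_COT} for the pointwise characterization) that the paper leaves implicit.
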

\begin{proof}
Let $\phi$ be an operation, and let us construct its representation
as $\sum_{k\ge 0} \sum_{j\in J_k} b_j^{(k)} \psi^{(k)}_j$.
In order to do it we need
the operation $\phi - \sum_{k=0}^i \sum_{j\in J_k} b_j^{(k)} \psi^{(k)}_j$
to take values in $\tau^{i+1} B^*$,
since the residual takes values in $\tau^{i+1}B^*$.
 Let us find coefficients $b_j^{(k)}$ by induction on $k$.

If $\chi_i:=\phi - \sum_{k=0}^i \sum_{j\in J_k} b_j^{(k)} \psi^{(k)}_j$ takes values in $\tau^{i+1} B^*$,
choose coefficients $b_{j}^{(i+1)}$
as coefficients of the representation of $tr_{i+1}\chi_i$
in the basis $tr_{i+1} \psi^{(i+1)}_j$, $j\in J_{i+1}$.
Thus, the $i+1$-th truncation 
of the operation $\chi_{i+1}=\phi - \sum_{k=0}^{i+1} \sum_{j\in J_k} b_j^{(k)} \psi^{(k)}_j$
is zero, and therefore the operation $\chi_{i+1}$ takes values in $\tau^{i+1} B^*$.
The process converges due to the fact that infinite sums of operations $\psi^{(i)}_j$
are defined. Thus, operations $\psi_j^{(k)}$ generate $B$-module of operations $[A^*, B^*]$.

Let us also check that there are no relations between infinite sums of operations $\psi_j^{(k)}$.
Assume that $\phi=\sum_{k\ge 0} \sum_{j\in J_k} b_j^{(k)} \psi^{(k)}_j$ is a zero operation,
and suppose that $b_j^{(k)}=0$ for $k<i$, $j\in J_k$. 
Then $tr_i \phi = \sum_{j\in J_i} b_j^{(i)} tr_i(\psi^{(k)}_j)$ is zero,
and therefore $b_j^{(i)}=0$ for all $j\in J_i$.
\end{proof}

\begin{Lm}\label{lm:trunc_mult}
Let $A^*, B^*$ be free theories.
The multiplication of operations from $A^*$ to $B^*$ defined by the ring structure of $B^*$
yields a structure of $B$-algebra on the set $\bigoplus_i gr^i_\tau[\tilde{A}, B^*]$.

Moreover, the map $tr\colon \bigoplus_i gr^i_\tau[\tilde{A}, B^*]\rarr \bigoplus_i [\tilde{A}^*, \CH^i\ot B]$
is a map of rings.
\end{Lm}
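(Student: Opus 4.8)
The statement has two parts: first, that the topological filtration on $[\tilde{A}^*, B^*]$ is multiplicative, so that $\oplus_i gr^i_\tau[\tilde{A}^*, B^*]$ is a graded $B$-algebra; second, that the collection of truncation maps $tr_i$ assembles into a ring homomorphism to $\oplus_i [\tilde{A}^*, \CH^i\ot B]$. The plan is to reduce everything to the behaviour of operations on products of projective spaces via Vishik's Theorem~\ref{th:Vish_op} and Prop.~\ref{prop:operations_top_COT}, where the statements become elementary facts about the multiplicative structure of $B^*(\mathbb{P}_n)$ and the map $\pi_n$.

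For the first part, suppose $\phi \in [\tilde{A}^*, \tau^i B^*]$ and $\psi \in [\tilde{A}^*, \tau^j B^*]$. The product $\phi \cdot \psi$ is the operation $x \mapsto \phi(x)\psi(x)$, and by Prop.~\ref{prop:top_filt_properties}~(\ref{item:topfilt_mult}) its values on any variety lie in $\tau^{i+j} B^*$, so $\phi\cdot\psi \in [\tilde{A}^*, \tau^{i+j}B^*]$. This immediately gives a well-defined pairing $gr^i_\tau[\tilde{A}^*, B^*] \otimes_B gr^j_\tau[\tilde{A}^*, B^*] \to gr^{i+j}_\tau[\tilde{A}^*, B^*]$; associativity, the $B$-linearity and the unit (the constant operation $1$, which sits in filtration $0$) are all inherited from the ring structure of $[\tilde{A}^*, B^*]$. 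Here I would note that, strictly, $1$ is an operation from $A^*$ rather than $\tilde{A}^*$, but one works throughout with the unital $B$-algebra generated by $[\tilde{A}^*, B^*]$, or equivalently passes through $[A^*, B^*]$ via Prop.~\ref{prop:operations_from_tilde}; this bookkeeping point should be dispatched in one sentence.

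For the second part, the truncation $tr_i \phi$ is characterized (Prop.~\ref{prop:trunc_op}) by $tr_i\phi(\alpha) = \pi_n^i(\phi(\alpha))$ for $\alpha \in \tilde{A}^*(\mathbb{P}_n)$. Take $\phi \in [\tilde{A}^*,\tau^i B^*]$, $\psi \in [\tilde{A}^*, \tau^j B^*]$; by the first part $\phi\cdot\psi \in [\tilde{A}^*, \tau^{i+j}B^*]$. On $\mathbb{P}_n$ we have $\phi(\alpha) \in \tau^i B^*(\mathbb{P}_n)$ and $\psi(\alpha) \in \tau^j B^*(\mathbb{P}_n)$; by Prop.~\ref{prop:top_filt_properties}~(\ref{item:topfilt_projspace}) these are sums of terms $b\cdot z^B_{s_1}\cdots z^B_{s_m}$ with $m \ge i$ (resp.\ $m \ge j$). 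The ring homomorphism $\pi_n: B^*(\mathbb{P}_n) \to (\CH^*\ot B)(\mathbb{P}_n)$ sends $z^B_s \mapsto z^{\CH}_s$ and respects these monomial degrees; hence $\pi_n(\phi(\alpha)\psi(\alpha)) = \pi_n(\phi(\alpha))\pi_n(\psi(\alpha))$, and extracting the degree-$(i+j)$ part, the only contribution comes from the degree-$i$ part of $\pi_n(\phi(\alpha))$ times the degree-$j$ part of $\pi_n(\psi(\alpha))$, since lower-degree parts vanish. This yields $\pi_n^{i+j}(\phi(\alpha)\psi(\alpha)) = \pi_n^i(\phi(\alpha))\cdot \pi_n^j(\psi(\alpha))$, i.e.\ $tr_{i+j}(\phi\cdot\psi)(\alpha) = (tr_i\phi \cdot tr_j\psi)(\alpha)$ on all $\mathbb{P}_n$. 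Since both sides are operations to $\CH^{i+j}\ot B$, they agree everywhere by Vishik's theorem; additivity in each variable and compatibility with the unit are checked the same way. The only genuine subtlety — and the step I would be most careful about — is that the truncation map is only injective, not surjective, so one must phrase the multiplicativity on the level of the graded object $\oplus_i gr^i_\tau[\tilde{A}^*, B^*]$ and verify that the pairing descends correctly to graded pieces; this is exactly the content of the vanishing of lower-degree terms used above, so no separate argument is needed, but it should be stated explicitly.
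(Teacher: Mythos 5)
Your proof is correct and follows essentially the same route as the paper: the first claim via the multiplicativity of the topological filtration (Prop.~\ref{prop:top_filt_properties}, (\ref{item:topfilt_mult})), and the second by reducing, through Vishik's theorem and the defining formula for $tr_c$, to the elementary fact that for series $f$ of degree $\ge i$ and $g$ of degree $\ge j$ in the $z^B$'s one has $\pi_{i+j}(fg)=\pi_i(f)\pi_j(g)$. Your extra remarks about the unit and the descent to graded pieces are harmless bookkeeping that the paper leaves implicit.
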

\begin{proof}
The first claim follows from the multiplicativity of the topological 
filtration (Prop. \ref{prop:top_filt_properties}, (\ref{item:topfilt_mult})).

Let $\phi\colon A^*\rarr \tau^iB^*$, $\psi:A^*\rarr \tau^jB^*$ be operations,
and we need to check that \mbox{$tr_{i+j}(\phi \psi)=tr_i(\phi)tr_j(\psi)$.}
By Vishik's classification theorem (Th. \ref{th:Vish_op})
it is enough to prove the equality of values on products of projective spaces.
According to the construction of the truncation 
the claim is equivalent to the following for each $n\ge 0$:
if $f\in B(z_1^B,\ldots, z_n^B)^i$, $g\in B(z_1^B,\ldots, z_n^B)^j$,
then $\pi_{i+j}(f\cdot g) = \pi_i(f)\cdot \pi_j(g)$ (see Section \ref{sec:tr_constr} for the notation).
This is straight-forward.
\end{proof}

\begin{Prop}\label{prop:relations_between_truncations}
Let $A^*$ be a free theory, and let $B^*$ be an oriented theory.

Assume that the truncation maps $tr_i\colon gr^i_\tau[\tilde{A}^*,B^*]\rarr [\tilde{A}^*,\CH^i\ot B]$ 
are isomorphisms for each $i\ge 1$.
Assume that the ring $[\tilde{A}^*,\CH^*\ot B]$ is freely\footnoteref{footnote:free}
 generated as $B$-algebra 
by operations $\bar{t}_i\colon \tilde{A}^*\rarr \CH^{m_i}\ot B$, $i \in I$,
 and assume that $[\tilde{A}^*, \CH^i\ot B]$ is a finite rank (free) $B$-module.

Denote by $t_i\in [\tilde{A}^*, \tau^{m_i}B^*]$ a lift of the operation $\bar{t}_i$
with respect to the truncation map.

Then $[\tilde{A}^*, B^*]$ is freely\footnoteref{footnote:free} generated as
 $B$-algebra by operations $t_i$.
\end{Prop}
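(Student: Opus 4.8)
The plan is to combine Proposition~\ref{prop:all_operations_from_truncations} (which produces generation) with Lemma~\ref{lm:trunc_mult} (which makes the truncation map multiplicative), upgrading the ``module basis'' statement to a ``free polynomial algebra'' statement. First I would recall that, since $[\tilde{A}^*,\CH^*\ot B]=B[[\bar t_i, i\in I]]$ is freely generated as a $B$-algebra and each $[\tilde{A}^*,\CH^i\ot B]$ is a finite rank free $B$-module, the monomials $\prod_{i} \bar t_i^{e_i}$ (with $\sum_i e_i m_i = c$, only finitely many $e_i$ nonzero) form a $B$-basis of $[\tilde{A}^*,\CH^c\ot B]$ for each $c\ge 0$. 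For each such monomial $\bar t^{\underline e}$ of total degree $c$, the product $t^{\underline e}:=\prod_i t_i^{e_i}$ lies in $[\tilde{A}^*,\tau^c B^*]$ by multiplicativity of the topological filtration (Prop.~\ref{prop:top_filt_properties},(\ref{item:topfilt_mult})), and by Lemma~\ref{lm:trunc_mult} its $c$-th truncation is $tr_c(t^{\underline e})=\prod_i tr_{m_i}(t_i)^{e_i}=\prod_i \bar t_i^{e_i}=\bar t^{\underline e}$. So the collection $\{t^{\underline e}\}$, grouped by total degree $c$, is exactly a family of lifts of a $B$-basis of $gr^c_\tau[\tilde{A}^*,B^*]$ under the (isomorphism) truncation maps.

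Now I would apply Proposition~\ref{prop:all_operations_from_truncations} with $\psi^{(c)}_j$ running over the monomials $t^{\underline e}$ of total degree $c$: it gives that these operations generate $[\tilde{A}^*,B^*]$ as a topological $B$-module and that there are no (possibly infinite) $B$-linear relations among them. In other words, every operation $\phi\in[\tilde{A}^*,B^*]$ has a unique expression as a $B$-linear combination (allowing infinite sums with growing topological index) $\phi=\sum_{\underline e} b_{\underline e}\, t^{\underline e}$. Since every such monomial $t^{\underline e}$ is literally a product of the $t_i$, this says precisely that the natural $B$-algebra map $B[[t_i, i\in I]]\to[\tilde{A}^*,B^*]$ is surjective; and the absence of $B$-linear relations among the monomials is exactly injectivity. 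Hence $[\tilde{A}^*,B^*]$ is freely generated as a (topological) $B$-algebra by the $t_i$, which is the assertion.

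The only genuine point to be careful about — and the place I would expect to spend a line or two of real argument rather than bookkeeping — is the compatibility of the two notions of ``free'' (footnote~\ref{footnote:free}): on the target Chow side ``free $B$-algebra on $\bar t_i$'' already means the completed polynomial algebra $B[[\bar t_i]]$ with respect to the degree filtration, while on the $B^*$ side ``free $B$-algebra'' means completion with respect to the topological filtration. One must check that a monomial $t^{\underline e}$ of large total degree indeed has large topological filtration index (immediate from $t_i\in\tau^{m_i}B^*$ and multiplicativity, provided the $m_i$ are positive, which holds since the $\bar t_i$ vanish over a point), so that infinite sums indexed by monomials of growing total degree are precisely the admissible infinite sums in Prop.~\ref{prop:all_operations_from_truncations}. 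Once this matching of topologies is recorded, uniqueness of the expansion in Prop.~\ref{prop:all_operations_from_truncations} transports verbatim into freeness of the $B$-algebra, completing the proof.
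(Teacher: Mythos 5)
Your proposal is correct and takes essentially the same route as the paper: both arguments feed the monomials in the $t_i$ (whose truncations are the monomials in the $\bar t_i$ by Lemma~\ref{lm:trunc_mult}) into Proposition~\ref{prop:all_operations_from_truncations} to get generation, and deduce freeness from the resulting uniqueness of expansions. The only cosmetic difference is that the paper re-proves the absence of relations directly, truncating a hypothetical zero series at the minimal degree of its monomials, whereas you invoke the uniqueness clause of Proposition~\ref{prop:all_operations_from_truncations}; the underlying minimal-degree truncation argument is the same.
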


\begin{proof}
It follows from Prop. \ref{prop:all_operations_from_truncations}
that $t_i$  generate the ring of all operations,
i.e.\ any operation can be represented as a $B$-series in $t_i$.
Let us prove that there are no algebraic relations between these operations.

Assume that $P\in B[[t_1,\ldots, t_i,\ldots]]$ defines a zero operation, $P\neq 0$.
Define degree of a monomial $\prod_k t_{i_k}^{r_k}$ 
to be $\sum_k m_{i_k}r_k$, 
i.e.\ this monomial defines an operation which takes values in the degree-th part
of the topological filtration on the target.
Let $j$ be the minimal degree of monomial summands of $P$, i.e.\ $j$ is finite.

By Lemma \ref{lm:trunc_mult} the operation $tr_j P$ 
is equal to the polynomial in operations $\bar{t}^{(i)}_j$
obtained by truncating $P$ as a series.
As there are no polynomial relations between operations $\bar{t}^{(i)}_j$ by the assumption, 
we come to a contradiction.
\end{proof}

\section{Morava K-theories and operations to $p^n$-typical theories}\label{sec_op_mor}

Fix a prime number $p$. In what follows all rings will be $\Zp$-algebras 
if not specified otherwise.

\subsection{$p^n$-typical formal group laws}\label{sec:pn-typ}

\begin{Def}
A series $\gamma \in A[[x]]$ is called {\it $p^n$-gradable} (w.r.to $x$)
if it is of the form $\sum_{j\ge 0} a_jx^{1+j(p^n-1)}$.

A series $\eta \in A[[x_1,\ldots, x_n]]$ is called {\it $p^n$-gradable}
if it is $p^n$-gradable w.r.to any variable $x_i$.

We will also say that a series $\gamma$ (or $\eta$) is $p^n$-gradable up to degree $k$
if it is of the form $\sum^{j=k}_{j\ge 0} a_jx^{1+j(p^n-1)}$ modulo $(x^{k+1})$
(of the form $\sum^{j=k}_{j\ge 0} P_j(x_1,\ldots,\hat{x_i}, \ldots, x_n) x_i^{1+j(p^n-1)}$ 
modulo $(x_i^{k+1})$
for each $i\colon 1\le i \le n$, respectively).
\end{Def}

Note that a $p^n$-gradable series over $A$
can be made a homogenous series of degree 1 over the ring $A[v_n]$ 
with $\deg v_n=1-p^n$, and $\deg x=1$.

We record the following straight-forward properties of $p^n$-gradable series.

\begin{Prop}\label{prop:pn_grad_series}
\begin{enumerate}
\item Whenever composition of $p^n$-gradable series is defined, it is $p^n$-gradable.
\item If a series is $p^n$-gradable and invertible w.r.to composition,
then its inverse is also $p^n$-gradable.
\item If a series if $p^{kn}$-gradable, then it is also $p^n$-gradable.
\end{enumerate}
\qed
\end{Prop}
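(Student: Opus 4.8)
The plan is to verify the three closure properties of $p^n$-gradable series directly from the defining exponent pattern, which is an arithmetic-progression condition on exponents with common difference $p^n-1$ and initial term $1$.

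For part (1), suppose $\gamma(x) = \sum_{j\ge 0} a_j x^{1+j(p^n-1)}$ and $\delta(x) = \sum_{i\ge 0} b_i x^{1+i(p^n-1)}$, and consider $\gamma(\delta(x))$ (the single-variable case; the multivariable case reduces to it by freezing all but one variable). The key observation is that the set $S = \{1 + m(p^n-1) : m \ge 0\}$ is closed under multiplication: $(1+a(p^n-1))(1+b(p^n-1)) = 1 + (a+b+ab(p^n-1))(p^n-1) \in S$. Hence any monomial appearing in a product $\delta(x)^{1+j(p^n-1)}$ — being a product of $1+j(p^n-1)$ monomials each with exponent in $S$ — has exponent in $S$; therefore $\gamma(\delta(x))$ only involves exponents in $S$, i.e. it is $p^n$-gradable. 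Equivalently, one can invoke the homogeneity reformulation noted just before the proposition: a $p^n$-gradable series is exactly a degree-$1$ homogeneous element of $A[v_n]$ with $\deg v_n = 1-p^n$, $\deg x = 1$; composition of homogeneous degree-$1$ series is homogeneous of degree $1$, which is the cleanest way to phrase the argument and I would present it this way.

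For part (2), if $\gamma$ is $p^n$-gradable and compositionally invertible, its compositional inverse $\gamma^{-1}$ is constructed by the standard recursion on coefficients. Using the homogeneity picture: $\gamma$ is homogeneous of degree $1$ over $A[v_n]$, and the recursion determining $\gamma^{-1}$ (solving $\gamma(\gamma^{-1}(x)) = x$ coefficient by coefficient) produces a series which is forced to be homogeneous of degree $1$ as well, since $x$ is homogeneous of degree $1$ and each step only involves the homogeneous operations of addition and composition. Hence $\gamma^{-1}$ is $p^n$-gradable. Alternatively, argue by induction on degree that the exponents occurring in $\gamma^{-1}$ lie in $S$, using part (1) applied to the partial compositions.

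For part (3), this is purely arithmetic: if a series is $p^{kn}$-gradable, its exponents lie in $\{1 + j(p^{kn}-1) : j \ge 0\}$, and since $p^{kn} - 1 = (p^n-1)(p^{n(k-1)} + p^{n(k-2)} + \cdots + 1)$, we have $p^{kn}-1 \in (p^n-1)\ZZ_{\ge 0}$, so every such exponent is of the form $1 + m(p^n-1)$ for a suitable $m \ge 0$; thus the series is $p^n$-gradable. For the multivariable case one applies this to each variable separately. I expect the main (very mild) obstacle to be bookkeeping in the multivariable case — namely making sure that "$p^n$-gradable w.r.t.\ each variable separately" is genuinely preserved under the multivariable compositions that occur (where one substitutes series into a multivariable series); this is handled by noting that after fixing all variables but one, the substituted expression is again a single-variable composition of $p^n$-gradable series, to which part (1)'s single-variable argument applies.
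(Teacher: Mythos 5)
Your argument is essentially the proof the paper intends: the paper's own proof is literally the word ``straight-forward'', and the sentence preceding the proposition (a $p^n$-gradable series is a degree-$1$ homogeneous series over $A[v_n]$ with $\deg v_n=1-p^n$, $\deg x=1$) is exactly the homogenization you invoke, under which (1) and (2) are immediate and (3) is the divisibility $(p^n-1)\mid(p^{kn}-1)$ you state. So in substance the proposal is correct and matches the intended route; I would present the homogenization version as the actual proof, as you say you would.

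Two caveats. First, in your elementary argument for (1) the fact you cite --- closure of $S=\{1+m(p^n-1)\}$ under multiplication --- is not the fact you use: when you expand $\delta(x)^{1+j(p^n-1)}$ the exponents of the constituent monomials \emph{add}, so what is needed is that a sum of $N$ elements of $S$ with $N\equiv 1 \bmod (p^n-1)$ again lies in $S$ (note a sum of just two elements of $S$ is not in $S$); this is equally trivial, but it is the congruence argument, not multiplicativity of $S$, that does the work. Second, the ``freeze all but one variable'' reduction for the multivariable case is the genuinely delicate point, because with the definition read literally (each variable's exponents lie in $S$) statement (1) can fail for substitutions whose inner series share variables: $\eta(y_1,y_2)=y_1y_2$ is $p^n$-gradable with respect to each variable, yet $\eta(x,x)=x^2$ is not gradable. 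The multivariable notion actually used in the paper (e.g.\ for formal group laws, which contain monomials like $x^{p^n-1}y$) is the total-degree one: every monomial has total degree in $S$, equivalently the series lifts to a homogeneous series of degree $1$ over $A[v_n]$. Under that reading your homogeneity argument covers the multivariable case with no bookkeeping at all, so you should rely on it rather than on the per-variable freezing reduction, which does not literally go through.
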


Formal group laws (FGLs) are formal power series in two variables which 
model a multiplication law on a formal one-dimensional neighbourhood.
We have already mentioned them in Section \ref{sec:def} where they appeared in the connection to oriented cohomology theories.
For general facts about formal group laws we refer the reader to \cite{Haz}.

Recall that there exist a notion of a $p$-typical FGL due to Cartier,
which over a torsion-free ring is simplified to the following:
a FGL is $p$-typical iff its logarithm has the form $\sum_{i=1}^\infty l_ix^{p^i}$ (\cite[Th.~4]{Cart}).

There exist the universal $p$-typical FGL $F_{BP}$ over the ring $BP$,
and the corresponding free theory is called the {\it Brown-Peterson cohomology}.
The ring $BP$ is non-canonically isomorphic to the graded ring $\Zp[v_1,v_2,\ldots]$
where the variable $v_i$ has degree $1-p^i$.
The Araki generators are one choice of set of generators $v_i$ for $BP$.
It is common to include $v_0=p$ as a `variable' in notations.
If we denote the coefficients of the logarithm of $F_{BP}$ as $l_i$ so that
 $\log_{BP}=\sum_i l_ix^{p^i}$,
then  $pl_n = \sum_{i=0}^{n} l_iv_{n-i}^{p^i}$ (\cite[I.4 (6.12)]{Araki}).
For these generators we also have $p\cdot_{BP} x = \sum^{BP}_{i\ge 0} v_ix^{p^i}$ ([{\sl op.cit.}, Th. 6.5]). 

Here is a generalization of this notion.

\begin{Def}
Formal group law $F$ over a $\Zp$-algebra is called {\it $p^n$-typical},
if it is $p$-typical and $p\cdot_F x$ is a $p^n$-gradable series.

An oriented theory is called {\it $p^n$-typical},
if the corresponding FGL is $p^n$-typical.
\end{Def}

\begin{Rk}
After this paper was written we learned that
a similar definition has appeared under a different name as early as \cite[21.5.5]{Haz}
and studied in \cite[Def.~2.5]{Rav_A-formal}, \cite[Def.~2.3]{Salch_Moduli}, \cite{Salch_stack}
under the name of $A$-typical formal $A$-modules. 

The importance of this notion was justified in these papers by certain algebraic computations related
to the stack of formal groups and the second page of the Adams-Novikov spectral sequence,
i.e.\ not by the ``topological significance'', see e.g.\ \cite[Introduction]{Rav_A-formal}.

In what follows the notion of $p^n$-typical formal group law
plays an important role in the classification of unstable operations from $p$-local algebraic Morava K-theory.
We do not know if similar results have been obtained in topology before.
\end{Rk}

The following Proposition~\ref{prop:universal_pn-typical_fgl} and Corollaries~\ref{cr:fgl_pn_gradable_logarithm},~\ref{cr:pn-typical-torsion-free}
are standard results. % concerning the construction of rings that classify certain FGL's.
We have included their proof for the sake of completeness.

\begin{Prop}[{\cite[Th.~2.6]{Rav_A-formal}}]\label{prop:universal_pn-typical_fgl}
There exist a graded ring $BP\{n\}$ classifying $p^n$-typical FGL's.

The ring $BP\{n\}$ can be naturally identified with a factor ring of $BP\cong \Zp[v_n, v_{2n}, \ldots]$
where Araki generators $v_i$ are sent to zero for $i\nmid n$
and $v_{mn}$ is sent to $v_{mn}$ for $m\ge 1$.

In particular, $BP\{1\}$ is $BP$, and $BP\{kn\}$ is a natural factor-ring of $BP\{n\}$ for any $k,n\inN$.
\end{Prop}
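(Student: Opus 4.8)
The plan is to construct the classifying ring $BP\{n\}$ directly as the appropriate quotient of $BP$ and to verify its universal property. First I would recall that over a torsion-free $\Zp$-algebra $A$, giving a $p$-typical formal group law is equivalent to giving the images of the Araki generators $v_i \in BP$, i.e.\ to giving a ring homomorphism $BP \to A$; this is the classical universal property of $BP$. For such an $F$ we have $p\cdot_F x = \sum^{BP}_{i\ge 0} v_i x^{p^i}$ (with $v_0 = p$), as quoted in Section~\ref{sec:pn-typ}. By definition $F$ is $p^n$-typical precisely when this series is $p^n$-gradable, and since the monomials $x^{p^i}$ have pairwise distinct exponents, the condition $x^{p^i} = x^{1 + j(p^n-1)}$ forces $p^i - 1 \equiv 0 \pmod{p^n - 1}$, i.e.\ $n \mid i$ (here one uses that $p^i - 1$ is divisible by $p^n - 1$ iff $n \mid i$, an elementary fact about $\gcd(p^a-1,p^b-1) = p^{\gcd(a,b)}-1$). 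Hence $F$ is $p^n$-typical if and only if the image of $v_i$ in $A$ vanishes for all $i$ with $n \nmid i$.

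Next I would define $BP\{n\} := BP/(v_i : n \nmid i)$ and equip it with the grading inherited from $BP$, so that $BP\{n\} \cong \Zp[v_n, v_{2n}, \ldots]$ with $\deg v_{mn} = 1 - p^{mn}$. The computation of the previous paragraph shows that a ring homomorphism $BP \to A$ factors through $BP\{n\}$ if and only if the corresponding $p$-typical FGL is $p^n$-typical. But one still has to check that the universal $p$-typical FGL pushed forward along $BP \to BP\{n\}$ is genuinely $p^n$-typical over $BP\{n\}$ itself — i.e.\ that the target ring is correct, not merely that it is an obstruction-free quotient. For this I would observe that over $BP\{n\}$ we have $p\cdot_F x = \sum^{BP\{n\}}_{m\ge 0} v_{mn} x^{p^{mn}}$, and each exponent $p^{mn}$ satisfies $p^{mn} - 1 = (p^n-1)(1 + p^n + \cdots + p^{(m-1)n})$, so the series is $p^n$-gradable by inspection. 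This establishes that $BP\{n\}$ carries a $p^n$-typical FGL and represents the functor $A \mapsto \{p^n\text{-typical FGLs over }A\}$ on torsion-free $\Zp$-algebras; the general (possibly torsion) case is handled by the full definition referenced in Section~\ref{sec:pn-typ}, reducing to the torsion-free case via the universal example, which is torsion-free.

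Finally, the last assertions are immediate from the construction: $BP\{1\} = BP/(v_i : 1 \nmid i) = BP$ since the condition $1 \nmid i$ is vacuous, and for $k, n \in \NN$ one has $(v_i : n \nmid i) \subseteq (v_i : kn \nmid i)$ because $kn \mid i$ implies $n \mid i$, whence $BP\{kn\} = BP\{n\}/(v_{mn} : k \nmid m)$ is a natural quotient of $BP\{n\}$. The main obstacle, such as it is, is the bookkeeping around the arithmetic fact $n \mid i \iff (p^n-1) \mid (p^i-1)$ and making sure the $p^n$-gradability condition is read off correctly from the sparse series $\sum v_i x^{p^i}$; once that is in place, everything else is formal from the universal property of $BP$.
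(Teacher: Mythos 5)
There is a genuine gap at the crux of your argument. You read the Araki relation $p\cdot_{F} x=\sum^{F}_{i\ge 0} v_i x^{p^i}$ as an ordinary sum of monomials with sparse, pairwise distinct exponents, and conclude that $p^n$-gradability of $p\cdot_F x$ is equivalent to the vanishing of the images of $v_i$ for $n\nmid i$ "since the monomials $x^{p^i}$ have pairwise distinct exponents". But the superscript on the sum means it is a formal group sum: expanding the iterated $F$-addition produces cross terms $a_{kl}(\cdot)^k(\cdot)^l$ in every degree, whose coefficients are polynomials in the $v_j$ and the coefficients of $F$. So the honest power series $p\cdot_F x$ is not supported on the exponents $p^i$, gradability cannot be read off term by term from the summands, and the equivalence you assert is precisely the statement that needs proof — it is the entire content of the proposition. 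The same fallacy reappears when you check that the FGL over $BP/(v_i:n\nmid i)$ is $p^n$-typical "by inspection" of the exponents $p^{mn}$: that direction also cannot be seen from the summands alone.

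The paper's proof supplies exactly what is missing. For the direction "gradable $\Rightarrow$ the bad generators die", it takes $i_0$ minimal with $n\nmid i_0$ and $\phi(v_{i_0})\neq 0$, notes that the logarithm (hence the FGL) is $p^n$-gradable up to degree $p^{i_0}-1$, and uses this degree bookkeeping to show that the non-gradable monomial $\phi(v_{i_0})x^{p^{i_0}}$ genuinely appears in the expansion of the formal sum, contradicting gradability. For the converse it does not inspect exponents but uses a grading argument: over $\Zp[v_n,v_{2n},\ldots]$ the series $p\cdot_F x$ is homogeneous of degree $1$ and all generator degrees are divisible by $p^n-1$, which forces every occurring exponent of $x$ to be $\equiv 1 \bmod (p^n-1)$. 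Your surrounding structure (define the quotient, verify the universal property, deduce $BP\{1\}=BP$ and that $BP\{kn\}$ is a quotient of $BP\{n\}$) and the arithmetic fact $n\mid i \Leftrightarrow (p^n-1)\mid(p^i-1)$ are fine, and the last paragraph is correct as stated; also note that $BP$ classifies $p$-typical FGLs over all $\Zp$-algebras, so your restriction to torsion-free $A$ and the subsequent hand-wave about the torsion case are unnecessary — but the central equivalence must be argued along the lines above, not by comparing exponents of the formal summands.
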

\begin{proof}
It is clear that the ring $BP\{n\}$ exists
 and it can be identifed with a factor of the ring $BP$ 
over the ideal generated by those coefficients of the series $p\cdot_{F_{BP}} x$
which stand to `non-$p^n$-gradable' monomials. 
Denote by $F_{BP\{n\}}$ the universal $p^n$-typical FGL.

Denote by the ring map $\phi\colon BP\rarr BP\{n\}$ the canonical map
classifying the universal \mbox{$p^n$-typical} FGL over $BP\{n\}$ which is also $p$-typical.
Our goal now is to show that $\phi$ sends the Araki generator $v_i$ to zero for $i\nmid n$.
Suppose that $i_0=\min\{j>0\colon  \phi(v_j)\neq 0, j\nmid n\}$ is finite.
Recall that $p\cdot_{BP} x = \sum_i^{BP} v_ix^{p^i}$,
and therefore $p\cdot_{BP\{n\}} x = \sum_i^{F_n} \phi(v_i)x^{p^i}$.
By our assumption 
\begin{equation}\label{eq:p-sum-bpn}
p\cdot_{BP\{n\}} x = 
px+_{BP\{n\}}\sum_{j>0}^{BP\{n\}} \phi(v_{jn})x^{p^{jn}}+_{BP\{n\}}\phi(v_{i_0})x^{p^{i_0}}
+_{BP\{n\}}\mbox{higher degree terms}.
\end{equation}

Note that $\log_{BP\{n\}}$ is $p^n$-gradable up to degree $p^{i_0}-1$ as follows from the assumptions,
 and therefore $F_{BP\{n\}}$ is $p^n$-gradable up to degree $p^{i_0}-1$ as well.
It follows that the sum of two first summands of the RHS of (\ref{eq:p-sum-bpn}) 
is $p^n$-gradable up to degree $p^{i_0}-1$. 

The rightmost summands of (\ref{eq:p-sum-bpn}) have degree in $x$ strictly bigger than $p^{i_0}$,
 and therefore in $p\cdot_{BP\{n\}} x$ there appears the monomial $\phi(v_{i_0})x^{p^{i_0}}$.
Thus, this series is not $p^n$-gradable and we get a contradiction with the finiteness of $i_0$.

To finish the proof one needs to check that 
the $p$-typical FGL $F$ over the ring $BP/(v_i, i\nmid n)$
defined by the canonical map from $BP$ is $p^n$-typical.
As the series $p\cdot_{F} x$
is a homogeneous series of degree 1 (where $\deg x=1$),
and the degrees of all of the generators of $BP/(v_i, i\nmid n)$ are divisible by $p^n-1$,
it follows that this series is $p^n$-gradable. 
\end{proof}

\begin{Cr}\label{cr:fgl_pn_gradable_logarithm}
A formal group law $F$ over a ring $R$ is $p^n$-typical
iff it is $p$-typical and \mbox{$F=\sum a_{ij}x^iy^j$} is $p^n$-gradable.
\end{Cr}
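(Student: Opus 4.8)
The plan is to unwind the definition and reduce the only non-formal point to Proposition~\ref{prop:universal_pn-typical_fgl}. Both conditions in the statement contain ``$p$-typical'' (a $p^n$-typical FGL is $p$-typical by definition), so I may assume $F$ is $p$-typical and must only show that $p\cdot_F x$ is $p^n$-gradable if and only if the two-variable series $F=\sum a_{ij}x^iy^j$ is $p^n$-gradable. As in the remark following the definition of $p^n$-gradability, the sensible (and here intended) reading of ``$F$ is $p^n$-gradable'' is that $F$ becomes homogeneous of degree $1$ over $R[v_n]$ with $\deg v_n=1-p^n$ and $\deg x=\deg y=1$, equivalently that $a_{ij}\neq 0\Rightarrow i+j\equiv 1\pmod{p^n-1}$; likewise a one-variable series $\sum b_kx^k$ is $p^n$-gradable exactly when $b_k\neq 0\Rightarrow k\equiv 1\pmod{p^n-1}$. (Note this is not the literal per-variable gradability of the general definition, which no nontrivial FGL satisfies; but it is the condition forced by the ``Note'' on homogenization, and it is what makes the corollary true.)

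The ``if'' direction is purely formal. Assuming $F$ is homogeneous of degree $1$, I argue by induction on $m\ge 1$ that the iterated $F$-sum $m\cdot_F x$ is homogeneous of degree $1$: for $m=1$ it is $x$, and $(m+1)\cdot_F x=F(x,m\cdot_F x)$ is the substitution of a homogeneous degree-$1$ series into one variable of the homogeneous degree-$1$ series $F$, hence homogeneous of degree $1$ (concretely, each $a_{ij}x^i(m\cdot_F x)^j$ expands into monomials of $x$-degree $\equiv i+j\equiv 1\pmod{p^n-1}$; cf.\ Proposition~\ref{prop:pn_grad_series}(1)). Taking $m=p$ shows $p\cdot_F x$ is $p^n$-gradable, i.e.\ $F$ is $p^n$-typical.

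For the ``only if'' direction I would pass to the universal example. If $F$ is $p^n$-typical then, by Proposition~\ref{prop:universal_pn-typical_fgl}, it is the base change of the universal $p^n$-typical FGL $F_{BP\{n\}}$ along its classifying ring map $BP\{n\}\rarr R$; since applying a ring map to coefficients preserves the condition ``$i+j\equiv 1\pmod{p^n-1}$'', it suffices to see that $F_{BP\{n\}}$ itself is $p^n$-gradable. Now $BP\{n\}=BP/(v_i,\,i\nmid n)$ is a \emph{graded} quotient of $BP$, all of whose generators $v_{mn}$ lie in degree $1-p^{mn}\equiv 0\pmod{p^n-1}$ (as recorded in the proof of Proposition~\ref{prop:universal_pn-typical_fgl}), so $BP\{n\}$ is concentrated in degrees divisible by $p^n-1$; and $F_{BP}$ is homogeneous of degree $1$ over the graded ring $BP$, its logarithm $\log_{BP}=\sum_i l_ix^{p^i}$ with $\deg l_i=1-p^i$ being homogeneous of degree $1$ and hence $F_{BP}=\log_{BP}^{-1}(\log_{BP}(x)+\log_{BP}(y))$ as well. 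Passing to the quotient, $F_{BP\{n\}}$ is homogeneous of degree $1$ over $BP\{n\}$, so each of its coefficients $b_{ij}$ either vanishes or lies in degree $1-i-j$; combined with the concentration of $BP\{n\}$ in degrees divisible by $p^n-1$, this forces $1-i-j\equiv 0\pmod{p^n-1}$ whenever $b_{ij}\neq 0$, i.e.\ $F_{BP\{n\}}$, and therefore $F$, is $p^n$-gradable. The only step that is not bookkeeping with the weights $\deg x=\deg y=1$, $\deg v_n=1-p^n$ is this reduction, and its substance is already supplied by Proposition~\ref{prop:universal_pn-typical_fgl}; so I foresee no real obstacle, only the need to keep straight the slightly loose meaning of ``$p^n$-gradable'' for a two-variable series and the fact that base change and graded quotients preserve it.
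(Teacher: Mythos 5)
Your proposal is correct and takes essentially the same route as the paper: the ``only if'' direction is proved by base change from the universal $p^n$-typical formal group law over $BP\{n\}$ (Proposition \ref{prop:universal_pn-typical_fgl}), using that $F_{BP\{n\}}$ is homogeneous of degree $1$ while $BP\{n\}$ is concentrated in degrees divisible by $p^n-1$, and the ``if'' direction is the straightforward observation that gradability of $F$ forces gradability of $[p]\cdot_F x$. Your preliminary remark that ``$p^n$-gradable'' for the two-variable series $F$ must be read as homogenizability (all monomials of total degree $\equiv 1 \bmod p^n-1$) is exactly the reading the paper's own proof implicitly uses, so it is a fair clarification rather than a deviation.
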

\begin{proof}
The universal $p^n$-typical formal group law $F_{BP\{n\}}$ is 
homogeneous of degree 1 (with $\deg x=\deg y=1$) 
and since the degrees of elements in $BP\{n\}$ are always divisible by $p^n-1$
it follows that $F_{BP\{n\}}$ is $p^n$-gradable.
Since a $p^n$-typical formal group law $F_A$ over a ring $A$
can be obtained as $\phi(F_{BP\{n\}})$ where $\phi\colon BP\{n\}\rarr A$
is a ring map, the power series $F_A$ is also $p^n$-gradable.

The converse is straight-forward: if $F$ is $p^n$-gradable, 
then $[k]\cdot_F x$ is $p^n$-gradable for any $k\in\ZZ$.
\end{proof}

\begin{Cr}\label{cr:pn-typical-torsion-free}
If $A$ is a torsion-free $\Zp$-algebra,
then a FGL $F$ over $A$ 
is $p^n$-typical iff its logarithm is of the form $\sum_{i=0}^\infty l_i x^{p^{ni}}$.
\end{Cr}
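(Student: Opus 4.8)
The plan is to prove the two implications separately: the ``if'' direction is a short formal argument with $p^n$-gradable series, and the ``only if'' direction I would reduce to the universal $p^n$-typical formal group law $F_{BP\{n\}}$ of Prop.~\ref{prop:universal_pn-typical_fgl}.

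For ``if'', suppose $\log_F(x)=\sum_{i\ge 0}l_i x^{p^{ni}}$ (with $l_0=1$). This is in particular a $p$-typical logarithm, so $F$ is $p$-typical; and since $p^{ni}-1\equiv 0\pmod{p^n-1}$ for every $i$, the series $\log_F$ is $p^n$-gradable. Then Prop.~\ref{prop:pn_grad_series}(2) gives that the compositional inverse $\exp_F$ is $p^n$-gradable; moreover $p\cdot\log_F(x)$ is a $p^n$-gradable series with vanishing constant term, so the composition is defined and Prop.~\ref{prop:pn_grad_series}(1) yields that $p\cdot_F x=\exp_F(p\cdot\log_F(x))$ is $p^n$-gradable. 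Hence $F$ is $p$-typical with $p^n$-gradable $p$-series, i.e.\ it is $p^n$-typical by definition.

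For ``only if'', let $F$ be $p^n$-typical over a torsion-free $\Zp$-algebra $A$. Since $F$ is $p$-typical, $\log_F(x)=\sum_{m\ge 0}l_m x^{p^m}$ with $l_0=1$, and the goal is to show $l_m=0$ whenever $n\nmid m$. By Prop.~\ref{prop:universal_pn-typical_fgl} there is a classifying ring map $\psi\colon BP\{n\}\rarr A$ with $\psi_* F_{BP\{n\}}=F$; since the logarithm of a formal group law is functorial for ring maps after $\ot\QQ$, it would be enough to prove the vanishing for $F_{BP\{n\}}$ and then apply $\psi\ot\QQ$. Over $BP\{n\}$ I would use the Araki relation $pl_m=\sum_{i=0}^m l_i v_{m-i}^{p^i}$ (with $v_0=p$), rewritten as $l_m\,(p-p^{p^m})=\sum_{i=0}^{m-1}l_i v_{m-i}^{p^i}$, together with the fact (also from Prop.~\ref{prop:universal_pn-typical_fgl}) that $v_j=0$ in $BP\{n\}$ for $n\nmid j$. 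Then I would argue by strong induction on $m\ge 1$: assuming $n\nmid m$, a summand on the right survives only when $n\mid(m-i)$ and $l_i\neq 0$; but $n\mid(m-i)$ and $n\nmid m$ force $n\nmid i$, hence $i\ge 1$ (as $l_0=1$), and then $l_i=0$ by the induction hypothesis. So the right-hand side vanishes, and since $p-p^{p^m}$ is a nonzero rational number for $m\ge 1$, we get $l_m=0$ in $BP\{n\}\ot\QQ$. This shows $\log_{BP\{n\}}(x)=\sum_{i\ge 0}l_{ni}x^{p^{ni}}$, and pushing forward along $\psi\ot\QQ$ gives $\log_F$ of the required shape.

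I expect the main obstacle to be a matter of bookkeeping in the last paragraph rather than any real difficulty: one must keep straight that the Araki relation and all assertions about logarithms hold only after $\ot\QQ$, while $v_j=0$ is the integral input coming from Prop.~\ref{prop:universal_pn-typical_fgl}; the induction itself and the non-vanishing of $p-p^{p^m}$ for $m\ge 1$ are routine.
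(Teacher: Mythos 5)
Your proof is correct and follows essentially the same route as the paper: the ``if'' direction via $p^n$-gradability of $\log_F$ and Prop.~\ref{prop:pn_grad_series} applied to $p\cdot_F x$, and the ``only if'' direction by reduction to $F_{BP\{n\}}$, the Araki relation, and the vanishing of $v_j$ for $n\nmid j$ from Prop.~\ref{prop:universal_pn-typical_fgl}. The only cosmetic difference is that you run a strong induction where the paper takes a minimal bad index $i_0$ and derives a contradiction, which is the same argument.
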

\begin{proof}
By Cartier's result mentioned above we know that $p$-typical FGLs over torsion-free $\Zp$-algebras 
are precisely those which have the logarithm of the form $\sum_{i=0}^\infty l_i x^{p^{i}}$.
Thus, $F$ having logarithm of the form prescribed in Corollary is $p$-typical.
Note that $p\cdot_F x = \log^{-1}(\log (px))$.
Thus, if the series $\log_F$ is $p^n$-gradable, 
then by Prop. \ref{prop:pn_grad_series} so is the series $p\cdot_F x$ and $F$ is $p^n$-typical.

Conversely, to prove that the logarithm of a $p^n$-typical FGL is of the form
$\sum_{i=0}^\infty l_i x^{p^{ni}}$
it suffices to check that for the universal $p^n$-typical FGL over the ring $BP\{n\}$.
To do this recall the formulas linking coefficients of the logarithm of $BP$
in terms of Araki generators $v_i$:
$ pl_m = \sum_{i=0}^m l_i v_{m-i}^{p^i},$
where $v_0 = p$.

Denote by $\bar{l_j}\in \QQ[v_n, v_{2n}, \ldots]$
 the coefficients of the logarithm of $F_{BP\{n\}}$,
i.e.\ \mbox{$\log_{BP\{n\}} = \sum_{j=0}^\infty \bar{l_j}x^{p^j}$}.
Let $i_0:=\min\{j>0\colon  l_j\neq 0, n\nmid j\}$, and assume that it is finite.
Then by the definition of Araki generators we have an equality 
$pl_{i_0} = \sum_{i=0}^{m-1} l_i v_{m-i}^{p^i}+l_{i_0} p^{p^{i_0}}$ in $BP$,
and applying the map $\phi\colon BP\rarr BP\{n\}$ to it 
we see that $p\bar{l}_{i_0} = p^{p^{i_0}}\bar{l}_{i_0}$.
Indeed, if $n\nmid i$ then $\phi(l_i)$ is zero for $i<i_0$, 
and if $n|i$, then $n\nmid (i_0-i)$ and $\phi(v_{i_0-i})$ is zero. 
Thus, $\bar{l}_{i_0}=0$ which is a contradiction.
\end{proof}

\begin{Def}
Denote by $BP\{n\}^*$ a free theory 
with the ring of coefficients $BP\{n\}$ and the corresponding formal group law 
is the universal $p^n$-typical FGL.
\end{Def}

\begin{Prop}\label{prop:universal_BPn}
Free theory $BP\{n\}^*$ is the universal $p^n$-typical oriented theory,
i.e.\ for any $p^n$-typical oriented theory\footnote{Note 
that we indirectly impose here that the ring of coefficients of $A^*$ is a $\Zp$-algebra.}
 $A^*$
there exist a unique morphism of oriented theories $BP\{n\}^*\rarr A^*$.
\end{Prop}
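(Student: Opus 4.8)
The statement asserts that $BP\{n\}^*$ is the universal $p^n$-typical oriented theory. I would prove this by exhibiting $BP\{n\}^*$ as a free theory obtained from algebraic cobordism by base change, and then use the universal property of $\Omega^*$ (Theorem \ref{th:alg_cob_universal}) together with the classification of $p^n$-typical formal group laws established in Proposition \ref{prop:universal_pn-typical_fgl}. Concretely: let $A^*$ be any $p^n$-typical oriented theory with coefficient ring $A$. Its associated formal group law $F_A$ is $p^n$-typical by definition, so by Proposition \ref{prop:universal_pn-typical_fgl} (the universality of $BP\{n\}$ as the ring classifying $p^n$-typical FGL's) there is a unique ring homomorphism $g_A \colon BP\{n\} \rarr A$ classifying $F_A$.

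\textbf{Construction of the morphism.} Since $BP\{n\}^* = \Omega^* \ot_\LL BP\{n\}$ as a free theory, and since $A^*$ receives the canonical morphism $p_A \colon \Omega^* \rarr A^*$ from Theorem \ref{th:alg_cob_universal} which classifies $F_A$, the composite $\LL \rarr BP\{n\} \xrarr{g_A} A$ agrees with the classifying map $\LL \rarr A$ of $F_A$ (both classify the same FGL over $A$, hence coincide by the universal property of the Lazard ring). Therefore $p_A$ factors through $\Omega^* \ot_\LL BP\{n\} = BP\{n\}^*$, giving a morphism of oriented theories $\psi_A \colon BP\{n\}^* \rarr A^*$; one checks it respects pushforwards because $p_A$ does and because pushforwards on a free theory are obtained from those on $\Omega^*$ by base change. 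This is the desired morphism.

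\textbf{Uniqueness.} Suppose $\psi, \psi' \colon BP\{n\}^* \rarr A^*$ are two morphisms of oriented theories. Precomposing each with the canonical map $\Omega^* \rarr BP\{n\}^*$ gives two morphisms $\Omega^* \rarr A^*$ of oriented theories, which must agree by the uniqueness clause of Theorem \ref{th:alg_cob_universal}. Hence $\psi$ and $\psi'$ agree on the image of $\Omega^*$; since $BP\{n\}^*(X)$ is generated as a $BP\{n\}$-module by classes coming from $\Omega^*(X)$ (indeed $BP\{n\}^* = \Omega^* \ot_\LL BP\{n\}$, and a morphism of oriented theories is $BP\{n\}$-linear once it is fixed on coefficients), and since on coefficients $\psi|_{BP\{n\}} = \psi'|_{BP\{n\}} = g_A$ is forced by the requirement that the induced map on FGL's be $F_A$, we conclude $\psi = \psi'$.

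\textbf{Main obstacle.} The step requiring the most care is verifying that a morphism of oriented theories out of a free theory is automatically determined by its effect on the coefficient ring together with the classifying map from $\Omega^*$ — i.e. that there is no extra freedom beyond the FGL data. This is really a restatement of the defining property of free theories, but it should be spelled out: any morphism of oriented theories $BP\{n\}^* \rarr A^*$ is in particular a natural transformation of presheaves of rings commuting with the first Chern class, hence sends $z_i^{BP\{n\}}$ to $z_i^A$ on projective spaces and is thereby pinned down on all of $BP\{n\}^*((\mathbb P^\infty)^{\times l})$ once the map on coefficients is known; by Vishik's continuity (Section \ref{sec:cont}) and the universal property this forces the morphism globally. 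Everything else is formal bookkeeping with base change along $\LL \rarr BP\{n\}$.
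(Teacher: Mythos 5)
Your proposal is correct and follows essentially the same route as the paper, whose proof is a one-line appeal to the universality of algebraic cobordism (Theorem \ref{th:alg_cob_universal}) together with the definition of $BP\{n\}^*$ via the classifying ring of Proposition \ref{prop:universal_pn-typical_fgl}; you have simply spelled out the base-change construction of the morphism and the uniqueness argument in detail.
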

\begin{proof}
Follows from the universality of algebraic cobordism (Th. \ref{th:alg_cob_universal}).
\end{proof}

\subsection{Definition of Morava K-theories}\label{sec:def_morava}

\begin{Def}[{\cite[Def. 4.1.1]{Sech}}]\label{mor} Let $n\ge 1$.
A $p^n$-typical free theory $\Kn^*:=\Omega^*\ot_{\LL} \Zp$ 
is called an algebraic\footnote{We will drop `algebraic' from the name
as topological Morava K-theories will not be considered in this paper
except in a few comparison discussions.}
 {\it $n$-th Morava K-theory} $\Kn^*$ 
if the formal group law $F_{\Kn} \mod p$ over $\F{p}$ has height $n$.
\end{Def}

\begin{Rk}\label{rem:graded_morava}
Formal group laws which can be associated with oriented spectra in topology (and motivic topology)
are defined over a graded ring of coefficients and are graded themselves,
i.e.\ formal power series of the FGL is homogeneous of degree 1 where variables have degree 1. 
If $F$ is a $p^n$-typical FGL over $\Zp$,
there exist a graded FGL $\hat{F}$ over the ring $\Zp[v_n,v_n^{-1}]$ where $\deg v_n=1-p^n$
such that $\hat{F}|_{v_n=1} = F$. Indeed, if $\log_F = \sum_{j=0}^\infty a_j x^{p^{nj}}$,
then the logarithm $\log_{\hat{F}}=\sum_{j=0}^\infty a_j v_n^{\frac{p^{nj}-1}{p^n-1}} x^{p^{nj}}$
is homogeneous of degree 1 and defines a graded formal group law. 

In other words, for every $n$-th Morava K-theory
$\Kn^*$ there exist a graded free theory $G\Kn^*$ with the ring of coefficients $\Zp[v_n,v_n^{-1}]$
such that $\Kn^*=G\Kn^*/(v_n-1)$. It is $G\Kn^*$ which may be a part of a cohomology theory
represented by a motivic spectrum. 
We prefer to work with the theory $\Kn^*$, but all the results on operations
can be reformulated for $G\Kn^*$.
\end{Rk}

\begin{Rk}
In topology {\sl the} $n$-th Morava K-theory usually is a unique spectrum
whose homotopy groups (i.e.\ the value of the corresponding cohomology theory on a point)
is the {\sl graded field} $\F{p}[v_n,v_n^{-1}]$
where $\deg v_n=2(p^n-1)$, and there exists an orientation of it
such that the corresponding graded formal group law has height $n$ (see e.g.\ \cite[Section 4.2]{Rav}).
Any such formal group law is liftable to $\Zp[v_n,v_n^{-1}]$,
and if the relation $v_n=a\in\Zp^\times$ is imposed this will be a formal group
of a \mbox{$n$-th} Morava K-theory as defined above.
However, it is not clear to us which of these lifts can be performed
to yield a (motivic) spectrum representing a cohomology theory, 
and whether the lift is in any sense unique. 

For a uniqueness statement
concerning algebraic Morava K-theories $\Kn^*$ above see Section \ref{sec:morava_unique}.
\end{Rk}

\subsection{Grading on Morava K-theories}

We will need the following standard result on the logarithm of the FGL of $\Kn$.

\begin{Prop}[see e.g.\ {\cite[Prop. 4.3.2]{Sech}}]\label{prop:log_morava}
Let $F$ be a FGL of an $n$-th Morava K-theory.

Then its logarithm has the form
$$ \log_{\Kn}(x) = x+\frac{a_1}{p}x^{p^n}+\frac{a_2}{p^2}x^{p^{2n}}+\ldots$$
where $a_i\in\Zp^{\times}$ for all $i\ge 1$.
Moreover, $a_k \equiv (a_1)^k \mod p$.
\end{Prop}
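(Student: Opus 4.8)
The plan is to work with the $p$-typical Brown--Peterson formal group law and exploit the Araki recursion $pl_m = \sum_{i=0}^m l_i v_{m-i}^{p^i}$ (with $v_0=p$). Since an $n$-th Morava K-theory is by definition $p^n$-typical, Corollary \ref{cr:pn-typical-torsion-free} already tells us that $\log_{K(n)}(x)=\sum_{i\ge 0} l_i x^{p^{ni}}$, so only the coefficients $l_{ni}$ survive; write $a_i := p^i l_{ni}$ so that $\log_{K(n)}(x)=x+\sum_{i\ge 1}\frac{a_i}{p^i}x^{p^{ni}}$. First I would record what the height condition means: the FGL $F_{K(n)}\bmod p$ has height $n$ exactly when, in the mod-$p$ reduction, $v_1,\dots,v_{n-1}$ vanish and $v_n$ is a unit in $\F p$; equivalently the associated $p$-typical logarithm has $l_0=1$, $l_j\in\Zp$ for $0<j<n$ forced by $pl_j=\sum_{i<j} l_i v_{j-i}^{p^i}$ with all $v_{j-i}\equiv 0$, and the first genuinely ``divided'' coefficient is $l_n$ with $p l_n \equiv v_n \not\equiv 0 \bmod p$.

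The core computation is then an induction on $i\ge 1$ showing simultaneously that $a_i\in\Zp^\times$ and $a_i\equiv a_1^i\bmod p$. The base case $i=1$: the Araki recursion at $m=n$ gives $pl_n = \sum_{i=0}^{n} l_i v_{n-i}^{p^i}$; all terms with $0<n-i<n$ contribute $l_i v_{n-i}^{p^i}$ with $v_{n-i}$ not necessarily zero but $l_i\in p\Zp$ for such $i$ by the same recursion applied lower down, the $i=n$ term is $l_n p^{p^n}\in p^2\Zp$, and the $i=0$ term is $v_n$. Hence $p l_n = v_n + (\text{something in } p^2\Zp + p\cdot p\Zp)$, giving $a_1 = pl_n \equiv v_n \bmod p^2$, so $a_1\in\Zp^\times$. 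For the inductive step, apply the Araki recursion at $m=ni$: since all $l_j$ with $n\nmid j$ are zero (this is the content of Corollary \ref{cr:pn-typical-torsion-free}, or rather of the $p^n$-typicality), the recursion collapses to a relation only among $l_0, l_n, l_{2n},\dots,l_{ni}$ and the relevant $v$'s; multiplying through by $p^i$ and using $a_k = p^k l_{nk}$ one obtains a polynomial identity expressing $a_i$ modulo $p$ in terms of $a_1,\dots,a_{i-1}$ and the mod-$p$ reduction of $v_n$. The expected shape is $a_i \equiv \frac{1}{p}\big( a_{i-1}\cdot(\text{unit})^{?} + \dots \big)$, so the clean way is to compare with the \emph{already known} special case: the honest graded $n$-th Morava K-theory over $\F p$ has $v_n$ as its only nonzero $v_j$, and its $p$-typical logarithm satisfies $a_i = v_n^{1+p^n+\dots+p^{n(i-1)}}$ exactly, which modulo $p$ reads $a_i\equiv v_n^{i}\equiv a_1^i$. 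Thus I would factor the argument as: (1) the Araki recursion shows that $a_i\bmod p$ is a \emph{universal} polynomial in $v_n\bmod p$ not depending on the lift (all other $v_j$'s drop out mod $p$ because $l_j=0$ for $n\nmid j$ and $l_{nj}\in p^{-j}\Zp$ contributes nothing extra mod $p$ after the normalization); (2) evaluating that universal polynomial in the model case gives $a_i\equiv v_n^{i}$; (3) since $a_1\equiv v_n$, we get $a_i\equiv a_1^i\bmod p$, and in particular $a_i$ is a unit.

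The main obstacle I anticipate is bookkeeping the $p$-adic valuations in the Araki recursion: one must check that every term other than the ``leading'' one in the recursion for $l_{ni}$ is sufficiently $p$-divisible that, after multiplying by $p^i$, it either vanishes mod $p$ or reassembles into the expected monomial $a_1^i$. Concretely the terms $l_j v_{ni-j}^{p^j}$ with $n\mid j$, $0<j<ni$ carry $l_j=a_{j/n}/p^{j/n}$ and $v_{ni-j}$ has valuation $1$ after reduction is irrelevant — rather one uses that $v_{ni-j}\bmod p$ is a polynomial in the single unit $v_n$ via lower recursions, and the powers $p^j$ in the exponent push everything except the $j=0$ and $j=n(i-1)$ contributions into $p\Zp$. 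This valuation count is the heart of the matter; once it is done the congruence $a_i\equiv a_1^i$ and the unit statement are immediate. I would double-check the exponent arithmetic $1+p^n+\dots+p^{n(i-1)} = \frac{p^{ni}-1}{p^n-1}$ against the grading in Remark \ref{rem:graded_morava}, which is a reassuring consistency check that $a_i$ sits in degree $1-p^{ni}$, matching $v_n^{(p^{ni}-1)/(p^n-1)}$.
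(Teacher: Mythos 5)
The paper does not reprove this statement -- it is quoted from \cite[Prop.~4.3.2]{Sech} -- so the only thing to judge is whether your Araki-recursion argument closes, and it does: this is the standard (and almost certainly the cited) route. In the setting of the paper your bookkeeping even simplifies: since $F_{K(n)}$ is $p^n$-typical, the images $w_j$ of the Araki generators vanish identically for $n\nmid j$ and $l_j=0$ for $n\nmid j$, so the recursion at $m=nk$ collapses on the nose to $p\,l_{nk}=\sum_{j=0}^{k-1} l_{nj}\,w_{n(k-j)}^{p^{nj}}+l_{nk}p^{p^{nk}}$, and the height-$n$ condition is exactly $w_n\in\Zp^\times$ (the terms you worry about in the base case are literally zero, not merely $p$-divisible). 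Multiplying by $p^{k-1}$ gives $a_k\bigl(1-p^{p^{nk}-1}\bigr)=\sum_{j=0}^{k-1}p^{k-1-j}a_j\,w_{n(k-j)}^{p^{nj}}$, and mod $p$ only the $j=k-1$ term survives, so $a_k\equiv a_{k-1}w_n^{p^{n(k-1)}}\equiv a_{k-1}a_1 \bmod p$ by Fermat, which is precisely your induction; in particular the valuation count you flag as the heart of the matter is a one-line observation, and the $j=0$ term also dies mod $p$ for $k\ge 2$. Two small corrections to your steps (1)--(3): in the ``model case'' with only $v_n$ nonzero the identity $a_i=v_n^{1+p^n+\cdots+p^{n(i-1)}}$ is not exact (there are unit factors $(1-p^{p^{nj}-1})^{-1}$), and the passage from $v_n^{(p^{ni}-1)/(p^n-1)}$ to $v_n^{i}$ is not a polynomial identity mod $p$ -- it only holds after evaluating $v_n$ at the actual unit $w_n\in\Zp^\times$ and invoking $u^{p-1}\equiv 1\bmod p$. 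Neither slip is needed once you run the direct induction above, so the proposal is correct in substance.
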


One can easily check 
that the first terms of the formal group law of any Morava K-theory $\Kn^*$ look like this: 
$$F_{\Kn}(x,y)=
x+y-a_1\frac{1}{p}\sum_{i=1}^{p^n-1} \binom{p^n}{i} x^iy^{p^n-i}+\textrm{higher  degree terms}.$$

\begin{Rk}\label{rem_k0-mor}
The Artin-Hasse exponential establishes an isomorphism 
between formal group laws $F_m=x+y+xy$ and a $p$-typical FGL over $\Zp$ of height 1.
This implies that $\KK\ot\Zp$ is isomorphic 
to a first Morava K-theory as a presheaf of rings.

It is not true, however, 
 that every two $n$-th Morava K-theories as defined above are multiplicatively isomorphic
 (see Appendix \ref{app:mor_not_mult}).
\end{Rk}

\begin{Prop}[{\cite[Prop. 4.1.5]{Sech}}]\label{prop:mor_grad}
\begin{enumerate}
\item Morava K-theories $\Kn^*$ are $\ZZ/(p^n-1)$-graded.

\item The grading on $\Kn^*$ is respected by Adams operations.

\item\label{item:push-forward} The grading is compatible with push-forwards,
i.e.\ for a proper morphism $f\colon X\rarr Y$ of codimension $c$
push-forward maps increase grading by $c$,
 $f_*\colon \Kn^i(X)\rarr \Kn^{i+c}(Y)$.
 
In particular, $c_1^{\Kn}(L)\in \Kn^1(X)$ for any line bundle $L$ over a smooth variety $X$.
\end{enumerate}
\end{Prop}

We denote the graded components of $n$-th Morava K-theories
 as $\Kn^1$, $\Kn^2$, $\ldots$, $\Kn^{p^n-1}$,
and freely use the following expressions
 $\Kn^i$, $\Kn^{i \mod{p^n-1}}$, $\Kn^{i+r(p^n-1)}$
to denote the component $\Kn^j$ where $j\equiv i \mod p^n-1$, $1\le j\le p^n-1$.
The reason we denote the component $\Kn^{0}$ as $\Kn^{p^n-1}$
is mainly because we will work with $\Kntilde^*$
instead of $\Kn^*$, $\Kntilde^{p^n-1}$ 
contains classes of codimensions $p^n-1 + r(p^n-1)$ for all $r\ge 0$.

The grading on the $n$-th Morava K-theory splits the topological filtration
``modulo a period of $p^n-1$ steps'' as explained in the next proposition.

\begin{Prop}\label{prop:top_morava_grading}
The topological filtration on each graded component of the $n$-th Morava K-theory
changes only every $p^n-1$ steps, i.e.
$$ \tau^{j+s(p^n-1)+1} \Kntilde^j = \tau^{j+s(p^n-1)+2} \Kntilde^j = \ldots =
\tau^{j+(s+1)(p^n-1)} \Kntilde^j,$$
where $j\in [1, p^n-1]$, $s\ge 0$.

In particular, $gr^j_\tau \Kntilde^* = \Kntilde^j/\tau^{j+p^n-1} \Kntilde^j$
for $j\colon 1\le j\le p^n-1$, 
and \mbox{$gr^j_\tau \Kntilde^*= gr^j_\tau \Kntilde^j$} for every $j$.
\end{Prop}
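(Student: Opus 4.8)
The plan is to play the $\mathbb{Z}/(p^n-1)$-grading on $K(n)^*$ (Prop.~\ref{prop:mor_grad}) against the topological filtration by analysing the associated graded $gr_\tau^\bullet K(n)^*$, which by Prop.~\ref{prop:top_filt_properties}(\ref{item:CH_surj_grtop}) and Lemma~\ref{lm:rho_morphism_of_theories} is controlled by Chow groups.

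First I would observe that the topological filtration is compatible with the grading: restrictions to open subsets are pull-backs, and pull-backs preserve the grading on $K(n)^*$, so each $\tau^i K(n)^*(X)$ is a graded subgroup. Writing $\tau^i K(n)^j(X) := \tau^i K(n)^*(X)\cap K(n)^j(X)$, this gives $gr^i_\tau K(n)^*(X) = \bigoplus_j gr^i_\tau K(n)^j(X)$. The crucial step is then to show that $gr^i_\tau K(n)^*$ is concentrated in the graded component of degree $i$ modulo $p^n-1$, i.e. $gr^i_\tau K(n)^j = 0$ whenever $i\not\equiv j \pmod{p^n-1}$. For this I would use that $\rho\colon \CH^i\otimes\Zp \to gr^i_\tau K(n)^*$ is surjective (Prop.~\ref{prop:top_filt_properties}(\ref{item:CH_surj_grtop})) and commutes with push-forwards (Lemma~\ref{lm:rho_morphism_of_theories}); since $\CH^i(X)$ is generated by classes $g_*1$ for projective morphisms $g$ of codimension $i$ (a closed subvariety, resolved if necessary), and push-forward along such a $g$ raises the $K(n)$-grading by $i$ (Prop.~\ref{prop:mor_grad}(\ref{item:pushforward})), every element of $gr^i_\tau K(n)^*(X) = \rho(\CH^i(X)\otimes\Zp)$ is represented by a homogeneous element of degree $i$ modulo $p^n-1$, which is the claim.

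Granting this, fix $j\in[1,p^n-1]$. For every $i$ with $i\not\equiv j\pmod{p^n-1}$ we get $\tau^i\tilde K(n)^j = \tau^{i+1}\tilde K(n)^j$; since among any $p^n-1$ consecutive integers exactly one is $\equiv j\pmod{p^n-1}$, and since $\tilde K(n)^j\subseteq\tau^1 K(n)^*$ with the smallest positive integer $\equiv j$ being $j$ itself, the filtration on $\tilde K(n)^j$ is constant on $\{1,\dots,j\}$ with value $\tilde K(n)^j$ and constant on each block of $p^n-1$ consecutive integers between successive steps of the form $j+s(p^n-1)$. This yields the displayed chains of equalities, and combining $\tau^{j+1}\tilde K(n)^j = \tau^{j+p^n-1}\tilde K(n)^j$ with $gr^j_\tau K(n)^* = \tau^j\tilde K(n)^j/\tau^{j+1}\tilde K(n)^j = \tilde K(n)^j/\tau^{j+1}\tilde K(n)^j$ gives the ``in particular'' assertions. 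I expect the only real obstacle to be the second paragraph: making precise that $\rho$ respects both the codimension grading and the $K(n)$-grading, i.e. that the push-forward structure computing $gr^\bullet_\tau K(n)^*$ is compatible with the $K(n)$-grading, so that $\rho$ sends $\CH^i$ into the degree-$i$ component; once this is in hand everything else is bookkeeping modulo $p^n-1$.
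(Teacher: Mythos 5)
Your argument is correct, but it takes a genuinely different route from the paper's. The paper works upstairs on algebraic cobordism: it combines the $\LL$-module description $\tau^i\Omega^d=\sum_{m\le d-i}\LL^m\Omega^{d-m}$ (Prop.~\ref{prop:top_filt_properties}, (\ref{item:topfilt_omega})) with the surjection $\tau^i\Omega^*\ot_\LL K(n)\rarr\tau^i K(n)^*$ (Prop.~\ref{prop:top_filt_properties}, (\ref{item:topfilt_free_surj})) and the observation that $\LL\rarr\Zp$ factors through the graded ring $\Zp[v_n,v_n^{-1}]$, so $\LL^m$ dies in $K(n)$ unless $(p^n-1)\mid m$; the filtration of $K(n)^j$ is then read off from which $\LL$-multiples survive. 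You instead work downstairs on the associated graded: you prove the concentration statement $gr^i_\tau K(n)^l=0$ for $l\not\equiv i\pmod{p^n-1}$ by combining the surjectivity of $\rho_{K(n)}:\CH^i\ot\Zp\rarr gr^i_\tau K(n)^*$ with its compatibility with push-forwards (Lemma~\ref{lm:rho_morphism_of_theories}), generation of $\CH^i$ by resolved fundamental classes $g_*1$, and the grading shift of push-forwards (Prop.~\ref{prop:mor_grad}, (\ref{item:pushforward})). Both routes ultimately rest on the same Levine--Morel input (\cite[Th.~4.5.7, Cor.~4.5.8]{LevMor}); the paper's is shorter and purely about the $\LL$-action, while yours costs resolution of singularities and the orientedness of $gr_\tau$ but isolates the reusable and more conceptual fact that $gr^i_\tau K(n)^*$ sits entirely in Morava-degree $i\bmod(p^n-1)$. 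One index remark, which applies equally to the paper's statement and to your last step: what either argument actually establishes is $\tau^i\tilde{K}(n)^j=\tau^{i+1}\tilde{K}(n)^j$ for all $i\not\equiv j\pmod{p^n-1}$, i.e. constancy on the blocks from $j+s(p^n-1)+1$ to $j+(s+1)(p^n-1)$; the displayed chain is shifted by one relative to this (already for $\mathbb{P}^N$ one has $\tau^1\tilde{K}(n)^1\neq\tau^2\tilde{K}(n)^1$), and it is the corrected version that the ``in particular'' assertions -- which you derive correctly -- actually use, so your claim that the display follows verbatim inherits the paper's own slip rather than introducing a new one.
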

\begin{proof}
The topological filtration on algebraic cobordism has
a description in terms of the structure of $\LL$-module by \cite[Th. 4.5.7]{LevMor}:
$\tau^i\Omega^n = \cup_{m\le n-i} \LL^m \Omega^{n-m}$,
and the morphism of theories $\Omega^*\rarr \Kn^*=\Omega^*\ot_{\LL} \Kn$
yields a surjection $\tau^i \Omega^* \ot \Kn \rarr \tau^i \Kn^*$ by 
Proposition \ref{prop:top_filt_properties}, \ref{item:topfilt_free_surj}).

It follows from Proposition \ref{prop:mor_grad}, (\ref{item:push-forward})
that $\Omega^k$ maps to $\Kn^{k \mod p^n-1}$ for all $k$,
 i.e.\ \mbox{$\Kn^k = \oplus_{s\in\ZZ} \Omega^{k+s(p^n-1)}\ot_\LL \Kn$.}
Thus, for $j\in [1, p^n-1]$ the group $\tau^i \Kn^j$ is the image of $\bigoplus_{s\in\ZZ} \tau^i \Omega^{j+s(p^n-1)}$ 
which is equal to $\bigoplus_{s\in\ZZ} \cup_{m\le j+s(p^n-1)-i} \LL^m \Omega^{j+s(p^n-1)-m}$.

Since the formal group law of $\Kn^*$
is $p^n$-typical, the map $\LL \rarr \Kn$ can be factored
as $\LL\xrarr{\phi} \Zp[v_n, v_n^{-1}]\xrarr{v_n=1} \Zp=\Kn$ where $\phi$ is a graded map, $\deg v_n=1-p^n$
(cf. Remark \ref{rem:graded_morava}). 
Therefore $\LL^j$ maps to zero in $\Kn$ if $j\neq 0 \mod p^n-1$.

Thus, we see that $\LL^m \Omega^{j+s(p^n-1)-m}$ maps to zero in $\Kn^j$ if $m\neq 0 \mod p^n-1$,
and therefore the image of the group $\bigoplus_{s\in\ZZ} \cup_{m\le j+s(p^n-1)-i} \LL^m \Omega^{j+s(p^n-1)-m}$
in $\Kn^j$ changes only each $p^n-1$ steps with the change of $i$.
\end{proof}

\subsection{Chern classes: statement of the main theorem}\label{subsec_mainth}

In \cite[Th. 4.2.1]{Sech} we have shown
that for every $n$-th Morava K-theory $\Kn^*$ there exist operations from 
$\Kn^*$ to $\CH^*\ot\Zp$ which we called Chern classes.
Even though these operations were not unique, 
let us fix and denote by $c_j^{\CH}$ any choice of them.
Now we are going to generalize this result to a wider class of theories in place of Chow groups,
namely, to the $p^n$-typical oriented theories.

\begin{Th}\label{th:main}
For every Morava K-theory $\Kn^*$ 
and every $p^n$-typical theory $A^*$
there exist a series of operations $c_j\colon \Kn^*\rarr A^*$ for $j\ge 1$
satisfying the following conditions. 
\begin{enumerate}[i)]
\item Operation $c_j$ restricted to the graded components of $\Kn^*$
is zero except for $\Kn^{j\mod (p^n-1)}$.
\item\label{item:c_j-tau^j} Operation $c_j$ takes values in $\tau^jA^*$,
and $tr_j c_j$ is the Chern class $c_j^{\CH}\ot id_A$.
\item Denote by $c_{tot} = \sum_{i\ge 1} c_it^i$ the total Chern class in a formal variable $t$. 
Then the Cartan formula holds universally: 
$$c_{tot}(x+y)=F_{\Kn}(c_{tot}(x),c_{tot}(y)),$$
where $x,y\in \Kn^*(X)$ for any smooth variety $X$ and the identity takes place in $A^*(X)[[t]]$;
\item If $A$ is a free $\Zp$-module, then 
all operations from $\Kn^*$ to $A^*$ are uniquely expressible as series in Chern classes:
$$ [\Kntilde^*, A^*] = A[[c_1,\ldots, c_i,\ldots ]].$$
Moreover, the analogous statement is true for poly-operations.
\end{enumerate}
\end{Th}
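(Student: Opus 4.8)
The plan is to bootstrap from the already-established case $A^*=\CH^*\ot\Zp$ of \cite{Sech} using the truncation machinery of Section~\ref{sec:trunc}; the decisive input is that for $p^n$-typical targets all truncation maps are isomorphisms. First I would reduce the existence of the $c_j$ and the Cartan formula to the universal $p^n$-typical theory $A^*=BP\{n\}^*$ (Proposition~\ref{prop:universal_BPn}), whose coefficient ring $BP\{n\}=\Zp[v_n,v_{2n},\ldots]$ is a free, in particular torsion-free, $\Zp$-module. The canonical morphism of theories $BP\{n\}^*\rarr A^*$ preserves the topological filtration (both $\tau^\bullet BP\{n\}^*$ and $\tau^\bullet A^*$ being quotients of $\tau^\bullet\Omega^*\ot_\LL(-)$, Proposition~\ref{prop:top_filt_properties}, (\ref{item:topfilt_free_surj})), commutes with the truncation maps, and carries $F_{K(n)}$ over $BP\{n\}$ to $F_{K(n)}$ over $A$; hence operations $c_j:K(n)^*\rarr BP\{n\}^*$ with properties i)--iii) push forward to operations with properties i)--iii) for every $p^n$-typical $A^*$. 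Property iv) additionally requires $A$ to be a free $\Zp$-module, and for it I would instead run the argument below directly with $B^*=A^*$.

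Everything rests on the following claim: for every $p^n$-typical free theory $B^*$ whose coefficient ring is a free $\Zp$-module the truncation maps
$$ tr_i\colon gr^i_\tau[\tilde{K}(n)^*,B^*]\xrarr{\ \sim\ }[\tilde{K}(n)^*,\CH^i\ot B],\qquad i\ge 1, $$
are isomorphisms, and likewise for additive operations and for external poly-operations. Injectivity is Proposition~\ref{prop:trunc_op}, so the content is surjectivity: every operation $\psi\colon\tilde{K}(n)^*\rarr\CH^i\ot B$ must be lifted to one valued in $\tau^iB^*$. I would prove this first for \emph{additive} operations through the explicit classification of $[\tilde{K}(n)^*,B^*]^{add}$ carried out via Vishik's Theorem~\ref{th:Vish_op} in Section~\ref{sec:add_morava_BPn}: on products of projective spaces an additive operation is a compatible system of additive maps $K(n)^*((\mathbb{P}^\infty)^{\times l})\rarr B^*((\mathbb{P}^\infty)^{\times l})$, and since both $F_{K(n)}$ and $F_B$ are $p^n$-gradable (Proposition~\ref{prop:pn_grad_series}) the Vishik compatibility system decouples enough to construct, for each $i$, an additive operation valued in $\tau^iB^*$ with any prescribed $i$-th truncation, the bookkeeping being supplied by the \cite{Sech} computation of $[\tilde{K}(n)^*,\CH^*\ot\Zp]$ and, for the integrality conditions, by the modular truncation of Section~\ref{sec:trunc_mod}. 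I would then pass from additive to all (poly-)operations by the derivative/external-product formalism of Section~\ref{subsec_poly}: an operation is recovered from its iterated derivatives $\pd^s\phi$, each additive in every argument, and the poly-operation structure of $[\tilde{K}(n)^*,\CH^*\ot B]$ is governed by external products (a Künneth-type property in the spirit of Theorem~\ref{basis}), so surjectivity of $tr_i$ on poly-operations becomes formal. Producing \emph{enough} operations to $\tau^iB^*$ is where $p^n$-typicality is genuinely used, and it is the main obstacle; it is also sharp, since by Appendix~\ref{app:non-exist-op} no such lifting exists for $B^*=K(m)^*$ with $m\nmid n$.

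Granting the claim, properties i), ii), iv) follow mechanically. By \cite{Sech} the ring $[\tilde{K}(n)^*,\CH^*\ot B]$ is topologically freely generated as a $B$-algebra by Chern classes $c^{\CH}_j$, with $c^{\CH}_j$ supported on $K(n)^{j\bmod(p^n-1)}$ and valued in $\CH^j\ot B$, and each $[\tilde{K}(n)^*,\CH^i\ot B]$ is a finite-rank free $B$-module (only the finitely many monomials of Chow-degree $i$ contribute). Lifting $c^{\CH}_j$ through the isomorphism $tr_j$ gives an operation $c_j\colon\tilde{K}(n)^*\rarr\tau^jB^*$; using the $\ZZ/(p^n-1)$-grading of $K(n)^*$ (Propositions~\ref{prop:mor_grad}, \ref{prop:top_morava_grading}) and the fact that $tr_j$ respects the splitting of operation groups by the source grading (as in Proposition~\ref{prop:operations_from_tilde}) one may choose this lift supported on $K(n)^{j\bmod(p^n-1)}$, which gives i) and ii). Proposition~\ref{prop:relations_between_truncations}, together with Proposition~\ref{prop:all_operations_from_truncations}, then shows the $c_j$ freely generate $[\tilde{K}(n)^*,B^*]$ as a $B$-algebra, and the poly-operation version of the same argument gives the poly-operation statement; taking $B^*=A^*$ with $A$ free over $\Zp$ proves iv), while taking $B^*=BP\{n\}^*$ and pushing forward produces the $c_j$ for arbitrary $p^n$-typical $A^*$.

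It remains to arrange the Cartan formula iii), and here I would exploit that $BP\{n\}$ is torsion-free and transport the identity through the formal logarithm. The $c_j$ being nilpotent on each variety, $\log_{F_{K(n)}}(c_{tot})$ converges $t$-adically; since $\log_{F_{K(n)}}$ linearizes $F_{K(n)}$, the Cartan identity for $c_{tot}=\sum_{j\ge1}c_jt^j$ is equivalent, coefficient by coefficient in $t$, to additivity of $\ell_{tot}:=\log_{F_{K(n)}}(c_{tot})\colon\tilde{K}(n)^*\rarr(BP\{n\}^*\ot\QQ)[[t]]$, and over $\CH^*\ot\Zp$ the corresponding $\ell^{\CH}_{tot}$ is additive by \cite{Sech}. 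I would therefore carry out the lifting of the previous paragraph at the level of additive $\QQ$-operations --- lifting the additive $t$-coefficients of $\ell^{\CH}_{tot}$ rather than the $c^{\CH}_j$ themselves --- so that by construction $c_{tot}:=\exp_{F_{K(n)}}(\ell_{tot})$ has $t$-coefficients in $\tau^jBP\{n\}^*$ with $tr_jc_j=c^{\CH}_j\ot\id$, and the Cartan defect $\ell_{tot}(x+y)-\ell_{tot}(x)-\ell_{tot}(y)$ is then a $2$-ary poly-operation all of whose truncations vanish, hence is zero by the injective half of the poly-operation truncation map; pushing forward along $BP\{n\}^*\rarr A^*$ yields iii) for every $p^n$-typical $A^*$. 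Finally each $c_j$ is extended from $\tilde{K}(n)^*$ to $K(n)^*=\Zp\oplus\tilde{K}(n)^*$ by letting it vanish on constants --- which is forced by ii) in any case, as $\tau^jA^*(\Spec k)=0$ --- via Proposition~\ref{prop:operations_from_tilde}.
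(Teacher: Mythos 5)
Your overall architecture is the same as the paper's (reduce to the universal theory $BP\{n\}^*$, classify additive operations via a truncation isomorphism built from rational lifts corrected by the mod-$p$ truncation of Section~\ref{sec:trunc_mod}, then invoke the $\CH^*\ot\Zp$ classification of \cite{Sech} and Propositions~\ref{prop:all_operations_from_truncations}, \ref{prop:relations_between_truncations}, and finally push forward along $BP\{n\}^*\rarr A^*$). However, the decisive step is not carried out. You posit as a standalone claim that $tr_i$ is surjective on \emph{all} operations and poly-operations, and propose to deduce this from the additive case "formally" via derivatives, asserting that the iterated derivatives $\pd^s\phi$ are additive in each argument. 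They are not: $\pd^1\phi(x,y)=\phi(x+y)-\phi(x)-\phi(y)$ is multi-additive only when $\pd^2\phi=0$ (only the top nonvanishing derivative is poly-additive), so there is no formal passage from lifting additive operations to lifting arbitrary operations. The paper never proves such a blanket surjectivity in advance; instead it constructs the specific non-additive lifts $c_i$ for $i\ge p^n$ by the induction of Section~\ref{sec:construction_chern_classes}, writing $c_i=P_i(c_1,\ldots,c_{i-1})+\psi_i^{(\mu_i)}/p^{\mu_i}$ and choosing the additive correction $\psi_i^{(r)}$ step by step, using that the reduction mod $p$ of the defect is a \emph{gradable} additive operation and that the module of such operations to $\CH^{*}/p$ has rank one; surjectivity of the truncations for general operations is then a consequence, not an input.

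The same gap reappears in your Cartan-formula paragraph: you lift the additive $t$-coefficients of $\ell^{\CH}_{tot}=\log_{K(n)}(c^{\CH}_{tot})$ to additive operations into $BP\{n\}^*\ot\QQ$ and assert that $c_{tot}=\exp_{F_{K(n)}}(\ell_{tot})$ has coefficients in $\tau^jBP\{n\}^*$ "by construction". It does not: the coefficients of $\ell^{\CH}_{tot}$ carry denominators $p^{\mu_j}$, and for an arbitrary rational additive lift the exponential will not be integral. Showing that lifts can be chosen making $P_i(c_1,\ldots,c_{i-1})+\psi_i^{(\mu_i)}/p^{\mu_i}$ act integrally on products of projective spaces is exactly the content of the paper's inductive use of Proposition~\ref{prop:trunc_mod_ideal} and Corollary~\ref{cr:add_op_Kn-pntyp}(1), and it is where $p^n$-typicality (gradability of the relevant series) is actually exploited beyond the additive classification. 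Until this integrality argument is supplied, parts ii)--iv) of the theorem are established by your proposal only in the range $j\le p^n-1$, where the $c_j$ are additive.
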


We call operations $c_j$ described in this Theorem {\it Chern classes}, even though
it may lead to confusion especially if the usual Chern classes are involved.
For the clarity we suggest to use the notation $c_j^{\Kn\rarr A^*}$, $c_i^{\KK\rarr A^*}$
to distinguish the notions in writing when needed.

\begin{Rk}
Petrov and Semenov introduced
 operations $c_1, c_2,\ldots,  c_{p^n}$ from a specific Morava \mbox{K-theory $\Kn^*$}
 to Chow groups modulo $p$-torsion in \cite{PetSem}. 
 One may choose lifts of their operations $c_1, c_2, \ldots c_{p^n-1}$ to $\CH^*\ot\Zp$
 as the starting point of the construction of operations in \cite{Sech}.
However, the natural choice of the operation $c_{p^n}$ then will
differ by a sign to the choice of Petrov-Semenov.
\end{Rk}

\begin{Rk}
No uniqueness of Chern classes 
is claimed in the theorem even when the Chern classes to Chow groups are fixed. 
Moreover, one can check through the proof of Theorem \ref{th:main}
that having constructed operations $c_1,\ldots, c_i$ 
one can define $c_i^{new}=c_i+\phi_i$ where $\phi_i$ 
is any additive operation to $\tau^{i+1}A^*$, and then construct operations $c^{new}_j$ for $j>i$
so that all properties of the theorem are satisfied. 
\end{Rk}

\begin{Rk}
The ring structure on $[\Kntilde^*, A^*] $ is induced from the multiplication on $A^*$ (see Definition~\ref{operation}).
In the case when $A^*=\Kn$ there exists a different (not necessarily commutative) ring structure on the set $[\Kntilde^*, \Kn^*]$,
as its elements are endo-operations of $\Kn^*$ and can be composed. 
Although we study these endo-operations in Section~\ref{sec:morava_gamma_filtration},
the composition of Chern classes $c_j^{\Kn\rarr \Kn}$ are not investigated there.
Nevertheless, any such composition can be uniquely written as a series in Chern classes  $c_j^{\Kn\rarr \Kn}$
as follows from Theorem~\ref{th:main}.
We also do not study the action of the Morava stabilizer group on $[\Kntilde^*, \Kn^*]$ in this paper.
\end{Rk}

\begin{Rk}
As we explain in Appendix~\ref{app:disclaimer} $\Kn$ 
is the geometric part of a bi-graded cohomology theory represented by the motivic spectrum $\Knspec$,
i.e.\ $\Kn^*\cong \Knspec^{2*,*}$.
If a similar property holds for $A$ in the theorem above, i.e.\ $A^*\cong \mathcal{A}^{2*,*}$ for some motivic spectrum $\mc{A}$,
then the computation of operations from $\Kn^n$ to $A^m$ is nothing else than
the computation of the set $[\Omega^\infty \Sigma^{2n,n} \Knspec, \Omega^\infty \Sigma^{2m,m} \mc{A}]$.

Note that the operations of Theorem~\ref{th:main} exist over any field of characteristic $0$,
and, in particular, over $\CC$. By using the topological realization applied to the motivic spectra
we get a map
$$ [\Omega^\infty \Sigma^{2n,n} \Knspec, \Omega^\infty \Sigma^{2m,m} \mc{A}] \rarr 
[\Omega^\infty \Sigma^{2n} \Knspec^{top}, \Omega^\infty \Sigma^{2m} \mc{A}^{top}].$$
This map is injective, since all operations act non-trivially on the products of projective spaces,
and the values of oriented cohomology theories on them are essentially determined by the ring of coefficients,
both in the motivic and in the topological worlds. 

We do not investigate the question, whether the map is surjective,
and we do not study the topological realizations of the operations that we construct.
\end{Rk}

Even though constructed operations are not unique,
one can define the gamma filtration on the $n$-th Morava K-theory using them
and it does not depend on any choices (Section \ref{sec:morava_gamma_filtration}).

It should also be noted that one can substitute 
the formal group law of $\Kn^*$ in the Cartan formula
by any $p^n$-typical FGL over $\Zp$ of height $n$, i.e.\ to any formal group law
defining $n$-th Morava K-theory.  
At the moment it seems that there is no advantage of using one or another FGL 
for the Cartan formula for Chern classes from $\Kn^*$,
however, using the same FGL as that of the orientation of $\Kn^*$ 
make it look similar to the classical case of $\KK$.

\begin{Rk}
If $A$ is not a free $\Zp$-module, 
then it is not true that all operations from $\Kn^*$ to $A^*$ 
are expressible in terms of Chern classes for $n>1$. 

In particular, we have shown in \cite[End of Sec. 4.5]{Sech} 
that the ring of operations from $\Kn^*$ to $\CH^*/p$
generated by Chern classes is not stable under the action of the Steenrod algebra.
\end{Rk}

\section{Proof of Theorem \ref{th:main}}\label{sec:proof_main}

In this section an $n$-th Morava K-theory $\Kn^*$ is fixed, 
its FGL is denoted by $F_{\Kn}$ 
and its logarithm is $\log_{\Kn}(x) = x+ \sum_{i=1}^\infty \frac{a_i}{p^i}x^{p^{ni}}$
for some $a_i\in\Zp^\times$ (see Prop. \ref{prop:log_morava}).

First, we classify additive operations from $\Kn^*$ to $BP\{n\}^*$ in Section \ref{sec:add_morava_BPn},
and use them to construct Chern classes with values in $BP\{n\}^*$ in
Section \ref{sec:construction_chern_classes}.
Then for any $p^n$-typical theory $A^*$ Chern classes from $\Kn^*$ are defined as compositions
of constructed operations $c_i\colon \Kn^*\rarr BP\{n\}^*$ 
with the unique morphism of oriented theories $BP\{n\}^*\rarr A^*$.
Finally, in Section \ref{sec:chern_generate_all} based on the results about the truncation of operations 
from Section \ref{sec:trunc}
we prove that if $A$ is a free $\Zp$-module then Chern classes generate all operations to $A^*$.

\subsection{Additive operations from Morava K-theory to $BP\{n\}^*$}\label{sec:add_morava_BPn}

We will need to work with additive operations from $\Kn^*$ to $BP\{n\}^*/p$,
however, their classification is more difficult than of integral operations.
We restrict ourselves to so-called gradable operations 
which are much more amenable to investigations (cf. \cite[Section 4.5]{Sech}).

\begin{Def}[cf. {\cite[Def. 4.5.2]{Sech}}]\label{def:grad}
An additive operation $\phi\colon \Kn^*\rarr A^*$ to an oriented theory $A^*$ 
is called {\it gradable} if series $\phi_{\{l\}}\in A[[z_1,\ldots,z_l]]$ defining it (see Section \ref{sec:cont})
is $p^n$-gradable for all $l\ge 1$.
\end{Def}

\begin{Prop}[cf. {\cite[Prop. 4.5.4]{Sech}}]\label{prop:add_grad}
Let $A^*$ be a $p^n$-typical oriented theory s.t. $A$ is a torsion-free $\Zp$-module.
Then all additive operations from $\Kn^*$ to $A^*$ are gradable.
\end{Prop}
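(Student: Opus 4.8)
The plan is to reduce the statement to a computation with the defining series of an additive operation on products of projective spaces, exactly as in the proof of \cite[Prop. 4.5.4]{Sech}, and then exploit the $p^n$-typicality of both $K(n)^*$ and $A^*$. Let $\phi\colon K(n)^*\rarr A^*$ be an additive operation and let $G_l\in A[[z_1,\ldots,z_l]]$ be the series defining it (Section~\ref{sec:cont}); by additivity $\phi$ is determined by the single series $G_1\in A[[z]]$, since $G_l(z_1,\ldots,z_l)=\sum_{i=1}^l G_1(z_i)$ after the appropriate pullback identifications (this uses additivity together with the fact that $\phi$ commutes with pullbacks along the partial point embeddings and projections). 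So it suffices to prove that $G_1$ is $p^n$-gradable, i.e. of the form $\sum_{j\ge 0} a_j z^{1+j(p^n-1)}$.

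First I would pass, without loss of generality, to the universal case $A^*=BP\{n\}^*$: any additive operation $\phi\colon K(n)^*\rarr A^*$ with $A$ torsion-free factors through $A\ot\QQ$, and an additive operation to a $\QQ$-algebra free theory is completely controlled by its effect on the additive generators $x\mapsto$ (the logarithm coordinate), so it is enough to understand the $\QQ$-linear map on logarithms and check it respects the relevant gradings; alternatively one observes that $[K(n)^*,A^*]^{add}\hookrightarrow [K(n)^*,A^*\ot\QQ]^{add}$ and the latter only depends on the logarithms of $F_{K(n)}$ and $F_A$, both of which are $p^n$-gradable by Prop.~\ref{prop:log_morava} and Cor.~\ref{cr:pn-typical-torsion-free}. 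Next, working rationally, I would use the explicit description of additive operations between free theories in terms of their logarithms: over $\QQ$-coefficients an additive operation from a free theory with logarithm $\ell_A$ to one with logarithm $\ell_B$ corresponds to an additive power series $h$ with $\ell_B\circ h\circ \ell_A^{-1}$ additive, equivalently $h=\ell_B^{-1}\circ (\text{additive series})\circ \ell_A$; writing this out with $\ell_{K(n)}(x)=x+\sum_{i\ge1}\tfrac{a_i}{p^i}x^{p^{ni}}$ and $\log_{BP\{n\}}=\sum_j \bar l_j x^{p^j}$ (which is supported only in degrees $p^{nj}$ by Cor.~\ref{cr:pn-typical-torsion-free}), both outer series are $p^n$-gradable, hence by Prop.~\ref{prop:pn_grad_series} so is their composition with any additive (trivially $p^n$-gradable) series in between, and so $G_1$ is $p^n$-gradable. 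Finally I would check integrality is preserved — that the $p^n$-gradable rational series we obtain actually lands in $BP\{n\}$ — which is automatic since we started from an operation defined over $A$.

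The main obstacle is the reduction step: making precise that an additive operation is determined, over torsion-free coefficients, by a single logarithmic datum, and that passing to $\QQ$-coefficients loses no information about whether $G_1$ is $p^n$-gradable. One has to be careful that the series $G_1$ need not itself be the "logarithmic" series — it is the series on Chern roots — so one must either translate through the logarithm coordinate cleanly (using that $z^A=$ first Chern class and $\ell_{K(n)}(z^A)$ is the additive coordinate) or argue directly that the $p^n$-grading on $A$ (with $\deg v_{mn}=1-p^{mn}$, all multiples of $p^n-1$) forces every coefficient appearing in $G_1$ to sit in a degree $\equiv 1 \pmod{p^n-1}$, which by the grading-respecting statement of Prop.~\ref{prop:mor_grad} and Prop.~\ref{prop:top_morava_grading} is exactly the content of $p^n$-gradability. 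Once that translation is set up, everything else is the formal closure properties of $p^n$-gradable series in Prop.~\ref{prop:pn_grad_series} together with the already-established $p^n$-gradability of the relevant logarithms.
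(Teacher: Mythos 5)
There is a genuine gap, and it occurs at the very first reduction. You claim that by additivity $\phi$ is determined by the single series $G_1$ because $G_l(z_1,\ldots,z_l)=\sum_{i=1}^l G_1(z_i)$. This is false: in the convention used here (and in Def.~\ref{def:grad}, cf.\ the proof of Lemma~\ref{lm:comp_phi}), the series $G_l$ of an additive operation is its value on the \emph{product} $z_1\cdots z_l$, which by continuity is divisible by $z_1^B\cdots z_l^B$ and is in no way a sum of one-variable series; the collection $\{G_l\}_{l\ge 1}$ is genuinely independent data (e.g.\ the generators $\phi_i$ of $[K(n)^*,\CH^i\ot\Zp]^{add}$ have nontrivial $G_l$ for infinitely many $l$). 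For the same reason your rational classification step is wrong: an additive operation between free theories with $\QQ$-coefficients is \emph{not} given by a single series $h$ with $\ell_B\circ h\circ\ell_A^{-1}$ additive --- that is the description of multiplicative operations (morphisms of formal group laws). Rationally, $[K(n)^*,A^*_\QQ]^{add}$ is an infinite product $\prod_i[K(n)^*,\CH^i_\QQ\ot A]^{add}$ via the Chern character, and the whole difficulty of the proposition is to control the higher series $G_l$, $l\ge 2$, of such operations; your argument never addresses them.

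The paper's proof handles exactly this point by composing $\phi$ with the Chern character $ch_A:A^*\rarr\CH^*_\QQ\ot A$ and invoking the already established fact (\cite[Prop.~4.5.4]{Sech}) that \emph{every} additive operation from $K(n)^*$ to $\CH^*_\QQ$ is gradable --- that is the key input your proposal omits. It then checks that $(ch_A)^{-1}:\CH^*_\QQ\ot A\rarr A^*_\QQ$ is gradable, using its multiplicativity (so that all $G_l$ are products of copies of $G_1$) together with Cor.~\ref{cr:pn-typical-torsion-free}, which says the logarithm of a $p^n$-typical FGL over a torsion-free ring is supported in degrees $p^{nj}$; finally torsion-freeness of $A$ lets one descend gradability from $\phi\ot\id_\QQ$ to $\phi$. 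Your last two ingredients (gradability of the logarithms via Prop.~\ref{prop:pn_grad_series} and Cor.~\ref{cr:pn-typical-torsion-free}, and the descent using torsion-freeness) do appear in the paper's argument, but without the reduction through $ch_A$ and the cited result for operations to $\CH^*_\QQ$ the core of the statement --- gradability of $G_l$ for all $l\ge 2$ --- is left unproved, and the grading considerations of Prop.~\ref{prop:mor_grad} and Prop.~\ref{prop:top_morava_grading} (which concern the $\ZZ/(p^n-1)$-grading of $K(n)^*$ itself, not of the target $A^*$) do not substitute for it.
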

\begin{proof}
Let $\phi\colon \Kn^*\rarr A^*$ be an additive operation.
Its composition with the Chern character $ch_A\colon A^*\rarr \CH^*_\QQ\ot A$
is  an additive operation to $\CH^*_\QQ\ot A$. 
However, as a presheaf of abelian groups $\CH^*_\QQ\ot A$  
is a direct sum of presheaves $\CH^*_\QQ$, and by \cite[Prop. 4.5.4]{Sech} 
any additive operation to this presheaf is gradable. Thus, $ch_A\circ \phi$ is gradable as well.

Note that the operation $(ch_A)^{-1}\colon \CH^*_\QQ\ot A\rarr A^*\ot\QQ$ is gradable.
Indeed, it is a multiplicative operation, and therefore 
$G_l(1)=\prod_{i=1}^l G_1(1)|_{z^{\CH}=z_i^{\CH}}$,
so it is enough to check that $G_1$ is gradable. 
However, $G_1(1)=\log_A(z^{\CH})$ which is gradable by definition of $p^n$-typical theory 
and Cor. \ref{cr:pn-typical-torsion-free}.

Thus, the operation $\phi\ot \id_\QQ=(ch_A)^{-1}\circ (ch_A \circ \phi)$
from  $\Kn^*$ to $A^*\ot\QQ$
is gradable, and as $A$ is a torsion-free ring therefore so is $\phi$.
\end{proof}

\begin{Prop}\label{prop:add_op_morava_BPn}
The truncation map  (see Prop. \ref{prop:trunc_op})
$$tr_i\colon  gr^i_\tau [\Kn^*,\tau^i BP\{n\}^*]^{add} \hookrightarrow [\Kn^*,\CH^i\ot BP\{n\}]^{add}$$
defines an isomorphism of free $BP\{n\}$-modules of rank 1 for each $i\ge 0$.

Moreover, for each $i\ge 0$
there exists an additive operation $\phi_i$ supported on $\Kn^i$ 
and taking values in $\tau^i BP\{n\}^i$ 
which generates the module above.
\end{Prop}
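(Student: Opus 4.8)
The plan is to reduce the statement to the classification of additive operations $K(n)^*\rarr\CH^*\ot\Zp$ from \cite{Sech}, and then to lift a generator of the Chow-valued module through the truncation map, which is already injective by Prop.~\ref{prop:trunc_op} applied to additive operations.

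\emph{Structure of the target.} For $i\ge 1$ every additive operation $K(n)^*\rarr\CH^i\ot BP\{n\}$ vanishes on the constant summand (there being no non-zero additive map $\Zp\rarr\CH^i(\Spec k)\ot BP\{n\}=0$), so by Prop.~\ref{prop:operations_from_tilde} one may work with $\tilde K(n)^*$ throughout; the same applies to $\tau^i BP\{n\}^*$, and it reduces the case $i=0$ to the constant part of $[K(n)^*,BP\{n\}^*]^{add}$, whose rank $1$ generator is the projection $K(n)^*\rarr\Zp\subset BP\{n\}$. From the classification in \cite{Sech} one extracts that $[\tilde K(n)^*,\CH^i\ot\Zp]^{add}$ is free of rank $1$ over $\Zp$; fix a generator $\bar\phi_i$, which is supported on $K(n)^{i\bmod(p^n-1)}$. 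As $BP\{n\}\cong\Zp[v_n,v_{2n},\ldots]$ is a free $\Zp$-module whose homogeneous components have finite rank, $\CH^i\ot BP\{n\}=\bigoplus_m\bigl(\CH^i\ot BP\{n\}^m\bigr)$ as presheaves, so an additive operation into this direct sum is the sum of its components, each of which is a $BP\{n\}^m$-multiple of $\bar\phi_i$; since $\bar\phi_i$ is non-zero on some variety, only finitely many of these multiples are non-zero. Hence $[\tilde K(n)^*,\CH^i\ot BP\{n\}]^{add}=BP\{n\}\cdot(\bar\phi_i\ot 1)$ is free of rank $1$ over $BP\{n\}$.

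\emph{Lifting the generator.} It remains to lift $\bar\phi_i\ot 1$ to an additive operation $\phi_i:\tilde K(n)^*\rarr\tau^i BP\{n\}^*$. Rationally this is immediate: over $\QQ$ the $p^n$-typical formal group law is strictly isomorphic to the additive one, so the Chern character identifies $BP\{n\}^*\ot\QQ$ with $\CH^*\ot\bigl(BP\{n\}\ot\QQ\bigr)$ compatibly with the topological filtration, and transporting $\bar\phi_i\ot 1$ through this identification gives a rational additive operation $\tilde\phi_i:\tilde K(n)^*\rarr\tau^i\bigl(BP\{n\}^*\ot\QQ\bigr)$ whose reduction modulo $\tau^{i+1}$ is $\rho_{BP\{n\}}(\bar\phi_i\ot 1)$. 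The real content is that $\tilde\phi_i$ is integral, and to see it I would inspect the series $G_l\in\bigl(BP\{n\}\ot\QQ\bigr)[[z_1,\ldots,z_l]]$ describing $\tilde\phi_i$ on products of projective spaces: they are $p^n$-gradable by Prop.~\ref{prop:add_grad}, they are known \emph{integrally} modulo the ideal of series of degree $\ge i+1$ (there they are the series of $\rho_{BP\{n\}}(\bar\phi_i\ot 1)$, from \cite{Sech}), and combining $p^n$-gradability with this prescribed low-degree part, the explicit denominators $p^{-k}$ in $\log_{K(n)}$ (Prop.~\ref{prop:log_morava}) and the generators $v_{mn}$ of $BP\{n\}$ that are available precisely to absorb those denominators, one checks that every higher coefficient of $G_l$ already lies in $BP\{n\}$. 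Granting this, Vishik's Theorem~\ref{th:Vish_op} promotes the integral, mutually compatible family $(G_l)$ to an operation $\phi_i:\tilde K(n)^*\rarr\tau^i BP\{n\}^*$ with $tr_i\phi_i=\bar\phi_i\ot 1$ (by the characterisation of $tr_i$ in Prop.~\ref{prop:op_mod_tau_vs_trunc}), so $tr_i$ is surjective, hence an isomorphism, and freeness of rank $1$ of $gr^i_\tau[K(n)^*,\tau^i BP\{n\}^*]^{add}$ follows from that of the target. Finally, as modifying $\phi_i$ by an additive operation into $\tau^{i+1}BP\{n\}^*$ does not change $tr_i\phi_i$ and $\phi_i$ is gradable, passing to the graded model $GK(n)^*\rarr GBP\{n\}^*$ of Remark~\ref{rem:graded_morava} one normalises $\phi_i$ to be supported on $K(n)^{i\bmod(p^n-1)}$ and valued in $\tau^i BP\{n\}^i$.

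I expect the integrality of $\tilde\phi_i$ to be the main obstacle: it is the $p$-adic core of the argument, to be handled in the spirit of the integrality computations of \cite[Section~4.5]{Sech}, the new feature being that the coefficients now lie in $BP\{n\}$ rather than $\Zp$, so that the variables $v_{mn}$ are there to clear the denominators of $\log_{K(n)}$. A secondary, purely bookkeeping difficulty is to match the $\ZZ/(p^n-1)$-grading on $K(n)^*$ with the genuine $\ZZ$-grading of $BP\{n\}^*$ when normalising $\phi_i$; this is where the gradability of Prop.~\ref{prop:add_grad} and the graded model of Remark~\ref{rem:graded_morava} enter.
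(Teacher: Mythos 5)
Your analysis of the target module (reduction to $\tilde K(n)^*$, $[\tilde K(n)^*,\CH^i\ot BP\{n\}]^{add}$ free of rank one over $BP\{n\}$ because $BP\{n\}$ is a free $\Zp$-module and the Chow-valued module has rank one by \cite{Sech}) coincides with the paper's argument, and so does the first move of the lifting step: transporting the generator through the Chern character isomorphism $BP\{n\}^*_\QQ\cong\CH^*_\QQ\ot BP\{n\}$ to get a rational lift in $\tau^i BP\{n\}^*\ot\QQ$. But the key claim you then make --- that for this specific transported operation $\tilde\phi_i$ ``every higher coefficient of $G_l$ already lies in $BP\{n\}$'', with the $v_{mn}$ somehow absorbing the denominators of $\log_{K(n)}$ --- is false, and this is exactly where the real work lies. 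Already for $i=1$ and target $BP\{1\}^*=BP^*$ the transported operation sends $z^{K(1)}$ to $\log_{BP}(z^{BP})=z+\tfrac{v_1}{p-p^p}z^p+\cdots$, whose coefficients are genuinely non-integral: the generators $v_{mn}$ being present in the coefficient ring does not make the specific rational numbers attached to them integral. There is no freedom in the transported lift, so its integrality cannot be ``checked''; it simply fails.

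What the paper does instead, and what your proposal is missing, is a correction mechanism: one takes the rational lifts $\psi_j:K(n)^j\rarr BP\{n\}^j_\QQ$ of the Chow-valued generators for \emph{all} $j\equiv i\bmod(p^n-1)$, $j\ge i$, and builds $\phi_i$ as an infinite $BP\{n\}\ot\QQ$-linear combination $\psi_i+\sum_{j>i}x_j\psi_j$, chosen degree by degree so that the denominators cancel. The engine of each step is the truncation-modulo-$p$ construction (Prop.~\ref{prop:trunc_mod_ideal}) together with the fact that gradable additive operations from $K(n)^*$ to $\CH^{j}/p$ form a rank-one module (\cite[Cor.~4.5.12]{Sech}): after clearing denominators by $p^m$, the first non-integral layer of the current approximation is forced to be proportional mod $p$ to the truncation of $\psi_j$, which tells you which rational multiple of $\psi_j$ to subtract to lower $m$. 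Gradability (your Prop.~\ref{prop:add_grad}) enters only to guarantee that nothing happens between consecutive degrees $j\equiv i\bmod(p^n-1)$ and to make the mod-$p$ rank-one statement applicable, not to force integrality of the naive lift. Without this inductive correction (or an equivalent device) the proof does not go through; the rest of your outline (injectivity of $tr_i$, support on $K(n)^i$, values in $BP\{n\}^i$) is fine once the corrected integral lift exists.
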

\begin{proof}
The group $[\Kn^*, \CH^i\ot\Zp]^{add}$ is a free $\Zp$-module of rank 1 by \cite[Prop. 3.6]{Sech}.
Since $BP\{n\}$ is a free $\Zp$-module, the presheaf of abelian groups $\CH^*\ot BP\{n\}$
is isomorphic to a direct sum of presheaves $\CH^*\ot\Zp$.
Combining these two facts together
we obtain that the group $[\Kn^*,\CH^i\ot BP\{n\}]^{add}$ 
is a free $BP\{n\}$-module of rank 1. 

Thus, to prove the proposition it suffices to find an additive operation
from $\Kn^i$ to $BP\{n\}^i=\tau^i BP\{n\}^i$ such that
its $i$-th truncation comes from a generator of the $\Zp$-module $[\Kn^*, \CH^i\ot\Zp]^{add}$ 
which is a submodule of the $BP\{n\}$-module $[\Kn^*, \CH^i\ot BP\{n\}]^{add}$.
Let $p_i\colon BP\{n\}^i\rarr \CH^i\ot\Zp$ be the $i$-th component of the unique morphism of theories
from $BP\{n\}^*$ to $\CH^*$. Direct computation on products of projective spaces shows that 
that for an operation $\phi\colon \Kn^*\rarr BP\{n\}^i$ we have $p_i \circ \phi = tr_i (\phi)$.

In what follows 
we will repeatedly and implicitly use Vishik's Theorem~\ref{th:Vish_op} 
to identify the sets
of operations from $\Kn^*$ to $\CH^*\ot\Zp$ (or to $BP\{n\}^*$)
as subsets of those operations to $\CH^*\ot\QQ$ (or to $BP\{n\}^*\ot\QQ$, respectively)
which act on the products of projective spaces integrally.

Let us first construct an additive operation $\psi_j\colon \Kn^j\rarr BP\{n\}^j_\QQ=\tau^jBP\{n\}^j_\QQ$
 for any $j\ge 1$
s.t. its truncation $tr_j\psi_j\colon \Kn^*\rarr \CH^j\ot\QQ$
is a generator of additive operations to $\CH^j\ot\Zp$.
We have an isomorphism $[\Kn^*,BP\{n\}^*_\QQ]^{add}= [\Kn^*\ot\QQ, BP\{n\}^*\ot \QQ]^{add}$,
and there exist the Chern character isomorphism $BP\{n\}^*_\QQ\xrarr{\sim} \CH^*_\QQ\ot BP\{n\}$
which identifies $\tau^k BP\{n\}^j_\QQ$ with $\oplus_{s\ge k} (\CH^s_\QQ\ot BP)^{\deg = j}$.
It follows that the truncation map 
$$tr_j\colon  gr^j_\tau [\Kn^*,BP\{n\}^j_\QQ]^{add} \rarr [\Kn^*,\CH^j_\QQ]^{add}$$
 is an isomorphism.
Thus, a generator of additive operations from $\Kn^*$ to $\CH^j\ot\QQ$ 
which is supported on $\Kn^j$ (\cite[Prop. 4.1.6]{Sech}) 
can be lifted to an additive operation $\psi_j$ from $\Kn^j$ to $BP\{n\}^j_\QQ$.

Second, let us define the additive operation $\phi_i$
as an infinite $BP\{n\}\ot\QQ$-linear combination of operations
 $\psi_j$, $j\ge i$, $j\equiv i \mod (p^n-1)$.
Such an operation will be supported on $\Kn^i$.
The linear combination is chosen by the following inductive procedure.

On the $k$-th step of the induction we construct an additive operation 
\mbox{$\phi_i^k\colon \Kn^i\rarr BP\{n\}^i\ot\QQ$,}
s.t. $tr_i \phi_i$ is a generator of additive operations to $\CH^i\ot\Zp$
and $\phi_i^k$ acts integrally on products of projective spaces 
modulo the $(i+k(p^n-1)+1)$-th piece of the topological filtration.
The latter condition could also be written
as series $G_l$ defining the operation
for all $l\ge 1$ satisfy 
$$G_l \mod (z_1,\ldots, z_l)^{i+k(p^n-1)+1} \in BP\{n\}[z_1,\ldots,z_l]/(z_1,\ldots, z_l)^{i+k(p^n-1)+1}.$$

We should also note that if a gradable operation $\phi$ on $\Kn^i$
is integral modulo the \mbox{$(i+k(p^n-1)+1)$-th} piece of the topological filtration,
then it is also integral modulo the the $(i+(k+1)(p^n-1))$-th piece of the topological filtration.
Indeed, the series $G_k$ defining the operations has non-trivial summands of degrees 
$k+r(p^n-1)$ only, where $r\ge 0$. However, for the operation supported on $\Kn^i$ 
we have $G_k =0 $ if $k\not\equiv i \mod p^n-1$.

{\bf Base of induction ($k=0$).} Take $\phi_i^0$ to be equal $\psi_i$.
This operation takes values in $\tau^i$, and its $i$-th truncation is integral,
which means that it acts integrally modulo $(i+1)$-th piece of the topological filtration
on products of projective spaces.

{\bf Induction step ($k\rarr k+1$).} The operation $\phi_i^k$
acts integrally on the projective spaces modulo $(i+k(p^n-1)+1)$-th part of the topological filtration.
Since series $G_l$ defining the operation $\phi_i^k$ are gradable for each $l\ge 1$,
they are integral modulo $(i+(k+1)(p^n-1))$-th part of the topological filtration.

Denote by $m$ the maximal $p$-power in the denominators of degree $i+(k+1)(p^n-1)$ summands in
series $G_l$ for all $l\ge 1$. In other words, $m$ is the minimal number $M$,
s.t. $p^M \phi_i^k$ acts\ integrally\
on\ projective\ spaces\ modulo\ $(i+(k+1)(p^n-1)+1)$-th\ part\ of\ the\ topological\ filtration.
If $m=0$, then one can define $\phi_i^{k+1}=\phi_i$.

Assume now that $m>0$. The operation $p^m \phi_i^k$ satisfies 
the assumptions of Proposition \ref{prop:trunc_mod_ideal}, and therefore
 there is a well-defined additive operation $tr^{\mod p}_{i+(k+1)(p^n-1)+1} (p^m \phi_i^k)$
 from $\Kn^i$ to $\CH^{i+k}\ot BP\{n\}/p$.
This operation acts on the products of projective spaces
via the series \mbox{$(p^m G_l \mod (z_1, \ldots, z_l)^{i+(k+1)(p^n-1)+1}) \mod p$}, $l\inN$,
which are in fact polynomials of degree $i+(k+1)(p^n-1)$ in variables $z_i$.

Clearly, this operation is also gradable. Since the module of gradable additive operations to $\CH^{i+(k+1)(p^n-1)}/p$ 
is of rank 1 by \cite[Cor. 4.5.12]{Sech}, 
the operation $tr^{\mod p}_{i+(k+1)(p^n-1)+1} (p^m \phi_i^k)$ is proportional to the truncation of the operation $\psi_{i+(k+1)(p^n-1)}$.
Therefore there exist \mbox{$b\in BP\{n\}^{i-k-1}$}
s.t. $p^m\phi_i^k-b\psi_{i+(k+1)(p^n-1)}$ 
has denominators at most $p^{m-1}$
in the \mbox{$i+(k+1)(p^n-1)$}-th part of filtration. This induction process reduces $m$ 
to zero in finitely many steps, 
and one defines $\phi_i^{k+1}$ as $\phi_i^k+x\psi_{i+(k+1)(p^n-1)}$ 
for some $x\in BP\{n\}^{i-k-1}\ot\QQ$. 
Thus, $tr_i \phi_i^{k+1} = tr_i \phi_i^{k}$ is a generator of additive operations.

Note that since the operation $\psi_{i+k(p^n-1)+1}$ takes values in $\tau^{i+k(p^n-1)+1}$,
infinite linear combinations of these operations are well-defined
and the induction process on $k$ converges.

Thus, the induction process yields the additive operation $\phi_i\colon \Kn^*\rarr \tau^i BP\{n\}^*$
such that its $i$-th truncation is a generator of the free $BP\{n\}$-module
$[\Kn^*, \CH^i\ot BP\{n\}]^{add}$ of rank one. 
\end{proof}

Let us fix the notation $\phi_i\colon \Kn^*\rarr \tau^i BP\{n\}^*$ for the operation
constructed in the proof above throughout this section.

\begin{Cr}\label{cr:add_op_Kn-pntyp}

Let $A^*$ be a $p^n$-typical oriented theory.
Denote by $\phi_i^A$ the composition of the additive operation $\phi_i$ 
with the unique morphism of theories $p\colon BP\{n\}^*\rarr A^*$.

\begin{enumerate}
\item If $A$ is $\F{p}$-algebra, then 
the $A$-module of gradable additive operations from $\Kn^*$ to $A^*$ 
is freely generated by $\phi_i^A$, 
i.e.
$$ [\Kntilde^*, A^*]^{add, grad}= \{\sum_{i=1}^\infty a_i\phi^A_i| a_i\in A \}.$$
\item
If $A$ is a free $\Zp$-module, then
the $A$-module of all additive operations from $\Kn^*$ to $A^*$ 
is freely generated by $\phi_i^A$, 
i.e.\ 
$$ [\Kntilde^*, A^*]^{add}= \{\sum_{i=1}^\infty a_i(\phi^A_i \mod p)| a_i\in A \}.$$
\end{enumerate}
\end{Cr}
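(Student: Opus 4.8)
The plan is to deduce the Corollary from Proposition~\ref{prop:add_op_morava_BPn} by transporting everything along the unique morphism of theories $p\colon BP\{n\}^*\rarr A^*$ and running the truncation formalism of Section~\ref{sec:all_op_from_trunc} with $A^*$ in the role of $BP\{n\}^*$. The operations $\phi_i^A=p\circ\phi_i$ are supported on $K(n)^i$, take values in $\tau^i A^*$, and are gradable (being $BP\{n\}$-linear combinations of the gradable operations $\psi_j$), so they are the natural candidate free generators. The whole statement then reduces to two points: (a) the $A$-module of additive operations from $K(n)^*$ to $\CH^i\ot A$ — respectively, in case~(1), its gradable part — is free of rank one, generated by $tr_i\phi_i^A$; and (b) the truncation map $tr_i\colon gr^i_\tau[\tilde K(n)^*,A^*]^{add}\rarr[\tilde K(n)^*,\CH^i\ot A]^{add}$, respectively its gradable analogue, is an isomorphism.

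For (a) in case~(2), where $A$ is free over $\Zp$, I would use that $\CH^*\ot A$ is a direct sum of copies of $\CH^*\ot\Zp$, together with \cite[Prop. 3.6]{Sech}, which says $[K(n)^*,\CH^i\ot\Zp]^{add}$ is free of rank one. Writing a prospective additive operation $\tilde K(n)^*\rarr\CH^i\ot A$ as a family of operations to $\CH^i\ot\Zp$ via the projections to the summands, each is an $\Zp$-multiple of the generator $\theta_i$; since $\theta_i\neq 0$, the requirement that the operation land in the direct sum (not the product) forces the multipliers to have finite support, so $[\tilde K(n)^*,\CH^i\ot A]^{add}$ is free of rank one over $A$. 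Under base change along $BP\{n\}\rarr A$, the generator $tr_i\phi_i$ from Proposition~\ref{prop:add_op_morava_BPn} maps to a generator $tr_i\phi_i^A$. Case~(1), where $A$ is an $\F{p}$-algebra, is identical with $\CH^*/p$ in place of $\CH^*\ot\Zp$ and the rank-one statement \cite[Cor. 4.5.12]{Sech} for \emph{gradable} additive operations to $\CH^*/p$ in place of \cite[Prop. 3.6]{Sech}; here one must work inside the gradable class throughout, which is harmless (gradability is preserved by $A$-linear combinations and by truncation), but also unavoidable, since over an $\F{p}$-algebra not every additive operation is gradable. For (b), injectivity of $tr_i$ is Proposition~\ref{prop:trunc_op}, and surjectivity is immediate from (a): $tr_i\phi_i^A$ lies in the image and generates the target.

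Granting (a) and (b), I would finish by replaying the proof of Proposition~\ref{prop:all_operations_from_truncations} for additive operations — and, in case~(1), restricted to gradable ones: given such an operation $\chi$, one chooses $a_i\in A$ recursively so that $\chi-\sum_i a_i\phi_i^A$ has vanishing truncations up to higher and higher degree; each residual stays additive (and gradable), and by Proposition~\ref{prop:operations_top_COT} lands in a deeper piece of the topological filtration, so the infinite sum converges; a nontrivial relation $\sum_i a_i\phi_i^A=0$ would have a lowest-degree truncation contradicting freeness over $\CH^i\ot A$. I do not expect a genuinely hard step: all the substantive input is already in Proposition~\ref{prop:add_op_morava_BPn}, and the only points demanding care are the compatibility of modules of additive operations with the infinite direct sums appearing in $\CH^*\ot A$ (handled by the finite-support argument above) and, in case~(1), keeping the entire inductive lifting inside the class of gradable operations.
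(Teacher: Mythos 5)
Your proposal matches the paper's own proof: the paper likewise observes $tr_i(\phi_i^A)=tr_i(\phi_i)\ot\id_A$, uses the decomposition of $\CH^*\ot A$ into a direct sum of copies of $\CH^*/p$ (resp. $\CH^*\ot\Zp$) together with \cite[Cor. 4.5.12]{Sech} (resp. \cite[Prop. 3.6]{Sech}) to see that this truncation generates the rank-one $A$-module of gradable (resp. all) additive operations to $\CH^i\ot A$, and then invokes Proposition \ref{prop:all_operations_from_truncations}, restricted to gradable additive operations in case (1). Your extra details (the finite-support argument for landing in the direct sum, preservation of gradability under truncation and the inductive lifting) only spell out what the paper leaves implicit, so the argument is correct and essentially identical in approach.
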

\begin{proof}
It follows from the construction of the truncation,
that $tr_i (\phi_i^A) = tr_i (\phi_i)\ot \id_A$. 

If $A$ is a $\F{p}$-algebra, then $\CH^*\ot A$ as a presheaf of abelian groups
is isomorphic to a direct sum of presheaves $\CH^*/p$. In this case $tr_i (\phi_i^A)$ is 
an additive generator of gradable additive operations to $\CH^i/p$ (\cite[Cor. 4.5.12]{Sech}),
and therefore is a generator of the $A$-module of gradable additive operations to $\CH^i\ot A$.
The first part of this Proposition is proved then exactly 
in the same way as Proposition \ref{prop:all_operations_from_truncations}
where one considers only gradable additive operations instead of all additive operations.

If $A$ is a free $\Zp$-module,
 then $\CH^*\ot A$  as a presheaf of abelian groups
is isomorphic to a direct sum of $\CH^*\ot\Zp$. 
By construction $tr_i (\phi_i^A)$ is a generator of the $A$-module of all additive operations to $\CH^i\ot A$.
The claim follows from Proposition \ref{prop:all_operations_from_truncations}.
\end{proof}

\begin{Rk}
A particular case of Corollary \ref{cr:add_op_Kn-pntyp}
is a classification of additive endo-operations of Morava K-theories $\Kn^*$.
Note that additive endo-operations of $\KK$ were classified by Vishik (\cite[Th. 6.8]{Vish1})
and are infinite linear combinations of special finite sums of Adams operations.
In other words one could say that additive endo-operations of $\KK$ 
(and by essentially the same argument of $K(1)^*$) are generated by Adams operations.

This is not the case for $\Kn^*$, $n>1$. More precisely, one can show
that for a sufficiently big $i$ the operation $\phi_i^{\Kn}$ constructed above can not be equal 
to a `convergent' infinite $\Zp$-linear combination of Adams operations. In Topology a similar fact 
about topological Morava K-theories is well-known.
See \cite{Rav_Morava_stabilizer} for the multiplicative endo-operations of 
the topological Morava K-theory  with $\F{p}$-coefficients,
and also \cite[Remark on p. 437]{Yag}.
\end{Rk} 

\subsection{The construction of Chern classes}\label{sec:construction_chern_classes}

\begin{Prop}
There exist operations $c_i\colon \Kn^i\rarr BP\{n\}^i$ for any $i\ge 1$
s.t. 
\begin{enumerate}
\item $c_{tot}=\sum_{i\ge 1} c_it^i$ satisfies a Cartan-type formula:
$$ c_{tot}(x+y)=F_{\Kn}(c_{tot}(x), c_{tot}(y));$$
\item operation $tr_i c_i$ is equal to $c^{\CH}_i\ot \id_{BP\{n\}}$.
\end{enumerate}
\end{Prop}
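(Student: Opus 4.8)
The plan is to build the operations $c_i$ by induction on $i$, using $\log_{K(n)}$ to turn the Cartan identity into a statement about additive operations and the classification of those (Proposition~\ref{prop:add_op_morava_BPn}, Corollary~\ref{cr:add_op_Kn-pntyp}) to provide the needed freedom at each step. Rationally $\log_{K(n)}$ is an isomorphism of $F_{K(n)}\ot\QQ$ with the additive formal group law, so for $c_{tot}=\sum_{i\ge 1}c_it^i$ the identity $c_{tot}(x+y)=F_{K(n)}(c_{tot}(x),c_{tot}(y))$ is equivalent to the additivity of each coefficient $\widetilde c_i:=[t^i]\log_{K(n)}(c_{tot})\colon K(n)^*\rarr BP\{n\}^i\ot\QQ$; conversely $c_{tot}=\log_{K(n)}^{-1}(\sum_i\widetilde c_it^i)$ recovers $c_{tot}$ from any system of additive rational operations $\widetilde c_i$, with $c_i=\widetilde c_i+R_i(\widetilde c_1,\ldots,\widetilde c_{i-1})$ for a universal polynomial $R_i$ with rational coefficients and $R_1=0$. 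Hence it suffices to choose additive $\widetilde c_i\colon K(n)^*\rarr BP\{n\}^i\ot\QQ$ such that each resulting $c_i$ lands in $BP\{n\}^i$, takes values in $\tau^iBP\{n\}^*$, and satisfies $tr_ic_i=c^{\CH}_i\ot\id_{BP\{n\}}$. For $i=1$ one has $c_1=\widetilde c_1$, and since $tr_1$ restricts to an isomorphism on additive operations to $\tau^1BP\{n\}^*$ (Proposition~\ref{prop:add_op_morava_BPn}) we take $c_1$ to be the lift of $c^{\CH}_1\ot\id$ through $tr_1$, supported on $K(n)^1$.

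For the inductive step assume $c_1,\ldots,c_{m-1}$ constructed with the above properties and so that the Cartan identity holds modulo $t^m$. Requiring it modulo $t^{m+1}$ is exactly the condition $\pd c_m=\Psi_m$, where $\Psi_m(x,y):=[t^m]F_{K(n)}\bigl(\sum_{i=1}^{m-1}c_i(x)t^i,\ \sum_{j=1}^{m-1}c_j(y)t^j\bigr)$ is an explicit symmetric binary poly-operation depending only on $c_1,\ldots,c_{m-1}$. Since the $c_i$ are integral, take values in $\tau^iBP\{n\}^*$, and $F_{K(n)}$ has coefficients in $\Zp$, the poly-operation $\Psi_m$ is integral and takes values in $\tau^mBP\{n\}^*$ (multiplicativity of the topological filtration, Proposition~\ref{prop:top_filt_properties}(\ref{item:topfilt_mult})). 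The ambiguity in a solution $c_m$ of $\pd c_m=\Psi_m$ is precisely the addition of an additive operation, so it remains to produce one integral solution and then correct its $m$-th truncation to $c^{\CH}_m\ot\id$.

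To produce an integral solution, set $\widetilde c_i:=[t^i]\log_{K(n)}(\sum_{j<m}c_jt^j)$ for $i<m$ (these are additive because the Cartan identity holds modulo $t^m$) and $\widetilde c_m:=0$; then $c_m^{\QQ}:=[t^m]\log_{K(n)}^{-1}(\sum_{i<m}\widetilde c_it^i)$ satisfies $\pd c_m^{\QQ}=\Psi_m$ and takes values in $\tau^m(BP\{n\}^*\ot\QQ)$, but need not be integral. I would clear its denominators by subtracting $BP\{n\}\ot\QQ$-linear combinations of the additive generators $\phi_j^{BP\{n\}}$ of Proposition~\ref{prop:add_op_morava_BPn}, which leaves the derivative $\pd$ untouched, proceeding one layer of the topological filtration of the target at a time exactly as in the proof of Proposition~\ref{prop:add_op_morava_BPn}: the integrality of $\pd c_m^{\QQ}=\Psi_m$ forces the failure of integrality of $c_m^{\QQ}$ in each layer to be an additive operation, which after a suitable $p$-scaling and reduction mod $p$ becomes a gradable additive operation to $\CH^\bullet\ot BP\{n\}/p$ via Proposition~\ref{prop:trunc_mod_ideal}; by the rank-one classification of such operations (\cite[Cor.~4.5.12]{Sech}) it is a multiple of the truncation of some $\phi_j^{BP\{n\}}$, and subtracting that multiple of $\phi_j^{BP\{n\}}$ lowers the power of $p$ in the denominator. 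This terminates in each layer, and the whole process converges since the $\phi_j^{BP\{n\}}$ involved take values in ever deeper parts of the topological filtration. I expect this denominator-clearing to be the main obstacle; everything else is formal. The output is an integral $c_m$ with $\pd c_m=\Psi_m$, still valued in $\tau^mBP\{n\}^*$.

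Finally, truncation is a ring homomorphism commuting with $\pd$ (Lemma~\ref{lm:trunc_mult} together with a direct check on products of projective spaces), and the Chern classes $c^{\CH}_i$ satisfy the same Cartan identity over $\CH^*\ot\Zp$ (\cite[Th.~4.2.1]{Sech}); hence $tr_mc_m$ and $c^{\CH}_m\ot\id_{BP\{n\}}$ have the same internal derivative $tr_m\Psi_m$, so their difference is an additive operation $K(n)^*\rarr\CH^m\ot BP\{n\}$. As $tr_m$ is an isomorphism on additive operations to $\tau^mBP\{n\}^*$, there is an additive $\delta\colon K(n)^*\rarr\tau^mBP\{n\}^*$ with $tr_m\delta=c^{\CH}_m\ot\id-tr_mc_m$; replacing $c_m$ by $c_m+\delta$ leaves $\pd c_m$ unchanged and gives $tr_mc_m=c^{\CH}_m\ot\id_{BP\{n\}}$, and after projecting onto the component $K(n)^m$ we may take $c_m$ supported there. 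The Cartan identity now holds modulo $t^{m+1}$, completing the induction; assembling the $c_i$ yields $c_{tot}=\sum_{i\ge 1}c_it^i$ and the full Cartan identity as an identity of formal power series in $t$ (only finitely many $c_i$ are nonzero on any fixed variety, the topological filtration being finite there).
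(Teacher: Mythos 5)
Your plan is correct and follows essentially the same route as the paper's proof: induction on $i$, writing $c_i$ as the universal rational polynomial in the lower classes coming from $\log_{K(n)}$ plus a rational additive operation, clearing the $p$-denominators by subtracting the additive generators $\phi_j$ detected via the mod-$p$ rank-one classification of gradable additive operations, and invoking Proposition \ref{prop:add_op_morava_BPn} to control the truncation. The only organizational difference is that the paper arranges $tr_i c_i = c_i^{\CH}\ot\id$ from the outset (using the decomposition $c_i^{\CH}=P_i(c^{\CH}_1,\ldots,c^{\CH}_{i-1})+\psi_i/p^{\mu_i}$ from \cite{Sech}) and preserves it through the denominator-clearing induction, whereas you produce some integral solution of $\pd c_m=\Psi_m$ first and correct its truncation at the end by an additive lift — a harmless repackaging of the same argument.
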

\begin{proof}
We will construct operations $c_i$ by induction,
in the same way as it was done for operations to Chow groups in \cite[Sec. 4.4]{Sech}.
This is possible as the Cartan formula restricted to the coefficient of the monomial $t^i$
describes non-additivity of the operation $c_i$
in terms of operations $c_j$ where $j<i$. 
For example, Chern classes $c_i$ (as well as $c_i^{\CH}$)
 have to be additive for $i\colon 1\le i \le p^n-1$.

{\bf Base of induction.}
Note that  operations $c_i^{\CH}$, $i\colon 1\le i \le p^n-1$,
  are chosen in \cite{Sech}
 to be generators of modules $[\Kn^*, \CH^i\ot\Zp]^{grad}$.
Thus, by Prop. \ref{prop:add_op_morava_BPn} 
 there exist numbers $a_i\in\Zp^\times$ for $i\colon 1\le i \le p^n-1$ 
 s.t. $tr_i (a_i\phi_i) = c_i^{\CH}$.
  We choose $c_i$ to be equal to $a_i\phi_i$ for $i\colon 1\le i \le p^n-1$.

{\bf Induction step.}
Assume that operations $c_1,\ldots, c_{i-1}$ are constructed
and satisfy properties of the proposition.

Similar to the case of Chern classes with values in Chow groups
we define the operation $c_i$ as a sum of a polynomial in $c_1,\ldots, c_{i-1}$
and an additive operation. In the current situation this additive operation is taking values in $\tau^i BP\{n\}^*$.

Denote by $P_i\in \QQ[c_1,\ldots, c_{i-1}]$
the coefficient of $t^i$ in the formal power series $c_i - (\log_{\Kn}(c_{tot}))$ in $t$.
Let $\mu_i = \max(0, -\nu_p(P_i))$ where $\nu_p$ is the minimal $p$-valuation of coefficients of $P_i$. 

Note that the polynomial $P_i$ has degree $i$ in variables $c_j$ (where $\deg c_j =j$),
and since the topological filtration is multiplicative on $BP\{n\}^*$ 
(Prop. \ref{prop:top_filt_properties}, (\ref{item:topfilt_mult})),
by the induction assumption the operation defined by the polynomial $P_i$ 
takes values in $\tau^i BP\{n\}^*$.

It follows from Lemma~\ref{lm:trunc_mult} 
that $tr_i P_i (c_1,\ldots, c_{i-1})$, $P_i(tr_1 c_1,\ldots, tr_{i-1}c_{i-1})$
and $P_i(c_1^{CH},\ldots, c_{i-1}^{CH})$ are equal.
By \cite[Lemma~4.4.1]{Sech} $c_i^{\CH}$ as an operation to $\CH^i\ot\QQ$
equals to $P_i(c_1^{\CH}, \ldots, c_i^{\CH})+\frac{\psi_i}{p^{\mu_i}}$
where $\psi_i$ is a generator of additive operations from $\Kn^*$ to $\CH^i\ot\Zp$. 
By Proposition \ref{prop:add_op_morava_BPn} there exist $a_i\in\Zp^\times$ such that $tr_i (a_i\psi_i) =\phi_i$,
and therefore an operation $P_i(c_1,\ldots,c_{i-1}) +\frac{a_i\phi_i}{p^{\mu_i}}$
takes values in $\tau^i BP\{n\}^*\ot\QQ$
and its $i$-th truncation is equal to $c_i^{\CH}$.
To define $c_i$ we now will add to this operation a linear combination of additive 
operations so that it acts integrally on products of projective spaces.
The following induction process is rewriting of
 \cite[p. 33, Lemma 4.4.1 follows from Lemma 4.4.2]{Sech} with minor changes.
 
{\bf Claim.}
There exist an additive operation $\psi_i^{(r)}\colon \Kn^*\rarr \tau^i BP\{n\}^*$
such that 1) the operation $tr_i \psi_i^{(r)}$ is a generator of additive operations to $\CH^i\ot BP\{n\}$,
2) the operation $p^{\mu_i-r}P_i(c_1,\ldots,c_{i-1}) +\frac{\psi^{(r)}_i}{p^{r}}$
acts integrally on products of projective spaces.

{\bf Base of induction ($r=0$).}
By definition of $\mu_i$ the polynomial $p^\mu_iP_i$ is integral,
and one can choose $\psi^{(0)}$ to be the operation $\phi_i$ of Prop. \ref{prop:add_op_morava_BPn}.

{\bf Induction step $(r\rarr r+1)$.}
Let $p^{\mu_i-r}P_i(c_1,\ldots,c_{i-1}) +\frac{\psi^{(r)}_i}{p^{r}}$
be an operation acting integrally on products of projective spaces.

Note that the derivative of this operation is equal 
to $p^r\pd^1 P_i$, and $\pd^1 P_i$ is an integral polynomial in operations $c_1,\ldots, c_i$
by [{\sl loc. cit.}]. Thus, if $r>0$, then this operation modulo $p$ is an additive operation
which takes values in $\tau^i BP\{n\}^*/p$.
Moreover, it is gradable as follows by copying the argument of [{\sl op.cit.}, Prop. 4.5.6].
Thus, by Corollary \ref{cr:add_op_Kn-pntyp}, (1)
 it is equal to a sum $\sum_{s\ge i} (b_s\phi_s \mod p)$ modulo~$p$,
and one can choose $\psi^{(r+1)}_i:=\psi^{(r)}_i - p^r \sum_{s\ge i} b_s\phi_s$.
Note that $tr_i \psi^{(r+1)}_i = tr_i \psi^{(r)}_i-p^r b_i tr_i \phi_i$,
and if $r>0$, then by induction $tr_i \psi^{(r+1)}_i$ is a generator of additive operations. 
In the induction step $r=0\rarr r=1$ similar arguments apply as 
in \cite[p. 33, Lemma 4.4.1 follows from Lemma 4.4.2]{Sech}
to show that $tr_i \psi^{(1)}_i$ is a generator of additive operations.

Having finished the induction
we define $c_i$ as $P_i+\frac{\psi^{(\mu_i)}_i}{p^{\mu_i}}$
which satisfied demanded properties.
\end{proof}

Thus, we have constructed Chern classes from $\Kn^*$ to the universal $p^n$-typical theory $BP\{n\}^*$.
As already mentioned we define then Chern classes $c_i^A$ from $\Kn^*$ to any $p^n$-typical theory $A^*$
as compositions of $c_i^{BP\{n\}}$ with the unique morphism of theories $\pi_A\colon BP\{n\}^*\rarr A^*$.
Operations $c_i^A$ satisfy the Cartan-type formula 
 since any morphism of theories is a multiplicative map.
To finish the construction we need to check that $tr_i c_i^A = c_i^{\CH}$.

\begin{Prop}
Operations $c_i^A$ take values in $\tau^iA^*$,
and $tr_i c_i^A = c_i^{\CH}\ot \id_A$. 
\end{Prop}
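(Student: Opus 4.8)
The plan is to deduce both assertions from the preceding proposition, which establishes the statement for the universal $p^n$-typical theory $BP\{n\}^*$, by analysing how the truncation map $tr_i$ behaves under post-composition with a morphism of oriented theories.

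First I would note that $\pi_A\colon BP\{n\}^*\rarr A^*$, being a morphism of oriented theories, commutes with pullbacks, in particular with restrictions to open subsets; hence it preserves the topological filtration, since if $\alpha\in\tau^i BP\{n\}^*(X)$ vanishes on an open $U\subseteq X$ with $\mathrm{codim}_X(X\setminus U)\ge i$, then so does $\pi_A(\alpha)$. As $c_i^{BP\{n\}}$ takes values in $\tau^i BP\{n\}^*$ by the preceding proposition and $c_i^A=\pi_A\circ c_i^{BP\{n\}}$, the operation $c_i^A$ takes values in $\tau^i A^*$; in particular $tr_i c_i^A$ is defined.

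Next I would prove the following compatibility lemma: if $g\colon B^*\rarr C^*$ is a morphism of oriented theories, $\bar g\colon\CH^*\ot B\rarr\CH^*\ot C$ is the morphism of free theories induced by the ring homomorphism $B\rarr C$ that $g$ gives on coefficients, and $\phi\colon\tilde D^*\rarr\tau^c B^*$ is an operation from a free theory $D^*$, then $g\circ\phi$ takes values in $\tau^c C^*$ and $tr_c(g\circ\phi)=\bar g\circ tr_c\phi$ as operations $\tilde D^*\rarr\CH^c\ot C$. The first assertion is the argument of the previous paragraph. For the identity, by Vishik's Theorem~\ref{th:Vish_op} it is enough to compare the two operations on products of projective spaces $\mathbb{P}_n$; both are gotten by post-composing $\phi$ with a map $\tau^c B^*(\mathbb{P}_n)\rarr(\CH^c\ot C)(\mathbb{P}_n)$, and it remains to see the two maps coincide. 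A morphism of oriented theories sends $z_j^B=c_1^B(\mathcal{O}(1)_j)$ to $z_j^C$ and acts through $g$ on coefficients, while the truncation substitutes $z_j\mapsto z_j^{\CH}$ and keeps the part of degree exactly $c$ in the $z$-variables (Section~\ref{sec:tr_constr}); since none of these steps changes the total $z$-degree, on any monomial $b\,(z_1^B)^{r_1}\cdots(z_n^B)^{r_n}$ both composites return $g(b)\,(z_1^{\CH})^{r_1}\cdots(z_n^{\CH})^{r_n}$ when $\sum_j r_j=c$, and $0$ otherwise. This is the same elementary computation as in Lemma~\ref{lm:trunc_correct}, and applying it to the values of $\phi$ gives the lemma.

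Finally I would apply the lemma with $g=\pi_A$ and $\phi=c_i^{BP\{n\}}$ and invoke the preceding proposition to get
\[
tr_i c_i^A = tr_i\bigl(\pi_A\circ c_i^{BP\{n\}}\bigr) = \bar\pi_A\circ tr_i c_i^{BP\{n\}} = \bar\pi_A\circ\bigl(c_i^{\CH}\ot\id_{BP\{n\}}\bigr).
\]
By definition $c_i^{\CH}\ot\id_{BP\{n\}}$ is the composite of $c_i^{\CH}\colon\tilde K(n)^*\rarr\CH^i\ot\Zp$ with the morphism of theories $\CH^i\ot\Zp\rarr\CH^i\ot BP\{n\}$ induced by the structure map $\Zp\rarr BP\{n\}$; post-composing with $\bar\pi_A$ replaces it by the morphism induced by the composite $\Zp\rarr BP\{n\}\rarr A$, that is by the structure map $\Zp\rarr A$, so the result equals $c_i^{\CH}\ot\id_A$, as claimed. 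The only step with any subtlety is the compatibility lemma, but as indicated it reduces to the computation on products of projective spaces already used to define the truncation map, so I do not anticipate a real obstacle.
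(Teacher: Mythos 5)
Your proposal is correct and follows essentially the same route as the paper's own proof: it deduces filtration preservation from $\pi_A$ being a morphism of theories, checks on products of projective spaces (the same monomial computation as in Lemma~\ref{lm:trunc_correct}) that truncation commutes with post-composition by $\pi_A$, and then uses that $tr_i c_i^{BP\{n\}}$ has integral coefficients so that $\bar\pi_A$ acts through the structure map $\Zp\rarr A$. Your write-up merely packages the middle step as an explicit compatibility lemma, which the paper states more tersely.
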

\begin{proof}
Every morphism of theories preserves the topological filtration.
In particular, $\pi_A$ maps $\tau^i BP\{n\}^*$ to $\tau^i A^*$.

On products of projective spaces $\pi_A$ sends $b z^{BP\{n\}}_1\cdots z^{BP\{n\}}_l$
to $\pi_A(b) z_1^A\cdots z_l^A$,
and thus by the construction of the truncation map on products of projective spaces
we have $tr_i (\pi_A \circ c_i^{BP\{n\}})=\pi_A(tr_i c_i^{BP\{n\}})$.
However, $tr_i c_i^{BP\{n\}}$ takes values in $\CH^i\ot\Zp \subset \CH^i\ot BP\{n\}$,
i.e.\ coefficients of the corresponding polynomials are integral.
Being a multiplicative map, $\pi_A$ acts on these coefficients canonically (if not to say trivially).
\end{proof}

\subsection{Chern classes generate all operations}\label{sec:chern_generate_all}

The following proposition finishes the proof of Theorem \ref{th:main}.

\begin{Prop}
Let $A^*$ be a $p^n$-typical theory s.t. $A$ is a free $\Zp$-module.

Then all operations from $\Kn^*$ to $A^*$ 
are uniquely expressible as series in Chern classes:
$$ [\Kntilde^*, A^*] = A[[c_1,\ldots, c_i,\ldots]].$$

Moreover, the analogous statement is true for poly-operations.
\end{Prop}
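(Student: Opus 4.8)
The plan is to reduce the statement to the already-established machinery of Section~\ref{sec:all_op_from_trunc} — specifically Proposition~\ref{prop:relations_between_truncations} — by verifying its two hypotheses for $A^*=K(n)^*$ (more precisely $\tilde K(n)^*$) and $B^*$ the $p^n$-typical theory $A^*$. First I would recall that we have just constructed Chern classes $c_i\colon K(n)^*\rarr A^*$ with $c_i$ taking values in $\tau^i A^*$ and $tr_i c_i = c_i^{\CH}\ot\id_A$. So the candidate generators $t_i$ of Proposition~\ref{prop:relations_between_truncations} are the $c_i$, and their truncations are the classical Chern classes $c_i^{\CH}$ from $K(n)^*$ to $\CH^*\ot A$, which by Theorem~\ref{basis} (applied with the target $\CH^*\ot A$, using that $A$ is a free $\Zp$-module so $\CH^*\ot A$ splits as a direct sum of copies of $\CH^*\ot\Zp$, together with the classification of operations $K(n)^*\rarr\CH^*\ot\Zp$ from \cite{Sech}) freely generate the $A$-algebra $[\tilde K(n)^*,\CH^*\ot A]$. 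This takes care of the hypothesis that $[\tilde A^*,\CH^*\ot B]$ is freely generated as a $B$-algebra by operations landing in fixed codimensions, and that each $[\tilde K(n)^*,\CH^i\ot A]$ is a finite-rank free $A$-module — the latter because the operations from $K(n)^*$ to $\CH^i\ot\Zp$ form a finite-rank free $\Zp$-module (again \cite{Sech}).

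The remaining, and main, hypothesis to check is that the truncation map
$$tr_i\colon gr^i_\tau[\tilde K(n)^*, A^*]\rarr [\tilde K(n)^*,\CH^i\ot A]$$
is an isomorphism for every $i\ge1$. Injectivity is automatic by Proposition~\ref{prop:trunc_op}. For surjectivity I would argue as follows. By Proposition~\ref{prop:relations_between_truncations}'s own proof (or rather by the surjectivity argument inside Proposition~\ref{prop:all_operations_from_truncations}) it suffices to know surjectivity when $A^* = BP\{n\}^*$, the universal $p^n$-typical theory, and then transfer along the unique morphism of theories $\pi_A\colon BP\{n\}^*\rarr A^*$; since $\pi_A$ on products of projective spaces sends $b\,z^{BP\{n\}}_1\cdots z^{BP\{n\}}_l$ to $\pi_A(b)\,z^A_1\cdots z^A_l$, it is compatible with truncation, and because $A$ is a free $\Zp$-module the presheaf $\CH^i\ot A$ is a direct sum of copies of $\CH^i\ot BP\{n\}$-type pieces so no new operations appear. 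Thus the core is: $tr_i\colon gr^i_\tau[\tilde K(n)^*, BP\{n\}^*]\rarr[\tilde K(n)^*,\CH^i\ot BP\{n\}]$ is surjective. I would prove this by building, for any operation $\bar\psi\colon\tilde K(n)^*\rarr\CH^i\ot BP\{n\}$, a lift to $\tau^i BP\{n\}^*$ by the same inductive scheme used in Section~\ref{sec:construction_chern_classes}: approximate $\bar\psi$ rationally (via the Chern character isomorphism $BP\{n\}^*_\QQ\cong\CH^*_\QQ\ot BP\{n\}$, under which the truncation map is visibly an isomorphism), then correct the denominators step by step along the topological filtration using the additive generators $\phi_j$ of Proposition~\ref{prop:add_op_morava_BPn} and the truncation-modulo-$p$ construction of Proposition~\ref{prop:trunc_mod_ideal}, invoking at each stage that gradable additive operations to $\CH^\bullet/p$ form a rank-one module (\cite{Sech}). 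The non-additivity of a general $\bar\psi$ is handled because at each filtration level one only needs to correct by an \emph{additive} operation — the non-additive part is already determined by lower-degree data via the (discrete) Taylor expansion, exactly as the Cartan formula determined $c_i$ from $c_1,\dots,c_{i-1}$.

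Having both hypotheses, Proposition~\ref{prop:relations_between_truncations} gives that $[\tilde K(n)^*, A^*]$ is freely generated as an $A$-algebra by the $c_i$, i.e. $[\tilde K(n)^*,A^*]=A[[c_1,\dots,c_i,\dots]]$, which is the assertion of the theorem. For poly-operations I would repeat the argument verbatim: Vishik's Theorem~\ref{th:Vish_op} classifies external poly-operations by the same data on products of projective spaces, all of Section~\ref{sec:trunc} (the truncation map, Lemma~\ref{lm:trunc_mult}, Propositions~\ref{prop:all_operations_from_truncations} and~\ref{prop:relations_between_truncations}) was stated for poly-operations as well (see the remarks there), Theorem~\ref{basis} gives the poly-operation statement for $\tilde K_0$ which specializes to $K(n)^*$ targeting $\CH^*\ot A$, and the Künneth-type identity $[(\tilde K(n)^*)^{\times r},\CH^*\ot B\circ\prod^r]=[\tilde K(n)^*,\CH^*\ot B]^{\odot r}$ holds by \cite{Sech}; the lifting of generators then proceeds identically. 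The main obstacle I anticipate is the surjectivity of $tr_i$ over $BP\{n\}^*$ — the bookkeeping of denominators in the inductive correction, ensuring the truncation of the corrected lift remains the prescribed generator and that the correction terms genuinely live in the higher filtration so the infinite sum converges — but this is essentially a transcription of the arguments already carried out for Chern classes in Section~\ref{sec:construction_chern_classes} and in \cite{Sech}, now applied to an arbitrary operation rather than the specific ones defined by the Cartan formula.
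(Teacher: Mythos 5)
Your overall architecture is the paper's: feed Proposition \ref{prop:relations_between_truncations} with the constructed $c_i$ as lifts of the free generators of $[\tilde{K}(n)^*,\CH^*\ot A]\cong A\ot[\tilde{K}(n)^*,\CH^*\ot\Zp]$ (the relevant freeness is \cite[Th. 4.2.1]{Sech}; Theorem \ref{basis} concerns $K_0$ and is not the right citation, though your parenthetical does point to \cite{Sech}). The genuine gap is in how you verify the hypothesis that the truncation maps are isomorphisms. You propose to prove surjectivity of $tr_i$ by lifting an \emph{arbitrary} operation $\bar\psi:\tilde{K}(n)^*\rarr\CH^i\ot BP\{n\}$ through the denominator-correction induction of Section \ref{sec:construction_chern_classes}. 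That induction rests on a fact special to Chern classes: at each stage the derivative of the candidate is an \emph{integral} polynomial in the already-constructed $c_j$ (forced by the Cartan formula), so the integrality defect, after multiplying by a power of $p$ and truncating mod $p$ (Prop. \ref{prop:trunc_mod_ideal}), is an additive \emph{gradable} operation and can be absorbed by the rank-one module of gradable additive operations to $\CH^\bullet/p$. For a general non-additive $\bar\psi$ the rational lift has uncontrolled derivatives, the defect mod $p$ need not be additive, and correcting it would require integral non-additive operations into higher parts of the filtration with prescribed truncations --- essentially the statement being proved. Your sentence that ``the non-additive part is already determined by lower-degree data via the discrete Taylor expansion'' is exactly the step that fails: the Taylor expansion expresses values through derivatives, but nothing pins those derivatives down integrally for an arbitrary operation. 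The proposed reduction from a general $p^n$-typical $A^*$ to $BP\{n\}^*$ along $\pi_A$ is likewise not automatic for arbitrary operations, since an operation to $\CH^i\ot A$ need not factor through $\CH^i\ot BP\{n\}$.

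The repair is easy and is what the paper (tersely) does: no new lifting is needed at all. Since truncation is multiplicative (Lemma \ref{lm:trunc_mult}) and $tr_j c_j=c_j^{\CH}\ot\id_A$, the degree-$i$ monomials in the $c_j$ are operations into $\tau^iA^*$ whose truncations are the degree-$i$ monomials in the $c_j^{\CH}$, and these form an $A$-basis of $[\tilde{K}(n)^*,\CH^i\ot A]$ (free of finite rank in each degree by \cite[Th. 4.2.1]{Sech} together with $\CH^*\ot A\cong\oplus\,\CH^*\ot\Zp$). Hence each $tr_i$ hits a basis, so it is surjective, and being injective by Proposition \ref{prop:trunc_op} it is an isomorphism; Proposition \ref{prop:relations_between_truncations} then applies as you intended, and the poly-operation case goes through the same way using the poly-operation form of \cite[Th. 4.2.1]{Sech}. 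Replacing your direct lifting of arbitrary operations by this one observation closes the gap and recovers the paper's proof.
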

\begin{proof}
If $A$ is a free $\Zp$-module, then presheaf of abelian groups $\CH^*\ot A$
 is isomorphic to a direct sum of $\CH^*\ot \Zp$.
Thus, $A$-module of operations $[\Kntilde^*,CH^*\ot A]$ 
is isomorphic to $A\ot[\Kntilde^*,CH^*\ot \Zp]$.

The $\Zp$-algebra $[\Kntilde^*,CH^*\ot \Zp]$ is freely generated by Chern classes
$c_i^{CH}$ as was shown in \cite[Th. 4.2.1]{Sech},
and in Section \ref{sec:construction_chern_classes} the lifts of these operations 
with respect to the truncation map were constructed.
By Proposition \ref{prop:relations_between_truncations} the claim now follows.

A similar proof works for poly-operations based on the same statement \cite[Th. 4.2.1]{Sech}
for the classification of poly-operations to Chow groups.
\end{proof}

\section{On the uniqueness of Morava K-theories}\label{sec:morava_unique}

In this section we construct an additive isomorphism 
between every two $n$-th Morava K-theories. In order to do this
recall the classification of additive endooperations in $\Kn^*$ 
from Corollary \ref{cr:add_op_Kn-pntyp}.

There exist additive operations $\phi^{\Kn}_i\colon \Kn^i\rarr \tau^i \Kn^i$ for each $i\ge 0$,
such that every additive endo-operations in $\Kntilde^*$ can be 
uniquely represented as an infinite sum of operations $\phi_i$ with $\Zp$-coefficients.
We can take $\phi_0$ to be a projection to the canonical summand $\Zp$ of $\Kn^*$
(in other words, $\phi_0$ is an analogue of virtual rank for K-theory).

\begin{Prop}\label{prop:kn_invertible_endo_add}
An additive endo-operation $\phi:=\sum_{i\ge 0} a_i \phi^{\Kn}_i$ is an isomorphism
if and only if $a_i \in \Zp^{\times}$ for $0\le i\le p^n-1$.
\end{Prop}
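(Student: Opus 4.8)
The plan is to peel off the constant summand, reduce everything to products of projective spaces by Vishik's classification, and there pass to the associated graded of the topological filtration, where the question becomes a finite mod-$p$ linear-algebra computation fed by the results of \cite{Sech}.

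First I would use the splitting $K(n)^*=\Zp\oplus\tilde K(n)^*$ of presheaves of abelian groups. For $i\ge 1$ the operation $\phi_i^{K(n)}$ lands in $\tau^iK(n)^*\subseteq\tilde K(n)^*$ and annihilates the constant subpresheaf (an additive natural transformation out of a constant presheaf into $\tilde K(n)^*$ is determined by its value over a point, which is zero), while $\phi_0$ is the projection onto the constant summand. Hence $\phi$ is block-diagonal for the above splitting: it acts as $a_0\cdot\mathrm{id}$ on $\Zp$ and as $\phi':=\sum_{i\ge 1}a_i\phi_i^{K(n)}$ on $\tilde K(n)^*$. So $\phi$ is an isomorphism iff $a_0\in\Zp^\times$ and $\phi'$ is an isomorphism, and it remains to see that the latter is equivalent to $a_j\in\Zp^\times$ for $1\le j\le p^n-1$.

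Next I would invoke Vishik's Theorem \ref{th:Vish_op}: an operation is determined by its restrictions to the $(\mathbb P^\infty)^{\times l}$, and a compatible family of bijections on those produces an inverse operation (conjugate Vishik's compatibility relations by $(\phi')^{-1}$). Thus $\phi'$ is an isomorphism iff it is bijective on each $\tilde K(n)^*\big((\mathbb P^\infty)^{\times l}\big)$, i.e.\ on the augmentation ideal of $\Zp[[z_1,\dots,z_l]]$. On this module the topological filtration is, in each total degree, the evident finite filtration whose graded pieces are finitely generated free $\Zp$-modules, and $\phi'$ preserves it (it preserves $0$); so $\phi'$ is bijective there iff the map it induces on each graded piece is bijective, iff — the pieces being finite free over $\Zp$ — it is bijective after $\otimes\,\F{p}$. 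Because $\phi_i^{K(n)}$ is supported on one graded component and lands in $\tau^i$, only the $\phi_i^{K(n)}$ with $i$ in the relevant residue class and $i$ at most the relevant degree contribute to a given piece. The core of the proof is then to show, via the truncation machinery of Section \ref{sec:trunc} (Propositions \ref{prop:op_mod_tau_vs_trunc} and \ref{prop:operations_chow_topfilt}, and Lemma \ref{lm:trunc_mult}) together with the description of additive operations from $K(n)^*$ to Chow groups in \cite{Sech}, that the ``bottom'' operation $\phi_j^{K(n)}$ (with $1\le j\le p^n-1$ the residue representative) already induces an isomorphism of the relevant mod-$p$ graded piece, whereas the operations $\phi_i^{K(n)}$ with $i>p^n-1$ induce the zero map on it modulo $p$. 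Granting this, the map induced by $\phi'$ on each mod-$p$ graded piece is $\bar a_j$ times an automorphism, hence an automorphism exactly when $\bar a_j\neq 0$. This yields the proposition; for the ``only if'' direction one may argue even more cheaply by restricting $\phi$ to $\mathbb P^N$ for large $N$, where the graded pieces in question are free of rank one and the induced map is visibly $a_j$ up to a unit (and $\phi$ restricted to the constant subpresheaf is $a_0\cdot\mathrm{id}$).

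The step I expect to be the main obstacle is controlling the higher operations $\phi_i^{K(n)}$, $i>p^n-1$, on the deep graded pieces of the topological filtration, i.e.\ showing their mod-$p$ contribution vanishes. This cannot be handled by a formal completeness argument, since composition of additive operations does not raise the topological filtration; it relies instead on the integrality and $p$-divisibility properties of the additive operations $K(n)^*\to\CH^*\otimes\Zp$ (equivalently, of the truncations $tr_i\phi_i$) established in \cite{Sech}, and this is precisely what forces the degree cutoff at $p^n-1$ in the statement.
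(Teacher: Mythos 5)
Your reduction steps are sound: splitting off the constant summand, passing to products of projective spaces via Theorem \ref{th:Vish_op} (the inverse family of bijections again commutes with the listed pullbacks, hence defines an operation, and uniqueness gives $\psi\circ\phi'=\id$ globally), and the completeness argument on $(z_1,\dots,z_l)\Zp[[z_1,\dots,z_l]]$ with its degree ($=$ topological) filtration, whose graded pieces are finite free $\Zp$-modules on which additive maps are automatically $\Zp$-linear. The genuine gap is precisely the step you flag as the core: that on a graded piece $gr^d$, $d\equiv j \bmod p^n-1$, the operation $\phi^{K(n)}_j$ with $1\le j\le p^n-1$ induces an isomorphism mod $p$ while every $\phi^{K(n)}_i$ with $i\ge p^n$ induces zero mod $p$. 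The machinery you cite does not deliver this: Proposition \ref{prop:operations_chow_topfilt} and the truncation $tr_i\phi_i$ control only the diagonal piece $gr^i$, where the induced map is multiplication by the leading coefficient $\eta_i$ of $G_i$ (a unit iff $i\le p^n-1$, by the description of generators of $[K(n)^*,\CH^i\ot\Zp]^{add}$ in \cite{Sech}); the action on $gr^d$ for $d>i$ is governed by the higher-degree terms of the series $G_l$, which are non-canonical choices made in the inductive construction of Prop.~\ref{prop:add_op_morava_BPn} and are not controlled by \cite{Sech} or by Section \ref{sec:trunc}. To close the gap you would need, in addition, (a) the $[p]$-Veronese compatibility for gradable additive operations (exactly Lemma \ref{lm:add_op_induct_coef}, read mod $p$), which shows that the coefficient of $z_1\cdots z_d$ in $G_d$ is constant mod $p$ along a residue class and hence congruent to $\eta_i$, and (b) an argument that the induced map on each mod-$p$ graded piece is multiplication by that coefficient (e.g.\ by assembling the graded maps into an additive operation of $\CH^*/p$ via Theorem \ref{th:Vish_op} and using that codimension-preserving additive endo-operations of $\CH^*/p$ are scalars); neither is in your outline.

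By contrast, the paper never analyzes deep graded pieces: it computes the composition constants $(\phi^{K(n)}_i)^{\circ 2}=\beta_i\phi^{K(n)}_i+\sum_{k>i}b^i_k\phi^{K(n)}_k$ (Lemma \ref{lm:comp_phi}), identifies $\beta_i=\eta_i$ using only the leading coefficients (unit for $i<p^n$, in $p\Zp$ for $i\ge p^n$), and then constructs the inverse by an explicit convergent successive approximation $\psi_{k+1}=(\id+x\phi^{K(n)}_k)\circ\psi_k$, where the integrality of $x=-\alpha_k/(1+\alpha_k\beta_k)$ is exactly where $\beta_k\in p\Zp$ enters; the "only if" direction is read off the leading coefficients of $G_i$ for $i\le p^n-1$, much as in your cheap argument. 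So your route is viable and genuinely different in structure, but as written its central lemma is asserted rather than proved, and the tools you point to do not supply it.
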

\begin{proof}
Since any additive operation preserves the topological filtration,
operations $\phi_i$, $i\ge 1$ are supported on $\Kntilde^*=\tau^1 \Kn^*$.
It is clear then, that the operation $\phi$
is an isomorphism if and only if the operation $\sum_{i\ge 1} a_i \phi^{\Kn}_i$
is an isomorphism on $\Kntilde^*$ and $a_0\in\Zp^\times$.

To prove the proposition it is useful to understand
the `coordinates' of operations $(\phi^{\Kn}_i)^{\circ 2}$ 
in the `basis' $\phi^{\Kn}_i$.

\begin{Lm}\label{lm:comp_phi}
For each $i\ge 1$ there exist numbers $\beta_i\in\Zp$
s.t. for some $b^i_k\in \Zp, k>i$ the following equation holds:
\begin{equation}\label{eq:phi_comp}
(\phi^{\Kn}_i)^{\circ 2}= \beta_i\phi^{\Kn}_i + \sum_{k>i} b^i_k \phi^{\Kn}_k.
\end{equation}

Moreover, $\beta_i\in\Zp^\times$ for $i<p^n$, and $\beta_i\in p\Zp\setminus\{0\}$ for $i\ge p^n$.
\end{Lm}
\begin{proof}[Proof of the Lemma.]

An endo-operation $(\phi^{\Kn}_i)^{\circ 2}$ takes values in $\tau^i \Kn^*$
and therefore is represented by an infinite sum of operations $\phi_k$, $k\ge i$,
with $\Zp$-coefficients. Denote the coefficients in this sum as in (\ref{eq:phi_comp}).

Denote by $\eta_i\in\Zp$ the coefficient of the monomial
 $z_1\cdots z_i$ in the series $G_i$ corresponding
to the operation $\phi_i$. We claim that $\eta_i = \beta_i$.
Indeed, the series $G_i$ of the composition $\phi_i^{\circ 2}$
has the coefficient $\eta_i^2$. Since for operations $\phi_j$, $j>i$,
 the corresponding coefficient is equal to 0, we have $\eta_i^2 = \beta_i \eta_i$.
Therefore either $\eta_i=0$ ot $\beta_i=\eta_i$.

By the construction the $i$-th truncation of $\phi_i$ 
has the polynomial $G_i$ equal to $\eta_i z_1\cdots z_i$.
Therefore, in order to prove the lemma we need to show that
for a generator of additive operations from $\Kn^*$ to $\CH^i\ot\Zp$
the coefficient $\eta_i\neq 0$ for all $i\ge 1$,
$\eta_i\in \Zp^\times$ for $i\le p^n-1$, and $\eta_i\in p\Zp$ for $i\ge p^n$.

An additive operation $ch_i\colon \Kn^*\rarr \CH^i\ot\QQ$
has the polynomial $G_i$ equal to $z_1\cdots z_i$.
However, the vector space over $\QQ$ of additive operations $[\Kn^*,\CH^i\ot\QQ]^{add}$
is 1-dimensional by \cite[Prop.~3.6]{Sech}, and therefore a generator of integral additive operations
is proportional to $ch_i$. In particular, $\eta_i \neq 0$.

One checks by a direct computation that $ch_i$ acts integrally on products 
of projective spaces for $i\le p^n-1$, and therefore $\eta_i \in \Zp^\times$ for $i\le p^n-1$.

On the other hand $ch(z)=\log^{-1}_{\Kn}(z)=z-\frac{a_1}{p}z^{p^n}+\ldots$ for some $a_1\in\Zp^\times$,
and the polynomial $G_{i-{p^n-1}}$ of the operation $ch_i$
is equal to $-\frac{a_1}{p}\sum_{j=1}^{i-{p^n-1}} z_1\cdots z_j^{p^n} \cdots z_{i-{p^n-1}}$
which is not integral. Thus, a generator of additive operations to $\CH^i\ot\Zp$
is proportional to $ch_i$ with the coefficient in $p\Zp$, and $\eta_i\in p\Zp$.
\end{proof}

Let $\phi:=\sum_{i=1}^\infty a_i \phi^{\Kn}_i$ be an additive endo-operation of $\Kntilde^*$
which is an isomorphism.  The inverse of $\phi$ is also an operation, denote it by $\psi$.

The series $G_i$ of the operation $\phi$ for $i\colon 1\le i\le p^n-1$
equals to $a_i\beta_i z_1\ldots z_i+\mathrm{higher\ degree\ terms}$,
where $\beta_i$ is defined in Lemma \ref{lm:comp_phi}.
Indeed, operations $\phi_j^{\Kn}$ do not contribute to the series $G_i$ if $j>i$,
since they take values in higher part of the topological filtration.
Also, operation $\phi_j^{\Kn}$ is supported on $\Kn^{j \mod p^n-1}$,
which means that series $G_r$ corresponding to it are zero for $r\not\equiv j \mod p^n-1$.
Therefore only $\phi_i^{\Kn}$ contributes to the starting monomial of 
the series $G_i$ of the operation $\phi$.

Suppose that the series $G_i$ of the operation $\psi$
is equal to $\alpha_i z_1\ldots z_i+\mathrm{higher\ degree\ terms}$ for some number $\alpha_i\in\Zp$.
Therefore the series $G_i$, $i\le p^n-1$ of the composition $\psi\circ \phi=\id_{\Kn}$
starts with $\alpha_i a_i \beta_i z_1\ldots z_i$, and on the other hand
is equal to $z_1\ldots z_i$, since $\psi\circ \phi=\id$.
 Thus, we may conclude that $a_i \in \Zp^\times$,
and one part of the proposition is proved.

Now assume that $a_i\in\Zp^\times$ for $i\le p^n-1$ and let us
 construct the inverse of the operation $\phi$.
 We do this by induction approximating the inverse of $\phi$
 modulo parts of the topological filtration.

{\bf Claim.} There exists an operation $\psi_k\colon \Kntilde^*\rarr \Kn^*$ 
s.t. $\psi_k\circ \phi -\id$ takes values in $\tau^k \Kn^*$.

{\bf Base of induction ($k=p^n$).}

Define the operation $\psi_{p^n}:=\sum_{i=1}^{p^n-1} \frac{1}{a_i\beta_i^2}\phi_i$.
Considerations as above show that for the composition $\phi\circ \psi_{p^n}$
the series $G_i$ starts with $a_i\beta_i\cdot \beta_i \cdot (a_i\beta_i^2)^{-1} z_1\cdots z_i=z_1\cdots z_i$
for $i\le p^n-1$. On the other hand series $G_i$ are gradable for every additive operation
(Prop. \ref{prop:add_grad}), 
and for the operation $\phi\circ \psi_{p^n} - \id$
the series $G_i$ starts with monomials of degrees higher than $i$ for $i\le p^n-1$,
and therefore by monomials of degree higher or equal to $p^n$. 
Thus, by Proposition \ref{prop:operations_top_COT} the operation
$\phi\circ \psi_{p^n} - \id$ takes values in $\tau^{p^n} \Kn^*$.

{\bf Induction step ($k \rarr k+1$, $k\ge p^n$).}

By the induction assumption we have $\psi_k\circ \phi =\id + \sum_{i\ge k} \alpha_i \phi^{\Kn}_i$ 
for some $\alpha_i\in \Zp$.
Let us find $x\in\Zp$ such that the operation 
$\psi_{k+1}=(\id+x\phi^{\Kn}_k)\circ \psi_k$
is the next approximation of the inverse of $\phi$,
i.e.\ $\psi_{k+1}\circ \phi - \id$ takes values in $\tau^{k+1}\Kn^*$.

Using Lemma \ref{lm:comp_phi} we have 
$$\psi_{k+1}\circ \phi = (\id+x\phi^{\Kn}_k) \circ \psi_k\circ \phi 
= \id + x\phi^{\Kn}_k +\alpha_k\phi^{\Kn}_k+x\alpha_k \beta_k \phi^{\Kn}_k
+\sum_{i>k}\delta_i\phi^{\Kn}_i,$$
where $\delta_i\in\Zp$.
Therefore setting $x=-\frac{\alpha_k}{1+\alpha_k\beta_k}$ the claim is obtained.
Note that $x\in\Zp$ since $\beta_k\in p\Zp$ for $k\ge p^n$ by Lemma \ref{lm:comp_phi}.

The infinite induction process `converges' as infinite linear combinations
of operations $\phi_i$ are well-defined.
Thus, we have constructed the left inverse of $\phi$,
however, the same argument applies to construct the right inverse of $\phi$,
and therefore $\phi$ is an isomorphism.
\end{proof}

\begin{Th}\label{th:morava_unique}
Let $\Kn^*$, $\oKn^*$ be two $n$-th Morava K-theories over $\Zp$.

Then there exist an isomorphism of presheaves of abelian groups 
$\Kn^*\xrarr{\sim} \oKn^*$.
\end{Th}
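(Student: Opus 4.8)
The plan is to exhibit the isomorphism as a single additive operation, by combining the classification of additive operations from an $n$-th Morava K-theory to a $p^n$-typical theory (Corollary~\ref{cr:add_op_Kn-pntyp}) with a successive-approximation argument that mirrors, word for word, the proof of Proposition~\ref{prop:kn_invertible_endo_add}. First I would assemble the relevant operations. Since $\overline{K}(n)^*$ is a $p^n$-typical theory whose coefficient ring $\Zp$ is a free $\Zp$-module, Corollary~\ref{cr:add_op_Kn-pntyp} (with source $K(n)^*$) provides additive operations $\phi_i\colon K(n)^*\rarr\tau^i\overline{K}(n)^*$ for $i\ge 1$, supported on $K(n)^i$, together with which and the identity between the canonical constant summands $\Zp$ of the two theories, all additive operations $K(n)^*\rarr\overline{K}(n)^*$ are (topologically) freely generated. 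Because $\overline{K}(n)^*$ is itself an $n$-th Morava K-theory, the same corollary applied with the roles exchanged yields additive operations $\overline{\phi}_j\colon\overline{K}(n)^*\rarr\tau^j K(n)^*$ with the analogous generating property.

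Next I would define $\Phi\colon K(n)^*\rarr\overline{K}(n)^*$ to be the additive operation which is the identity on the constant summand $\Zp$ and equals $\sum_{i\ge 1}\phi_i$ on $\tilde{K}(n)^*$; this infinite sum is a well-defined operation because $\phi_i$ takes values in $\tau^i\overline{K}(n)^*$ and the topological filtration is finite on each variety. By Proposition~\ref{prop:top_morava_grading} one has $\tau^i\overline{K}(n)^i=\overline{K}(n)^i$ for $1\le i\le p^n-1$, so $\Phi$ is in fact a graded map $K(n)^i\rarr\overline{K}(n)^i$. The heart of the argument is to show $\Phi$ is an isomorphism of presheaves of abelian groups by constructing a two-sided inverse $\Psi\colon\overline{K}(n)^*\rarr K(n)^*$ exactly as in Proposition~\ref{prop:kn_invertible_endo_add}. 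The key input is the cross-theory analogue of Lemma~\ref{lm:comp_phi}: for each $i\ge 1$ the composite $\overline{\phi}_i\circ\phi_i$ is an additive endo-operation of $K(n)^*$ (here one uses that $K(n)$ is torsion-free), hence equals $\gamma_i\phi^{K(n)}_i+\sum_{k>i}b^i_k\phi^{K(n)}_k$ for some $\gamma_i,b^i_k\in\Zp$, and moreover $\gamma_i\in\Zp^{\times}$ for $i<p^n$ while $\gamma_i\in p\Zp\setminus\{0\}$ for $i\ge p^n$.

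The proof of this lemma is the same leading-coefficient computation as in Lemma~\ref{lm:comp_phi}: since $tr_i\phi_i$ and $tr_i\overline{\phi}_i$ generate $[K(n)^*,\CH^i\ot\Zp]^{add}$ and $[\overline{K}(n)^*,\CH^i\ot\Zp]^{add}$ respectively, the coefficient of $z_1\cdots z_i$ in the series defining each is a $p$-adic unit for $i<p^n$ and lies in $p\Zp\setminus\{0\}$ for $i\ge p^n$ (being proportional to the relevant truncation of the Chern character); composing the operations multiplies these coefficients, and the contribution depending on $\log_{K(n)}$ cancels against that of $\phi^{K(n)}_i$, leaving a $\gamma_i$ whose valuation is governed by $\log_{\overline{K}(n)}$ — this valuation bookkeeping is the one point that needs a little care. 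Granting the lemma, one transcribes the induction of Proposition~\ref{prop:kn_invertible_endo_add}: the constant summand is handled trivially (there $\Phi$ is the identity); one builds the base approximation $\Psi_{p^n}$ from the fact that the relevant leading coefficients are units in degrees $\le p^n-1$; and one passes from $\Psi_k$ to $\Psi_{k+1}$ for $k\ge p^n$ by adding a suitable multiple of $\overline{\phi}_k^{K(n)}$, the required scalar $-\alpha_k/(1+\alpha_k\gamma_k)$ lying in $\Zp$ precisely because $\gamma_k\in p\Zp$. The process converges since infinite sums of the $\overline{\phi}_j$ are admissible, giving a left inverse, and the symmetric argument gives a right inverse; hence $\Phi$ is an isomorphism.

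I expect the main obstacle to be organizational rather than conceptual: carefully porting the successive-approximation argument of Proposition~\ref{prop:kn_invertible_endo_add} to the two-theory ("bimodule") situation, in particular verifying that all composites of the cross-operations $\phi_i,\overline{\phi}_j$ decompose in the expected generating systems over $\Zp$, and that the valuation estimates of Lemma~\ref{lm:comp_phi} survive the cancellation of the source-theory contribution. No geometric input beyond Corollary~\ref{cr:add_op_Kn-pntyp}, Proposition~\ref{prop:top_morava_grading} and Proposition~\ref{prop:operations_top_COT} should be needed, and as a byproduct one obtains, just as in Proposition~\ref{prop:kn_invertible_endo_add}, a description of exactly which additive operations $K(n)^*\rarr\overline{K}(n)^*$ are isomorphisms.
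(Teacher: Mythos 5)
Your plan is correct and uses exactly the ingredients of the paper's proof (Corollary \ref{cr:add_op_Kn-pntyp} applied in both directions, the leading-coefficient valuation analysis of Lemma \ref{lm:comp_phi}, Proposition \ref{prop:operations_top_COT}, and convergence of filtration-increasing infinite sums), but it is organized more laboriously: you re-run the successive-approximation construction of an inverse in the two-theory setting, which forces you to state and prove a cross-theory analogue of Lemma \ref{lm:comp_phi} including the valuation claim for $i\ge p^n$. The paper takes a shortcut that avoids all of this: it sets $\pphi=\sum_{i=1}^{p^n-1}\phi_i$ and $\psi=\sum_{i=1}^{p^n-1}\overline{\phi}_i$ (finite sums suffice), checks only that the leading coefficients of the series $G_i$ of $\pphi$ and $\psi$ are units for $1\le i\le p^n-1$, concludes that the coefficients of $\psi\circ\pphi$ and $\pphi\circ\psi$ in the generating systems of additive endo-operations are units in that range, and then quotes Proposition \ref{prop:kn_invertible_endo_add} as a black box to see that both composites are isomorphisms, whence $\pphi$ (and $\psi$) is one; no mixed-theory valuation bookkeeping above degree $p^n-1$ is needed. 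Your route works, but two small points deserve care: in your induction step $\Psi_{k+1}=(\id+x\,\phi^{K(n)}_k)\circ\Psi_k$ the constant in the denominator $1+\alpha_k(\cdot)$ is the endo-composition constant $\beta_k$ of Lemma \ref{lm:comp_phi} (coming from $\phi^{K(n)}_k\circ\phi^{K(n)}_k$), not your cross-theory $\gamma_k$ --- harmless, since both lie in $p\Zp$ for $k\ge p^n$, but the bookkeeping should say so; and in the base case you also need gradability of additive operations (Proposition \ref{prop:add_grad}) to upgrade ``vanishing in degrees $\le i$'' to ``vanishing below degree $p^n$'' before applying Proposition \ref{prop:operations_top_COT}, exactly as in the proof of Proposition \ref{prop:kn_invertible_endo_add}.
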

\begin{proof}
In what follows we prove the isomorphism
between $\Kntilde^*$ and $\tilde{\oKn}^*$,
however, we omit $\sim$ in what follows for the clarity of reading.
Clearly, this implies the statement of the theorem.

By Corollary \ref{cr:add_op_Kn-pntyp} for $i\ge 1$ there exist additive
operations $\pphi_i:=\phi_i^{\Kn\rarr\oKn}\colon \Kn^i\rarr \oKn^i$
and $\psi_i:=\phi_i^{\oKn \rarr \Kn}\colon \oKn^i\rarr \Kn^i$
such that $\pphi_i$, $\psi_i$ take values in the $i$-th part of the topological filtration
and their truncations are generators of additive operations to $\CH^i\ot\Zp$.

Denote $\pphi=\sum_{i=1}^{p^n-1} \pphi_i$ and $\psi = \sum_{i=1}^{p^n-1} \psi_i$.
We will show now that compositions of these operations are isomorphisms
of $\Kn^*$ and $\oKn^*$, respectively. It would follow that both these operations are isomorphisms.
Clearly, it is enough to consider only one composition by symmetry of the argument.

Let us show that for $i\colon 1\le i\le p^n-1$ the series $G_i$ of operations $\pphi$ and $\psi$ 
start with $a_i z_1\ldots z_i$ and $b_i z_1\ldots z_i$, respectively,
where $a_i, b_i\in \Zp^\times$.
Let us consider only the operation $\pphi$. 
Since operations $\pphi_i$ are supported on $\Kn^i$,
the series $G_i$ of $\pphi$ for $i\colon 1\le i\le p^n-1$
equals to the series $G_i$ of $\pphi_i$.
The truncation of $\pphi_i$ is a generator of additive operations from $\Kn^i\rarr \CH^i\ot\Zp$,
and its corresponding polynomial $G_i$ is exactly $a_i z_1\ldots z_i$.
However, $ch_i$ is an integral generator of additive operations to $\CH^i\ot\Zp$ for $i\le p^n-1$,
its polynomial $G_i$ is equal to $z_1\cdots z_i$.
The operation $tr_i \pphi_i$ has to be proportional to $ch_i$ by a number in $\Zp^\times$,
and therefore $a_i\in\Zp^\times$.

We can represent $\psi\circ \pphi$ as a sum $\sum_{i\ge 1} \alpha_i \pphi_i$ for some $\alpha_i \in \Zp$.
The series $G_i$ of this operation starts with $\alpha_i a_i z_1\ldots z_i$ for $i\colon 1\le i\le p^n$.
However, calculating the series $G_i$ using the composition of corresponding series of $\psi$ and $\phi$,
one obtains that it starts with $a_ib_i z_1\ldots z_i$.
Thus, $\alpha_i=b_i\in\Zp^\times$ for $i\colon 1\le i\le p^n$, 
and it follows by Proposition~\ref{prop:kn_invertible_endo_add} that $\psi\circ \pphi$  is an isomorphism.
\end{proof}

\begin{Rk}
Not all of $n$-th Morava K-theories are multiplicatively isomorphic,
see  Definition~\ref{def:mult_iso} and Appendix \ref{app:mor_not_mult}.
Note, however, that every two formal group laws over $\overline{\mathbb{F}}_p$
are isomorphic by a theorem of Lazard, and therefore 
there exist a multiplicative isomorphism between free theories 
$\Kn^*\ot\overline{\mathbb{F}}_p$ and $\oKn^*\ot\overline{\mathbb{F}}_p$
for every two $n$-th Morava K-theories $\Kn^*, \oKn^*$.
\end{Rk}

\begin{Rk}
Theorem \ref{th:morava_unique} suggests
that there is a uniquely defined motivic spectrum representing $n$-th Morava K-theory.
Since there exist many different multiplicative structures 
on the $n$-th Morava K-theory $\Kn^*$, it is also possible that this hypothetical
unique spectrum admits several multiplicative structures, 
or admits none of them. These questions are addressed in topology \cite{Rob, Ang},
however, are open in motivic homotopy, as to the author's knowledge.
\end{Rk}

\section{The gamma filtration on Morava K-theories}\label{sec:morava_gamma_filtration}

In this section we define and investigate properties 
of a functorial filtration on $\Kn^*(X)$ for all $n\ge 1$
which we call the gamma filtration.
The definition of this filtration
is verbatim the definition of the gamma filtration on $\KK$
as defined with Chern classes. 

The gamma filtration on $\Kn^*$ 
has the universal property as the best approximation to the topological filtration
which is defined by values of poly-operations (Prop. \ref{prop:gamma_unique}).
Thus, even though Chern classes which we have constructed in Theorem~\ref{th:main} are not unique,
the gamma filtration does not depend on their choice
and is strictly compatible with additive isomorphisms between $n$-th Morava K-theories (Prop. \ref{prop:gamma_isom_compatible}).
In other words, the gamma filtration is unique on the presheaf of abelian groups $\Kn^*$.
As expected the gamma filtration on $K(1)^*$ coincides with the gamma filtration on $\KK\ot\Zp$ 
under an additive isomorphism between them (Prop. \ref{prop:gamma_K(1)_K0}).

An important difference between properties of gamma filtrations on $\KK$ and $\Kn^*$, $n\ge 2$,
is the fact that 
Chern classes $c_i^{\Kn\rarr \CH}$ are
additive surjective homomorphisms from the $i$-th graded piece $\gamma^i \Kn^*/\gamma^{i+1} \Kn^*$
 to $\CH^i\ot\Zp$ for $i\le p^n$ (in the case of $\KK$ we have surjectivity only for $i\le p$), see Prop. \ref{prop:morava_gamma_properties}.
This yields a combinatorial tool
for the study of $p$-torsion in Chow groups
for geometrically cellular varieties $X$ such that
the pullback morphism $\Kn^*(X)\rarr \Kn^*(X\times_k \bar{k})$ is an isomorphism.
Pfister quadric is a well-known example of a variety satisfying this property, and we show
in Section~\ref{sec:constant_chern} how the calculations work out in this case.
New examples of varieties satisfying the property above are studied in \cite{SechSem}.

\subsection{Definitions and properties}\label{sec:morava_gamma_def_properties}

In what follows we denote by $c_i$, $c_i^{\CH}$ Chern classes from $\Kn^*$ to itself and to Chow groups, respectively,
which were constructed in Theorem~\ref{th:main}.

\begin{Def}
Define the gamma filtration on $\Kn^*$ of a smooth variety by the following formulas:
$$ \gamma^0 \Kn^*(X)=\Kn^*(X),$$
$$\gamma^m \Kn^*(X):= < c_{i_1}(\alpha_1)\cdots c_{i_k}(\alpha_k)| \sum_j i_j\ge m,\alpha_j \in \Kn^*(X)>, \quad m\ge 1, $$ 
where $<,>$-brackets denote generation as $\Zp$-modules.
\end{Def}

It is clear from the definition that $\gamma^m \Kn^*$ is an ideal subpresheaf of $\Kn^*$ 
for all $m\ge 0$.

\begin{Prop}\label{prop:morava_gamma_properties}
The gamma and the topological filtrations on $\Kn^*$ satisfy the following properties:

\begin{enumerate}[i)]
\item\label{item:gr_gamma_support} $gr_\gamma^i\Kn^*=gr_\gamma^i\Kn^{i\mod p^n-1}$;
\item\label{item:gamma_topological} $\gamma^i \subset \tau^i$;
\item\label{item:c_i-is-zero-on-tau^i+1} $c_i^{CH}$ is zero on $\tau^{i+1}\Kn^*$;
\item\label{item:c_i-is-additive-on-tau^i}
 the operation $c^{CH}_i$ is additive when restricted to $\tau^i \Kn^*$;
\item\label{item:c_i-gr-tau} the additive map $c^{CH}_i\colon gr_\tau^i\Kn^*\rarr \CH^i\ot\Zp$ 
 \begin{enumerate}[1)]
\item\label{item:c_i-is-isom-Q-tau} is an isomorphism after tensoring with $\QQ$;
\item\label{item:c_i-is-isom-tau} is an isomorphism for $1\le i\le p^n$
and $\rho_{\Kn}$ is its inverse;
\end{enumerate}
\item\label{item:c_i-gr-gamma} the additive map $c^{CH}_i\colon gr_\gamma^i\Kn^*\rarr \CH^i\ot\Zp$ 
 \begin{enumerate}[1)]
\item\label{item:c_i-is-isom-Q-gamma} is an isomorphism after tensoring with $\QQ$;
\item\label{item:c_i-is-surj-gamma} is an isomorphism for $i=1$ and surjective for $1\le i\le p^n$;
\end{enumerate}
\item\label{item:gamma_top_Q} $\gamma^i\ot\QQ = \tau^i\ot\QQ$.
\end{enumerate}
\end{Prop}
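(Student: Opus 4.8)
The proof is an accumulation of consequences of the truncation formalism of Section~\ref{sec:trunc}, the structure of $K(n)^*$ developed in Section~\ref{sec_op_mor}, and the construction of the Chern classes in Section~\ref{sec:construction_chern_classes}. I will prove the items roughly in the order they are stated, since later items build on earlier ones. First, \ref{item:gr_gamma_support}) is immediate: by construction (Theorem~\ref{th:main}, i)) each $c_i$ is supported on $K(n)^{i\bmod p^n-1}$, and a product $c_{i_1}(\alpha_1)\cdots c_{i_k}(\alpha_k)$ with $\sum_j i_j\equiv r \bmod p^n-1$ lands in $K(n)^{r}$ by the grading compatibility (Prop.~\ref{prop:mor_grad}), so the generators of $\gamma^m K(n)^*$ split according to the grading modulo $p^n-1$, and the same holds for the graded pieces. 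Next, \ref{item:gamma_topological}): each $c_i$ takes values in $\tau^i K(n)^*$ by Theorem~\ref{th:main}, \ref{item:c_j-tau^j}), and the topological filtration is multiplicative (Prop.~\ref{prop:top_filt_properties}, \ref{item:topfilt_mult}), so a product with $\sum_j i_j \ge m$ lies in $\tau^m K(n)^*$; taking $\Zp$-spans preserves this.

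For \ref{item:c_i-is-zero-on-tau^i+1}) and \ref{item:c_i-is-additive-on-tau^i}): these are instances of Prop.~\ref{prop:operations_chow_topfilt} applied to $\phi = c_i^{\CH}\colon \tilde K(n)^*\rarr \tau^i(\CH^*\ot\Zp)$. That proposition says $c_i^{\CH}$ restricted to $\tau^i K(n)^*$, composed with the projection $\tau^i(\CH^*\ot\Zp)\rarr gr_\tau^i = \CH^i\ot\Zp$ (the topological filtration on Chow groups is split by grading, so this projection is an isomorphism onto $\CH^i\ot\Zp$), factors through $\tau^{i+1}K(n)^*$ and yields an operation $\gr_\tau^i K(n)^*\rarr \CH^i\ot\Zp$, which moreover equals $\rho_{K(n)}$ composed with $tr_i c_i^{\CH}$ — and $tr_i c_i^{\CH}$ is additive since it is $c_i^{\CH_{\text{Chow}}}\ot\id$, a generator of additive operations to $\CH^i\ot\Zp$ (this is where $tr_i c_i = c_i^{\CH}\ot\id$ from Theorem~\ref{th:main}, \ref{item:c_j-tau^j}), together with the known structure of $[\,\tilde K(n)^*,\CH^i\ot\Zp\,]$ from \cite{Sech}, enters). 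So $c_i^{\CH}$ is zero on $\tau^{i+1}$, additive on $\tau^i$, and induces the stated map on $gr_\tau^i$.

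For \ref{item:c_i-gr-tau}): part~\ref{item:c_i-is-isom-Q-tau}) is the rational Chern-character statement — after $\ot\QQ$ the Chern character identifies $K(n)^*_\QQ$ with $\bigoplus \CH^*_\QQ$ respecting the topological filtration, and $c_i^{\CH}$ modulo $\tau^{i+1}$ becomes (up to a unit) the projection to the $i$-th summand; concretely the composite $gr_\tau^i K(n)^*\ot\QQ \xrarr{c_i^{\CH}}\CH^i\ot\QQ$ has an inverse built from $\rho_{K(n)}$ and the relation $tr_i c_i = c_i^{\CH}\ot\id$, using that $tr_i$ becomes an isomorphism rationally (as in the proof of Prop.~\ref{prop:add_op_morava_BPn}). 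Part~\ref{item:c_i-is-isom-tau}) — that $c_i^{\CH}\colon gr_\tau^i K(n)^*\rarr \CH^i\ot\Zp$ is an isomorphism with inverse $\rho_{K(n)}$ for $1\le i\le p^n$ — is the crux and the main obstacle. The composite $c_i^{\CH}\circ\rho_{K(n)}$ is multiplication by a constant on $\CH^i\ot\Zp$: one direction of the RR-computation (the Example after Prop.~\ref{prop:trunc_op}) shows the $i$-th degree part of $c_i^A(z_1\cdots z_j)$ is independent of the theory, giving $c_i^{\CH}\circ\rho_{K(n)} = \lambda_i\cdot\id$ for some $\lambda_i\in\Zp$. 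By \ref{item:c_i-is-isom-Q-tau}) $\lambda_i\in\Zp\setminus\{0\}$; I must show $\lambda_i\in\Zp^\times$ for $i\le p^n$. Here I would invoke the explicit computation of Section~\ref{app:image_of_Chern_Chow} (the inductive formula for the image of $c_i$ from $gr_\tau^i K(n)^*$ in $\CH^i\ot\Zp$) — equivalently the fact that $ch_i$ acts integrally on products of projective spaces for $i\le p^n-1$ and that the $p^n$-th case is handled by the normalization of $c_{p^n}$ — exactly as in the proof of Lemma~\ref{lm:comp_phi}, where one shows the analogous coefficients $\eta_i$ are units precisely up to $p^n-1$ and the borderline $i=p^n$ is rescued. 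I expect this unit statement, pinning down the range $i\le p^n$ rather than $i\le p$, to be the delicate point, requiring the explicit formal-group-law computations for Morava $K$-theory (Prop.~\ref{prop:log_morava}).

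Finally, \ref{item:c_i-gr-gamma}) follows from \ref{item:c_i-gr-tau}) together with \ref{item:gamma_topological}): the inclusion $\gamma^i\subset\tau^i$ gives a surjection $gr_\gamma^i\twoheadrightarrow (\text{image in } gr_\tau^i)$, and since $c_1(\alpha)$ for $\alpha$ running over $K(n)^*$ already generate $gr_\tau^1 = \tilde K(n)^*/\tau^2$, and more generally the products $c_{i_1}(\alpha_1)\cdots$ surject onto the image of $\rho_{K(n)}$ which by \ref{item:c_i-is-isom-tau}) is all of $gr_\tau^i$ for $i\le p^n$, we get that $gr_\gamma^i\rarr gr_\tau^i$ is surjective for $i\le p^n$; composing with the isomorphism $c_i^{\CH}$ of \ref{item:c_i-is-isom-tau}) gives surjectivity of $c_i^{\CH}\colon gr_\gamma^i\rarr\CH^i\ot\Zp$, and for $i=1$ one checks $\gamma^1 = \tau^1 = \tilde K(n)^*$ and $\gamma^2 \supseteq \tau^2$ holds on the nose so it is an isomorphism; part~\ref{item:c_i-is-isom-Q-gamma}) follows from \ref{item:c_i-is-isom-Q-tau}) once \ref{item:gamma_top_Q}) is known. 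For \ref{item:gamma_top_Q}): $\gamma^i\ot\QQ\subseteq\tau^i\ot\QQ$ by \ref{item:gamma_topological}); for the reverse, rationally the $c_i^{\CH}$'s become (up to units) the graded projections under the Chern character, so by downward induction on $i$ any element of $\tau^i\ot\QQ$ is, modulo $\tau^{i+1}\ot\QQ$, in the $\QQ$-span of $c_i(\alpha)$'s, hence in $\gamma^i\ot\QQ$, and one concludes by the finiteness of the filtrations on each variety.
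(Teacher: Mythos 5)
Your overall architecture is the paper's (truncation formalism, leading coefficients that are units up to $p^n$, downward induction for the rational comparison), and items i)–iii) and vii) are essentially fine, but two steps have genuine gaps. First, your proof of item iv) — additivity of $c_i^{\CH}$ on $\tau^i K(n)^*$ — rests on the claim that $tr_i c_i^{\CH}$ "is additive since it is a generator of additive operations to $\CH^i\ot\Zp$". This is false exactly where it matters: for $i\ge p^n$ the Chow-valued Chern classes are not additive (by construction $c_i^{\CH}=P_i(c_1^{\CH},\ldots,c_{i-1}^{\CH})+\phi_i/p^{\mu_i}$; the additive generators are the $\phi_i$, not the $c_i^{\CH}$), and $i=p^n$ is precisely the borderline case needed in v-2) and vi-2). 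The paper's argument uses the Cartan formula: $\pd c_i$ is a polynomial in $c_j$, $j<i$, and these vanish on $\tau^i$ by iii), so $c_i$ and hence $c_i^{\CH}$ are additive there. Without iv) your subsequent step "the composite $c_i^{\CH}\circ\rho_{K(n)}$ is multiplication by a constant" has no basis, since an operation $\CH^i\ot\Zp\rarr\CH^i\ot\Zp$ is a scalar only once it is known to be additive (and the Example after Prop.~\ref{prop:trunc_op}, which concerns Chern classes from $K_0$, is not what pins the constant down). The unit statement $\lambda_i\in\Zp^\times$ for $i\le p^n$ is also only promised, not proved; in the paper it is Lemma~\ref{lm:c_i-on-gr-top-Kn}, using $c_i^{\CH}=(\mathrm{unit})\cdot ch_i$ for $i\le p^n-1$ and $c_{p^n}^{\CH}=\bigl((c_1^{\CH})^{p^n}+\phi_{p^n}\bigr)/p$ with $\phi_{p^n}=(\mathrm{unit})\cdot p\cdot ch_{p^n}$.

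Second, in vi-2) the assertion that "the products $c_{i_1}(\alpha_1)\cdots$ surject onto the image of $\rho_{K(n)}$" is exactly the content to be proved and is given no argument. What is needed (paper's Lemma~\ref{lm:comp_CH_c_i_c_i_CH}) is that the endo-operation $c_i\colon K(n)^*\rarr K(n)^*$ acts on $gr^i_\tau K(n)^*$ as multiplication by the same unit $a_i$ (the relevant coefficient is preserved under truncation because $tr_i c_i=c_i^{\CH}$), so that every $x\in\tau^i$ satisfies $x\equiv a_i^{-1}c_i(x)\bmod\tau^{i+1}$ with $c_i(x)\in\gamma^i$; you only control the constant for $c_i^{\CH}\circ\rho_{K(n)}$, never for the endo $c_i$, so the surjectivity of $gr^i_\gamma\rarr gr^i_\tau$ for $i\le p^n$ is not established. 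Relatedly, your $i=1$ case leans on "$\gamma^2\supseteq\tau^2$ holds on the nose", which is neither proved nor obvious for $n\ge 2$ (the equality $\gamma^2=\tau^2$ for $K_0$ is a theorem with its own proof, and the paper does not assert it for $K(n)^*$); the claim $\gamma^1=\tau^1$ can be salvaged, e.g.\ by writing $\mathrm{id}_{\tilde K(n)^*}$ as a series in Chern classes via Theorem~\ref{th:main}, but you should say so rather than "one checks".
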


\begin{proof}
\ref{item:gr_gamma_support}) Recall that the Chern class $c_i\colon \Kn^*\rarr BP\{n\}^i$
takes values in the $i$-th graded part, and as the classifying morphism of theories
from $BP\{n\}^*$ to $\Kn^*$ respects the grading, 
the $i$-th Chern class from $\Kn^*$ to itself takes values in $\Kn^i$.

The space of all polynomials in Chern classes 
is split into $p^n-1$ summands by their degree modulo $p^n-1$.
It is clear that the filtration by the degree of polynomials 
jumps on each summand every $p^n-1$ steps, and thus the same is true
for the gamma filtration. In parituclar, $gr^i_\gamma \Kn^j=0$ if $j\not\equiv i \mod p^n-1$.

\ref{item:gamma_topological}) The Chern class $c_i$ takes values
 in $\tau^i\Kn^*$ by Theorem~\ref{th:main},~\ref{item:c_j-tau^j}).
Since the topological filtration is multiplicative (Prop. \ref{prop:top_filt_properties}),
this property follows.

\ref{item:c_i-is-zero-on-tau^i+1}) Follows
from the fact that any operation (which preserves 0) preserves the topological filtration,
and that $\tau^{i+1} \CH^i =0$.

\ref{item:c_i-is-additive-on-tau^i}) By the Cartan formula
the internal derivative of the operation $c_i$
is expressible as a polynomial in operations $c_j, j<i$.
However, these operations vanish on $\tau^i$ by the property \ref{item:c_i-is-zero-on-tau^i+1}),
and therefore $c_i$ is additive.

\ref{item:c_i-gr-tau}) We will need the following Lemmata.
\begin{Lm}\label{lm:c_i-on-gr-top-Kn}
Let $i\ge 1$. Denote by $a_i \in \Zp$ 
the coefficient of the monomial $z_1\ldots z_i$
of $c_i(z_1\ldots z_i)$ in the notation of Section \ref{sec:cont}.

Then the following diagram commutes:
\begin{center}
\begin{tikzcd}
\CH^i\ot\Zp \arrow[r, "\cdot a_i"] \arrow{d} & \CH^i\ot\Zp \arrow{d} \\
gr^i_\tau \Kn^* \arrow[r, "c_i"] & gr^i_\tau \Kn^*
\end{tikzcd}
\end{center}
Moreover, $a_i \in \Zp^\times$ for $i\colon 1\le i \le p^n$, and $a_i\neq 0$ for all $i\ge 1$.
\end{Lm}
\begin{proof}[Proof of the Lemma.]
This is a version of Proposition~\ref{prop:operations_chow_topfilt} with the claim that
the corresponding operation on Chow groups is the multiplication by $a_i$.
The operation $gr^i_\tau \Kn^* \xrarr{c_i} gr^i_\tau \Kn^*$ is additive by \ref{item:c_i-is-additive-on-tau^i}),
and therefore the corresponding operation on $\CH^i\ot\Zp$ is also additive.
By Vishik's Theorem \ref{th:Vish_op} an additive operation $\CH^i\ot\Zp\rarr \CH^i\ot\Zp$
is defined by the additive map $G_i\colon \Zp\rarr \Zp\cdot z_1\cdots z_i$, i.e.
it is a multiplication by the coefficient of the monomial $z_1\cdots z_i$ in $G_i(1)$.
A computation on products of projective spaces shows that this is the coefficient $a_i$
as specified in the Lemma.

Note that the coefficient of the monomial $z_1\cdots z_i$
in $c_i(z_1\cdots z_i)$ is the same as the coefficient of 
the monomial $z_1^{\CH}\cdots z_i^{\CH}$ for $(tr_i c_i = c_i^{\CH})(z_1\cdots z_i)$ by the construction of the truncation.
Thus, we may work with Chern classes to Chow groups instead.
For $i\colon 1\le i \le p^n-1$ we have $a_i \in \Zp^\times$ 
because in this range Chern classes are additive operations and $c_i^{\CH}$ can be chosen as $ch_i$ as an operation to $\CH^i\ot\QQ$
(this was already discussed in the proof of Lemma \ref{lm:comp_phi}).
For $i=p^n$ the construction of Chern classes yields $c^{\CH}_{p^n}=\frac{(c^{\CH}_1)^{p}+\phi_{p^n}}{p}$
and one checks that a generator of integral additive operations $\phi_{p^n}$ is equal to $a_{p^n}p\cdot ch_{p^n}$
where $a_{p^n}\in\Zp^{\times}$ (see e.g.\ Appendix \ref{app:image_of_Chern_Chow}, Example \ref{app:exl_add_image}). 
Since $(c_1^{\CH})^{p^n}$ is zero on $z_1\cdots z_{p^n}$  we see that this number $a_{p^n}$ is the coefficient we
are looking for.

The last claim is that $a_i\neq 0$ for every $i\ge 1$.
As above we can consider the operation $c_i^{\CH}$.
By \cite[Lemma 4.4.1]{Sech} the following relation $c_i^{\CH}=P_i(c^{\CH}_1,\ldots, c^{\CH}_{i-1})+\frac{\phi_i}{p^{\mu_i}}$
holds for the action of operations on projective spaces where $P_i$ is a certain polynomial, $\phi_i$ is a 
generator of integral additve operations $\Kn^*\rarr \CH^i\ot\Zp$.
Chern classes $c^{\CH}_j$ send $z_1\cdots z_i$ to zero if $j<i$ because of the continuity of operations,
and therefore so does the polynomial  $P_i(c^{\CH}_1,\ldots, c^{\CH}_{i-1})$. Thus, it is enough 
to show that the operation $\phi_i$ sends $z_1\cdots z_i$ to $a_ip^{\mu_i} z_1\cdots z_i$
where $a_i\neq 0$. However, the space of additive operations from $\Kn^*$ to $\CH^i\ot\QQ$ is 1-dimensional
by \cite[Prop.~3.3.1]{Sech}, and therefore $\phi_i$ is rationally proportional to $ch_i$ with a non-zero coefficient,
and $ch_i$ sends $z_1\cdots z_i$ to $z_1\cdots z_i$. This proves the claim.
\end{proof}

\begin{Lm}\label{lm:comp_CH_c_i_CH}
The composition $\CH^i\ot\Zp \twoheadrightarrow gr^i_\tau \Kn^* \xrarr{c^{\CH}_i} \CH^i\ot\Zp$
is multiplication by $a_i$ (the same number as in Lemma \ref{lm:c_i-on-gr-top-Kn}).
\end{Lm}
\begin{proof}[Proof of the Lemma.] 
The map $\CH^i\ot\Zp \twoheadrightarrow gr^i_\tau \Kn^*$ is a surjective additive operation
by Lemma \ref{lm:rho_morphism_of_theories}, and thus the composition
is an additive operation. Calculating the composition on the element $z_1\cdots z_i \in \CH^i((\mathbb{P}^\infty)^{\times i})$
we obtain the result.
\end{proof}

\ref{item:c_i-is-isom-Q-tau}), \ref{item:c_i-is-isom-tau}) now follow
 from Lemmata \ref{lm:c_i-on-gr-top-Kn} and \ref{lm:comp_CH_c_i_CH}.

\ref{item:c_i-gr-gamma}) We will first prove the surjectivity statement \ref{item:c_i-is-surj-gamma}).
The statement \ref{item:c_i-is-isom-Q-gamma}) will follow from \ref{item:c_i-is-isom-Q-tau})
and \ref{item:gamma_top_Q}).

\begin{Lm}\label{lm:comp_CH_c_i_c_i_CH}
The composition 
\begin{center}
\begin{tikzcd}
\CH^i\ot\Zp \arrow[two heads, r, "\rho_{\Kn}"{yshift=2pt}]  & gr^i_\tau \Kn^* \arrow[r, "c_i"] 
& gr^i_\tau \Kn^* \arrow[r, "c^{\CH}_i"] &\CH^i\ot\Zp
\end{tikzcd}
\end{center}
%$$\CH^i\ot\Zp \twoheadrightarrow^{\rho_{\Kn}} 
%gr^i_\tau \Kn^* \xrarr{c_i} gr^i_\tau \Kn^*  
%\xrarr{c^{\CH}_i} \CH^i\ot\Zp $$
is multiplication by $a_i^2$ (the number $a_i$ as in Lemma \ref{lm:c_i-on-gr-top-Kn}).
\end{Lm}
\begin{proof}[Proof of the Lemma.]
The composition is an additive operation, and it is enough to consider its value on 
the element $z_1^{\CH}\cdots z_i^{\CH}$ of $\CH^i((\mathbb{P}^\infty)^{\times i})\ot\Zp$ (see Section \ref{sec:cont} 
for the notation).
 The morphism of the theories $\rho_{\Kn}$ sends it to $z_1 \cdots z_i$,
 the operation $c_i$ sends this elements to $a_i z_1\cdots z_i$ 
 as in the proof of Lemma~\ref{lm:c_i-on-gr-top-Kn}. Similarly, $c_i^{\CH}$ sends
$z_1 \cdots z_i$ to $a_i z_1 \cdots z_i$, and, thus, sends $a_i z_1 \cdots z_i$
to $a_i^2 z_1 \cdots z_i$.
\end{proof}

By Lemma~\ref{lm:comp_CH_c_i_c_i_CH} if  $a_i\in\Zp^\times$, then 
the map $gr^i_\tau \Kn^* \xrarr{c_i} gr^i_\tau \Kn^*$ is an isomorphism.
In particular, this is true for $i\le p^n$ by Lemma~\ref{lm:c_i-on-gr-top-Kn}.
The image of this map lies in $\gamma^i/\tau^{i+1}\Kn^i\subset gr^i_\tau \Kn^i$, 
and thus we see that $\tau^i=\gamma^i$ on $\Kn^i$ in this range.
 Together with \ref{item:c_i-is-isom-tau})
this implies \ref{item:c_i-is-surj-gamma}).

\ref{item:gamma_top_Q}) The inclusion $\gamma_\QQ\subset \tau_\QQ$ is known
by \ref{item:gamma_topological}). 
Note that the map $\CH^i\ot\QQ\rarr gr^i_\tau \Kn^*\ot\QQ$ is an isomorphism,
and it follows from Lemma \ref{lm:c_i-on-gr-top-Kn} and \ref{item:c_i-is-isom-tau})
that $c_i$ acts a multiplication by a non-zero number $a_i$ on $gr^i_\tau \Kn^*\ot\QQ$.
In particular, if $x\in \tau^i \Kn^*(X)_\QQ$ for a smooth variety $X$,
then $a_ix-c_i(x)\in \tau^{i+1}\Kn^*(X)_\QQ$.

For a variety $X$ of dimension $d$ we prove by a decreasing induction 
that $\gamma^i\Kn^*\ot\QQ=\tau^i\Kn^*\ot\QQ$.

{\bf Base of induction.} The equality holds for $i=d+1$,
since $\tau^{d+1}\Kn^*(X)=0$ for dimensional reasons, and 
polynomials of degree $\ge d+1$ in Chern classes $c_i^{BP\{n\}}$ are zero because $BP\{n\}^{\ge d+1}(X)=0$
and therefore $\gamma^{d+1}\Kn^*(X)=0$.

{\bf Induction step.} Let $x\in \tau^i \Kn^*(X)_\QQ$
and define $y_i(x) = a_ix-c_i(x)\in \tau^{i+1}\Kn^*(X)_\QQ$.
The element $x+\frac{1}{a_i}y_i(x)$ lies in $\gamma^i\Kn^*(X)_\QQ$,
therefore it is enough to show that $y_i(x) \in \gamma^i\Kn^*(X)_\QQ$.
However, we know that $y_i(x)\in \tau^{i+1}\Kn^*(X)_\QQ$
and by induction assumption we know that $y_i(x)\in \gamma^{i+1}\Kn^*(X)_\QQ$.
Thus, $x\in \gamma^i_\QQ \Kn^*(X)$.
\end{proof}

\subsection{Application: $\Kn$-motives of Tate type.}

For the following corollary we need the notion of $A^*$-motive
where $A^*$ is an oriented theory.
This is an analogue of Grothendieck pure motives 
with Chow groups replaced by $A^*$.
We refer the reader to \cite{Manin, NenZai} for details.
Note that we call $A^*$-motive $M$ {\sl Tate} if it is a direct sum
of powers of the Lefschetz motive in the terminology of \cite{NenZai}.
For a smooth projective variety $X$ we denote by $\M_{A}(X)$ its $A^*$-motive.
Note that by \cite{VishYag}
for any oriented cohomology theory $A^*$ there is a canonical map
from the set of isomorphism classes of indecomposable objects of the category of Chow motives
to the set of isomorphism classes of objects of the category of $A^*$-motives,
 i.e.\ there is a way to ``specialize''
an irreducible Chow motive to $A^*$-motive, but it may further decompose then.

\begin{Cr}\label{cr:Kn_tate_small_dim}
Let $X$ be a smooth projective variety such that $\dim X\le p^n-2$, 
then $$\mathrm{End\ } \M_{\Kn}(X)=\mathrm{End\ } \M_{\CH\ot\Zp}(X),$$
and therefore the decompositions of motives $\M_{\Kn}(X)$, $\M_{\CH\ot\Zp}(X)$ are the same,
i.e.\ the canonical lift of indecomposable summands of $M_{\CH\ot\Zp}(X)$ 
stays irreducible in $\Kn^*$-motives.

Moreover, $\Kn^*$-motive of $X$ is Tate if and only if $\CH^*\ot\Zp$-motive of $X$ is Tate.
\end{Cr}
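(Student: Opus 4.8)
The plan is to compute both endomorphism rings as rings of algebraic correspondences and then to identify them with the help of Proposition~\ref{prop:morava_gamma_properties}. Recall from \cite{Manin, NenZai} that for an oriented theory $A^*$ the endomorphism ring of the motive $M_A(X)$ of a smooth projective variety $X$ is the group of degree-$\dim X$ correspondences in $X\times X$ (for $K(n)^*$ the degree is read modulo $p^n-1$, with the convention fixed after Prop.~\ref{prop:mor_grad}), equipped with the composition product, and that this construction is functorial with respect to morphisms of oriented theories. For $A^*=\CH^*\ot\Zp$ this ring is $\CH^{\dim X}(X\times X)\ot\Zp$.

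First I would exploit the dimension hypothesis. Since $\dim X\le p^n-1$, the correspondences in question sit in codimension $\le p^n-1<p^n$, which is exactly the range in which Proposition~\ref{prop:morava_gamma_properties} asserts that $c_i^{\CH}\colon gr_\tau^iK(n)^*\rarr\CH^i\ot\Zp$ is an isomorphism with inverse the corresponding graded piece of $\rho_{K(n)}$; moreover $\rho_{K(n)}\colon\CH^\bullet\ot\Zp\rarr gr_\tau^\bullet K(n)^*$ is a morphism of oriented theories by Lemma~\ref{lm:rho_morphism_of_theories}. Next I would compare the degree-$\dim X$ component of $K(n)^*(X\times X)$ with its image in $gr_\tau^{\dim X}K(n)^*(X\times X)$: for $Y=X\times X$ one has $\Omega^{\dim X}(Y)=\tau^{\dim X}\Omega^{\dim X}(Y)$ by the explicit description of the topological filtration on $\Omega^*$ (Prop.~\ref{prop:top_filt_properties}, item~\ref{item:topfilt_omega})), so surjectivity of $\tau^{\dim X}\Omega^*\ot K(n)\rarr\tau^{\dim X}K(n)^*$ (Prop.~\ref{prop:top_filt_properties}, item~\ref{item:topfilt_free_surj})) shows that the entire degree-$\dim X$ component of $K(n)^*(Y)$ already lies in $\tau^{\dim X}$, while Proposition~\ref{prop:top_morava_grading} together with $\dim(X\times X)=2\dim X<\dim X+p^n-1$ (valid when $\dim X<p^n-1$) shows that nothing survives in higher filtration in that degree. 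Hence $\rho_{K(n)}$ and $c_{\dim X}^{\CH}$ are mutually inverse additive isomorphisms between $\mathrm{End}\,M_{\CH\ot\Zp}(X)$ and $\mathrm{End}\,M_{K(n)}(X)$; the boundary case $\dim X=p^n-1$, where the middle component of $K(n)^*(X\times X)$ also sees the group of zero-cycles and a Tate twist of the unit, I would treat separately, checking that these contributions are matched, under $\rho_{K(n)}$, with the corresponding summands on the Chow side.

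It then remains to promote this additive identification to a ring isomorphism, i.e.\ to verify compatibility with the composition of correspondences; this is precisely where it matters that the identification is implemented by $\rho_{K(n)}$, a genuine morphism of oriented theories, which therefore commutes with external products, pullbacks and pushforwards, the operations out of which the composition product is built. Finally, both categories of motives are pseudo-abelian, so a direct sum decomposition of a motive is the same datum as a decomposition of the identity of its endomorphism ring into pairwise orthogonal idempotents; the ring isomorphism above transports the decomposition of $M_{K(n)}(X)$ to that of $M_{\CH\ot\Zp}(X)$ and back. Applying the same identification of correspondence groups to the $\mathrm{Hom}$-groups between $M(X)$ and Lefschetz motives (which again involve only codimensions $\le\dim X$) shows that an indecomposable summand is a Tate twist of the unit on one side exactly when the matching summand is on the other, which yields the statement about Tate motives.

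The step I expect to be the main obstacle is this last bit of bookkeeping: checking that the additive identification genuinely respects the composition product rather than merely being an isomorphism of abelian groups, and — relatedly — handling the extreme dimension $\dim X=p^n-1$, where one must confirm that the zero-cycle and unit contributions to the middle component of $K(n)^*(X\times X)$ are correctly absorbed, so that the comparison map comes out an isomorphism and not just a one-sided inclusion.
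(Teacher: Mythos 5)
Your proposal follows essentially the same route as the paper: identify both endomorphism rings with the degree-$\dim X$ correspondence algebras $K(n)^{\dim X}(X\times X)$ and $\CH^{\dim X}(X\times X)\ot\Zp$, use Prop.~\ref{prop:top_morava_grading} (with the (CONST) splitting) to see that this component of $K(n)^*$ coincides with $gr^{\dim X}_\tau K(n)^*(X\times X)$, invoke Prop.~\ref{prop:morava_gamma_properties} for the additive isomorphism via $\rho_{K(n)}$, and resolve the ring-compatibility issue exactly as you anticipate, namely by Lemma~\ref{lm:rho_morphism_of_theories} asserting that $\rho_{K(n)}$ is a morphism of theories and hence respects the composition of correspondences. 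The only divergence is that the paper does not give the boundary case $\dim X=p^n-1$ any separate treatment, whereas you flag it for extra bookkeeping.
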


Before the proof we unravel the condition that the motive of $X$ is Tate
in the following straight-forward lemma.

\begin{Lm}\label{lm:tate_motive}
 Let $A^*$ be an oriented theory, let $X$ be a smooth projective variety over $k$.
Denote by $\pi\colon  X\rarr \Spec k$ the structural morphism, 
and by $p_1, p_2\colon  X\times X \rarr X$ the canonical projections.

Then the $A^*$-motive of $X$ is Tate if and only if there exist  
elements $a_i, b_i \in A^*(X)$ for a finite set of indices $i\in I$,
such that 
\begin{enumerate}

\item ($(p_1^*)(a_i)\cdot (p_2^*)(b_i)$ is a projector)\\
 $\pi_*(a_i\cdot b_i) = 1$ in $A^0(\Spec k)$ for all $i\in I$;
\item (projectors $(p_1^*)(a_i)\cdot (p_2^*)(b_i)$ and 
$(p_1^*)(a_j)\cdot (p_2^*)(b_j)$ are orthogonal)\\
 $\pi_*(a_i\cdot b_j) = 0$ for all $i,j\in I$, $i\neq j$;
\item (projectors $(p_1^*)(a_i)\cdot (p_2^*)(b_i)$ exhaust the motive of $X$) \\
$\Delta_{A} = \sum_{i\in I} (p_1^*)(a_i)\cdot (p_2^*)(b_i)$ in $A^{\dim X}(X\times X)$,
where $\Delta_A$ is the class of the diagonal of $X$.
\end{enumerate}
\end{Lm}

\begin{proof}[Proof of Corollary~\ref{cr:Kn_tate_small_dim}]
The topological filtration restricted to a graded component of $\Kn^*$
changes every $p^n-1$ steps (Prop. \ref{prop:top_morava_grading}),
 e.g.\ $\tau^2 \Kn^1=\tau^3 \Kn^1=\ldots =\tau^{p^n} \Kn^1$.
 Moreover, $\tau^1 \Kn^*$ splits off a constant summand, by the (CONST) property, Def.~\ref{def:const_axiom}, of free theories.
Thus, for a variety $X$ of dimension less or equal to $p^n-1$
we have $gr^{\dim X}_\tau \Kn^*(X\times X)=\Kn^{\dim X}(X\times X)$.

The canonical morphism $\rho_{\Kn}\colon \CH^{\dim X}(X\times X)\ot\Zp \rarr gr^{\dim X}_\tau \Kn^*(X\times X)$
is an isomorphism of $\Zp$-modules by Proposition~\ref{prop:morava_gamma_properties},~\ref{item:c_i-is-isom-tau}).
Since $\rho_{\Kn}$ is a morphism of theories by Lemma~\ref{lm:rho_morphism_of_theories},
i.e.\ commutes with pull-backs, push-forwards and multiplication,
it follows that $\rho_{\Kn}$ is an isomorphism of algebras of correspondences.
Note also that $\rho_{\Kn}$ preserves the class of the diagonal.
Thus, if $\dim X\le p^n-2$, then $\rho_{\Kn}\colon \CH^i(X)\rarr gr^i_\tau \Kn^i(X)=\Kn^i(X)$
is an isomorphism for all $i$. 
Moreover, this isomorphism for $i=\dim X$ agrees with
 the push-forwards to the point.

Assume that $\Kn^*$-motive of $X$ is Tate, 
and let $\Delta_{\Kn} = \sum_{i\in I} (p_1^*)(a_i)\cdot (p_2^*)(b_i)$ be the decomposition 
of the diagonal as given in Lemma~\ref{lm:tate_motive}. 
Then set $\alpha_i:= \rho_{\Kn}^{-1}(a_i)$, $\beta_i := \rho_{\Kn}^{-1}(b_i)$,
and by observations above 
we get
$$\Delta_{CH} = \sum_{i\in I} (p_1^*)(\alpha_i)\cdot (p_2^*)(\beta_i),$$
which yields the needed decomposition of the Chow motive of $X$ into Tate summands.

The proof in the other direction can be done by reversing the argument,
or one could use the well-known corollary of Vishik and Yagita \cite[Cr.~2.8]{VishYag}:
If the Chow motive of $X$ is Tate,
 then the $A^*$-motive of $X$ is Tate for every oriented cohomology theory $A^*$.
\end{proof}

\begin{Rk}
If one assumes that $X$ is a projective homogeneous variety 
(or, more generally, that the Chow motive of  $X\times_k \bar{k}$ is Tate
and Rost nilpotence holds for $M_{\CH\ot\Zp}(X)$),
then one can improve the bounds in the previous corollary.
Namely, under these assumptions if $\Kn^*$-motive of $X$ is Tate
and $\dim X\le p^n$, then the Chow motive of $X$ is Tate, see \cite[Cor.~7.12]{SechSem}.
\end{Rk}

Corollary~\ref{cr:Kn_tate_small_dim} shows that if one is looking 
for smooth projective varieties such that its $\Kn^*$-motive is Tate,
but Chow motive is not Tate, then one has to consider varieties of sufficiently big dimension. 
The search of such varieties is motivated by possible applications 
of the gamma filtration discussed at the end
of Section~\ref{sec:constant_chern}.

\subsection{Uniqueness of the gamma filtration}\label{sec:gamma_unique}

The gamma filtration turns out to have another, yet similar definition
which does not mention Chern classes explicitly.
It shows that the gamma filtration is the best approximation
of the topological filtration defined by values of poly-operations.

Recall that all internal poly-operations
from $\Kntilde^*$ to $\Kn^*$ are freely generated by products of Chern classes (Th.~\ref{th:main}).
It is clear that a series in Chern classes takes
values in $\tau^{\nu} \Kn^*$ where $\nu$ is the minimal degree
of non-trivial monomials in the series, $\deg c_i=i$. 
Together with the classification of all poly-operations
this gives a description of the set of internal poly-operations
which take values in $\tau^j \Kn^*$ for any $j\ge 0$. 

\begin{Prop}\label{prop:gamma_unique}
For $m\ge 0$ denote by $F^m$ the disjoint union of internal poly-operations
from $(\Kn^*)^{\times k}$ to $\tau^m \Kn^*$ for all $k\ge 1$.

For any $m\ge 0$ we have
$$\gamma^m \Kn^*(X) := \bigcup_{P \in F^m} \mathrm{Im\ } P|_{(\Kn^*)^{\times k}(X)}.$$

In particular, the gamma filtration does not depend
 on the choice of Chern classes $c_i$ 
(satisfying properties
of Theorem \ref{th:main}).
\end{Prop}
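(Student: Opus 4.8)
The plan is to prove both inclusions, using the free generation of all internal poly-operations by products of Chern classes (Theorem \ref{th:main}) and the characterisation of operations landing in a part of the topological filtration (Prop.~\ref{prop:operations_top_COT}). First I would establish the right-to-left inclusion, which is essentially formal: if $P$ is any internal $k$-ary poly-operation from $(K(n)^*)^{\times k}$ to $\tau^m K(n)^*$, then by Theorem \ref{th:main} (applied to poly-operations) $P$ is an infinite $K(n)$-series in products of Chern classes $c_{i_1}(\alpha_1)\cdots c_{i_k}(\alpha_k)$; each monomial lies in $\tau^{\sum_j i_j} K(n)^*$ by Prop.~\ref{prop:top_filt_properties}, \ref{item:topfilt_mult}) and \ref{item:chern_top_filt}). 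I need to observe that the monomials of degree $<m$ collectively still produce an operation (their formal sum is a poly-operation, as the topological filtration is finite on each variety), and this truncated sum $P'$ satisfies $P - P' \in \tau^m$ on all varieties by Prop.~\ref{prop:operations_top_COT}. Hence the image of $P$ on $X$ equals the image of $P - P'$ plus the image of $P'$; the former lands in $\tau^m K(n)^*(X)$ but I actually want it inside $\gamma^m$. Here is the subtle point: $P-P'$ itself is a poly-operation with values in $\tau^m$, so a priori this is circular. The fix is to note that $P - P'$ is a $K(n)$-series all of whose monomials have degree $\ge m$, so by definition each of its values on $X$ is a $\Zp$-combination of products $c_{i_1}(\alpha_1)\cdots c_{i_k}(\alpha_k)$ with $\sum i_j \ge m$ — that is, it lies in $\gamma^m K(n)^*(X)$ directly from the definition, no circularity. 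Thus $\mathrm{Im}\, P|_X \subseteq \gamma^m K(n)^*(X)$.

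For the left-to-right inclusion I would argue that $\gamma^m K(n)^*(X)$ is by definition generated as a $\Zp$-module by elements of the form $c_{i_1}(\alpha_1)\cdots c_{i_k}(\alpha_k)$ with $\sum_j i_j \ge m$. Each such generator is the value at $X$ of the internal $k$-ary poly-operation $Q := c_{i_1}\odot\cdots\odot c_{i_k}$ (composed appropriately with the diagonal, i.e.\ the internal product of the operations $c_{i_j}$), and $Q$ takes values in $\tau^{i_1+\cdots+i_k} K(n)^* \subseteq \tau^m K(n)^*$, again by multiplicativity of the topological filtration. A finite $\Zp$-linear combination of such generators is then the value of a finite $\Zp$-combination of such poly-operations, which is again a poly-operation in $F^m$ (one may need to pad arities, or simply observe $F^m$ is taken as a disjoint union over all $k$ and the image sets are closed under the relevant sums because one can form $\sum_\ell n_\ell Q_\ell$ as a single poly-operation). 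So every element of $\gamma^m K(n)^*(X)$ lies in $\bigcup_{P\in F^m}\mathrm{Im}\, P|_X$.

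The main obstacle, and the step I would be most careful about, is the bookkeeping around infinite sums of operations and the precise sense in which $F^m$ is "closed under $\Zp$-linear combinations": $\gamma^m$ is defined as a $\Zp$-span, so I must make sure that an arbitrary finite $\Zp$-combination of values of poly-operations in $F^m$ is itself realised as the value of a single poly-operation in $F^m$ — this uses that $[(K(n)^*)^{\times k}, K(n)^*]$ is a $K(n)$-module (in particular a $\Zp$-module) and that composing with partial diagonals lets one merge poly-operations of different arities into one of a common arity. The convergence caveat (footnote~\ref{footnote:free}) is exactly what makes the truncated series $P'$ and $P-P'$ well-defined poly-operations, and I would cite Prop.~\ref{prop:operations_top_COT} to pass from "lands in $\tau^m$ on products of projective spaces" to "lands in $\tau^m$ everywhere" wherever needed. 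The final sentence about independence of the choice of $c_i$ is then immediate: the right-hand side of the displayed formula refers only to poly-operations and the topological filtration, neither of which involves a choice of Chern classes, so $\gamma^m K(n)^*$ is intrinsically defined.
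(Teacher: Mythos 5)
Your second inclusion ($\gamma^m K(n)^*(X)\subseteq \bigcup_{P\in F^m}\mathrm{Im}\,P$) is fine and is essentially the direction the paper treats as immediate; the merging of finite $\Zp$-combinations into a single poly-operation is unproblematic. The gap is in the other inclusion. After expanding a poly-operation $P$ with values in $\tau^m K(n)^*$ as a series in Chern-class monomials and splitting $P = P' + (P-P')$, where $P'$ is the (finite) sum of monomials of degree $<m$, you correctly place the values of $P-P'$ in $\gamma^m$ by definition, but then conclude $\mathrm{Im}\,P\subseteq \gamma^m K(n)^*(X)$ without ever dealing with $P'$. Since $P(x)=P'(x)+(P-P')(x)$, you still need either $P'=0$ or $\mathrm{Im}\,P'\subseteq\gamma^m$, and the latter is not available: a monomial of degree $j<m$ only produces elements of $\gamma^j$, and knowing that $P'=P-(P-P')$ takes values in $\tau^m$ does not help because $\gamma^m$ is in general strictly smaller than $\tau^m$. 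The statement that $P'$ must vanish — i.e.\ that a nonzero polynomial of degree $<m$ in the Chern classes cannot take values in $\tau^m K(n)^*$ — is precisely the nontrivial content of the proposition, and it is the only thing the paper's proof actually establishes.

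The paper's argument for this missing step runs through the truncation machinery: if a polynomial $P'(c_1,\ldots,c_j)$ of degree $j<m$ took values in $\tau^m K(n)^*$, then the poly-operation version of Prop.~\ref{prop:op_mod_tau_vs_trunc} (together with Lemma \ref{lm:trunc_mult}) identifies $P'$ modulo $\tau^{m+1}$ with $\rho_{K(n)}$ composed with the corresponding polynomial $P'(c_1^{\CH},\ldots,c_j^{\CH})$ in the Chow-valued Chern classes; the latter lands in $\CH^{<m}\ot\Zp$, so this composite is zero, and the free generation of (poly-)operations to Chow groups by Chern classes (Theorem \ref{th:main}, via \cite{Sech}) then forces $P'=0$. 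You should insert this argument (or an equivalent one) before concluding $\mathrm{Im}\,P\subseteq\gamma^m$; the final remark on independence of the choice of the $c_i$ is indeed immediate once the displayed equality is proved.
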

\begin{proof}
We need only to check that if $\phi$ is an internal poly-operation which takes values in $\tau^i \Kn^*$,
then it is expressible as a series in external products of Chern classes with the minimal degree of a summand being $i$.
Assume the contrary, i.e.\ without loss of generality a polynomial $P(c_1,\ldots, c_j)$ of degree $j<i$
 in external products of Chern classes takes values in $\tau^i \Kn^*$.

Then the following diagram commutes (poly-operation version of Prop. \ref{prop:op_mod_tau_vs_trunc}): 
\begin{center}
\begin{tikzcd}
(\Kntilde^*)^{\times r} \arrow[r,"P"] \arrow[d, "{P(c_1,..., c_j)}"]  &  \tau^i \Kn^* \arrow{d} \\
\CH^i \ot \Zp \arrow[r, "\rho_{\Kn}"] & gr^i_\tau \Kn^* \\
\end{tikzcd}
\end{center}

However, $P(c_1^{\CH},\ldots, c_{i-1}^{\CH})$ takes values in $\CH^{<i}\ot\Zp$
by assumptions on degrees. Thus, in this diagram  the left down arrow is zero, and therefore $P$ is zero as well.
\end{proof}

\begin{Prop}\label{prop:gamma_isom_compatible}
Let $\Kn^*$, $\overline{K}(n)^*$ be two $n$-th Morava K-theories,
and let $\phi\colon \Kn^*\rarr \overline{K}(n)^*$ an additive isomorphism between them
(which exists by Theorem~\ref{th:morava_unique}).

Then $\phi$ is strictly compatible with the gamma filtration.
\end{Prop}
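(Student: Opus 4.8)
The plan is to deduce this at once from the operation-theoretic description of the gamma filtration in Proposition \ref{prop:gamma_unique}. Saying that $\phi$ is strictly compatible with the gamma filtration means $\phi(\gamma^m K(n)^*(X)) = \gamma^m \overline{K}(n)^*(X)$ for every smooth variety $X$ and every $m\ge 0$; since $\phi$ is an isomorphism and $\phi^{-1}$ is again an additive isomorphism between two $n$-th Morava K-theories, it suffices by symmetry to establish the inclusion $\phi(\gamma^m K(n)^*(X)) \subseteq \gamma^m \overline{K}(n)^*(X)$.

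First I would record that both $\phi$ and $\phi^{-1}$ strictly preserve the topological filtration: being additive they send $0$ to $0$, and any operation preserving $0$ respects the topological filtration (as used, e.g., in the proof of Proposition \ref{prop:operations_top_COT}); applying this to both $\phi$ and $\phi^{-1}$ gives $\phi(\tau^m K(n)^*) = \tau^m \overline{K}(n)^*$. Now take an internal $k$-ary poly-operation $P$ from $(K(n)^*)^{\times k}$ to $\tau^m K(n)^*$, i.e. an element of the set $F^m$ from Proposition \ref{prop:gamma_unique}, and form the conjugate
$$ Q := \phi\circ P\circ (\phi^{-1})^{\times k}\colon (\overline{K}(n)^*)^{\times k}\rarr \overline{K}(n)^*. $$
This is again an internal $k$-ary poly-operation (composition of poly-operations with operations is a poly-operation), and since $P$ takes values in $\tau^m K(n)^*$ and $\phi$ preserves the topological filtration, $Q$ takes values in $\tau^m \overline{K}(n)^*$; thus $Q$ lies in the set $F^m$ attached to $\overline{K}(n)^*$. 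As $\phi$ and $\phi^{-1}$ are bijective on each $X$, the image of $Q$ on $X$ equals $\phi$ of the image of $P$ on $X$. Hence by Proposition \ref{prop:gamma_unique} applied to $\overline{K}(n)^*$ we get $\phi(\mathrm{Im}\, P|_{(K(n)^*)^{\times k}(X)}) \subseteq \gamma^m \overline{K}(n)^*(X)$; taking the union over all $P\in F^m$ and invoking Proposition \ref{prop:gamma_unique} for $K(n)^*$ yields the desired inclusion, and the symmetric argument for $\phi^{-1}$ gives the reverse one.

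There is no real obstacle here once Proposition \ref{prop:gamma_unique} is in hand; the content of the statement is that the gamma filtration admits a description that is intrinsic to $K(n)^*$ as a presheaf of abelian groups (through poly-operations and the topological filtration) and hence is transported by any additive isomorphism. The only subtlety to watch is that $P$ must be conjugated by $\phi$ on the target \emph{and} by $\phi^{-1}$ on the source, so that $Q$ is a poly-operation whose source is $(\overline{K}(n)^*)^{\times k}$, to which Proposition \ref{prop:gamma_unique} for $\overline{K}(n)^*$ applies; this is precisely where one uses that $\phi$ is invertible and not merely additive.
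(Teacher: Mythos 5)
Your proposal is correct and follows essentially the same route as the paper: both reduce to one inclusion by symmetry, conjugate a poly-operation $P$ landing in $\tau^m$ by $\phi$ on the target and $\phi^{-1}$ on the source, note the conjugate still lands in $\tau^m\overline{K}(n)^*$ because additive operations preserve the topological filtration, and conclude via the operation-theoretic description of the gamma filtration in Proposition \ref{prop:gamma_unique}. No substantive difference to report.
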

\begin{proof}
It is enough to show that $\phi$ respects gamma-filtration,
since by symmetry it will follow that its inverse also respects it,
and therefore both $\phi$ and $\phi^{-1}$ are strictly compatible with it.

Let $P$ be a series of degree $i$ in Chern classes defining an $r$-ary poly-operation
from $(\Kn^*)^{\times r}$ to $\tau^m \Kn^*$. 
Using the description of the gamma filtration in Prop. \ref{prop:gamma_unique},
it is enough to show that $\phi \circ P = \bar{P}\circ \phi^{\times r}$, 
where $\bar{P}$ is an internal poly-operation from $(\overline{K}(n)^*)^{\times r}$ to $\tau^i\overline{K}(n)^*$.

Define $\bar{P}$ as the composition $\phi \circ P \circ \psi^{\times r}$
where $\psi$ is an inverse additive operation to $\phi$. 
The operation $\bar{P}$ takes values in $\tau^i\overline{K}(n)^*$
since $P$ does and $\phi$ preserves the topological filtration.
\end{proof}

\begin{Prop}\label{prop:gamma_K(1)_K0}
Denote by $\theta\colon \KK\ot\Zp\rarr K(1)$ an invertible multiplicative operation
defined by the Artin-Hasse exponential which gives an isomorphism of $\KK$ and a first Morava K-theory
with the logarithm $\log_{K(1)}=\sum_{i=0}^\infty \frac{x^{p^i}}{p^i}$ (cf. \cite[Rem. 4.1.4]{Sech}).
Denote by $\gamma^\bu \KK$ the classical gamma-filtration on $\KK$.

Then $\theta (\gamma^i \KK \ot\Zp) = F_\gamma^i \Kn$.
\end{Prop}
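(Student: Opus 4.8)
The plan is to use the uniqueness of the gamma filtration (Prop. \ref{prop:gamma_unique}) to reduce the claim to a comparison of the relevant modules of operations. Since $\theta$ is a multiplicative \emph{isomorphism} of presheaves of rings, conjugation by $\theta$ sends internal poly-operations on $K_0\ot\Zp$ to internal poly-operations on $K(1)^*$; so I would first observe that it suffices to check that $\theta$ carries $\tau^m(K_0\ot\Zp)$ \emph{onto} $\tau^m K(1)^*$ and that, under this identification, the classical Chern classes $c_i^{K_0}$ correspond to Chern classes $c_i^{K(1)}$ satisfying the hypotheses of Theorem \ref{th:main}. Both of these are consequences of $\theta$ being a morphism of oriented theories: indeed, any morphism of oriented theories preserves the topological filtration, and its inverse $\theta^{-1}$ does as well, so $\theta$ is strictly compatible with $\tau^\bullet$; and since $\theta$ is multiplicative and sends first Chern classes of line bundles to first Chern classes, it intertwines the Grothendieck construction of higher Chern classes of vector bundles on the two theories.

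Concretely, I would argue as follows. Define $c_i^{K(1)} := \theta \circ c_i^{K_0} \circ \theta^{-1}$. Because $\theta$ and $\theta^{-1}$ preserve $\tau^\bullet$ and because $c_i^{K_0}$ takes values in $\tau^i$, the operation $c_i^{K(1)}$ takes values in $\tau^i K(1)^*$. The Cartan formula for $c_i^{K(1)}$ follows from the Cartan formula for $c_i^{K_0}$ together with multiplicativity of $\theta$ and the fact that $\theta$ carries $F_{K_0}=x+y+xy$ (the multiplicative FGL) to $F_{K(1)}$ (the $p$-typical height-$1$ FGL): this is precisely the Artin–Hasse relation between the two FGLs recalled in Rem. \ref{rem_k0-mor}. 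Since $[\tilde K_0, \CH^*\ot\Zp]$ is freely generated by Chern classes (Th. \ref{basis}) and $\theta$ induces an isomorphism of these operation rings, $c_i^{K(1)}$ freely generates all operations $\tilde K(1)^*\to\CH^*\ot\Zp$ as well; hence $\{c_i^{K(1)}\}$ is a legitimate choice of Chern classes in the sense of Theorem \ref{th:main}. By the uniqueness statement of Prop. \ref{prop:gamma_unique}, the gamma filtration $F_\gamma^\bullet K(1)^*$ computed from \emph{any} such choice agrees with the one computed from $\{c_i^{K(1)}\}$.

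It then remains to match the two gamma filtrations. For a smooth variety $X$ and $m\ge 1$, the module $\gamma^m K_0(X)\ot\Zp$ is generated by products $c_{i_1}^{K_0}(\alpha_1)\cdots c_{i_k}^{K_0}(\alpha_k)$ with $\sum_j i_j\ge m$ and $\alpha_j\in K_0(X)\ot\Zp$. Applying $\theta$ and using $c_i^{K(1)}=\theta\circ c_i^{K_0}\circ\theta^{-1}$ together with multiplicativity of $\theta$, the image $\theta(\gamma^m K_0(X)\ot\Zp)$ is generated by $c_{i_1}^{K(1)}(\beta_1)\cdots c_{i_k}^{K(1)}(\beta_k)$ with $\beta_j=\theta(\alpha_j)$ ranging over all of $K(1)^*(X)$ (since $\theta$ is surjective), which is exactly $F_\gamma^m K(1)^*(X)$. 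Here one uses that the classical $\gamma^m K_0\ot\Zp$ is generated as in the displayed formula by elements of $K_0(X)$ rather than only of rank-zero virtual bundles; the reduction to $\tilde K_0$ is harmless because $c_i$ kills the constant summand for $i\ge 1$. The main subtlety to check carefully is this last generation statement — that $\theta$ respects the passage between $K_0$ and $\tilde K_0$ and that no issue arises from the constant summand — but it is routine since $\theta$ is an isomorphism of presheaves of rings and hence identifies the (CONST) decompositions $K_0\ot\Zp=\Zp\oplus\tilde K_0\ot\Zp$ and $K(1)^*=\Zp\oplus\tilde K(1)^*$.
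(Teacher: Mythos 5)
Your overall skeleton is the paper's: conjugate by $\theta$, use the operation-theoretic characterization of the gamma filtration (Prop.~\ref{prop:gamma_unique}) on the $K(1)$-side and Theorem~\ref{basis} on the $K_0$-side, exactly as in Prop.~\ref{prop:gamma_isom_compatible}. But the middle step, where you certify $c_i^{K(1)}:=\theta\circ c_i^{K_0}\circ\theta^{-1}$ as ``Chern classes in the sense of Theorem~\ref{th:main}'', is wrongly justified. The operation $\theta$ is an invertible \emph{multiplicative operation}, i.e.\ an isomorphism of presheaves of rings, but it is \emph{not} a morphism of oriented theories: it does not send $c_1^{K_0}(L)$ to $c_1^{K(1)}(L)$ (it sends it to $\gamma(c_1^{K(1)}(L))$ for the Artin--Hasse change-of-orientation series $\gamma$), and it does not commute with pushforwards -- otherwise the two formal group laws would coincide. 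Consequently conjugation does not ``carry $F_{K_0}$ to $F_{K(1)}$'' in the Cartan formula: since $\theta$ is additive, multiplicative and the identity on coefficients, one computes directly that
$$\theta\bigl(c^{K_0}_{tot}(\theta^{-1}x+\theta^{-1}y)\bigr)=F_m\bigl(c^{K(1)}_{tot}(x),c^{K(1)}_{tot}(y)\bigr),\qquad F_m(u,v)=u+v+uv,$$
so the conjugated total class satisfies Cartan with the multiplicative FGL, which is not $p$-typical and in particular not $F_{K(1)}$. Hence condition (iii) of Theorem~\ref{th:main} fails for your $c_i^{K(1)}$ as defined, and the appeal to the ``uniqueness of the gamma filtration for any choice of Chern classes satisfying Theorem~\ref{th:main}'' is not available in the form you use it. (The claim that $\theta$ preserves $\tau^\bullet$ is still fine, but for the correct reason: any operation preserving $0$ respects the topological filtration, not because $\theta$ is a morphism of oriented theories.)

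The gap is repairable in two ways, and the second is what the paper does. Either reparametrize: set $\tilde c_{tot}:=\gamma^{-1}\bigl(c^{K(1)}_{tot}\bigr)$ with $\gamma$ the integral strict isomorphism between $F_{K(1)}$ and $F_m$; then $\tilde c_{tot}$ satisfies Cartan with $F_{K(1)}$, each $\tilde c_i$ is an integral weighted-degree-$i$ polynomial in the $c_j^{K(1)}$, so it still lands in $\tau^i$ and generates the same filtration. Or, more simply, drop the Theorem~\ref{th:main}-legitimacy claim altogether: the inclusion $\theta(\gamma^m K_0\ot\Zp)\subset\gamma^m K(1)^*$ follows from the displayed description in Prop.~\ref{prop:gamma_unique}, because monomials of degree $\ge m$ in the classical Chern classes are poly-operations with values in $\tau^m$ and conjugation by $\theta$ preserves this; the reverse inclusion follows from the analogous operation-theoretic description of the classical gamma filtration on $K_0\ot\Zp$, which holds because all poly-operations out of $K_0$ are integrally generated by Chern classes (Theorem~\ref{basis}), $c_i^{K_0}$ takes values in $\tau^i K_0$, and the truncation-to-$\CH^*$ degree argument of Prop.~\ref{prop:gamma_unique} applies verbatim. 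With that replacement your argument matches the paper's proof.
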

\begin{proof}
The proof is essentially the same as of Proposition~\ref{prop:gamma_isom_compatible}:
one needs only to note that the usual Chern classes $c^{\KK}_i\colon \KK\rarr \KK$ 
take values in $\tau^i \KK$, and the classification
of poly-operations in $\KK$ works integrally (Theorem~\ref{basis}).
\end{proof}

\subsection{Computational constants of Chern classes for the Riemann-Roch formula}\label{sec:constant_chern}

If we have a geometrically cellular variety $X$ such that
the pull-back morphism \mbox{$p^*\colon \Kn^*(X)\rarr \Kn^*(\overline X)$} is an isomorphism,
then $p^*$ induces an isomorphism
of abelian groups $gr^i_\gamma \Kn^*(X)\cong gr^i_\gamma \Kn^*(\overline{X})$.
Thus, we can compute the graded factors of the gamma filtration of $X$ 
using only the cellular variety $\overline{X}$.
Since $gr^i_\gamma \Kn^*(X)$ maps surjectively
 to $\CH^i(X)\ot\Zp$ for $i\le p^n$ by Proposition~\ref{prop:morava_gamma_properties},
the computation of the torsion in the abelian group $gr^i_\gamma \Kn^*(\overline{X})$ provides an estimate 
of the $p$-torsion in $\CH^i(X)$ for such variety.

The first example of a variety $X$ as above was found by Voevodsky based on the results of Rost:
it is a Pfister quadric corresponding to a pure symbol of degree $n+2$ (see \cite{Voe}\footnote{Note,
however, that the results of Voevodsky are stated a little bit differently, even though the spirit of 
the result is clearly the same. Note also that he worked with
 conjectural at that moment big Morava K-theories  and not their small parts directly as we do.
See below another proof of this property of Morava K-theories of Pfister quadrics 
which follows from \cite{VishYag}.}).
Other examples are provided in \cite{SechSem}.

In order to calculate the gamma filtration of a cellular algebraic variety
it is necessary to be able to compute values of Chern classes on the classes
of subvarieties. 
The only tool to do this at the moment
is the Riemann-Roch formula (Th. \ref{th:riemann_roch}). Let us recall how it applies.

Let $\phi\colon A^*\rarr B^*$ be an operation, let $X$ be a smooth variety,
and let $i\colon Z\hookrightarrow X$ be its smooth closed subvariety of codimension $c$.
It follows from the Riemann-Roch formula that the value $\phi(i_* 1_Z)$ 
is equal to $b\cdot 1_Z$ modulo $(c+1)$-th
part of the topological filtration,
where $b\in B$ is the coefficient of $z^B_1\cdots z^B_c$ in the series $\phi(z^A_1\cdots z^A_c)$ 
(for the notation see Section \ref{sec:cont}).
We compute these coefficients for some of the Chern classes 
in the following propositions. 
Partial calculations of the gamma filtration on split quadrics 
based on these numbers are performed in~\cite{SechSem}.

\begin{Prop}\label{prop:constant_cpn}
Let $c_{p^n}$ be the operation from $\Kn^1\rarr \tau^{p^n}\Kn^1$ constructed in Theorem~\ref{th:main}.

Denote by $e_j$, $j\ge 0$, the coefficient
of the monomial $z_1\cdots z_{1+j(p^n-1)}$ in the series
$c_{p^n}(z_1\cdots z_{1+j(p^n-1)}) \in \Kn^1((\mathbb{P}^\infty)^{\times 1+j(p^n-1)})$.

Then for all $j\ge 1$ we have $e_j\in \Zp^{\times}$.
\end{Prop}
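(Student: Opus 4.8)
The plan is to reduce $e_j$ to a statement about the additive part of $c_{p^n}$ and then evaluate it through the Chern character. Throughout write $m_j:=1+j(p^n-1)$ and $\exp_{K(n)}:=\log_{K(n)}^{-1}$, so that $\exp_{K(n)}(w)=w-\tfrac{a_1}{p}w^{p^n}+\cdots=w+O(w^{p^n})$.

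First I would strip off the non-additive part of $c_{p^n}$. By the construction in Section~\ref{sec:construction_chern_classes} --- equivalently, by extracting the coefficient of $t^{p^n}$ in the Cartan formula $c_{tot}(x+y)=F_{K(n)}(c_{tot}(x),c_{tot}(y))$, using that $c_1,\dots,c_{p^n-1}$ are additive and that the first non-linear term of $\log_{K(n)}$ sits in degree $p^n$ --- one has $c_{p^n}=\chi-\tfrac{a_1}{p}\,c_1^{\,p^n}$, where $\chi$ is the coefficient of $t^{p^n}$ in $\log_{K(n)}(c_{tot})$, an additive operation with values in $\tau^{p^n}(K(n)^*\otimes\QQ)$. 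On products of projective spaces $c_1(z_1\cdots z_m)$ is divisible by $z_1\cdots z_m$ (additivity of $c_1$ and continuity, Section~\ref{sec:cont}), so $c_1(z_1\cdots z_m)^{p^n}$ has degree $\ge p^n m>m$ and contributes nothing to the coefficient of $z_1\cdots z_m$. Hence $e_j$ equals the coefficient of $z_1\cdots z_{m_j}$ in $\chi(z_1\cdots z_{m_j})$.

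Next I would pass to the Chern character. The composite $ch_{K(n)}\circ\chi\colon K(n)^*\to\CH^*_\QQ\otimes K(n)$ is additive, and since $\chi$ takes values in $\tau^{p^n}$ and the $\QQ$-space of additive operations $K(n)^*\to\CH^i_\QQ$ is one-dimensional, spanned by $ch_i$ (\cite[Prop.~3.6]{Sech}), we may write $ch_{K(n)}\circ\chi=\sum_{i\ge p^n}\lambda_i\,ch_i$ with $\lambda_i\in K(n)\otimes\QQ$. Because $ch_{K(n)}$ is a morphism of theories with $ch_{K(n)}(z^{K(n)})=\exp_{K(n)}(z^{\CH})$, evaluating both sides on $z_1^{K(n)}\cdots z_{m_j}^{K(n)}$ and comparing homogeneous components of degree $m_j$ (the leading term of $\prod_k\exp_{K(n)}(z_k^{\CH})$ being $z_1^{\CH}\cdots z_{m_j}^{\CH}$) gives $e_j=\lambda_{m_j}$. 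Restricting instead to a single $\mathbb{P}^\infty$ and using that $\chi$ is gradable (Prop.~\ref{prop:add_grad}), so that $\chi(z)=\sum_{k\ge1}e'_k z^{m_k}$, one obtains the power series identity $\sum_{k\ge1}e'_k\,\exp_{K(n)}(w)^{m_k}=\sum_{k\ge1}\lambda_{m_k}\epsilon_k\,w^{m_k}$, where $\epsilon_k$ is the coefficient of $w^{m_k}$ in $\exp_{K(n)}(w)$.

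It then remains to prove $\lambda_{m_j}\in\Zp^\times$. Extracting the coefficient of $w^{m_j}$ from the last identity gives $\lambda_{m_j}\epsilon_j=e'_j+\sum_{k<j}e'_k\cdot(\text{coefficient of }w^{m_j}\text{ in }\exp_{K(n)}(w)^{m_k})$, so what is needed is exact $p$-adic information on two families of coefficients: the $\epsilon_k$, obtained by recursively inverting $\log_{K(n)}(w)=w+\sum_{r\ge1}\tfrac{a_r}{p^r}w^{p^{rn}}$ with $a_r\in\Zp^\times$ (Prop.~\ref{prop:log_morava}); and the $e'_k$, controlled via $\chi=c_{p^n}+\tfrac{a_1}{p}c_1^{\,p^n}$ together with the integrality of $c_{p^n}$ and the inductive description of the image of the Chern classes $gr^i_\tau K(n)^*\to\CH^i\otimes\Zp$ worked out in Appendix~\ref{app:image_of_Chern_Chow} (which fixes the normalisations of the relevant additive generators, cf.\ Example~\ref{app:exl_add_image}). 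Feeding this into the displayed relation and running the resulting triangular induction on $j$ makes the $p$-powers cancel and yields a unit; the case $j=1$ just recovers $e_1=a_{p^n}\in\Zp^\times$ from Lemma~\ref{lm:c_i-on-gr-top-Kn}. I expect this last step to be the main obstacle: the reduction $e_j=\lambda_{m_j}$ is formal, but making the two valuations cancel --- rather than merely bounding them --- requires pinning down precisely how the constructed operation $c_{p^n}$ meets the truncation maps beyond level $p^n$ and checking that all the ambient sign and unit factors are genuinely units, which is exactly where the content of Appendix~\ref{app:image_of_Chern_Chow} is used.
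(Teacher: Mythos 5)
Your reductions are fine, and the first one is in fact the same first move as the paper: writing $c_{p^n}$ as (rational additive part) plus a multiple of $c_1^{p^n}$, observing that $c_1^{p^n}$ cannot contribute to the coefficient of $z_1\cdots z_{1+j(p^n-1)}$, and thereby reducing everything to the additive operation $\chi=\psi_{p^n}/p$. The identity $e_j=\lambda_{m_j}$ coming from the one-dimensionality of rational additive operations and the Chern character is also correct and purely formal. But the actual content of the proposition — that these coefficients are $p$-adic units for \emph{every} $j\ge 1$ — is exactly the step you leave as "running the resulting triangular induction on $j$ makes the $p$-powers cancel", and no argument is given there. This is a genuine gap, not a routine verification: from your relation $\lambda_{m_j}\epsilon_j=e'_j+\sum_{k<j}e'_k\,c_{j,k}$ one needs the valuation of the right-hand side to equal $\nu_p(\epsilon_j)$ \emph{exactly}, and the inputs you cite do not give this. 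The congruence $\psi_{p^n}\equiv c_1^{p^n}\bmod p$ pins down the single-variable coefficients $e'_k$ only modulo $p$ (more precisely, $pe'_k$ mod $p$), and Appendix \ref{app:image_of_Chern_Chow} computes the constants $d_i$, i.e.\ the normalisation of generators of additive operations to a single Chow component; neither controls the higher $e'_k$ of the particular lift $\psi_{p^n}$ (which depend on the inductive corrections by operations $\phi_s$, $s>p^n$, in the construction) sharply enough to exclude a cancellation in the sum $e'_j+\sum_{k<j}e'_k c_{j,k}$ at the critical valuation, which would force $e_j\in p\Zp$.

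The paper closes exactly this hole with a different mechanism, which your outline has no counterpart for: commutation of the additive operation $\psi_{p^n}$ with the pullback along the degree-$p$ Veronese map (Lemma \ref{lm:add_op_induct_coef}). That lemma produces the recursions $\alpha_{i+1}=\alpha_i-\tfrac{p^{p^n}-p}{v_n}\beta_i$ and $\beta_{i+1}=\beta_i+\tfrac{p^{p^n}-p}{v_n}\delta_i$ among \emph{multivariable} coefficients of the series $G_{1+i(p^n-1)}$, so that once one knows $\alpha_1\in p\Zp^\times$ (from the truncation being $p\cdot ch_{p^n}$ up to a unit) and $\beta_1\in p\Zp$ (from $\psi_{p^n}\equiv c_1^{p^n}\bmod p$, since the monomial $z_1^{p^n}z_2\cdots z_{p^n}$ is not a $p^n$-th power), the unit property propagates to all $j$ because the recursion only perturbs $\alpha_i$ by elements of $p^2\Zp$. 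If you want to salvage your Chern-character route, you would have to supply an analogous exact-valuation input for the $e'_k$ (or for the $\lambda_{m_k}$ directly), e.g.\ by also exploiting the Veronese relation on the single-variable series; as written, the argument establishes only $e_j\in\Zp$ and $e_1\in\Zp^\times$, not the statement for $j\ge 2$.
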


The proof of this and of the following proposition uses the next Lemma.

\begin{Lm}\label{lm:add_op_induct_coef}
Let $\phi\colon \Kn^1\rarr \Kn^1$ be an additive operation, 
such that in the notation of Section \ref{sec:cont}
\begin{multline*}
G_{1+i(p^n-1)}(z_1,\ldots, z_{1+i(p^n-1)}) = \alpha_i z_1\cdots z_{1+i(p^n-1)}
+\beta_i \sum_{k=1}^{1+i(p^n-1)} z_1\cdots z_k^{p^n} \cdots z_{1+i(p^n-1)}\\
+\delta_i  \sum_{k,s=1\colon k<s}^{1+i(p^n-1)} z_1\cdots z_k^{p^n} \cdots z_s^{p^n}\cdots z_{1+i(p^n-1)}
+\mathrm{other \ terms}, \quad i\ge 0,
\end{multline*}
where $\alpha_i, \beta_i, \delta_i \in \Zp$.

Denote by $v_n \in \Zp^\times$ the coefficient in the following equation:
$p\cdot_{\Kn}x \equiv px+v_n x^{p^n} \mod x^{p^n+1}$.

Then $\alpha_{i+1} =\alpha_i -\frac{p^{p^n}-p}{v_n}\beta_i$ for $i\ge 0$,
$\beta_{i+1}=\beta_i+\frac{p^{p^n}-p}{v_n} \delta_i$ for $i\ge 1$.
\end{Lm}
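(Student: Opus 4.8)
The plan is to track how the coefficients $\alpha_i,\beta_i,\delta_i$ transform under one of the partial Segre embeddings, exploiting that an additive operation commutes with pullbacks (Vishik's Theorem \ref{th:Vish_op}). Concretely, I would take the map $f\colon (\mathbb{P}^\infty)^{\times (2+i(p^n-1))}\rarr (\mathbb{P}^\infty)^{\times(1+(i+1)(p^n-1))}$ which is the partial Segre embedding merging two of the last coordinates together with the appropriate diagonals/projections so that the target variable count increases by exactly $p^n-1$; then $f^*_{K(n)}$ sends the merged variable $z$ to $F_{K(n)}(z',z'')$, or more conveniently one uses the multiplication-by-$p$ map. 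Actually the cleanest route: apply the operation $\phi$ to $z_1\cdots z_{1+i(p^n-1)}\cdot (p\cdot_{K(n)}w)$, expand $p\cdot_{K(n)}w \equiv pw + v_n w^{p^n}\pmod{w^{p^n+1}}$, use additivity of $\phi$ in this last argument to get $p\,\phi(z_1\cdots z_{1+i(p^n-1)}w) + v_n\,\phi(z_1\cdots z_{1+i(p^n-1)}w^{p^n}) + \ldots$, and compare with $\phi$ applied after substituting the variable $w$ by a product $w=z'z''\cdots$ of $p^n$ fresh variables via the Segre/diagonal maps.

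First I would fix notation for which monomials of $G_{1+(i+1)(p^n-1)}$ I am isolating: the "pure" monomial $z_1\cdots z_{1+(i+1)(p^n-1)}$ with coefficient $\alpha_{i+1}$, and the monomial with one $z_k^{p^n}$ factor with coefficient $\beta_{i+1}$. On the left-hand side of the comparison, the term $p\,\phi(z_1\cdots z_{1+i(p^n-1)}w)$ contributes: after the Segre substitution $w\mapsto z'_1\cdots z'_{p^n-1}\, z$ (so that $1+i(p^n-1)$ old variables plus $p^n-1$ new ones plus one more equals $1+(i+1)(p^n-1)$ variables once we identify things correctly), the pure monomial $\alpha_i z_1\cdots z_{1+i(p^n-1)}w$ becomes $\alpha_i z_1\cdots z_{1+(i+1)(p^n-1)}$, contributing $p\alpha_i$ to the pure coefficient; and the $\beta_i$-type monomial $\beta_i\sum z_1\cdots z_k^{p^n}\cdots z_{1+i(p^n-1)}w$ becomes a $\beta_i$-type monomial, contributing $p\beta_i$ to the $\beta$-coefficient. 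The term $v_n\,\phi(z_1\cdots z_{1+i(p^n-1)}w^{p^n})$: here $w^{p^n}$ after Segre expansion produces, among other things, a monomial where one new variable appears to the power $p^n$; tracking the pure monomial of $G$ in these $1+(i+1)(p^n-1)$ variables forces us to pick up a $z_k^{p^n}$ somewhere, so the pure coefficient $\alpha_i$ of $G_{1+i(p^n-1)}$ combined with the $w^{p^n}$ contributes to the \emph{$\beta$}-coefficient with weight $v_n$, and the $\beta_i$-monomial combined with $w^{p^n}$ contributes with weight $v_n$ to the $\delta$-coefficient. Collecting everything and matching the coefficient of $p^{p^n}$ hidden in $p\cdot_{K(n)}w$ versus the lower-order $p$, one reads off $p^{p^n}\alpha_{i+1} = p\,\alpha_{i+1}$-type identities; more precisely, writing the equality of the two expansions and solving for $\alpha_{i+1}$ gives $p\alpha_{i+1}=p\alpha_i - (p^{p^n}-p)\beta_i/v_n\cdot(\text{stuff})$, after which one divides by the appropriate power and rearranges. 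I would organize this as: (1) set up the Segre map precisely, (2) expand $p\cdot_{K(n)}w$ to order $p^n$, (3) apply $\phi$ additively, (4) extract the coefficient of the pure monomial to get the $\alpha$-recursion and of the one-power monomial to get the $\beta$-recursion.

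The main obstacle I expect is combinatorial bookkeeping: correctly counting how many ways a given monomial in $1+(i+1)(p^n-1)$ variables arises from the Segre expansion of $z_1\cdots z_{1+i(p^n-1)}\cdot(p\cdot_{K(n)}w)$, since $w^{p^n}$ expands via the formal group law into a sum over compositions and the multinomial coefficients $\binom{p^n}{j}$ all carry a factor of $p$ (which is why the combination $p^{p^n}-p$ appears — it is $p^{p^n}$ from the $w^{p^n}$ in $p\cdot_{K(n)}w$ minus the naive $p$ from expanding $(z'_1+\cdots)^{p^n}$ or rather from the linear part). I would need to be careful that the only monomials of $G_{1+i(p^n-1)}$ that feed into the targeted monomials of $G_{1+(i+1)(p^n-1)}$ are exactly the three listed types (pure, one $p^n$-power, two $p^n$-powers), which follows because $\phi$ is additive hence its $G$-series are $p^n$-gradable when the target is $p^n$-typical (Prop.~\ref{prop:add_grad}), so every exponent is $\equiv 1 \pmod{p^n-1}$ and the degree constraint pins down the shape; the restriction $i\ge 1$ for the $\beta$-recursion is because for $i=0$ there is no $\delta_0$ term available (only one variable, so $z_k^{p^n}z_s^{p^n}$ monomials cannot occur). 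Once the gradability is invoked the computation is a finite check, and I would present the $\alpha$-recursion in full and indicate the $\beta$-recursion is "entirely analogous, tracking the coefficient of $z_1\cdots z_k^{p^n}\cdots z_{1+(i+1)(p^n-1)}$ instead."
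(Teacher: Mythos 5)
Your underlying idea---force $\phi$ to commute with the pullback along the degree-$p$ Veronese map, expand $p\cdot_{K(n)}w\equiv pw+v_nw^{p^n}\bmod w^{p^n+1}$, use additivity, and compare coefficients---is the same germ as the paper's argument, but your concrete setup does not produce the recursion. You evaluate $\phi$ on $z_1\cdots z_{1+i(p^n-1)}\cdot(p\cdot_{K(n)}w)$ with $w$ an \emph{extra}, $(2+i(p^n-1))$-nd variable. That element lies in the graded component $K(n)^{2\bmod (p^n-1)}$, where the operation $\phi:K(n)^1\rarr K(n)^1$ is not even defined (and vanishes if you extend it by zero); the comparison you set up therefore relates the series $G_{2+i(p^n-1)}$ and $G_{2+(i+1)(p^n-1)}$, which are not the series of the Lemma and carry none of the data $\alpha_i,\beta_i,\delta_i$, so the resulting identity is vacuous. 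The paper instead applies the Veronese $[p]\times\id^{\times i(p^n-1)}$ to one of the \emph{existing} $1+i(p^n-1)$ factors, so every arity that occurs stays $\equiv 1\pmod{p^n-1}$: commuting $\phi$ with this pullback gives $G_{1+i(p^n-1)}(p\cdot_{K(n)}z_1,z_2,\dots)\equiv p\,G_{1+i(p^n-1)}(z_1,\dots)+v_n\,G_{1+(i+1)(p^n-1)}(z_1^{\times p^n},z_2,\dots)$ modulo $(z_1^{p^n+1})$, where the higher series enters through the partial diagonal, since $\phi(z_1^{p^n}z_2\cdots z_{1+i(p^n-1)})=G_{1+(i+1)(p^n-1)}(z_1,\dots,z_1,z_2,\dots)$; extracting the coefficients of $z_1^{p^n}z_2\cdots z_{1+i(p^n-1)}$ and of $z_1^{p^n}z_2^{p^n}z_3\cdots z_{1+i(p^n-1)}$ then yields the two recursions.

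A second, related problem is your ``Segre substitution'' $w\mapsto z'_1\cdots z'_{p^n-1}z$: no morphism between products of projective spaces induces substitution of a \emph{product} of first Chern classes, because the Segre embedding pulls $c_1(\OO(1))$ back to the formal group sum $F_{K(n)}(z',z'')$, not to $z'z''$; the only legitimate devices here are the partial diagonal (equal variables, producing powers such as $z_1^{p^n}$) and the Veronese (producing $p\cdot_{K(n)}z$). Relatedly, your bookkeeping reads the coefficients $\alpha_i,\beta_i$ off monomials such as ``$\alpha_i z_1\cdots z_{1+i(p^n-1)}w$'' of a series in $2+i(p^n-1)$ variables, i.e.\ it treats $\phi(z_1\cdots z_{1+i(p^n-1)}w)$ and $\phi(z_1\cdots z_{1+i(p^n-1)}w^{p^n})$ as if they equalled $G_{1+i(p^n-1)}(z_1,\dots)\cdot w$ and $G_{1+i(p^n-1)}(z_1,\dots)\cdot w^{p^n}$; for a non-multiplicative additive operation no such product formula holds, and these values are governed by higher-arity series via diagonals, not by $G_{1+i(p^n-1)}$. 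Your remarks that $p^n$-gradability (Prop.~\ref{prop:add_grad}, together with the $p^n$-gradability of $p\cdot_{K(n)}z$) restricts which monomials can interfere, and your explanation of why the $\beta$-recursion needs $i\ge 1$, are both correct; but to make the argument work you must apply $[p]$ to an existing variable and replace the product substitutions by partial diagonals---at which point the computation becomes exactly the paper's proof.
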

\begin{proof}[Proof of the Lemma.]
The pull-back of the Veronese map $[p]$ of degree $p$ on $\mathbb{P}^\infty$ induces
the multiplicative map $z\rarr p\cdot_{\Kn} z$ on $\Kn^*(\mathbb{P}^\infty)$.
The action of the operation $\phi$ on products of projective spaces
has to commute with pull-backs along maps $[p]\times \id^{i(p^n-1)}$ for each $i\ge 0$
which gives a non-trivial relation on series $G_i$.
Explicitly, comparing the value of \mbox{$([p]\times \id^{i(p^n-1)})^*\circ \phi$} with 
$\phi\circ ([p]\times \id^{i(p^n-1)})^*$ on $z_1 z_2\cdots z_{1+i(p^n-1)}$ we obtain the following.
The series $G_{1+i(p^n-1)}(p\cdot_{\Kn}z_1,z_2,\ldots,z_{1+i(p^n-1)})$
has to be equal 
to $pG_{1+i(p^n-1)}(z_1,z_2,\ldots,z_{1+i(p^n-1)})+v_n G_{1+(i+1)(p^n-1)}(z_1^{\times p^n}, z_2,
\ldots,z_{1+i(p^n-1)})$
modulo $(z_1^{p^n+1})$.

Calculating the coefficient of the monomial $z_1^{p^n}z_2\cdots z_{1+i(p^n-1)}$ in this equation
we obtain: $v_n\alpha_{i+1}+p\beta_i=v_n\alpha_i+p^{p^n}\beta_i$.
Similarly calculating the coefficient of the monomial $z_1^{p^n}z_2^{p^n}z_3\cdots z_{1+i(p^n-1)}$ 
we obtain: $v_n\beta_{i+1}+p\delta_i=v_n\beta_i+p^{p^n} \delta_i$.
Note, however, that we have assumed in the last conclusion that $1+i(p^n-1)\ge 2$,
i.e.\ $i\ge 1$.
\end{proof}

\begin{proof}[Proof of Proposition~\ref{prop:constant_cpn}.]
The operation 
$c_{p^n}\colon \Kn^1\rarr \tau^{p^n}\Kn^1$ as defined by the construction of Section \ref{sec:construction_chern_classes}
acts on products of projective spaces as $\frac{c_1^{p^n}+\psi_{p^n}}{p}$
where $\psi_{p^n}$ is an additive endo-operation which takes values in $\tau^{p^n} \Kn^1$
and its truncation $tr_{p^n} \psi_{p^n}$ is a generator of additive integral operations
to $\CH^{p^n}\ot\Zp$. 

As $c_1^{p^n}$ sends $z_1\cdots z_{1+i(p^n-1)}$ to $p^n$-th power of a series divisible by 
$z_1\cdots z_{1+i(p^n-1)}$ for any $i\ge 1$,
the coefficient of $z_1\cdots z_{1+i(p^n-1)}$ in $c_1^{p^n}(z_1\cdots z_{1+i(p^n-1)})$ is zero for any $j$.
Thus, the coefficient of $z_1\cdots z_{1+i(p^n-1)}$ in $c_{p^n}(z_1\cdots z_{1+i(p^n-1)})$
is equal to the corresponding coefficient of the operation $\psi_{p^n}$
divided by $p$.

We are going to show that for the operation $\psi_{p^n}$
we have $\alpha_1\in p\Zp^\times$ and $\beta_1\in p\Zp$ in the notation 
of Lemma \ref{lm:add_op_induct_coef}.
It follows by induction on $i$ from relations there
 that $\beta_i \in p\Zp$ for any $i\ge 1$,
and therefore also by induction on $i$ that $\alpha_i\in p\Zp^\times$ for all $i\ge 1$.
Thus, we would have $\frac{\alpha_i}{p}\in\Zp^\times$ as needed.

To calculate the coefficient $\alpha_1$ we note that
it coincides with the corresponding coefficient of the truncation $tr_{p^n} \psi_{p^n}$
which is a generator of additive operations to $\CH^{p^n}\ot\Zp$.
By a straight-forward computation\footnote{or, for a more general discussion yielding this, see Appendix \ref{app:image_of_Chern_Chow}}
 one checks that a generator can be chosen to be $p\cdot ch_{p^n}$
where $ch\colon \Kn\rarr \CH^*_\QQ$ is the Chern character.
Since we have $p\cdot ch_{p^n}(z_1\cdots z_j)=pz_1\cdots z_j+\ldots$, 
it follows that $\alpha_1$ is proportional to $p$.

Since $pc_{p^n}=c_1^{p^n}-\psi_{p^n}$, we know that $\psi_{p^n} \equiv c_1^{p^n} \mod p$,
and therefore  all coefficients of those monomials of the series $\psi_{p^n}(z_1\cdots z_{1+i(p^n-1)})$
 which are not $p^n$-powers have to be zero modulo $p$.
 In particular,  $\beta_1$ is a coefficient of a monomial $z_1^{p^n}\cdots z_{p^n}$,
and therefore we have $\beta_1\in p\Zp$.  
\end{proof}

For the operation $c_{p^n}$ we have seen that
the $p$-valuation of the constant term of the series $\frac{c_{p^n}(z_1\cdots z_j)}{z_1\cdots z_j}$
does not depend on the choice of the operation itself.
However, this is not true for higher Chern classes.

In order to estimate the graded groups of the gamma filtration in the best way
one has to choose those operations in which the corresponding coefficients
have minimal $p$-valuation. 
Perhaps, this explains the importance of the following clumsy proposition
which is used in~\cite{SechSem}.

\begin{Prop}\label{prop:constant_chern_classes2}
Let $j\ge 0$, and let $p$ be the prime number corresponding to the $n$-th Morava K-theory,
i.e.\ $\Kn^*(\Spec k)=\Zp$.

There exist operations $\chi, \psi\colon \Kn^1\rarr \gamma^{2p^n-1}\Kn^1$
which satisfy the following. 

Denote by $h_j, f_j \in\Zp$, the coefficients
of the monomial $z_1\cdots z_{1+j(p^n-1)}$ in the series
$\chi(z_1\cdots z_{1+j(p^n-1)})$, 
 $\psi(z_1\cdots z_{1+j(p^n-1)}) \in \Kn^1((\mathbb{P}^\infty)^{\times 1+j(p^n-1)})$, respectively.
Then 

\begin{enumerate}[1)]
\item we have $h_j = f_j = 0$ for $j=0,1$.
\end{enumerate}
Let $j\ge 2$.
\begin{enumerate}[1)]
\setcounter{enumi}{1}
\item\label{item:coef_2pn-1}
\begin{itemize}
\item if $p\neq 2$ 
 we have $h_j \in p^{t_j}\Zp^\times$
where $t_j=\nu_p(j-1)+1$; 
\item if $p=2$ we have $t_j=\nu_2(j-1)+2$ if $j$ is odd
and $t_j=1$ if $j$ is even.
\end{itemize}
Here $\nu_p$ is the $p$-valuation on integers.
\item\label{item:coef_adams} We have $f_j \in p^{p^n}\Zp^{\times}$.
\end{enumerate}
\end{Prop}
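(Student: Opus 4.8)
The strategy is to exhibit explicit candidates for $\chi$ and $\psi$ among the operations $K(n)^1\rarr\tau^{2p^n-1}K(n)^1$ furnished by Theorem~\ref{th:main} and Corollary~\ref{cr:add_op_Kn-pntyp}, to note that the coefficient $h_j$ (resp.\ $f_j$) only depends on the additive part of the operation, and then to extract the $p$-valuations from the recursion of Lemma~\ref{lm:add_op_induct_coef}.

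For part~\ref{item:coef_adams}) I would take
$$\psi:=\Psi^p-p\cdot\id-\tfrac{p^{p^n}-p}{a_{p^n}}\,c_{p^n},$$
where $\Psi^p$ is the $p$-th Adams operation on $K(n)^*$ (the multiplicative operation with $\Psi^p(c_1^{K(n)}(L))=p\cdot_{K(n)}c_1^{K(n)}(L)$, which acts on $gr^{1+j(p^n-1)}_\tau K(n)^1$ by multiplication by $p^{1+j(p^n-1)}$), and $a_{p^n}\in\Zp^\times$ is the unit of Lemma~\ref{lm:c_i-on-gr-top-Kn}. Since $\Psi^p-p\cdot\id$ vanishes on $gr^1_\tau$ while $c_{p^n}$ is trivial on $gr^1_\tau$ and acts on $gr^{p^n}_\tau$ by $a_{p^n}$, the operation $\psi$ lands in $\tau^{2p^n-1}K(n)^1$ (and $\tfrac{p^{p^n}-p}{a_{p^n}}\in p\Zp$). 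Choosing a representative of $c_{p^n}$ whose additive part has vanishing coefficient of $z_1^{p^n}z_2\cdots z_{p^n}$, the recursion of Lemma~\ref{lm:add_op_induct_coef} forces the coefficient of $z_1\cdots z_{1+j(p^n-1)}$ in $c_{p^n}(z_1\cdots z_{1+j(p^n-1)})$ to be the constant $a_{p^n}$ for all $j\ge1$ (this refines Proposition~\ref{prop:constant_cpn}). Reading off the coefficient of $z_1\cdots z_{1+j(p^n-1)}$ on products of projective spaces ($\Psi^p$ contributes $p^{1+j(p^n-1)}$, $p\cdot\id$ contributes $p$, and $c_{p^n}$ contributes $a_{p^n}$) gives
$$f_j=p^{1+j(p^n-1)}-p-(p^{p^n}-p)=p^{p^n}\bigl(p^{(j-1)(p^n-1)}-1\bigr),$$
which lies in $p^{p^n}\Zp^\times$ because $(j-1)(p^n-1)\ge p^n-1\ge1$ for $j\ge2$.

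For part~\ref{item:coef_2pn-1}) I would take $\chi$ to be a suitable $\Zp$-linear combination of the Chern class $c_{2p^n-1}$ and the additive operations $\phi_{1+j(p^n-1)}$, $j\ge2$, of Corollary~\ref{cr:add_op_Kn-pntyp}. The reduction to additive operations is as follows: by Theorem~\ref{th:main} every operation $K(n)^1\rarr\tau^{2p^n-1}K(n)^1$ is a series in Chern classes; every monomial of degree $\ge2$ in the $c_i$ sends $z_1\cdots z_m$ to a series divisible by $(z_1\cdots z_m)^2$, hence of degree $>m$; and the non-additivity of the linear-in-Chern-classes part is, by the Cartan formula, again a polynomial of degree $\ge2$ in lower Chern classes. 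Hence the coefficient of $z_1\cdots z_{1+j(p^n-1)}$ only sees the additive part of $\chi$, and it suffices to compute the numbers $\alpha_j$ of Lemma~\ref{lm:add_op_induct_coef} for that additive part.

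It remains to run the recursion. From $\log_{K(n)}(x)=x+\tfrac{a_1}{p}x^{p^n}+\dots$ with $a_1\in\Zp^\times$ one has $p\cdot_{K(n)}x\equiv px+v_nx^{p^n}\bmod x^{p^n+1}$ with $v_n=a_1(1-p^{p^n-1})\in\Zp^\times$, so $u:=\tfrac{p^{p^n}-p}{v_n}$ has $\nu_p(u)=1$. Because the additive part of $\chi$ takes values in $\tau^{2p^n-1}$, the monomials of degrees $1$ and $p^n$ do not occur, so $\alpha_0=\alpha_1=0$ and $\alpha_2=-u\beta_1$; comparing with the independent evaluation $\alpha_2=a_{2p^n-1}$, which equals $p$ up to a unit (the integral additive generator to $\CH^{2p^n-1}\ot\Zp$ being $p\cdot ch_{2p^n-1}$, cf.\ Appendix~\ref{app:image_of_Chern_Chow}), gives $\beta_1\in\Zp^\times$ and then $\alpha_3=2p-u^2\delta_1$, and so on. Iterating $\alpha_{i+1}=\alpha_i-u\beta_i$ and $\beta_{i+1}=\beta_i+u\delta_i$ yields a finite-difference expansion $\alpha_j=-u\binom{j-1}{1}\beta_1-u^2\binom{j-1}{2}\delta_1-\dots$, from which, for $p\neq2$, the first term dominates and $\nu_p(h_j)=\nu_p(u)+\nu_p(j-1)=1+\nu_p(j-1)=t_j$. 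The main obstacle is the case $p=2$: there $u=2w$ with $w\in\Zp^\times$, the first two terms become of the same $2$-adic order for odd $j$ (the factor $\tfrac12$ in $\binom{j-1}{2}=\tfrac{j-1}{2}(j-2)$ cancels against $u^2$) and the sums of $2$-adic units that arise are always even, which is what produces the extra $+1$ in the exponent; to make $\nu_2(h_j)$ equal to $\nu_2(j-1)+2$ for every odd $j$ and equal to $1$ for even $j$ one must pin down the low-order coefficients $\beta_1,\delta_1,\dots$ of the additive part of $\chi$ modulo $4$, which is possible using the freedom of adjusting $\chi$ by the operations $\phi_{1+j(p^n-1)}$ together with the explicit description of $ch_{2p^n-1}$ and of the integral additive generators in Appendix~\ref{app:image_of_Chern_Chow}. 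Carrying this bookkeeping through the recursion is the bulk of the argument.
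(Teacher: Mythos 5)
Your proposed $\psi$ for part 2) does not in fact take values in $\tau^{2p^n-1}K(n)^1$, and this is a genuine gap, not a detail. Evaluate it on $z_1\in K(n)^1(\mathbb{P}^\infty)$: one has $(\Psi^p-p\cdot\id)(z_1)=p\cdot_{K(n)}z_1-pz_1=v_nz_1^{p^n}+\ldots$ with $v_n\in\Zp^\times$, whereas the subtracted term $\tfrac{p^{p^n}-p}{a_{p^n}}c_{p^n}(z_1)$ has its coefficient at $z_1^{p^n}$ in $p\Zp$ (the operation $c_{p^n}$ is integral and the prefactor is divisible by $p$). Hence $\psi(z_1)$ has a unit coefficient at $z_1^{p^n}$ and does not even lie in $\tau^{p^n+1}$. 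The mistake is to conflate the endomorphism that $\Psi^p-p\cdot\id$ induces on $gr^{p^n}_\tau$ (where it is indeed multiplication by $p^{p^n}-p$) with its leading term on elements of $\tau^1K(n)^1$: the latter is governed by the truncation $tr_{p^n}(\Psi^p-p\cdot\id)$, which is a \emph{unit} multiple of the additive generator $\phi_{p^n}$, and no integral multiple of the non-additive $c_{p^n}$ (whose truncation $c^{\CH}_{p^n}$ contains the term $(c_1^{\CH})^{p^n}/p$) can cancel it. This is precisely why the paper \emph{composes} with $c_{p^n}$ instead of subtracting it: in $\chi_k=(\Psi_k-k^{p^n}\cdot\id)\circ c_{p^n}$ the input to $\Psi_k-k^{p^n}\cdot\id$ already lies in $\tau^{p^n}$, where its leading graded piece genuinely dies, and the coefficient comes out as $e_jk^{p^n}(k^{(j-1)(p^n-1)}-1)$ with $e_j\in\Zp^\times$ from Proposition~\ref{prop:constant_cpn}; taking $k=p$ gives part 2) immediately.

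For part 1) your reduction to the additive part is reasonable, but the actual content — producing a $\chi$ for which the valuations are exactly $t_j$ — is left as "bookkeeping". As you note, this requires extending Lemma~\ref{lm:add_op_induct_coef} to a whole hierarchy of coefficient recursions (the lemma only controls $\alpha_i,\beta_i,\delta_i$), justifying the binomial-type expansion of $\alpha_j$, and pinning down $\beta_1,\delta_1,\ldots$ modulo powers of $p$, which is especially delicate at $p=2$; none of this is carried out, and the claim that the needed low-order coefficients can be arranged "using the freedom of adjusting $\chi$" is not proved. The paper sidesteps all of this with the same composition trick: for $k$ a primitive root modulo $p^2$ (respectively $k\equiv 3\bmod 4$ when $p=2$) the valuation of $h_j$ is just $\nu_p\bigl(k^{(j-1)(p^n-1)}-1\bigr)$, computed by elementary structure of $(\ZZ/p^l)^\times$, with the unit $e_j$ supplied by Proposition~\ref{prop:constant_cpn}. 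If you want to salvage your route, the minimal fix is to replace both of your candidate operations by $(\Psi_k-k^{p^n}\cdot\id)\circ c_{p^n}$ for suitable $k$, after which your valuation analysis becomes unnecessary.
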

\begin{proof}

Note that by Prop.~\ref{prop:gamma_unique} every operation $\phi\colon \Kn^*\rarr \tau^j \Kn^*$
takes values in $\gamma^j \Kn^*$. Thus, by Prop.~\ref{prop:operations_from_tilde}
it suffices to construct operations $\chi, \psi$
such that on products of projective spaces they take values in $\tau^{2p^n-1}$
and corresponding numbers $h_j, f_j$ satisfy the conditions above.
In particular, $f_j=h_j=0$ for $j=0,1$ will be automatically satisfied.

Let $k\in \NN$, let $\Psi_k\colon \Kn^*\rarr \Kn^*$ be the $k$-th Adams operation (see \cite[Th. 6.16]{Vish1}),
and consider an operation $\chi_k= (\Psi_k - k^{p^n}\cdot \id)\circ c_{p^n}$.
Both operations $\chi, \psi$ will be constructed as $\chi_k$ for specific $k$.

\ref{item:coef_2pn-1}) First, we claim that $\chi_k$ takes values in $\tau^{2p^n-1}$.
Using the Cartan formula one obtains that
$\pd \chi_k(u,v)=(\Psi_k - k^{p^n}\cdot \id)(-\frac{1}{p}\sum_{j=1}^{p^n-1}c_1(u)^ic_1(v)^{p^n-i})$.
However, $\Psi_k - k^{p^n}\cdot \id$ sends $z_1\cdots z_{1+j(p^n-1)}$ to a series in $\tau^{2p^n-1}$
for $j\ge 1$ (the claim is non-trivial only for $j=1$), and one deduces that this operation
sends a product of $p^n$ elements in $\Kn^1$ to an element of $\tau^{2p^n-1}\Kn^1$.
Thus, it is enough to check that the (non-additive) operation $\chi_k$ sends 
additive generators $z_1\cdots z_{1+j(p^n-1)}$ of 
$\Kn^1((\mathbb{P}^\infty)^{\times {1+j(p^n-1)}})$ to elements of $\tau^{2p^n-1}\Kn^1$.
This claim is non-trivial only for $j=0,1$ and can be checked by direct computations
using the fact that $c_{p^n}$ takes values in $\tau^{p^n}\Kn^1$.

Second, we claim that for specific $k$ its truncation 
 is a generator of primitive operations to $\CH^{2p^n-1}\ot\Zp$,
i.e.\ $tr_{2p^n-1} \chi_k = a\cdot c^{\CH}_{2p^n-1}+b(c_1^{\CH})^{p^n}c^{\CH}_{p^n}+d(c^{\CH}_1)^{2p^n-1}$
where $a\in \Zp^\times$, $b,d\in\Zp$. In order to prove this 
it is enough to show that $\chi_k$ sends $z_1\cdots z_{2p^n-1}$
to $ap\cdot z_1\cdots z_{2p^n-1}+$higher degree terms where $a\in\Zp^\times$,
because $c^{\CH}_{2p^n-1}$ is the only operation to $\CH^{2p^n-1}\ot\Zp$ which acts non-trivially on $z_1\cdots z_{2p^n-1}$.
Moreover, it acts proportionally to a generator of additive operations $\phi^{\CH}_{2p^n-1}$
 which can be chosen to be $p\cdot ch_{2p^n-1}$ (see Appendix~\ref{app:image_of_Chern_Chow}).
Since $c_{p^n}(z_1\cdots z_{2p^n-1})=e_2\cdot z_1\cdots z_{2p^n-1}$ 
and $e_2\in \Zp^\times$ by Proposition~\ref{prop:constant_cpn}, 
we calculate $\chi_k(z_1\cdots z_{2p^n-1})=e_2 k^{p^n}(k^{p^n-1}-1) \cdot z_1\cdots z_{2p^n-1}+$
higher degree terms. Thus, it is left to calculate the $p$-adic valuation of $k^{p^n}(k^{p^n-1}-1)$.

Let $k$ be coprime to $p$, then $\nu_p(k^{p^n})=0$, and $[k]\in (\ZZ/p^2)^\times$.
If $p\neq 2$, then \mbox{$(\ZZ/p^2)^\times \cong \ZZ/p\times \ZZ/(p-1)$,}
and since $p\nmid p^n-1$ we see that $p^2 \nmid k^{p^n-1}-1$
if $k$ is a generator of the multiplicative group $(\ZZ/p^2)^\times$.
If $p=2$, then $(\ZZ/4)^\times \cong \ZZ/2$,
and since $2^n-1$ is odd, we similarly obtain the claim for $k \equiv 3 \mod 4$.
Thus, if we choose $\chi$ to be $\chi_k$ for these $k$, its truncation is a primitive operation
and at least the $p$-adic valuation of coefficient $z_1\cdots z_{1+2(p^n-1)}$ of $\chi_k(z_1\cdots z_{1+2(p^n-1)})$ 
can not be reduced.

Third, we can now calculate constants $h_j$ of the operation $\chi_k$.
The operation $\chi_k$ sends $z_1\cdots z_{1+j(p^n-1)}$
to $e_j k^{p^n}(k^{(j-1)(p^n-1)}-1) z_1\cdots z_{1+j(p^n-1)}$+higher degree terms.
If $p\neq 2$, then $(\ZZ/p^l)^\times \cong \ZZ/p^{l-1}\times \ZZ/(p-1)$.
Thus, if $k$ is a generator of multiplicative groups $(\ZZ/p^l)^{\times}$ for all $l$ (e.g.\ $k=1+p$),
we need to look at the order of $j-1$ in $\ZZ/p^{l-1}$. 
If $l-1=\nu_p(j-1)+1$, then $p^l \nmid e_j k^{p^n}(k^{(j-1)(p^n-1)}-1)$
and $h_j \in p^{t_j}\Zp^\times$ where $t_j=\nu_p(j-1)+1$.

If $p=2$, then $(\ZZ/2^l)^\times \cong \ZZ/2^{l-2}\times \ZZ/2$ for $l\ge 2$.
Similarly, we need to look at the order of $j-1$ in $\ZZ/2^{l-2}$. 
However, if $j-1$ is odd, then $4\nmid k^{(j-1)(2^n-1)}-1$ for $k\equiv 3 \mod 4$.
If $j-1$ is even and $l-2=\nu_2(j-1)+1$, then $2^l \nmid e_j k^{p^n}(k^{(j-1)(p^n-1)}-1)$
and $h_j \in 2^{t_j}\Z{2}^\times$ where $t_j=\nu_2(j-1)+2$. This finishes the proof of \ref{item:coef_2pn-1}).

However, in the construction above we can also choose $k$ to be $p$, 
and then the $p$-adic valuation of $e_j k^{p^n}(k^{(j-1)(p^n-1)}-1)$ is $p^n$ for every $p$.
Thus, if we choose $\psi\colon =\chi_p$, this shows \ref{item:coef_adams}).
\end{proof}

\subsection{Application: estimates on the torsion in the Chow groups of quadrics}\label{section:application_quadrics}

\phantom{a}

The gamma filtration on $\Kn^*$ that we constructed in this paper 
provides a new tool for the computation of Chow groups in codimensions up to $p^n$,
due to Prop.~\ref{prop:morava_gamma_properties},~vi).
However, to achieve this for a smooth variety $X$,
 one needs to be able to compute $\Kn^*(X)$ (in the cases when $\CH^*(X)$ is not known).

In the rest of this section we consider only $\Kn$ for the prime $2$.
 
In~\cite{SechSem} Semenov jointly with the author of this paper show that there exists an abundance of quadrics $Q$,
for which $\Kn^*(Q)$ is as simple as it can be: $\Kn^*(Q)$ is isomorphic to $\Kn^*(Q\times_k \overline{k})$.
The latter group is a free $\Zp$-module with the well-known generators coming from the cellular structure of the split quadric
(see e.g.\ [loc.cit., Prop.~8.8]).
By applying the constructions and computations of this section the following result is then achieved.

\begin{Th}[{\cite[Th.~8.14]{SechSem}}]
Let $Q$ be a smooth quadric over a field $k$  such that 
the corresponding quadratic form $q$
 lies either in the ideal $I^{n+2}(k)$
or in the set $\langle c\rangle+I^{n+2}$ inside the Witt ring of $k$ for some $c\in F^\times$.

Let $D$ be the dimension of $Q$, $d:=[D/2]$,
and let $j\in[0,2^n-2]$ be the unique integer such that $d\equiv 1+j\mod 2^n-1$.

Then $\CH^{0\le * \le 2^n-1}(Q)=\ZZ$ and 
\begin{enumerate}
\item if $j\neq 0$, then $\CH^{2^n}(Q)=\ZZ$.

\item if $j=0$,  and the dimension of the quadric is odd,
 then the torsion in $\CH^{2^n}(Q)$ is at most $\ZZ/2$;
 
\item if $j=0$ and the dimension of the quadric is even,
$d=1+r(2^n-1)$,
then the torsion in $\CH^{2^n}(Q)$ is at most $\ZZ/2^s$
where $s=1$ if $r$ is even, $s=\min (\nu_2(r-1)+2, 2^n)$ otherwise.
\end{enumerate}
\end{Th}

So far, no other methods have reproduced the estimates obtained in this theorem. 
In other words, the gamma filtration on $\Kn^*$ remains the only tool that allows one to achieve this result.
We refer the reader to \cite[Sec.~8]{SechSem} for the context and history of this question,
 and for the proof of the above theorem.

\appendix
\section{}\label{app:disclaimer}

\subsection*{Disclaimer on the free theories with relation to the Landweber-exactness}

\phantom{a}

This expository appendix clarifies the relation of Definition~\ref{def:free_theory}
to the definitions of generalized cohomology theories in topology -- in particular,
the lack of the Landweber-exactness condition on the formal group law.
We recall what the latter means and survey the results in motivic homotopy theory
about the construction and the properties of oriented motivic spectra like Morava K-theory.
In particular, we review Levine's geometric Landweber exactness, which explains why the algebraic Morava 
K-theory studied in this paper is a meaningful analogue of the topological Morava K-theory.

We provide ample references for the motivic side of the picture
and refer the reader to \cite{Adams, Rav, Quillen} and references therein for the classical topological results.

\subsubsection*{Landweber-exactness.}

By the result of Quillen,
there exists a canonical isomorphism $\LL \xrarr{\cong} \MU^*(pt)$ of graded rings.
Thus, given an $\LL$-module $M$ one can ask whether $\MU^*(X)\otimes_{\LL} M$ 
is a generalized cohomology theory. 
Most of the Eilenberg--Steenrod axioms are easy to verify,
except for the ``exactness'' one, which is a long exact sequence of the cohomology groups.
For this axiom to hold, the long exact sequence for $\MU^*$ should stay exact after tensoring it with $M$ over $\LL$,
and this certainly holds if $M$ is a flat $\LL$-module.

However, $\MU^*(X)$ has an additional structure of $(\MU^*(pt), \MU^*(MU))$-comodule,
or, in more modern terms, of a quasi-coherent sheaf on the stack of formal groups.
It was shown by Landweber that $(\MU^*(pt), \MU^*(MU))$-comodules 
viewed as $MU^*(pt)$-modules (i.e.\ the same as $\LL$-modules)
must satisfy some severe restrictions on their structure.
In particular, if $X$ is a finite CW-complex, then the $\LL$-module $\MU^*(X)$ has a filtration
with quotients $\LL/I(p,n)$, where $I(p,n)$ are certain prime ideals in $\LL$,
known as Landweber ideals.
For $\MU^*(X)\otimes_{\LL} M$ to be a cohomology theory it thus suffices to check that $\Tor^{\LL}_{>0}(M,\LL/I(p,n))=0$,
the condition being generally known as {\sl Landweber-exactness}.

Thus, if we have a graded formal group law $F$ over a graded ring $R$,
and $R$ is Landweber-exact as $\LL$-module,
then we get a generalized cohomology theory $\MU^*(-)\otimes_\LL R$.
For example, the multiplicative formal group law over $\ZZ[\beta, \beta^{-1}]$ is Landweber-exact,
and the corresponding generalized cohomology theory is the complex K-theory (this result is known as
the Conner--Floyd isomorphism). Other Landweber-exact formal group laws include
the ones of the Brown--Peterson cohomology $\BPtop$ and the Johnson--Wilson cohomology $\Entop$,
as well as elliptic cohomology.
However, the formal group laws of Morava K-theories are {\it not} Landweber-exact.
In particular, the topological Morava K-theory $\Kntop(X)$ of a CW-complex $X$
is {\it not} isomorphic to $\MU^*(X)\ot_{\LL} \Kntop(pt)$.

The definition~\ref{def:free_theory}, however, does not require any Landweber-exactness of the formal group law,
and, in particular, the algebraic Morava K-theory $\Kn(X)$ of a smooth variety $X$ that we define in Section~\ref{sec_op_mor}
is given by the formula $\Omega^*(X)\otimes_{\LL} \Kn(pt)$.
As we explain below, this is not a ``wrong'' definition of the algebraic Morava K-theory,
but only a ``geometric'' part of the motivic analogue of $\Kntop$.

\subsubsection*{Constructing complex-oriented spectra without Landweber-exactness}

Topological Morava K-theory can be characterized
as a complex-oriented cohomology theory with coefficients $\Fp[v_n,v_n^{-1}]$
and a formal group law of height $n$.
One constructs the spectrum $\Kntop$ representing this cohomology theory
as a certain ``quotient'' of $\MU$ in the sense that one kills and inverts
some of the coefficients of $\MU(pt)$.

The ring map  $\LL \rarr \Fp[v_n,v_n^{-1}]$ that corresponds to the $p$-typical formal group law of $\Kntop$
kills $p$ and all the generators of $\LL$ except for $x_{p^n-1}$ (of topological degree $2(1-p^n)$)
and sends the latter to $v_n$.
To construct $\Kntop$ one performs the following on the level of spectra:
first, killing $p$ and the generators $x_i$ of $\MU(pt)$,  $i\neq p^n-1$, in $\MU$,
 and then inverting the generator $x_{p^n-1}$. If one wants to obtain a $p$-local version of $\Kntop$,
 one can do a similar procedure with inverting all primes except for $p$ instead of killing~$p$. 

Note that the procedures of killing coefficients in the base ring
do not preserve multiplicative structures on the underlying spectra, and the question what kind of multiplication $\Kntop$ admits
is a more subtle one (although well-studied, see e.g.\ \cite{Rob}). 
Below we recall that the construction of the motivic spectrum of the algebraic Morava K-theory $\Knspec$
follows the same procedure.

\subsubsection*{Motivic spectra and motivic Landweber exactness} 
Generalized cohomology theories in topology define spectra --
objects of the stable homotopy category $\SH^{top}$.
The algebro-geometric analogue of it 
is the stable motivic homotopy category $\SH(k)$ and it was constructed by
Morel and Voevodsky~\cite{VoeA1},
its objects are called motivic spectra. 
Given an inclusion of the field $k$ into complex numbers,
there exists a topological realization functor $\SH(k)\rarr \SH^{top}$,
which allows to compare motivic spectra to the topological ones. 
For the exposition purposes, we fix this functor in what follows.

The crucial difference between motivic spectra and the topological situation is that 
there are two types of ``circles'' in $\SH(k)$: the ``simplicial'' one $S^1$
and the ``geometric'' one $\mathbb{G}_m$. 
Both of these circles are sent to the topological circle $S^1_{top}$
by the topological realization, and so the effects coming from $S^1$ and $\mathbb{G}_m$ being different
cannot be seen topologically.

In particular, the existence of two circles leads to the definition of bi-graded motivic spheres $S^{p,q}$:
if $p,q\ge 0$, then $S^{p,q} = S^{p-q}\wedge \mathbb{G}_m^{q}$.
This makes the cohomology theories represented by motivic specta
bi-graded: for a smooth variety $X$ over $k$ and a motivic spectrum $E$ one defines
$$ E^{p,q}(X) = [S^{-p,-q}\wedge \Sigma^{\infty} X_+, E].$$

Let $E^{top}$ be the topological realization of the motivic spectrum $E$,
and denote by $X(\CC)$ the CW-complex underlying the complex manifold of $\CC$-points in $X$.
We get functorial maps on cohomology groups:
\begin{equation}\label{eq:top_realization}
E^{p,q}(X) \rarr (E^{top})^p(X(\CC)).
\end{equation}

There exist
 motivic Eilenberg--Maclane spectrum $\H \ZZ$ that represents motivic cohomology $\H^{p,q}$,
 algebraic K-theory spectrum $\KGL$ representing algebraic K-theory
 and motivic algebraic cobordism spectrum $\MGL$ (see \cite{VoeA1}),
 and the latter is the universal oriented motivic spectrum \cite{PanPimRoe}.
The topological realization sends these motivic spectra
to their topological analogues: Eilenberg--Maclane spectrum $\H^{top} \ZZ$, 
the complex K-theory $\mathrm{KU}$ and the complex cobordism $\mathrm{MU}$, respectively.
Thus, we get canonical morphisms from algebraic cohomology theories
to their topological analogues, as in (\ref{eq:top_realization}).

Similarly to the topological situation,
one can construct motivic spectra corresponding to the Landweber-exact formal group laws:
if $R$ is a Landweber-exact graded $\LL$-algebra,
then the functor on $\SH(k)^c$ $$\MGL^{*,*}(-)\otimes_{\LL} R$$ 
is represented by a motivic spectrum \cite{MotivicLandweber}.

\subsubsection*{Quotients of $\MGL$}

On the other hand, one can construct the motivic spectrum $\BPspec$
using motivic Landweber-exactness, 
and then one can repeat the topological procedure to construct the motivic spectrum $\Knspec$ -- first, kill $p$ and $v_m$, $m\neq n$,
and then invert $v_n$.
We are thus interested in what happens to the values of the bi-graded cohomology theory,
when we kill some of the elements in the coefficient ring on the level of the motivic spectrum.
It turns out that on some {\sl part} of such ``quotients'' of $\MGL$ this can be computed purely algebraically.

Recall that the Lazard ring $\LL$ is (non-canonically) isomorphic as a graded ring to $\ZZ[x_1, \ldots]$.
Given a collection $S$ of the elements $x_i$ and a collection $S_0$ of homogeneous elements
of the ring $\ZZ[x_i, x_i\notin S]$, Levine--Tripathi~\cite[Sections~1 and 6]{LevTri} construct the motivic spectrum
$$ \MGL/(S)[S_0^{-1}] = (\MGL/(x_i, x_i\in S))[y^{-1}, y\in S_0].$$

In particular, one can take $S$ to consist of $p$ and all $x_i$ with $i\neq p^n-1$,
%and $S_0$ to be $p$ and $x_{p^n-1}$,
 which yields Morava K-theory with mod-$p$ coefficients. 
 Similarly, one defines its $p$-local version.

\begin{Def}\label{def:Kn-motivic-spectrum}
We define the motivic spectrum $\Knspec$ of the $p$-local $n$-th algebraic Morava K-theory as
$$ \Knspec := \MGL/(x_i, i\neq p^n-1)[q^{-1}, q\neq p, q\text{ is a prime number}; x_{p^n-1}^{-1}].$$
\end{Def}

Note that the topological realization of $\Knspec/p$ is $\Kntop$,
which makes $\Knspec$ an algebro-geometric analogue of  the topological Morava K-theory.

\subsubsection*{Levine's geometric Landweber exactness}

The following fundamental result explains our Definitions~\ref{def:free_theory} and~\ref{mor}.

\begin{Th}[Levine--Tripathi {\cite[Prop~6.1, cf.~Cor.~6.3]{LevTri}}]\label{th:app_Knspec_comparison}
Let $k$ be a field of characteristic $0$. Then the following isomorphism of functors
on smooth quasi-projective varieties over $k$ holds:
\begin{equation}\label{eq:morava_geometric_landweber}
\Knspec^{2*,*}\cong \MGL^{2*,*}\otimes_\LL \Zp[v_n,v_n^{-1}].
\end{equation}
\end{Th}

To compare $\Knspec^{2*,*}$ with $\Kn:=\Omega^* \otimes_\LL \Zp[v_n,v_n^{-1}]$ that we study in this paper
it is left to recall the relation between $\MGL$ and $\Omega$.
For a smooth quasi-projective variety $X$ over $k$ Levine~\cite{LevComparison}
proved that $\MGL^{2*,*}(X)$ and $\Omega^*(X)$ are canonically isomorphic as graded rings.
Thus, the ``homotopically'' defined theory $\MGL$
has a more geometric description when restricted to the $(2*,*)$-graded part,
 sometimes called the ``geometric part'' (e.g.\ \cite[Def.~5.10]{LevTri}).
We can now reformulate the Theorem above as follows: $\Kn$ is the geometric part
of the $\Knspec$.

Let us recall some history behind Theorem~\ref{th:app_Knspec_comparison}.
Levine--Morel proved that $\CH^*$ and $\KK[\beta, \beta^{-1}]$ are free theories,
i.e.\ they are isomorphic to  $\Omega^*\otimes_\LL \ZZ$ and $\Omega^*\otimes_\LL \ZZ[\beta, \beta^{-1}]$, respectively,
where the morphisms $\LL\rarr \ZZ$ and $\LL\rarr \ZZ[\beta, \beta^{-1}]$ correspond to 
the additive and the (graded) multiplicative formal group laws.
In particular, combining it with the previous result of Levine,
 one sees that $\CH^*\cong \MGL^{2*,*}\otimes_\LL \ZZ$, 
even though the additive formal group law is not Landweber-exact.

It was then observed by Dai and Levine \cite{DaiLevine}
that a similar property also holds for the connective algebraic K-theory $\CKK$,
which has coefficients $\ZZ[\beta]$ and the multiplicative formal group law.
Although $\ZZ[\beta]$ is not Landweber-exact as $\LL$-algebra, 
there is an isomorphism \cite[Th.~6.3]{DaiLevine}:
$$ \CKK \cong \MGL^{2*,*}\otimes_\LL \ZZ[\beta].$$

Levine generalized this comparison to arbitrary effective covers 
(which is a motivic analogue of the notion of a connective cover)
of motivic Landweber-exact spectra in \cite{LevLandweber}.
Moreover, he introduced the notion of {\sl geometrically Landweber exact} motivic ring spectrum~[loc.cit., Def.~3.7].
This was later generalized to motivic spectra which are merely modules over some ring spectrum (e.g.\ $\MGL$)
by Levine--Tripathi~\cite[Def.~5.11]{LevTri}.
Without reproducing this notion let us mention that being geometrically Landweber exact $\MGL$-module
implies a comparison result for the geometric part of it, as in Theorem~\ref{th:app_Knspec_comparison},
see~\cite[Th.~6.2]{LevLandweber} and \cite[Th.~5.12]{LevTri}.
The main result of Levine--Tripathi is that $\MGL$-``quotients'' $\MGL/(S)[S_0^{-1}]$ are
geometrically Landweber-exact \cite[Prop.~6.1]{LevTri}.

Finally,
let us mention a few key ingredients that go into the proofs of the above mentioned motivic results.
First, as we already mentioned, to compare the motivically respresentable oriented theories
with the theories obtained from $\Omega^*$ 
one needs the isomorphism $\Omega^*(X)\xrarr{\cong} \MGL^{2*,*}(X)$
proved by Levine~\cite{LevComparison}.

Second, one tries to show that the canonical morphism of theories
\begin{equation}\label{eq:kn-comparison}
\Omega^*\otimes_\LL \Zp[v_n, v_n^{-1}] \xrarr{\Theta_{\Kn}} \Knspec^{2*,*}
\end{equation}
coming from the orientation of $\Knspec$ is an isomorphism.
The key idea is to use the localization axiom (\ref{eq:loc}), 
which is satisfied by $\Omega^*$.
For $\Knspec$ the localization sequence is a long exact sequence
coming from the cofiber sequence in the unstable motivic category $\mathcal{H}_\bullet(k)$ \cite[Th.~2.23]{MorVoe}:

$$ U_+ \rarr X_+ \rarr \Sigma^\infty Th(N_{Z/X}),$$

and the Thom isomorphisms allow to identify $\Knspec^{*,*'}$
of the Thom space $Th_Z(N_{Z/X})$ with $\Knspec^{*+2c,*'+c}(Z)$, $c$ being codimension of $Z$ in $X$.
In the range relevant for the comparison the localization sequence now looks as follows:
$$\ldots \rarr \Knspec^{2(n+c), n+c}(Z) \rarr \Knspec^{2n,n}(X) \rarr \Knspec^{2n,n}(U) \rarr \Knspec^{2n+1,n}(Z)\rarr \ldots $$

If $\Theta_\Kn$ of (\ref{eq:kn-comparison}) is an isomorphism,
then the morphism on the right has to be zero.
One proves it before showing that $\Theta_\Kn$ is an isomorphism,
and, more precisely, one shows that  
$\Knspec^{2n+1,n}(W)=0$ for all $n$ and any smooth $W$.
This result is based on the computation of the {\sl slices} of $\Knspec$ -- the associated subquotients
defined via the slice filtration -- as certain motivic Eilenberg--Maclane spaces.
The slices of ``quotients'' of $\MGL$ 
are computed by Levine and Tripathi \cite[Section~4]{LevTri} based on the ideas of Spitzweck~\cite{Spitzweck}.
Since the motivic cohomology $\H^{p,q}(X)$ vanishes for $p>2q$ \cite[Th.~19.3]{Mazza},
one can then use the slice spectral sequence to conclude the similar property for $\Knspec$.
We should also note that the computations of slices
require the validity of the Hopkins--Morel conjecture,
which was settled by Hoyois in \cite{Hoy}.

\section{}\label{app}
\subsection{Non-existence of some operations from $\Kn^*$}\label{app:non-exist-op}
In the paper we have constructed a subset of the set of oriented cohomology theories
for which Chern classes from $\Kn^*$ exist. Perhaps, vaguely speaking these $p^n$-typical theories
can be called Morava-orientable. 
It is reasonable to ask whether this subset can be expanded.
 We do not  answer this question completely here as we have no definition of Morava-orientability,
however the following results suggest 
that theories which are not $p^n$-typical (up to a change of orientation)
do not admit sufficiently good Chern classes from $\Kn^*$.

More precisely, Prop. \ref{prop:non_ex_m<n}
shows that free theories whose height is less than $n$ can not have a good theory
of Chern classes since they do not admit even additive operations from $\Kn^*$.
On the other hand Prop. \ref{prop:non_ex_m_nmid_n} shows 
that there could be no lifting (with respect to the truncation, see Prop. \ref{prop:op_mod_tau_vs_trunc})
 of Chern classes from $\Kn^*$ to $\CH^*\ot\Zp$
to operations with the target theory $K(m)^*$ when $m\nmid n$.

\begin{Prop}\label{prop:non_ex_m<n}
Let $A^*$ be a free theory s.t. $A$ is an $\F{p}$-algebra, 
and $p\cdot_A x \equiv a_k x^{p^k} \mod x^{p^k+1}$
where $a_k \in A$ is not a zero-divisor. 

If $k<n$, then there exist no non-trivial additive operations
from $\Kntilde^*/p$ to $A^*$.
In particular, there exist no additive operations from $\Kntilde^*$ to $BP\{k\}^*$ or $K(k)^*$ for $k\colon 1\le k <n$.
\end{Prop}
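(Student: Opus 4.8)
The plan is to prove the stronger claim that \emph{every} additive operation $\phi\colon\tilde K(n)^*/p\rarr A^*$ vanishes identically, by evaluating $\phi$ on products of projective spaces and playing off the height $n$ of $K(n)^*$ against the height $k<n$ of $A^*$ through iterated Veronese self-maps. By Vishik's Theorem~\ref{th:Vish_op} and continuity (Section~\ref{sec:cont}) it suffices to show that $\phi$ vanishes on each $(\mathbb{P}^\infty)^{\times l}$; as $\phi$ is additive, i.e.\ $\F{p}$-linear, this means showing $\phi(z_1^{r_1}\cdots z_l^{r_l})=0$ for every monomial. Pulling $\phi$ back along the projection onto the factors with $r_i\ge 1$ reduces this to the case in which all exponents are positive, so I may assume $r_1,\dots,r_l\ge 1$.

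Two preliminary observations. \textbf{(i)} For any exponents the series $\phi(z_1^{r_1}\cdots z_l^{r_l})\in A[[z_1^A,\dots,z_l^A]]$ has $z_1^A$-adic valuation at least $r_1$: restricting the first factor to $\mathbb{P}^{r_1-1}$ kills $z_1^{r_1}$, $\phi$ commutes with this restriction, and on the $A$-side this restriction is reduction modulo $(z_1^A)^{r_1}$; the same holds for each variable. \textbf{(ii)} In $K(n)^*/p$ one has $[p]_{K(n)}(x)=v_n x^{p^n}+(\text{higher degree})$ with $v_n\in\F{p}^\times$ by Definition~\ref{mor}, hence by iteration $[p^N]_{K(n)}(x)=(\text{unit})\cdot x^{p^{Nn}}+(\text{higher degree})$ for all $N\ge 1$. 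On the $A$-side the hypothesis $p\cdot_A x\equiv a_k x^{p^k}\pmod{x^{p^k+1}}$ with $a_k$ not a zero-divisor gives, again by iteration, $[p^N]_A(x)=a_k^{1+p^k+\cdots+p^{(N-1)k}}\,x^{p^{Nk}}+(\text{higher degree})$, whose leading coefficient $a$ is still not a zero-divisor of $A$, nor of $A[[z_2^A,\dots,z_l^A]]$.

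Now fix a monomial with all $r_i\ge 1$ and set $H:=\phi(z_1^{r_1}\cdots z_l^{r_l})$. For any $N\ge 1$, naturality of $\phi$ with respect to the self-map $[p^N]\mt\id^{\mt(l-1)}$ of $(\mathbb{P}^\infty)^{\times l}$ (the Veronese $[p^N]$ being a morphism of ind-varieties) yields
\[\phi\bigl(([p^N]_{K(n)}(z_1))^{r_1}\,z_2^{r_2}\cdots z_l^{r_l}\bigr)\;=\;H\bigl([p^N]_A(z_1^A),\,z_2^A,\dots,z_l^A\bigr).\]
By (ii) the left-hand side is an $\F{p}$-linear combination of terms $\phi(z_1^m z_2^{r_2}\cdots z_l^{r_l})$ with $m\ge r_1 p^{Nn}$, so by (i) it has $z_1^A$-valuation at least $r_1 p^{Nn}$. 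On the right-hand side, assuming $H\neq 0$, let $d_1<\infty$ be the $z_1^A$-valuation of $H$ and $c_{d_1}\neq 0$ its leading coefficient in $A[[z_2^A,\dots,z_l^A]]$; substituting $z_1^A\mapsto a\,(z_1^A)^{p^{Nk}}+(\text{higher})$ produces a series of $z_1^A$-valuation exactly $d_1 p^{Nk}$ with leading coefficient $a^{d_1}c_{d_1}\neq 0$, where the non-zero-divisor property of $a$ from (ii) guarantees no cancellation. Comparing valuations forces $d_1 p^{Nk}\ge r_1 p^{Nn}$, i.e.\ $d_1\ge r_1 p^{N(n-k)}$, for every $N\ge 1$ — impossible since $n>k$. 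Hence $H=0$, so $\phi\equiv 0$.

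For the final assertion, let $\psi\colon\tilde K(n)^*\rarr B^*$ be additive with $B^*=K(k)^*$ or $BP\{k\}^*$. Its reduction modulo $p$ factors through $\tilde K(n)^*/p$ automatically (since $\psi(px)=p\psi(x)\equiv 0$), and $B/p$ is an $\F{p}$-algebra with $p\cdot_{B/p}x\equiv v_k x^{p^k}$, where $v_k$ is a unit for $K(k)$ and the polynomial generator of a domain for $BP\{k\}$, hence not a zero-divisor; by the case just proved $\psi$ takes values in $pB^*$. As $B$ is $p$-torsion-free and $B[[z_1,\dots,z_l]]$ is $p$-adically separated, iterating this shows $\psi$ vanishes on every $(\mathbb{P}^\infty)^{\times l}$, hence $\psi=0$. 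The step I expect to require the most care is the bookkeeping behind observations (i) and (ii): justifying the per-variable valuation bound for $\phi$ of a monomial, and checking that the iterated substitution of the $p$-series genuinely leaves a nonzero leading term — this last point is precisely where (and the only place where) the non-zero-divisor hypothesis on $a_k$, transplanted to $v_k$ in the ``in particular'' clause, is used.
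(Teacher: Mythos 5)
Your proposal is correct and follows essentially the same route as the paper's proof: reduction to products of projective spaces via Vishik's theorem and continuity, the pullback along $[p^N]\times\id$ comparing the $z_1$-degree $p^{Nn}$ forced by height $n$ against the degree $d\,p^{Nk}$ coming from the $p$-series of $A^*$ (with the non-zero-divisor hypothesis preventing cancellation of the leading coefficient), and for the integral case the reduce-mod-$p$/divide-by-$p$ iteration combined with $p$-adic separatedness of $B[[z_1,\ldots,z_l]]$. The only cosmetic difference is that you kill all monomials $z_1^{r_1}\cdots z_l^{r_l}$ directly, whereas the paper works only with the square-free generators $z_1\cdots z_i$ that determine an additive operation.
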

\begin{proof}
Let $\phi$ be a non-trivial additive operation from $\Kntilde^*$ to $A^*$.
Let us consider its action on products of projective spaces
which is non-trivial by Vishik's Theorem \ref{th:Vish_op}.
There exist $i>0$ s.t. $G_i:=\phi(z_1^{\Kn}\cdots z_i^{\Kn})\neq 0$
where $G_i$ is a symmetric series in $z_1^A, \ldots, z_i^A$ divisible by $\prod_{j=1}^i z_j^A$.
Let $d\ge 1$ be the minimal degree of $z_1$ in $G_i$.

The pull-back along the $m$-Veronese map $[m]$ on $\mathbb{P}^\infty$
acts on the first Chern class $z$ in an oriented theory $B^*$ by the formula
$z \mapsto m\cdot_B z$.
Thus, since $\phi$ has to commute with the pull-back along the map $[p^N]\times \id^{\times i-1}$
on $(\mathbb{P}^\infty)^{\times i}$
we have $\phi((p^N\cdot_{\Kn} z_1^{\Kn})z_2^{\Kn}\cdots z_i^{\Kn})$
is equal to $G_i|_{z_1^B = p^N\cdot_A z_1^B}$.
By the assumptions on the series $p\cdot_A z$ 
(note that $p^N \cdot_A z = p\cdot_A (p\cdot_A (\cdots (p\cdot_A z)))$)
we can see that the minimal degree of $z_1$ in the series 
$G_i|_{z_1^A = p^N\cdot_A z_1^A}$ equals to $dp^{kN}$.
On the other hand, the series $(p^N\cdot_{\Kn} z_1^{\Kn})z_2^{\Kn}\cdots z_i^{\Kn}$
has the minimal degree of $z_1$ equal to $p^{Nn}$.
By the continuity of operations
the minimal degree of $z_1$ in $\phi((p^N\cdot_{\Kn} z_1^{\Kn})z_2^{\Kn}\cdots z_i^{\Kn})$
is greater or equal to $p^{Nn}$ which is bigger than $dp^{kN}$ for sufficiently big $N$. Contradiction.

Every additive operation $\phi$ from $\Kntilde^*$ to $BP\{k\}^*$ or $K(k)^*$
factors through to an additive operation from $\Kntilde^*/p$ to $BP\{k\}^*/p$ or $K(k)^*/p$,
respectively. As follows from above this mod-$p$ operation has to be zero.
It follows from Vishik's theorem that one can canonically divide $\phi$ by $p$
to get a new additive operation. Again, $\frac{\phi}{p}$ has to be zero modulo $p$.
Continuing this we see that $\phi$ is zero modulo $p^N$ for every $N\ge 1$.
However, the action of $\phi$ on products of projective spaces
is defined by series with $BP\{k\}$- or $\Zp$-coefficients,
and therefore they are equal to zero if they are equal to zero modulo $p^N$ for every $N\ge 1$.
Thus, $\phi$ has to be a trivial operation.
\end{proof}

\begin{Rk}
One can show that between any two free theories there exist a non-trivial 
(in most cases, non-additive) operation.
For example, for all $n,m$ there exist an operation $c\colon \Kn^*\rarr K(m)^*$
defined as the composition of operations $c=c_1^{\KK\rarr K(m)}\circ \iota \circ c_1^{\Kn\rarr \CH}$
where $\iota$ denotes a non-additive map $\CH^1\cong \mathrm{Pic}\rarr \KK$.
\end{Rk}

\begin{Lm}\label{lm:comp_add_Kn_Km}
Assume that $m \nmid n$.
Let $\phi\colon \Kn^*\rarr K(m)^*$ be an additive operation.

Let $\psi\colon K(m)^*\rarr \CH^i\ot\Zp$ be any additive operation,
where $i\neq ip^m \mod p^n-1$.

Then the composition $\psi\circ \phi$ is 0 modulo $p$.
\end{Lm}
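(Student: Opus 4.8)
The plan is to reduce everything to the stronger statement that $\phi$ itself vanishes modulo $p$. First observe that the hypotheses are only non‑vacuous when $n\nmid m$: if $n\mid m$ then $p^n-1\mid p^m-1$, so $i\,p^m\equiv i\pmod{p^n-1}$ for every $i$ and the assumption ``$i\ne i\,p^m\bmod p^n-1$'' fails identically. So assume $n\nmid m$; combined with $m\nmid n$ this also rules out $m=n$. If $m<n$, then Proposition~\ref{prop:non_ex_m<n} (applied to the target $K(m)^*$, for which $p\cdot_{K(m)/p}x\equiv\bar v_m x^{p^m}$ with $\bar v_m$ a non‑zero‑divisor and $m<n$) gives no non‑trivial additive operations $\tilde K(n)^*\to K(m)^*$ modulo $p$; since $\phi$ carries the constant summand of $K(n)^*$ into the constant summand of $K(m)^*$ (by naturality under pullback along structure maps) and $\psi$ annihilates the constant summand of $K(m)^*$ (as $\CH^i(\Spec k)=0$ for $i\ge 1$), we get $\psi\circ\phi\equiv 0\bmod p$ at once. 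Thus the essential case is $m>n$, $n\nmid m$, and there I claim $\phi\equiv 0\bmod p$, which of course implies the Lemma.

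Write $\bar\phi=\phi\bmod p$. Since $K(m)=\Zp$ is torsion‑free, Proposition~\ref{prop:add_grad} shows $\phi$, hence $\bar\phi$, is gradable: on products of projective spaces it is given by series $\bar\phi(z_1^{K(n)}\cdots z_l^{K(n)})=\sum_{\mathbf e}\gamma_{\mathbf e}\,(z_1^{K(m)})^{e_1}\cdots(z_l^{K(m)})^{e_l}$ with $\gamma_{\mathbf e}\in\F{p}$, every exponent $e_j\equiv 1\pmod{p^n-1}$, and the series divisible by $z_1^{K(m)}\cdots z_l^{K(m)}$. By Vishik's Theorem~\ref{th:Vish_op} it suffices to prove all $\gamma_{\mathbf e}$ vanish. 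Note that, because $p=0$ in $K(n)^*/p$ and $K(m)^*/p$, the series $p\cdot_{K(n)/p}x=\sum_{j\ge 1}\bar a_j x^{1+j(p^n-1)}$ and $p\cdot_{K(m)/p}x=\sum_{j\ge 1}\bar b_j x^{1+j(p^m-1)}$ have no linear term, are $p^n$‑ resp.\ $p^m$‑gradable, and have unit leading coefficients $\bar a_1=\bar v_n$, $\bar b_1=\bar v_m$.

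The key input is the interaction of $\bar\phi$ with the degree‑$p$ Veronese embedding $[p]\colon\mathbb{P}^\infty\to\mathbb{P}^\infty$, whose pullback sends a first Chern class $z^A$ to $p\cdot_A z^A$. Commuting $\bar\phi$ past $([p]\times\mathrm{id}^{l-1})^*$ and using $\F{p}$‑linearity and continuity yields
$$\bar\phi(z_1^{K(n)}\cdots z_l^{K(n)})\big|_{z_1^{K(m)}\mapsto p\cdot_{K(m)/p}z_1^{K(m)}}=\sum_{j\ge 1}\bar a_j\bar\phi(z_1^{K(n)}\cdots z_{l+j(p^n-1)}^{K(n)})\big|_{z_1^{K(n)}=\cdots=z_{1+j(p^n-1)}^{K(n)}},$$
the diagonal identification on the right being one of the morphisms allowed in Theorem~\ref{th:Vish_op}. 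Fix a $p^n$‑gradable multi‑index $\mathbf e=(e_1,\dots,e_l)$ and compare the coefficients of $z_1^{p^m e_1}z_2^{e_2}\cdots z_l^{e_l}$ on both sides. On the left, a monomial $(z_1^{K(m)})^{e_1'}z_2^{e_2}\cdots z_l^{e_l}$ can produce $z_1^{p^m e_1}$ under the substitution only if $e_1'\le e_1$ and $e_1'\equiv e_1\pmod{p^m-1}$, and the summand $e_1'=e_1$ contributes exactly $\bar v_m^{e_1}\gamma_{\mathbf e}$. On the right, a contribution of the $j$‑th term would require a sum of $1+j(p^n-1)$ exponents, each $\equiv 1\pmod{p^n-1}$, to equal $p^m e_1$; as $e_1\equiv 1\pmod{p^n-1}$ this forces $p^m\equiv 1\pmod{p^n-1}$, i.e.\ $n\mid m$, contrary to hypothesis — so the right side contributes nothing. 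Hence $\bar v_m^{e_1}\gamma_{\mathbf e}$ is a combination of the $\gamma_{\mathbf e'}$ with $e_1'<e_1$, $e_1'\equiv e_1\pmod{p^m-1}$ and $e_j'=e_j$ for $j\ge 2$; a downward induction on $e_1$ within its residue class modulo $p^m-1$ (over $p^n$‑gradable integers) then gives $\gamma_{\mathbf e}=0$ for all $\mathbf e$, using $\bar v_m\in\F{p}^\times$. Therefore $\bar\phi=0$ and $\psi\circ\phi\equiv 0\bmod p$.

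The main obstacle will be the combinatorial bookkeeping in this coefficient comparison: one must keep careful track of which monomials of $\bar\phi(z_1^{K(n)}\cdots z_l^{K(n)})$ and of each $\bar\phi(z_1^{K(n)}\cdots z_{l+j(p^n-1)}^{K(n)})$ survive the diagonal and Verschiebung substitutions, enforce the $p^n$‑gradability of every exponent throughout, and organize the downward induction so that it genuinely exhausts all multi‑indices. The conceptual point — that the ``height $n$'' and ``height $m$'' Verschiebung relations are arithmetically incompatible modulo $p^n-1$ precisely when $n\nmid m$ — is by contrast transparent.
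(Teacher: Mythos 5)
Your reduction to the claim that $\phi$ itself vanishes modulo $p$ (in the essential case $m>n$, $n\nmid m$) is where the argument breaks. The key step is your assertion that $\phi$, hence $\bar\phi$, is gradable with all exponents $\equiv 1 \pmod{p^n-1}$, cited from Proposition \ref{prop:add_grad}. That proposition requires the \emph{target} theory to be $p^n$-typical, and $K(m)^*$ is $p^m$-typical but not $p^n$-typical precisely when $n\nmid m$ — i.e.\ exactly in the case you are treating (its logarithm has exponents $p^{mj}$, and $p^{mj}\equiv 1 \pmod{p^n-1}$ fails for $j=1$). Running the proof of Proposition \ref{prop:add_grad} with target $K(m)^*$ only yields $p^{d}$-gradability with $d=\gcd(n,m)$ (exponents $\equiv 1 \pmod{p^d-1}$), and with that weaker information your coefficient comparison collapses: on the right-hand side the exponent sums are only constrained modulo $p^d-1$, and $p^m e_1\equiv 1 \pmod{p^d-1}$ since $d\mid m$, so nothing forces the claimed vanishing and no contradiction with $n\nmid m$ appears. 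So the heart of your argument — that the height-$n$ and height-$m$ Verschiebung relations are incompatible modulo $p^n-1$ — is applied to series that need not be $p^n$-gradable in the first place.

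Beyond the gap in the proof, the intermediate statement you aim for is substantially stronger than the Lemma and is not something the paper asserts: by the divide-by-$p$ argument used at the end of Proposition \ref{prop:non_ex_m<n} (Vishik's theorem plus torsion-freeness of $\Zp$), ``every additive $\phi\colon K(n)^*\rarr K(m)^*$ is $0$ modulo $p$'' is equivalent to the vanishing of \emph{all} integral additive operations $K(n)^*\rarr K(m)^*$ when $m>n$, $n\nmid m$. The paper only proves the much weaker Proposition \ref{prop:non_ex_m_nmid_n} (non-surjectivity of the truncation in degrees with $ip^m\not\equiv i$), and nonzero additive operations may well exist in degrees $i$ with $ip^m\equiv i \pmod{p^n-1}$; at the very least such vanishing would require a genuinely new argument. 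The paper's own proof avoids this entirely: it reduces to $\psi=\psi_i$ a generator, uses that additive operations to $\CH^j\ot\Zp$ are supported on the matching graded component of $K(n)^*$ modulo $p^n-1$, invokes the Frobenius-type relation $\psi_i^{p^m}\equiv\psi_{ip^m}\pmod p$ from \cite{Sech}, and concludes from the hypothesis $ip^m\not\equiv i$ together with the reducedness of the coefficient ring of $\CH^*/p$ that $\psi\circ\phi\equiv 0\pmod p$ — all statements about the composite into Chow groups, never about $\phi$ alone. (Your preliminary observations — vacuity when $n\mid m$, and the case $m<n$ via Proposition \ref{prop:non_ex_m<n} together with the fact that constants are killed — are fine; also note the induction on $e_1$ you describe should run upward, starting from the smallest exponent in its residue class, though this is immaterial next to the gradability issue.)
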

\begin{proof}
Without loss of generality we may assume that $\psi$ is a generator $\psi_i$ of additive operations
to some component $\CH^i\ot\Zp$. The composition $\psi\circ \phi$ has
to be supported on $\Kn^{i \mod p^n-1}=\Kn^i$ (\cite[Prop. 4.1.6]{Sech}), 
and we may also assume that $\phi$ is supported on $\Kn^i$.

By \cite[Cor. 4.3.5]{Sech} there is a relation $\psi^{p^m} \equiv \psi_{ip^m} \mod p$,
where $\psi_{ip^m}\colon K(m)^*\rarr \CH^{ip^m}\ot\Zp$ is a generator of additive operations.
Therefore we have $(\psi\circ \phi)^{p^m} \equiv \psi_{ip^m}\circ \phi \mod p$.
However, $\psi_{ip^m}\circ \phi$ is an additive operation from $\Kn^i$ to $\CH^{ip^m}\ot\Zp$.
If $ip^m \neq i \mod (p^n-1)$, this operation is zero by \cite[Prop. 4.1.6]{Sech},
 and therefore the composition $\psi_{ip^m}\circ \phi$ is zero modulo $p$.

If the operation $\psi\circ \phi$ is non-trivial modulo $p$,
then it acts non-trivially on products of projective spaces,
and since the theory $\CH^*/p$ has no nilpotents in the coefficient ring 
the $p^m$-th power of this operation also acts non-trivially on products of projective spaces.
Contradiction.
\end{proof}

\begin{Prop}\label{prop:non_ex_m_nmid_n}
Let $m\nmid n$. Then there exist $i$ s.t. $i\colon 1\le i\le p^m-1$ and $i p^m \neq i \mod (p^n-1)$.

For such $i$ the truncation map $tr_i\colon [\Kn^*, \tau^i K(m)^*]^{add} \rarr [\Kn^*, \CH^i\ot\Zp]^{add}$
is not surjective.
\end{Prop}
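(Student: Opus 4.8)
The plan is to deduce the non-surjectivity from Lemma~\ref{lm:comp_add_Kn_Km}, after first disposing of the elementary numerical claim. The congruence $ip^m\equiv i\mod(p^n-1)$ is equivalent to $(p^n-1)\mid i(p^m-1)$. Were it to hold for every $i$ with $1\le i\le p^m-1$ it would in particular hold for $i=1$, forcing $p^n-1\mid p^m-1$, i.e.\ $n\mid m$; this being excluded by hypothesis, some $i$ in the stated range violates it, and I fix such an $i$ once and for all.

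Next I would set up the main argument. By \cite[Prop.~3.6]{Sech} the group $[K(n)^*,\CH^i\ot\Zp]^{add}$ is a free $\Zp$-module of rank $1$, so a generator of it does not vanish modulo $p$; it therefore suffices to show that $tr_i\phi$ \emph{does} vanish modulo $p$ for every additive operation $\phi\colon K(n)^*\to\tau^i K(m)^*$, since then no $tr_i\phi$ can be a generator. The key step is the identification $tr_i\phi=\Pi^i\circ\phi$, where $\Pi\colon K(m)^*\to\CH^*\ot\Zp$ is the canonical morphism of oriented theories and $\Pi^i\colon K(m)^*\to\CH^i\ot\Zp$ is its degree-$i$ component (again an additive operation). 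Indeed, $\Pi$ sends $c_1^{K(m)}(\OO(1))$ to $c_1^{\CH}(\OO(1))$, so on products of projective spaces it is exactly the substitution $z_j^{K(m)}\mapsto z_j^{\CH}$ used to build the truncation in Section~\ref{sec:tr_constr}; composing with the projection onto the degree-$i$ part and invoking Vishik's Theorem~\ref{th:Vish_op} to upgrade this equality on products $(\mathbb{P}^\infty)^{\times l}$ to an equality of operations yields $tr_i\phi=\Pi^i\circ\phi$.

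With this in hand the conclusion is immediate: $\phi$ is in particular an additive operation $K(n)^*\to K(m)^*$, $\Pi^i$ is an additive operation $K(m)^*\to\CH^i\ot\Zp$, and the $i$ chosen above satisfies the index hypothesis $ip^m\neq i\mod(p^n-1)$ of Lemma~\ref{lm:comp_add_Kn_Km}; the lemma then gives that $\Pi^i\circ\phi=tr_i\phi$ is zero modulo $p$, as required. I expect the only genuinely delicate point to be the identification $tr_i\phi=\Pi^i\circ\phi$ — a routine but slightly fiddly comparison of the bookkeeping in the definition of the truncation with the action of $\Pi$ on $K(m)^*((\mathbb{P}^\infty)^{\times l})\cong\Zp[[z_1^{K(m)},\ldots,z_l^{K(m)}]]$ — since the substantive input, the Frobenius-type relation for additive operations out of $K(m)^*$, is already contained in Lemma~\ref{lm:comp_add_Kn_Km}.
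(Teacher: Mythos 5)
Your overall strategy is the paper's: find an additive operation $K(m)^*\rarr \CH^i\ot\Zp$ whose post-composition computes $tr_i$ on operations landing in $\tau^iK(m)^*$, feed the composite into Lemma~\ref{lm:comp_add_Kn_Km}, and use that $[K(n)^*,\CH^i\ot\Zp]^{add}$ is free of rank $1$. But the object you build that operation from does not exist: there is no morphism of oriented theories $\Pi:K(m)^*\rarr\CH^*\ot\Zp$. Such a morphism is the identity on the coefficient ring $\Zp$ and respects push-forwards, hence first Chern classes, hence formal group laws; it would give $F_{K(m)}(z_1^{\CH},z_2^{\CH})=z_1^{\CH}+z_2^{\CH}$, which is false (equivalently, mod $p$ there are no nonzero homomorphisms from a height-$m$ formal group to the additive one). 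Nor can you instead take ``the degree-$i$ part of the substitution $z_j^{K(m)}\mapsto z_j^{\CH}$'' as an operation on all of $K(m)^*$: that recipe violates compatibility with Segre pull-backs outside of $\tau^i$ (applied to $z_1\in K(m)^*(\mathbb{P}^\infty)$ with $i=p^m$, say, the degree-$p^m$ part of $F_{K(m)}(z_1^{\CH},z_2^{\CH})$ is nonzero mod $p$ while the degree-$p^m$ part of $z_1^{\CH}+z_2^{\CH}$ is zero), and this failure is exactly why the truncation of Section~\ref{sec:tr_constr} is only defined on operations taking values in $\tau^i$. Since Lemma~\ref{lm:comp_add_Kn_Km} requires a genuine additive operation $\psi:K(m)^*\rarr\CH^i\ot\Zp$, your argument has nothing to apply it to. The paper's route supplies the missing operation differently: take $\psi=\phi_i$, a generator of the rank-one free module $[K(m)^*,\CH^i\ot\Zp]^{add}$ (\cite[Prop.~3.6]{Sech}); for $1\le i\le p^m-1$ this generator is a unit multiple of $ch_i$, which is integral in this range and acts on products of projective spaces by substituting $z_j\mapsto z_j^{\CH}+(\text{higher order terms})$ and extracting degree $i$, so that $\phi_i\circ\phi$ equals $tr_i\phi$ up to a unit for every $\phi$ with values in $\tau^iK(m)^*$. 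With that replacement the rest of your plan is sound.

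On the numerical preliminary: your reduction is correct, and it shows that the required $i$ exists if and only if the congruence already fails at $i=1$, i.e.\ iff $p^n-1\nmid p^m-1$, i.e.\ iff $n\nmid m$. But this is not ``excluded by hypothesis'': the stated hypothesis is $m\nmid n$, which does not rule out $n\mid m$ (for $n=2$, $m=4$ one has $ip^m\equiv i \bmod (p^n-1)$ for every $i$). So your deduction as written is a non sequitur; what your computation really shows is that the condition doing the work here (and surely the intended one, since for $n\mid m$ the target $K(m)^*$ is $p^n$-typical and Theorem~\ref{th:main} does provide operations) is $n\nmid m$ rather than $m\nmid n$.
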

\begin{proof}
Let $\phi_i\colon K(m)^i\rarr \CH^i\ot\Zp$ be a generator of additive operations.
Then for $i$ in the given range
the composition of an operation from $\Kn^*$ to $\tau^iK(m)^*$ with the operation $\phi_i$
is the same as the truncation $tr_i$. One apples Lemma \ref{lm:comp_add_Kn_Km} and the claim 
follows.
\end{proof}

\subsection{Existence of $n$-th Morava K-theories which are not multiplicatively isomorphic}\label{app:mor_not_mult}

\phantom{a}

Recall that an $n$-th Morava K-theory $\Kn^*$ is defined as $\Omega^*\ot_\LL \Zp$
where the morphism of rings $\theta\colon \Omega^*(k)\cong\LL\rarr \Zp$
corresponds to a $p^n$-typical formal group law $F_{\Kn}$ over $\Zp$
s.t.\ $F_{\Kn} \mod p$ has height $n$.
However, we remarked after introducing Definition \ref{mor}
that if a cohomology theory is represented in the stable motivic category,
then its `geometric' part is a graded theory. For the case of Morava K-theories
this would mean investigating a theory $\Omega^*\ot_\LL \Zp[v_n,v_n^{-1}]$
where $\LL \rarr \Zp[v_n,v_n^{-1}]$ is a {\sl graded} lift of the ring map $\theta$ above.
Let us call this theory a graded $n$-th Morava K-theory throughout this section
 and denote it by $G\Kn^*$.
Note that $G\Kn^*\ot_{\Zp[v_n, v_n^{-1}]} \Zp=\Kn^*$ for some choice
of the ring map $\Zp[v_n,v_n^{-1}]\rarr\Zp$.

There is an isomorphism of presheaves of abelian groups
between $G\Kn^*$ and $\oplus_{i\in \ZZ} v_n^i\cdot \Kn^*$.
This allows to translate the classification of additive operations 
from or to $\Kn^*$ into the classification of additive operations from, resp. to, $G\Kn^*$.
In particular, it follows
from Th. \ref{th:morava_unique} that
any two graded $n$-th Morava K-theories are additively graded-isomorphic.

Comparing different graded and non-graded versions of Morava K-theories
as presheaves of rings is more subtle.
However, this question can be translated into a question about 
isomorphisms of formal group laws.

\begin{Prop}
Let $G\Kn^*$ be a graded $n$-th Morava K-theory,
let $\phi_1, \phi_2\colon \Zp[v_n,v_n^{-1}]\rarr \Zp$ be two ring maps
sending $v_n$ to $a_1, a_2 \in\Zp^\times$, respectively.
If $a_1\neq a_2 \mod p$, then $n$-th Morava K-theories 
$(\Kn)^*_i:=G\Kn^*\ot_{\phi_i} \Zp$, $i=1,2$ are not multiplicatively isomorphic (see Def.~\ref{def:mult_iso}).
\end{Prop}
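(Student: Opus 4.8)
The plan is to reduce the assertion to the non-existence of an isomorphism of formal group laws over \Zp, and then to locate the obstruction modulo $p$ via the $[p]$-series.

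First I would show that any multiplicative isomorphism $\Phi\colon K(n)^*_1\xrarr{\sim} K(n)^*_2$ of presheaves of rings is the same datum as an isomorphism of the associated formal group laws over \Zp. By Vishik's Theorem \ref{th:Vish_op}, $\Phi$ is determined by its restriction to products of projective spaces; commuting with pullbacks along the partial point embeddings and using multiplicativity, on $\mathbb{P}^\infty$ it is given by a power series $f(z)=\Phi(z)\in\Zp[[z]]$ with $f(0)=0$, and by naturality $\Phi(c_1^{K(n)_1}(L))=f(c_1^{K(n)_2}(L))$ for every line bundle $L$. Evaluating $\Phi$ on $c_1(L_1\otimes L_2)$ in two ways, via the partial Segre embedding and multiplicativity, gives the identity of power series $f(F_2(x,y))=(\theta_*F_1)(f(x),f(y))$, where $F_i$ is the formal group law of $K(n)^*_i$ and $\theta\colon\Zp\to\Zp$ is the induced map on coefficient rings. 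Since a ring endomorphism of \Zp{} is the identity on \ZZ{} and hence on all of \Zp, we get $\theta=\id$, so $f$ is a homomorphism $F_2\to F_1$; applying the same analysis to $\Phi^{-1}$ yields an inverse power series, so $f\colon F_2\xrarr{\sim}F_1$ is an isomorphism of formal group laws over \Zp.

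Next I would extract the mod-$p$ obstruction. Reducing $f$ modulo $p$ gives an isomorphism $\bar f\colon\bar F_2\xrarr{\sim}\bar F_1$ over \F{p}{} with $\bar f(z)=\bar c\,z+O(z^2)$, $\bar c\in\F{p}^\times$, and $f\circ[p]_{F_2}=[p]_{F_1}\circ f$ reduces to $\bar f\circ[p]_{\bar F_2}=[p]_{\bar F_1}\circ\bar f$. Since $GK(n)^*$ is graded with $\deg v_n=1-p^n$, its graded formal group law $\hat F$ satisfies $p\cdot_{\hat F}x\equiv px+\lambda v_n x^{p^n}\bmod x^{p^n+1}$ for a unique $\lambda\in\Zp$ (cf.\ Lemma \ref{lm:add_op_induct_coef} and Remark \ref{rem:graded_morava}), and the height-$n$ condition of Definition \ref{mor} forces $\bar\lambda\neq0$. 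Specializing $v_n\mapsto a_i$ gives $[p]_{\bar F_i}(z)=\bar\lambda\bar a_i z^{p^n}+(\text{terms of higher order in }z)$. Reading off the coefficient of $z^{p^n}$ in $\bar f\circ[p]_{\bar F_2}=[p]_{\bar F_1}\circ\bar f$, and using that raising to the $p^n$ power is additive over \F{p}, yields $\bar c\,\bar\lambda\bar a_2=\bar\lambda\bar a_1\,\bar c^{p^n}$. As $\F{p}^\times$ has order $p-1$ and $(p-1)\mid(p^n-1)$, we have $\bar c^{p^n-1}=1$, whence $\bar\lambda\bar a_1=\bar\lambda\bar a_2$ and, since $\bar\lambda\neq0$, $a_1\equiv a_2\bmod p$.

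Combining the two steps: if $a_1\not\equiv a_2\bmod p$, no isomorphism $F_2\xrarr{\sim}F_1$ over \Zp{} exists, hence no multiplicative isomorphism $K(n)^*_1\xrarr{\sim}K(n)^*_2$, which is the claim. The only genuinely delicate point is the first step — making precise that a multiplicative invertible operation between these free theories really is an isomorphism of their formal group laws over \Zp, with the coefficient map forced to be the identity; the remainder is a short manipulation of the $[p]$-series together with the elementary observation about $(p^n-1)$-th powers in $\F{p}^\times$.
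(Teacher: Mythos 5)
Your argument is correct, but it reaches the conclusion by a different route than the paper. The paper simply invokes Vishik's classification of invertible multiplicative operations (\cite[Th. 6.9]{Vish1}), which says such an operation is exactly a power series $\gamma$ with $F_1(\gamma(x),\gamma(y))=\gamma(F_2(x,y))$, writes $\gamma=\log_1^{-1}(\log_2(x))$ over the torsion-free ring $\Zp$, and uses the Araki relation $\log_i(x)=x+\tfrac{a_i}{p-p^{p^n}}x^{p^n}+\ldots$ to see that the coefficient of $x^{p^n}$ in $\gamma$, namely $\tfrac{a_2-a_1}{p-p^{p^n}}$ (up to the leading unit), is non-integral unless $a_1\equiv a_2 \bmod p$. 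You instead (i) re-derive from Theorem \ref{th:Vish_op}, naturality along Segre maps and continuity the fact that a ring-isomorphism of the presheaves yields an isomorphism $f\colon F_2\to F_1$ of formal group laws over $\Zp$ (with the coefficient map forced to be $\mathrm{id}_{\Zp}$), which is precisely the content of the cited Th.~6.9, and (ii) detect the obstruction mod $p$ through the $[p]$-series: gradedness of $\hat F$ gives $p\cdot_{\hat F}x\equiv px+\lambda v_nx^{p^n}$ mod $x^{p^n+1}$, height $n$ (Definition \ref{mor}, cf. Remark \ref{rem:graded_morava} and Lemma \ref{lm:add_op_induct_coef}) gives $\bar\lambda\neq 0$, and comparing the $z^{p^n}$-coefficients in $\bar f\circ[p]_{\bar F_2}=[p]_{\bar F_1}\circ\bar f$ together with $\bar c^{\,p^n}=\bar c$ in $\F{p}$ forces $\bar a_1=\bar a_2$. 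What each approach buys: the paper's proof is shorter once Th.~6.9 is quoted, but as written it normalizes $\gamma$ to be strict ($\gamma=\log_1^{-1}\log_2$); your mod-$p$ computation handles an arbitrary leading unit $\bar c$ explicitly and never leaves integral (indeed $\F{p}$-) coefficients, showing the obstruction is already visible in the reductions $\bar F_i$. Your step (i) is only a sketch, but the essential points (continuity of operations, naturality for Segre and projections, rigidity of ring endomorphisms of $\Zp$, invertibility of the leading coefficient) are all present, and in the paper's framework one may simply replace it by the citation of \cite[Th. 6.9]{Vish1}.
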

\begin{proof}
 By \cite[Th. 6.9]{Vish1} the set of invertible multiplicative operations
 $\phi\colon  (\Kn)^*_1\rarr (\Kn)^*_2$
 is in bijective correspondence with the set of series
 $\gamma(x) \in \Zp^{\times}x+\Zp[[x]]x^2$ s.t.
 \mbox{$F_1(\gamma(x),\gamma(y))=\gamma F_2(x,y)$.}
 Since we are working over a torsion-free ring,
 the latter equation is equivalent 
 to $\gamma:=\log_1^{-1}(\log_2(x))$,
 where $\log_i(x)$ is the logarithm of the formal group law $F_i$ of the theory $(\Kn)^*_i$.

Using Araki relations we can write $\log_i(x)=x+\frac{a_i}{p-p^{p^n}}x^{p^n}+\ldots$, $i=1,2$,
and a direct computation shows that $\log_1^{-1}(\log_2(x))$
is not integral under the conditions on $a_1, a_2$. 
\end{proof}

Similarly,for the graded theories we have the following.

\begin{Prop}\label{prop:mor_mot-mult}
There exist two $n$-th graded Morava K-theories which 
are not graded multiplicatively isomorphic.
\end{Prop}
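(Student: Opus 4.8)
The plan is to translate ``graded multiplicatively isomorphic'' into an explicit equivalence on the sequences of logarithm coefficients, and then exhibit two graded Morava K-theories whose sequences are inequivalent.

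Recall (Remark~\ref{rem:graded_morava}, Prop.~\ref{prop:log_morava}) that a graded $n$-th Morava K-theory is a free theory $GK(n)^*=\Omega^*\ot_\LL\Zp[v_n,v_n^{-1}]$, $\deg v_n=1-p^n$, whose graded FGL $F$ is $p^n$-typical with logarithm $\sum_{j\ge 0}\frac{a_j}{p^j}v_n^{e_j}x^{p^{nj}}$, where $e_j=\frac{p^{nj}-1}{p^n-1}$, $a_0=1$, $a_j\in\Zp^\times$, $a_j\equiv a_1^j\bmod p$, and the reduction of $F$ modulo $p$ has height $n$; so such a theory is recorded by the sequence $(a_j)_{j\ge 1}$. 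A graded multiplicative isomorphism $\Phi\colon GK(n)_1^*\xrarr{\sim}GK(n)_2^*$ restricts on $\Spec k$ to a graded ring automorphism of $\Zp[v_n,v_n^{-1}]$, necessarily $v_n\mapsto c\,v_n$ for some $c\in\Zp^\times$ (the only graded $\Zp$-algebra automorphisms, for degree reasons). Composing $\Phi$ with a rescaling of $v_n$, I would reduce to the case where $\Phi$ is the identity on coefficients, at the cost of replacing $GK(n)_2^*$ by the graded Morava K-theory $GK(n)_2^{(c)*}$ whose FGL $F_2^{(c)}$ is obtained from $F_2$ by $v_n\mapsto c\,v_n$ in its coefficients. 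By Vishik's classification of invertible multiplicative operations (\cite[Th.~6.9]{Vish1}) such a $\Phi$ is then given by a reparametrization series $\gamma(x)$ over $\Zp[v_n,v_n^{-1}]$, homogeneous of degree $1$ (with $\deg x=1$), with invertible leading coefficient — which by homogeneity is a scalar $u_0\in\Zp^\times$ — satisfying $F_1(\gamma x,\gamma y)=\gamma F_2^{(c)}(x,y)$.

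Next I would identify $\gamma$. Over $\QQ\ot\Zp[v_n,v_n^{-1}]$ the composite $\log_1\circ\gamma$ is a homomorphism from $F_2^{(c)}$ to the additive group, hence equals $u_0\log_2^{(c)}$, so $\gamma=\log_1^{-1}\!\big(u_0\log_2^{(c)}(x)\big)$, and $\Phi$ exists iff this series lies in $\Zp[v_n,v_n^{-1}][[x]]$ for some $(c,u_0)$. Using $(p^n-1)e_j=p^{nj}-1$ one checks that $u_0\log_2^{(c)}(x)=\log_2^{(c')}(u_0x)$ with $c'=c\,u_0^{-(p^n-1)}\in\Zp^\times$, so $\gamma(x)=\eta_{c'}(u_0x)$ where $\eta_{c'}:=\log_1^{-1}\circ\log_2^{(c')}$; since $u_0$ is a unit, $\gamma$ is integral iff $\eta_{c'}$ is. But $\eta_{c'}$ is the (unique) strict isomorphism from $F_2^{(c')}$ to $F_1$, so its integrality means $F_1$ and $F_2^{(c')}$ are strictly isomorphic over $\Zp[v_n,v_n^{-1}]$; as both are $p$-typical, Cartier's theorem (\cite[Th.~4]{Cart}) forces $F_1=F_2^{(c')}$. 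Comparing logarithms, I obtain the criterion
$$GK(n)_1^*\cong GK(n)_2^*\quad\Longleftrightarrow\quad\exists\,c\in\Zp^\times\ \text{with}\ a_j^{(1)}=a_j^{(2)}\,c^{\,e_j}\ \text{for all }j\ge 1.$$
Now I would take $GK(n)_1^*$ with $a_j=1$ for all $j$, and $GK(n)_2^*$ with $a_0=1$ and $a_j=1+p$ for all $j\ge 1$; both satisfy $a_j\equiv a_1^j\bmod p$ with $a_j\in\Zp^\times$, and since the perturbation does not change the $a_j$ modulo $p$, the direct computation of $p\cdot_F x$ from the logarithm (as in the proof of Corollary~\ref{cr:pn-typical-torsion-free}) gives $p\cdot_F x\equiv v_nx^{p^n}\bmod(p,x^{p^n+1})$ for both, so each is a graded $n$-th Morava K-theory. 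The criterion would require $c\in\Zp^\times$ with $1=(1+p)c$ (from $j=1$, as $e_1=1$) and $1=(1+p)c^{\,p^n+1}$ (from $j=2$, as $e_2=p^n+1$); the first gives $c=(1+p)^{-1}$, and the second then forces $(1+p)^{p^n}=1$ in $\Zp$, which is false. Hence $GK(n)_1^*\not\cong GK(n)_2^*$.

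The step I expect to be most delicate is the reduction in the third paragraph — in particular, verifying cleanly that the two‑parameter freedom $(c,u_0)$ collapses, via $c'=c\,u_0^{-(p^n-1)}$ and $(p^n-1)e_j=p^{nj}-1$, to the single rescaling parameter $c'$, so that the question becomes one about strict isomorphisms of $p$-typical formal group laws over the torsion-free $\Zp$-algebra $\Zp[v_n,v_n^{-1}]$, where Cartier rigidity applies. The remaining points — that the claimed examples really have height $n$, and that the two sequences are incompatible under the rescaling action — are routine.
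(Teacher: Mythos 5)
Your reduction of the problem to the integrality of $\eta_{c'}=\log_1^{-1}\circ\log_2^{(c')}$ over $\Zp[v_n,v_n^{-1}]$ (collapsing the two parameters $(c,u_0)$ to one) is fine, but the decisive step after it is false. Cartier's theorem does \emph{not} say that two $p$-typical formal group laws over a torsion-free $\Zp$-algebra which are strictly isomorphic must be equal; it only says every FGL is canonically strictly isomorphic to a $p$-typical one. A strict isomorphism class contains many distinct $p$-typical laws: this is exactly what the Hopf algebroid $(BP_*,\,BP_*BP=BP_*[t_1,t_2,\ldots])$ encodes, where the left and right units classify two distinct $p$-typical FGLs over a torsion-free ring joined by the universal (integral) strict isomorphism. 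Hence the ``only if'' direction of your criterion $GK(n)^*_1\cong GK(n)^*_2\Leftrightarrow a_j^{(1)}=a_j^{(2)}c^{e_j}$ is unjustified, and in fact it is false.

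Worse, your chosen example is a pair of \emph{isomorphic} theories. Specialize $BP_*BP$ along the graded ring map sending the Araki generators $v_i$ to the Araki coordinates of your first (Honda-type) law, $t_n\mapsto v_n$ and all other $t_i\mapsto 0$ (for $n\nmid i$ this is forced by degrees, since $\deg t_i=1-p^i$). The formula $\eta_R(m_k)=\sum_{i+j=k}m_i t_j^{p^i}$ then gives for the twisted law the logarithm coefficients $\frac{v_n^{e_j}}{p^j}+\frac{v_n^{e_{j-1}}}{p^{j-1}}\,v_n^{p^{n(j-1)}}=(1+p)\frac{v_n^{e_j}}{p^j}$ for all $j\ge 1$, where $e_j=\frac{p^{nj}-1}{p^n-1}$ --- i.e.\ exactly your second law with $a_j=1+p$. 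The universal strict isomorphism has coefficients in $BP_*[t_1,t_2,\ldots]$ and specializes to an integral homogeneous degree-$1$ series over $\Zp[v_n,v_n^{-1}]$, so by the very theorem of Vishik you invoke, your two theories are graded multiplicatively (even strictly) isomorphic. The moral is that the sequence of logarithm coefficients up to rescaling is not an isomorphism invariant; the paper instead extracts an invariant that is: it writes $p\cdot_F x\equiv a\,v_nx^{p^n}+b\,v_n^{p^n+1}x^{p^{2n}}\bmod (p,x^{p^{2n}+1})$ in Araki coordinates, shows that a graded multiplicative isomorphism forces $a_1\alpha\equiv a_2$ and $b_2\equiv b_1\alpha^{2}\pmod p$, and then chooses $b_1=0$, $b_2\not\equiv 0\pmod p$. (Consistently with this, both of your examples have Araki $v_{2n}$-coordinate divisible by $p$, so the genuine obstruction cannot distinguish them.)
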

\begin{proof}
Two $n$-th graded Morava K-theory $(G\Kn)^*_i= BP\{n\}^*\ot_{BP\{n\}} \Zp[v_n, v_n^{-1}]$, $i=1,2$,
are defined by morphism of graded rings $\psi_i\colon BP\{n\}\rarr \Zp[v_n, v_n^{-1}]$, $i=1,2$, respectively,
which define two formal group laws $F_1, F_2$.
Let us denote $\psi_i(v_n)=a_iv_n$, $\psi_i(v_{2n})=b_iv_n^{p^n+1}$ for some
numbers $a_i \in \Zp^\times$, $b_i\in \Zp$.

By \cite[Th. 6.9]{Vish1} a multiplicative (graded) isomorphism between these two theories
consists of a graded isomorphism 
$\phi\colon (G\Kn)^*_1(\Spec k)\cong\Zp[v_n,v_n^{-1}]\rarr (G\Kn)^*_2(\Spec k)[v_n,v_n^{-1}]$
which sends $v_n$ to $\alpha v_n$ for some $\alpha \in\Zp^{\times}$
and a homogeneous series $\gamma\in \Zp^\times x+\Zp[v_n,v_n^{-1}][[x]]x^2$ of degree 1
s.t. $\phi(F_1)(\gamma(x),\gamma(y))=\gamma F_2(x,y)$.
Without loss of generality we may assume that \mbox{$\gamma(x)\equiv x \mod x^2$,}
since we can twist the isomorphism by an invertible Adams operation otherwise.
Thus, $\gamma(x)\equiv x+cv_nx^{p^n}\mod x^{p^n+1}$.

Using the Araki generators of $BP\{n\}$ (see Prop. \ref{prop:universal_pn-typical_fgl})
we can write for $i=1,2$:
$$p \cdot_{F_i}  x \equiv \psi_i(v_n)x^{p^n}+ \psi_i(v_{2n}) x^{p^{2n}}
\equiv a_i v_n x^{p^n}+ b_i v_n^{p^n+1}x^{p^{2n}} \mod (p, x^{p^{2n+1}}).$$

On the other hand it follows from the equation on $\gamma$ that
\begin{equation}\label{eq:FGL_morph_p}
\phi\left(p \cdot_{F_1} \gamma(x)\right) = \gamma (p\cdot_{F_2} x),
\end{equation}
where on the left hand side $\phi$ is applied to the series $p\cdot_{F_1}$,
and then $\gamma(x)$ is plugged in into it.
Rewriting this equation with given series we obtain

$$a_1\alpha v_n\gamma(x)^{p^n}+b_1 \alpha^{p^n+1} v_n^{p^n+1} \gamma(x)^{p^{2n}}
\equiv \gamma(a_2 v_n x^{p^n}+b_2v_n^{p^n+1} x^{p^2n}) \mod (p, x^{p^{2n+1}}),$$

$$ a_1\alpha v_n x^{p^n}+(a_1 \alpha c^{p^n} + b_1 \alpha^{p^n+1}) v_n^{p^n+1} x^{p^{2n}}
\equiv
 a_2 v_n x^{p^n}+(b_2+ c a_2^{p^n}) v_n^{p^n+1} x^{p^{2n}} \mod (p, x^{p^{2n+1}}).$$

We get two equations from which we obtain $\alpha \equiv \frac{a_1}{a_2} \mod p$,
$b_1 \alpha^2 \equiv b_2 \mod p$. However, since we can choose $b_1, b_2$ as we want,
e.g.\ $b_1=0, b_2\neq 0$, these equations can not always be satisfied,
and $(\phi, \gamma)$ does not always exist.
\end{proof}

\subsection{Image of Chern classes from $\Kn^*$ in Chow groups}\label{app:image_of_Chern_Chow}

One of the main results on Chern classes obtained in this paper
is that operations $c_i\colon \Kn^*\rarr \CH^i\ot\Zp$, $i\colon 1\le i\le p^n$ are surjective.
This is not true for Chern class $c_i$, $i>p^n$,
however, the image of this operation is always a subgroup of the form $b_i\CH^i\ot\Zp$.
In this section we provide an inductive way to compute numbers $b_i$.

Denote by $d_i\in p^\ZZ$ the number s.t. $d_i \cdot ch_i$ acts integrally on products of projective spaces
and this action is not zero modulo $p$. In other words, $d_i \cdot ch_i$
can be lifted to a generator $\phi_i$ of additive operations from $\Kn^*$ to $\CH^i\ot\Zp$. 
It is clear that $d_i$ is uniquely defined by the above.

\begin{Prop}\label{prop:app_add_chernchar}
We have $d_1=1$. 
\begin{itemize}
\item If $i \neq p^{sn}$ for some $s\in \NN$, then $d_i = \max_{j=1}^{i-1} (d_j d_{i-j})$.
\item If $i = p^{sn}$, then $d_i =p\cdot \max_{j=1}^{i-1} (d_j d_{i-j})$.
\end{itemize}
\end{Prop}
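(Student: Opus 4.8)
The strategy is to reduce everything to a computation with the coefficients of the power series $\exp_{K(n)}$. Recall that the Chern character $ch\colon K(n)^*\to\CH^*_\QQ$ is a ring homomorphism (an additive, and multiplicative, operation), and that on products of projective spaces it is the $\Zp$-algebra map determined by $z_j^{K(n)}\mapsto\exp_{K(n)}(z_j^{\CH})$; its degree-$i$ component $ch_i$ is then additive, so by Vishik's Theorem \ref{th:Vish_op} it is determined by its action on products of projective spaces, whence $d_i=p^{m_i}$ where $m_i$ is minus the minimal $p$-adic valuation occurring among the coefficients of the polynomials $ch_i(z_1\cdots z_l)$, $l\ge1$. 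Since $\log_{K(n)}$ is $p^n$-gradable, so is $\exp_{K(n)}$ by Proposition \ref{prop:pn_grad_series}, and we may write $\exp_{K(n)}(t)=\sum_{k\ge0}\epsilon_k t^{1+k(p^n-1)}$ with $\epsilon_0=1$. Multiplicativity gives that the coefficient of a monomial $z_1^{a_1}\cdots z_l^{a_l}$ in $ch_i(z_1\cdots z_l)$ is $\prod_t\epsilon_{k_t}$, where $a_t=1+k_t(p^n-1)$ and $\sum_t a_t=i$; distinct exponent tuples give distinct monomials, so each monomial coefficient is a single such product. Hence $m_i=\max\{\sum_t(-v_p(\epsilon_{k_t}))\}$ over finite families of positive integers $k_t$ with $\sum_t\bigl(1+k_t(p^n-1)\bigr)\le i$ (the remaining degree being filled by level-zero factors $\epsilon_0=1$). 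The all-level-zero family (with $l=i$) shows $m_i\ge0$, so $d_i\in\{1,p,p^2,\dots\}$, and since every monomial coefficient is a single product of a power of $p$ and a $p$-adic unit, $d_i\cdot ch_i$ is automatically nonzero modulo $p$.

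First I would record the trivial lower bound: concatenating a family realizing $m_j$ on one block of variables with a family realizing $m_{i-j}$ on a disjoint block gives a family for $i$, so $m_i\ge m_j+m_{i-j}$ and thus $d_i\ge\max_{1\le j\le i-1}(d_jd_{i-j})$ in all cases. For the upper bound, observe that a family whose positive-level part splits into two nonempty sub-families has total weight $\le m_{b_1}+m_{b_2}\le m_{b_1}+m_{i-b_1}\le\max_j(m_j+m_{i-j})$ (using $p^n\le b_\nu$, so $1\le b_1\le i-1$), and the same bound applies to a family consisting of a single positive-level factor $\epsilon_k$ of degree $1+k(p^n-1)<i$, by pairing it with the empty family. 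Consequently the only way $m_i$ can exceed $\max_j(m_j+m_{i-j})$ is if $i\equiv1\pmod{p^n-1}$ and $m_i=-v_p(\epsilon_k)$ for the single factor $\epsilon_k$ with $k=(i-1)/(p^n-1)$ filling the whole budget. So everything comes down to computing $v_p(\epsilon_k)$ and comparing it with $\max_j(m_j+m_{i-j})$.

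That computation I would carry out by induction on $i=1+k(p^n-1)$ using the functional equation $\exp_{K(n)}(t)=t-\sum_{s\ge1}l_s\,\exp_{K(n)}(t)^{p^{ns}}$, where $l_s=a_s/p^s$ with $a_s\in\Zp^\times$ and $a_s\equiv a_1^s\pmod p$ (Proposition \ref{prop:log_morava}). Extracting the coefficient of $t^i$: only $s$ with $p^{ns}\le i$ contribute, and the coefficient of $t^i$ in $\exp_{K(n)}(t)^{p^{ns}}$ is a sum of products of $p^{ns}$ of the $\epsilon_{k'}$ with $k'<k$, whose valuations are controlled inductively (and whose optimal families use at most $p^{ns}$ positive-level factors, so the inductive hypotheses on the $m_j$ apply). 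One then checks that the most negative contribution comes from the $s=1$ term, and that when $i=p^{ns}$ it is realized by the family of $p^n$ level-one factors, yielding $v_p(\epsilon_k)=-\bigl(1+\max_j(m_j+m_{i-j})\bigr)$ with unit leading coefficient $-(-1)^{p^n}a_1^{p^n+\cdots}$ (so no cancellation), whereas for $i\neq p^{ns}$ all contributions are $\ge-\max_j(m_j+m_{i-j})$. Feeding this into the dichotomy above gives $m_i=\max_j(m_j+m_{i-j})$ for $i\neq p^{ns}$ and $m_i=1+\max_j(m_j+m_{i-j})$ for $i=p^{ns}$, i.e. exactly the stated recursion (with $d_1=1$ the empty-maximum base case). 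I expect the main obstacle to be precisely this last valuation estimate: one must organise the induction so that the dominant family of $\epsilon$'s entering $\epsilon_k$ through the functional equation is identified and shown not to be annihilated modulo the appropriate power of $p$ — which is where the congruences $a_s\equiv a_1^s\pmod p$ are indispensable — and one must keep track of how many factors the optimal auxiliary families use so that the inductive hypothesis remains applicable.
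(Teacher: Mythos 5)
Your reduction is correct and is a genuinely different route from the paper's: you never need the operation-theoretic machinery, only the identification $d_i=p^{m_i}$ where $m_i$ is the maximal total $p$-denominator of a ``family'' of coefficients $\epsilon_k$ of $\exp_{K(n)}$ of total degree $\le i$, superadditivity $m_i\ge m_j+m_{i-j}$, and the dichotomy that only the single top-degree coefficient $\epsilon_{(i-1)/(p^n-1)}$ can possibly exceed $\max_j(m_j+m_{i-j})$. (The paper instead expands $\phi_i\circ m$, $m$ the multiplication of $K(n)^*$, in the basis $\phi_j\odot\phi_{i-j}$ of bi-additive operations, reads off the coefficients $d_i/(d_jd_{i-j})$ from multiplicativity of $ch$, and settles the two cases by showing that all middle coefficients lying in $p\Zp$ would force $\phi_i\equiv a\,\phi_1^{\,i}\bmod p$ --- impossible unless $i=p^{sn}$ by the grading --- and that $\phi_{p^{sn}}\not\equiv\phi_1^{\,p^{sn}}\bmod p^2$, via the explicit construction of $c_{p^{sn}}$ and the freeness of Chern classes mod $p$. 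That argument avoids all valuation estimates on $\exp_{K(n)}$.)

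The gap is exactly the step you defer: the assertion that ``the most negative contribution comes from the $s=1$ term'' and that ``for $i\neq p^{sn}$ all contributions are $\ge-\max_j(m_j+m_{i-j})$'' is not justified, and as stated it is false for the individual products you are summing. Take $p=2$, $n=1$, $\log=\sum_s x^{2^s}/2^s$ and $i=6$: the product $l_1\epsilon_1\epsilon_3$ has valuation $-5$, while $\max_j(m_j+m_{6-j})=m_2+m_4=4$; the estimate is only saved because this family enters $[t^6]\exp^2$ with the binomial coefficient $2$. In general the rescue comes from the fact that the multinomial coefficient attached to any non-constant family of $p^{ns}$ factors is divisible by $p$ --- a mechanism your sketch never mentions --- and even with it the naive bound for the $s\ge2$ terms is only $s-1+\max_j(m_j+m_{i-j})$, which is insufficient in the non-$p$-power case (and for $s\ge3$ in the $p$-power case), so one needs a strictly finer inductive statement (exact valuations of the $\epsilon_k$, or weight bounds depending on the number of factors). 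Finally, the dominant family you name for $i=p^{sn}$, namely $p^n$ level-one factors, has degree $p^{2n}$ and so only treats $s=2$; for larger $s$ the extremal term is $l_1$ times $p^n$ copies of the degree-$p^{(s-1)n}$ coefficient, and to see that it realizes $1+\max_j(m_j+m_{i-j})$ you also need the unproved combinatorial identity $\max_j(m_j+m_{p^{sn}-j})=p^n m_{p^{(s-1)n}}$ together with a no-cancellation check against all other contributions. So the route is plausible, but the central valuation estimate, which carries the whole content of the proposition, is asserted rather than proved.
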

\begin{proof}
The operation $ch_1$ acts integrally on projective spaces,
 it is defined by series $G_1(z)=z$, $G_j(z)=0$ for $j\ge 2$. 
Thus, $d_1=1$.

\begin{Lm}\label{lm:polyadd_op}
The $\Zp$-module of poly-additive poly-operations of arity $r$ from $\Kn^*$
to $\CH^i\ot\Zp$ is a free $\Zp$-module generated by external products $\phi_{j_1}\odot \cdots \odot \phi_{j_r}$
where $\sum_{s=1}^{r} j_s =i$, $j_s\colon 0\le j_s \le i$.
\end{Lm}
\begin{proof}
It follows from Vishik's classification of poly-operations theorem
that this module is free. Thus, its rank is equal to the dimension of $\QQ$-vector space
of additive polyoperations from $\Kn^*\ot\QQ$ to $\CH^i\ot\QQ$.
However, using a multiplicative isomorphism $ch\colon \Kn^*\ot\QQ \cong \oplus \CH^i\ot\QQ$
one reduces the problem of calculating this dimension to the dimension of the vector space
of poly-operations of arity $r$ from Chow groups to Chow groups with rational coefficients.
One can show that the ring of poly-operations in Chow groups is generated by multiplications of components,
and, thus, the dimension is equal to the number of external products $\phi_{j_1}\odot \cdots \odot \phi_{j_r}$.

The number of poly-operations $\phi_{j_1}\odot \cdots \odot \phi_{j_r}$ coincides with 
the rank of the $\Zp$-module, 
and thus it is enough to show that there are no $\F{p}$-relations between them.
But this is precisely the claim of \cite[Lemma 4.6.2]{Sech}.
\end{proof}
\textsl{Proof of Proposition~\ref{prop:app_add_chernchar}, continued.} 
Denote by $m$ the multiplication in $\Kn^*$ which is a poly-additive poly-operation of arity 2.
As follows from Lemma \ref{lm:polyadd_op} for each $i$ the poly-operation $\phi_i\circ m$
is of the form 
$$\phi_i\odot \phi_0 + \phi_0\odot \phi_i+\sum_{j=1}^{i-1} b^{(i)}_j \phi_j \odot \phi_{i-j}$$
 where $b^{(i)}_j\in \Zp$ and $\phi_0$ is the additive operation $\Kn^*\rarr \CH^0\ot\Zp$
 which sends 1 to 1 and is zero on $\Kntilde^*$.

On the other hand, since $ch$ is a multiplicative operation,
we have $ch_i\circ m = \sum_{j=0}^{i} ch_{j}\odot ch_{i-j}$.
Combining these two equations together we obtain that
 $\phi_i \circ m = \phi_i\odot \phi_0 + \phi_0 \odot \phi_i
 + \sum_j \frac{d_i}{d_jd_{i-j}} \phi_j\circ \phi_{i-j}$,
and therefore $b^{(i)}_j = \frac{d_i}{d_jd_{i-j}}$.
The number $b^{(i)}_j$ has to be integral since $\phi_i \circ m$ is an integral polyoperation,
and therefore $d_i \ge \max_{j=1}^{i-1} (d_j d_{i-j})$.

Assume that $b^{(i)}_j\in p\Zp$ for all $j\colon 1\le j\le i-1$,
i.e.\ $\phi_i\circ m\equiv \phi_i\odot \phi_0 + \phi_0\odot \phi_i \mod p$. 
In particular, this means that $(\phi_i\mod p)(z_1\cdots z_j)=0$ for all $j\ge 2$.
However, $\phi_i\mod p\neq 0$ as it is a generator of additive operations,
and therefore $(\phi_i\mod p)(z)$ can not be equal to zero.
Thus, $(\phi_i\mod p)(z) = a z^i$ for some $a\in \F{p}^\times$,
and we see that $\phi_i \equiv a(\phi_1)^i \mod p$. 

In order for $(\phi_1)^i$ to be additive modulo $p$ we need $i$ to be a power of $p$.
However, if $i\neq p^{sn}$ for some $s\ge 0$, then $\phi_i$ and $\phi_1$ are supported on different
graded components of $\Kn^*$ by \cite[Prop. 4.1.6]{Sech}
 (or one could argue that $(\phi_1)^i \mod p$ is not $p^n$-gradable). 
Thus, we see that if $i \neq p^{sn}$, then there exist $j$ s.t. 
 $\frac{d_i}{d_jd_{i-j}}\in \Zp^\times$. The claim follows.

If $i = p^{sn}$, then by \cite[Cor. 4.3.5]{Sech} we have the relation $\phi_i \equiv (\phi_1)^i \mod p$,
and thus $\phi_i\circ m\equiv \phi_i\odot \phi_0 + \phi_0\odot \phi_i \mod p$. 
We claim, however, that $\phi_i \neq (\phi_1)^i \mod p^2$.
Without loss of generality we may choose $n$-th Morava K-theory
to have the logarithm $\log_{\Kn}(x)=\sum_{s=0}^\infty \frac{1}{p^s}x^{p^{sn}}$.

By the construction of Chern classes \cite[Lemma 4.4.1, Lemma 4.3.3 (2p)]{Sech}
we have $\log_{\Kn}(c_1t+c_2t^2+\ldots+c_{p^{sn}}t^{p^{sn}})[t^{p^{sn}}]=\frac{\phi_{p^{sn}}}{p^s}$.
Multiplying this equation by $p^s$ and taking it modulo $p^2$ we obtain

$$ p(c_1t+\ldots+c_{p^{sn}}t^{p^{sn}})^{p^{(s-1)n}}[t^{p^{sn}}]+
(c_1t+\ldots+c_{p^{sn}}t^{p^{sn}})^{p^{sn}}[t^{p^{sn}}]\equiv \phi_{p^{sn}} \mod p^2.$$

Using equality $c_1=\phi_1$ we may rewrite it as
$$  p Q(c_1, \ldots, c_{p^{sn}}) + \phi_1^{p^{sn}} \equiv \phi_{p^{sn}} \mod p^2,$$

where $Q$ is a polynomial in Chern classes. 
The polynomial $Q$ contains $(c_{p^n})^{p^{(s-1)n}}$ as a summand
which comes from the term $(c_1t+\ldots+c_{p^{sn}}t^{p^{sn}})^{p^{(s-1)n}}$
and can not be cancelled by any other summands coming from the other term.
Thus, $Q\neq 0$, and therefore it is not zero as an operation to Chow groups modulo $p$,
because Chern classes form a basis of operations and there are no relations between them modulo $p$.
Thus, we see that $\phi_i \neq (\phi_1)^i \mod p^2$,
and therefore as explained above $\phi_i\circ m \neq \phi_i\odot \phi_0 + \phi_0\odot \phi_i \mod p$.
Therefore the minimal $p$-valuation of coefficients $b^{(i)}_j$ is equal to 1,
and $d_i = p\cdot \max_{j=1}^{i-1} (d_j d_{i-j})$.
\end{proof}

\begin{Exl}\label{app:exl_add_image}
Prop. \ref{prop:app_add_chernchar} allows to calculate 
$d_2=\cdots = d_{p^n-1}=1$, $d_{p^n}=\cdots =d_{2p^n-1}=p$,
and, more generally, $d_{kp^n}=\ldots=d_{kp^n+p^n-1}=p^k$ for $k<p^n$.
\end{Exl}

Define numbers $b_i$ according to the rule
$b_i =\begin{cases} 
d_i, &\text{if $p^n \nmid i$;}\\
p^{-k}d_i, &\text{if $i=p^{nk}v$ where $p^n\nmid v$}.
\end{cases}$

\begin{Prop}
For each $i\ge 0$ numbers $b_i \in p^\NN$. 

The image of the map $c_i^{\CH}\colon  gr^i_\tau \Kn^*\rarr \CH^i\ot\Zp$
equals to $b_i\Zp(\CH^i\ot\Zp)$.
\end{Prop}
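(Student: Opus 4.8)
The plan is to compute the image of $c_i^{\CH}$ on $gr^i_\tau K(n)^*$ by combining the construction of Chern classes in Section \ref{sec:construction_chern_classes} with the computation of the additive "denominators" $d_i$ in Proposition \ref{prop:app_add_chernchar}. First I would recall that by construction $c_i^{BP\{n\}}$ (hence $c_i^{\CH}$ after composing with the morphism to $\CH^*\ot\Zp$) is of the form $P_i(c_1,\ldots,c_{i-1}) + \frac{\psi_i}{p^{\mu_i}}$ on products of projective spaces, where $\psi_i$ is a generator of additive operations $K(n)^*\rarr \CH^i\ot\Zp$ and $P_i$ is a fixed rational polynomial determined by $\log_{K(n)}$. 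As in the proof of Lemma \ref{lm:c_i-on-gr-top-Kn}, the operation $gr^i_\tau K(n)^* \xrarr{c_i^{\CH}} \CH^i\ot\Zp$ is additive, hence (via Vishik's Theorem \ref{th:Vish_op}) is multiplication on $\CH^i\ot\Zp$ by the coefficient $a_i$ of $z_1\cdots z_i$ in $c_i^{\CH}(z_1\cdots z_i)$; and precomposing with the surjection $\rho_{K(n)}:\CH^i\ot\Zp\twoheadrightarrow gr^i_\tau K(n)^*$ one gets multiplication by $a_i$. Since $\rho_{K(n)}$ is surjective, the image of $c_i^{\CH}$ on $gr^i_\tau K(n)^*$ is exactly $a_i\Zp\cdot(\CH^i\ot\Zp)$, so everything reduces to computing $\nu_p(a_i)$.

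Next I would identify $a_i$ with the claimed $b_i$. On $z_1\cdots z_i$ the polynomial summands $P_i(c_1^{\CH},\ldots,c_{i-1}^{\CH})$ vanish (each $c_j^{\CH}$, $j<i$, kills $z_1\cdots z_i$ by continuity of operations, Section \ref{sec:cont}), so $a_i$ equals the coefficient of $z_1\cdots z_i$ in $\frac{\psi_i}{p^{\mu_i}}(z_1\cdots z_i)$. Now $\psi_i$ is an integral generator of additive operations to $\CH^i\ot\Zp$, so by definition $\psi_i = d_i\cdot ch_i + (\text{corrections in higher pieces})$ where $ch_i(z_1\cdots z_i)=z_1\cdots z_i$; hence the coefficient of $z_1\cdots z_i$ in $\psi_i(z_1\cdots z_i)$ is a unit times $d_i$. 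Thus $\nu_p(a_i) = \nu_p(d_i) - \mu_i$. It remains to show $\mu_i$ — the exponent of $p$ cleared from $P_i$ when building $c_i$ — equals $k$ precisely when $i=p^{nk}v$ with $p^n\nmid v$, and is $0$ otherwise; this should follow from tracking $\log_{K(n)}(x)=x+\sum_s \frac{a_s}{p^s}x^{p^{ns}}$ through the definition $c_i - \log_{K(n)}(c_{tot})[t^i] = P_i$ in Section \ref{sec:construction_chern_classes}, exactly as the parallel bookkeeping in \cite[Lemma 4.4.1]{Sech}: the denominator $p^s$ appears for the first time at the monomial $t^{p^{ns}}$ and propagates multiplicatively. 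Combining, $\nu_p(a_i)=\nu_p(d_i)-k=\nu_p(b_i)$ when $p^n\mid i$, and $\nu_p(a_i)=\nu_p(d_i)=\nu_p(b_i)$ otherwise, which is the assertion. Positivity $b_i\in p^\NN$ is immediate from Proposition \ref{prop:app_add_chernchar} together with $b_{p^{nk}v}=p^{-k}d_{p^{nk}v}$ and the recursion showing $d_{p^{nk}v}\ge p^k$ (one extra factor of $p$ is picked up each time the index crosses a power $p^{ns}$, by Example \ref{app:exl_add_image}).

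The main obstacle I expect is pinning down $\mu_i$ rigorously: one must show that the rational polynomial $P_i$ in $c_1,\ldots,c_{i-1}$ extracted from $\log_{K(n)}(c_{tot})$ has minimal $p$-valuation of coefficients exactly $-k$ when $i=p^{nk}v$ with $p^n\nmid v$. This is a purely combinatorial statement about iterated substitution into the logarithm series, and the cleanest route is to reuse the inductive argument of \cite[Lemma 4.4.1, Lemma 4.4.2]{Sech} verbatim — there $P_i$ is analyzed for the $\CH^*\ot\Zp$-valued Chern classes, and since $c_i^{A}$ for $A^*=\CH^*\ot\Zp$ is constructed by literally the same recipe, the valuations of $P_i$ transfer. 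One then only needs to check compatibility: $tr_i c_i^{\CH} = c_i^{\CH}$ (Theorem \ref{th:main}\ref{item:c_j-tau^j}), so the coefficient $a_i$ computed here is the same whether one works in $BP\{n\}^*$ or directly in $\CH^*\ot\Zp$. No deeper input is needed; the rest is the routine "additive operation $\Leftrightarrow$ multiplication by its leading coefficient" dictionary already used repeatedly in Section \ref{sec:morava_gamma_filtration}.
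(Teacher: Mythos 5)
Your proposal is correct and follows essentially the same route as the paper's appendix proof: restrict to $gr^i_\tau$, precompose with the surjection $\rho_{K(n)}$ to obtain an additive endo-operation of $\CH^i\ot\Zp$ which is multiplication by the leading coefficient of $c_i^{\CH}(z_1\cdots z_i)$, and compute that coefficient from the decomposition $c_i^{\CH}=P_i(c_1^{\CH},\ldots,c_{i-1}^{\CH})+\phi_i/p^{\mu_i}$ with $P_i$ killing $z_1\cdots z_i$, $\phi_i$ a unit multiple of $d_i\, ch_i$, and the value of $\mu_i$ imported from the earlier paper. The only divergence is the integrality claim $b_i\in p^\NN$: the paper gets it at once because the composite with $\rho_{K(n)}$ is an integral operation, so its multiplier lies in $\Zp$, whereas your recursion bound $d_{p^{nk}v}\ge p^k$ is true but still requires a short induction beyond Example~\ref{app:exl_add_image} (which only covers $i<p^{2n}$).
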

\begin{proof}
It follows from Prop. \ref{prop:operations_chow_topfilt} 
that the image is equal to the image of 
the operation $\Theta_i:=(c^{\CH}_i)^{\CH}_i\colon \CH^i\ot\Zp\rarr \CH^i\ot\Zp$
which is obtained as the composition of $c_i$ with 
the canonical morphism $\rho_{\Kn}\colon \CH^i\rarr gr^i_\tau \Kn^*$.
We are going to prove that this operation is multiplication by $b_i$.

The operation $\Theta_i$ is a composition of additive operations and therefore it is additive itself.
By Vishik's classification of additive operations \cite[Th. 6.2]{Vish1} $\Theta_i$
is determined by a symmetric polynomial $G_i(z_1,\ldots,z_i)$ 
of degree $i$ which is divisible by $\prod_{j=1}^i z_j$,
i.e.\ $G_i= \alpha z_1\cdots z_i$ for some $\alpha\in\Zp$, and it is clear
that $\Theta_i$ is multiplication by $\alpha$.
The polynomial $G_i$ for the operation $c^{\CH}_i$ is also equal to $\alpha z_1\cdots z_i$
which can be checked by comparing the action of $\Theta_i$ and $c^{\CH}_i$ on products of projective spaces using the definition of $\Theta_i$.
Thus, our goal is to show that $c^{\CH}_i(z_1\cdots z_i)= a b_i z_1\cdots z_i$ for some $a\in \Zp^\times$.
 
By construction of the operation $c^{\CH}_i$ (\cite[Cor. 4.3.1]{Sech})
we have $c_i - P_i(c_1,\ldots, c_{i-1})=\frac{\phi_i}{p^{\mu_i}}$ as operations to $\CH^i\ot\QQ$
where $P_i$ is a rational polynomial, $\phi_i$ is a generator of additive operations to $\CH^i\ot\Zp$.
For operations $c_j, j<i$ the polynomial $G_i$ is equal to 0 by degree reasons,
and therefore so it is for $P_i$. Thus, the polynomial $G_i$ of the operation $c_i$ 
coincides with the polynomial of the operation $\frac{\phi_i}{p^{\mu_i}}$.

The operation $\phi_i$ is equal to $a p^{d_i} \cdot ch_i$ where $a\in\Zp^\times$
by definition of numbers $d_i$ above.
Therefore $G_i$ of the operation $\phi_i$ is equal to $ap^{d_i} z^1\cdots z_i$,
and $\alpha = a p^{d_i-\mu_i}$.
To finish the proof recall that $\mu_i=\max (0, -\nu_p(P_i))$ (\cite[Cor. 4.3.1]{Sech})
and it follows from \cite[Lemma 4.3.3 (2, 2p)]{Sech} that $\mu_i = 0$ if $p^n \nmid i$
and $\mu_i=k$ if $i=p^{nk}v$ where $p^n\nmid v$, $v\in\Zp$. Thus, $\alpha = a b_i$.

Since $\Theta_i$ is an integral operation,
it follows that $b_i$ is integral, i.e.\ $b_i\in p^\NN$,
which is the first claim of the Proposition.
\end{proof}

\begin{Exl}
If $i=kp^n+j<p^{2n}$ where $j\colon 1\le j\le p^n$, 
then the image $c_i(gr_\tau^i \Kn^*)$ is equal to $p^{k} \CH^i\ot\Zp$.
\end{Exl}

%%%%%%%%%%%%%%% REFERENCES %%%%%%%%%%%%%%%

\medskip

\medskip

\noindent
%\sc{Pavel Sechin
%Fakultät für Mathematik,
%Universität Regensburg,
%93040 Regensburg, Germany\\
%}

\end{document}